\documentclass[10pt, letterpaper,reqno]{amsart}
\usepackage[left=1in,right=1in,bottom=0.5in,top=0.8in]{geometry}
\usepackage{amsfonts}
\usepackage{amsmath, amssymb}
\usepackage{graphicx}
\usepackage[font=small,labelfont=bf]{caption}
\usepackage{epstopdf}
\usepackage{xcolor}
\usepackage{amsthm}
\usepackage{float}
\usepackage{enumitem}
\usepackage{pgfplots}
\usepackage{listings}
\usepackage{longtable}
\usepackage{mathrsfs}
\usepackage{dsfont}
\usepackage{mathtools}
\usepackage{dsfont}
\usepackage{stmaryrd}
\usepackage[pdfpagelabels,hyperindex]{hyperref}
\hypersetup{linkbordercolor=green}

\newcommand*{\rom}[1]{\expandafter\@slowromancap\romannumeral #1@}
 \colorlet{lgray}{white!80!black}
\colorlet{lred}{white!85!red}
\colorlet{lgreen}{white!60!green}
\colorlet{dgreen}{black!30!green}
\colorlet{lpurple}{white!60!purple}
\colorlet{lblue}{white!60!blue}
\definecolor{green}{rgb}{0.1,0.8,0.1}
\definecolor{yellow}{rgb}{1.0,0.85,0.25}
\definecolor{purple}{rgb}{1.0, 0, 1.0}
\definecolor{blue}{rgb}{0, 0, 1.0}
\tikzstyle{unfused}=[lgray, line width=1.5pt, ->]
\tikzstyle{fused}=[lgray, line width=4pt, ->]
\tikzstyle{dual}=[black, line width=1pt, dashed]
\tikzstyle{lightdual}=[black, line width=0.5pt, dashed]
\tikzstyle{cut}=[black, line width=1.0pt]

\theoremstyle{plain}
\newtheorem{theorem}{Theorem}[section]
\newtheorem{lemma}[theorem]{Lemma}
\newtheorem{proposition}[theorem]{Proposition}

\theoremstyle{remark}
\newtheorem{definition}[theorem]{Definition}
\newtheorem{remark}[theorem]{Remark}

\colorlet{shadecolor}{gray!20}
\pgfplotsset{compat=1.9}
\usetikzlibrary{shapes.multipart}
\usetikzlibrary{patterns}
\usetikzlibrary{shapes.multipart}
\usetikzlibrary{arrows}
\usetikzlibrary{decorations.markings}
\usepgflibrary{decorations.shapes}
\usetikzlibrary{decorations.shapes}
\usepgflibrary{shapes.symbols}
\usetikzlibrary{shapes.symbols}
\usetikzlibrary{decorations.pathreplacing}
\tikzstyle{fleche}=[>=stealth', postaction={decorate}, thick]
\tikzstyle{axis}=[->, >=stealth', thick, gray]
\tikzstyle{paths}=[>->, >=stealth', thick]
\tikzstyle{path}=[->, >=stealth', thick]
\tikzstyle{grille}=[dotted, gray]
\DeclareFontFamily{U}{mathx}{}
\DeclareFontShape{U}{mathx}{m}{n}{<-> mathx10}{}
\DeclareSymbolFont{mathx}{U}{mathx}{m}{n}
\DeclareMathAccent{\widehat}{0}{mathx}{"70}
\DeclareMathAccent{\widecheck}{0}{mathx}{"71}

\usepackage{romannum}
\usepackage{upgreek}

\usepackage[scr=boondoxo]{mathalfa}

\newcommand{\ber}{\mathrm{Ber}}

\newcommand{\bg}{\boldsymbol{\gamma}}
 
\newcommand{\exc}{\mathsf{exc}}
\newcommand{\mea}{\mathsf{mea}}
\newcommand{\revmea}{\mathsf{revmea}}

\renewcommand{\a}{\mathsf{a}}
\renewcommand{\b}{\mathsf{b}}
\renewcommand{\c}{\mathsf{c}}
\renewcommand{\d}{\mathsf{d}}

\numberwithin{equation}{section}

\newcommand{\TV}[1]{{\left\lVert #1 \right\rVert}_{\normalfont
\text{TV}}}

\usepgflibrary{fpu}
\makeatletter
\let\NAT@parse\undefined
\makeatother

\author{Dominik Schmid}
\address
{
Dominik Schmid, 
University of Augsburg, Germany 
}
\email{d.schmid@uni-a.de} 
\author{Zongrui Yang}
\address
{
Zongrui Yang, 
University of California, Berkeley
}
\email{zongrui@berkeley.edu}

\title[Approximation for the stationary measure of the open ASEP]{Sharp approximations for the stationary measures of the open ASEP III: The fan region}  
\begin{document}
\pagenumbering{arabic}

\begin{abstract} 

We study the stationary measure of the open asymmetric simple exclusion process (ASEP) in the fan region with general boundary rates. 
The stationary height fluctuations in the maximal current phase and on its boundaries within the fan region are known to have limits involving Brownian excursions or meanders. We establish discrete counterparts of these limits: on intervals away from the boundaries, the joint law of the occupation variables is approximated in total variation distance by particle configurations constructed from lazy simple random walk excursions and meanders. In the high- and low-density phases of the fan region, we establish Bernoulli product approximations with errors that decay exponentially in the distance from the relevant boundary. Our methods combine refined combinatorial estimates for Motzkin paths and the matrix product ansatz in the totally asymmetric case with a Fourier-type decomposition (based on Askey--Wilson polynomials) of the change of measure for general asymmetry. In addition, in the maximal current phase and on its boundaries within the fan region, we prove moderate deviation estimates for particle counts on arbitrary sets of sites. These estimates rely on a covariance identity for an auxiliary path measure, through which total nonnegativity (of order two) of the matrices yields negative correlations.
\end{abstract}
\maketitle

%
\section{Introduction and main results}

\subsection{Preface}

The asymmetric simple exclusion process (ASEP) is a central model in probability and non-equilibrium statistical physics. Introduced in the biology literature in 1968 \cite{macdonald1968kinetics} and in the mathematical literature in 1970 \cite{SPITZER1970246}, it has become a prototypical interacting particle system, exhibiting phase transitions and shock formation \cite{liggett1975ergodic,derrida1993exact,FF:ShockFluctuations}. We refer the reader to \cite{liggett1999stochastic,wood2020combinatorial,blythe2007nonequilibrium,corwin2022some} for a selection of expository works from probabilistic, combinatorial, and statistical mechanical perspectives.

The present paper is the third in a series of three on sharp approximations of the stationary measure of the open ASEP. In this model, particles perform biased random walks on a finite segment, subject to the exclusion rule, and may enter or exit at the endpoints at prescribed rates. The boundary rates govern a rich phase diagram, reflected in the structure of the stationary measure; see, for example,  \cite{liggett1975ergodic,derrida1993exact,uchiyama2004asymmetric,Bryc2017,wang2023askey}. The parameter space of the open ASEP is divided into the fan and shock regions, separated by a common boundary. We focus on the fan region, where the effective density of the left reservoir exceeds that of the right reservoir; see Section~\ref{subsec:model and results} for a precise definition. This complements our companion papers \cite{SchmidYangShock,SchmidYangCoexistence}, which treat the shock region. Here, our results allow general boundary rates in the fan region and do not require Liggett's condition.

The stationary height fluctuations of the open ASEP have different limits involving Brownian motion, excursions, and meanders, depending on the phase. In the maximal current phase, the limiting height fluctuations involve the sum of a Brownian motion and an independent Brownian excursion; on the boundaries of the maximal current phase within the fan region, the excursion is replaced by a Brownian meander or its time reversal. These limits were established in \cite{bryc2019limit} using the connection with Askey--Wilson processes from \cite{bryc2010askey,Bryc2017}; see also the earlier work \cite{derrida2004asymmetric} for the excursion limit in the totally asymmetric case. Our goal is to explain these structures at the discrete level through quantitative total variation approximations of the stationary measure itself.

Our main results approximate the joint law of the occupation variables on intervals away from the relevant boundaries in total variation distance by configurations constructed from conditioned lazy simple random walks. In the maximal current phase, the reference path is an excursion: it starts at the origin, stays nonnegative, and returns to the origin after traversing the segment. On one boundary of the maximal current phase within the fan region, the reference path is a meander, conditioned only to stay nonnegative; the construction for the other boundary is obtained by spatial reversal and particle–hole exchange. Upward and downward steps specify occupied and vacant sites, respectively, while independent fair coin flips determine the occupations corresponding to horizontal steps. These constructions provide discrete pre-limiting counterparts of the Brownian fluctuation limits. We also establish Bernoulli product approximations in the high- and low-density phases of the fan region, with errors exponentially small in the distance from the left and right boundary, respectively.
The approximation errors are controlled by the distance of the observed interval from the relevant boundaries. Consequently, the intervals may occupy almost the entire segment, provided these distances tend to infinity. These results complement and, in the overlapping cases, strengthen earlier bulk and boundary approximation results \cite{nestoridi2024approximating,yang2025limits}.

Our proofs combine probabilistic and analytic methods. Starting from the matrix product ansatz, we represent stationary configurations through weighted bicolored Motzkin paths. In the totally asymmetric case, refined combinatorial estimates on these paths control the stationary measure. For general asymmetry, we introduce a Fourier-type decomposition based on Askey--Wilson orthogonal polynomials to analyze the change of measure and compare the weighted path laws with their totally asymmetric counterparts. We give precise statements of the main results in Section~\ref{sec:MainResults} and a more detailed account of the proof strategy in Section~\ref{sec:Overview}.

Finally, we prove moderate deviation estimates for stationary particle counts on arbitrary sets of sites in the maximal current phase and on its boundaries within the fan region. These estimates require an additional argument beyond the total variation approximations. We derive a determinantal formula for covariances of the occupation variables under suitable tilts of an auxiliary path measure. Using log-concavity, we establish total nonnegativity of order two for the matrices involved in this formula. This leads to negative correlations and hence the required concentration bounds.

\subsection{The open ASEP}\label{subsec:model and results}

We define the open ASEP as an interacting particle systems on a finite state space. More precisely, the open ASEP is a continuous-time Markov chain $(\tau(t))_{t \geq 0}$ on the state space $\{0,1\}^{N}$  with respect to parameters
\begin{equation}\label{eq:conditions open ASEP}
\alpha,\beta>0,\quad \gamma,\delta\geq 0,\quad\text{and}\quad  0\leq q<1 . 
\end{equation}
We write $\tau=(\tau_1,\tau_2,\dots,\tau_N) \in \{0,1\}^N$ for the configurations of the open ASEP, where $\tau_i$ denotes the indicator whether the $i^{\text{th}}$ site is occupied in $\tau$. Write $\llbracket a,b\rrbracket := [a,b] \cap \mathbb Z$ for all $a<b$.
In the dynamics, on each site $\llbracket 2,N\rrbracket$, particles attempt jumps to the left according to rate $q$ Poisson clocks, and on the sites $\llbracket 1,N-1\rrbracket$ to the right according to rate $1$ Poisson clocks, independently of all other particles. However, a move is performed if and only if the target is a vacant site.  At the two boundaries, particles can enter and exit the segment. More precisely, at the left boundary, particles attempt to enter the system from the
reservoir at rate $\alpha$ and attempt to exit at rate $\gamma$. Similarly, at the right boundary, particles
attempt to enter at rate $\delta$ and attempt to exit at rate $\beta$; see Figure~\ref{fig:openASEP} for an illustration. The assumption $q<1$ ensures that particles have a  higher rate of moving to the right.  The special case $q=\gamma=\delta=0$, where particles can only move to the right, is called the open totally asymmetric simple exclusion process, or \textit{open TASEP}. 
It will be convenient to consider the boundary parameters
\begin{equation}\label{eq:defining ABCD}
\a=\phi_+(\alpha,\gamma),\quad\b=\phi_+(\beta,\delta),\quad \mathsf{c}=\phi_-(\alpha,\gamma), \quad \mathsf{d}=\phi_-(\beta,\delta),
\end{equation}
where we set
\begin{equation} \label{eq:phi}
\phi_{\pm}(x,y):=\frac{1}{2x}\left(1-q-x+y\pm\sqrt{(1-q-x+y)^2+4xy}\right), \quad \mbox{for }\; x>0\mbox{ and }y\geq 0.
\end{equation}
In particular, note that under our assumptions \eqref{eq:conditions open ASEP}, 
\begin{equation}\label{eq:conditions qABCD}
\a,\b\geq0,\quad -1<\mathsf{c},\mathsf{d}\leq 0 . 
\end{equation}
We define $\rho_{\textup{L}}:=(1+\a)^{-1}$, respectively $\rho_{\textup{R}}:=\b(1+\b)^{-1}$ as the left side, respectively right side \textit{effective densities}. The parameters $\a$ and $\b$ allow for a simple partition of the open ASEP into different regimes. When $\a > \max(\b,1)$, we say that the open ASEP belongs to the \textit{low density phase}, for $\b > \max(\a,1)$ the open ASEP belongs to the \textit{high density phase}, and for $\a,\b \leq 1$ to the \textit{maximal current phase}. We refer to the special case $\a=\b>1$ as the \textit{coexistence line}. Moreover, we say that the open ASEP belongs to the \textit{fan region} if $\a \b < 1$, and to the \textit{shock region} if $\a \b >1$; see Figure~\ref{fig:Phase} for a visualization.  We provide a more thorough discussion of the role of the different phases in the subsequent subsection.

\begin{figure}
\centering
\begin{tikzpicture}[scale=0.85]
\def\spiral[#1](#2)(#3:#4:#5){
\pgfmathsetmacro{\domain}{pi*#3/180+#4*2*pi}
\draw [#1,
       shift={(#2)},
       domain=0:\domain,
       variable=\t,
       smooth,
       samples=int(\domain/0.08)] plot ({\t r}: {#5*\t/\domain})
}
\def\particles(#1)(#2){
  \draw[black,thick](-3.9+#1,0.55-0.075+#2) -- (-4.9+#1,0.55-0.075+#2) -- (-4.9+#1,-0.4-0.075+#2) -- (-3.9+#1,-0.4-0.075+#2) -- (-3.9+#1,0.55-0.075+#2);
\node[shape=circle,scale=0.6,fill=red] (Y1) at (-4.15+#1,0.2-0.075+#2) {};
  	\node[shape=circle,scale=0.6,fill=red] (Y2) at (-4.6+#1,0.35-0.075+#2) {};
  	\node[shape=circle,scale=0.6,fill=red] (Y3) at (-4.2+#1,-0.2-0.075+#2) {};
   	\node[shape=circle,scale=0.6,fill=red] (Y4) at (-4.45+#1,0.05-0.075+#2) {};
  	\node[shape=circle,scale=0.6,fill=red] (Y5) at (-4.65+#1,-0.15-0.075+#2) {}; }
\def\annhil(#1)(#2){	  \spiral[black,thick](9.0+#1,0.09+#2)(0:3:0.42);
\draw[black,thick](8.5+#1,0.55+#2) -- (9.5+#1,0.55+#2) -- (9.5+#1,-0.4+#2) -- (8.5+#1,-0.4+#2) -- (8.5+#1,0.55+#2); }
\node[shape=circle,scale=1.5,draw] (B) at (2.3,0){} ;
	\node[shape=circle,scale=1.5,draw] (C) at (4.6,0) {};
	\node[shape=circle,scale=1.2,fill=red] (CB) at (2.3,0) {};
    \node[shape=circle,scale=1.5,draw] (A) at (0,0){} ;
 	\node[shape=circle,scale=1.5,draw] (D) at (6.9,0){} ; 
 	 	\node[shape=circle,scale=1.5,draw] (Z) at (-2.3,0){} ;
	\node[shape=circle,scale=1.2,fill=red] (YZ) at (-2.3,0) {};
   \node[line width=0pt,shape=circle,scale=1.6] (B2) at (2.3,0){};
	\node[line width=0pt,shape=circle,scale=2.5] (D2) at (6.9,0){};
		\node[line width=0pt,shape=circle,scale=2.5] (Z2) at (-2.3,0){};
		
	\node[line width=0pt,shape=circle,scale=2.5] (X10) at (6.8,0){};
		\node[line width=0pt,shape=circle,scale=2.5] (X11) at (-2.2,0){};	
			\node[line width=0pt,shape=circle,scale=2.5] (X12) at (8.4,0){};
		\node[line width=0pt,shape=circle,scale=2.5] (X13) at (-3.8,0){};		
\draw[thick] (Z) to (A);	
	\draw[thick] (A) to (B);	
		\draw[thick] (B) to (C);	
  \draw[thick] (C) to (D);

\particles(0)(0);
\particles(6.9+4.9+1.6)(0);


\draw [->,line width=1pt]  (B2) to [bend right,in=135,out=45] (C);
 \draw [->,line width=1pt] (6.9+1.6,-0.475) to [bend right,in=135,out=45] (D);	
   \draw [->,line width=1pt] (B2) to [bend right,in=-135,out=-45] (A);
     \draw [->,line width=1pt] (Z) to [bend right,in=135,out=45] (-3.9,-0.475);
    \node (text1) at (3.5,1){$1$} ;    
	\node (text2) at (1.1,1){$q$} ;   
	\node (text3) at (-2.3-1,1){$\alpha$}; 
	\node (text4) at (6.9+1,1){$\beta$};   
    \node (text3) at (-2.3-1,-1){$\gamma$}; 
	\node (text4) at (6.9+1,-1){$\delta$};   

\node[scale=0.9] (textL) at (-5.2,0){$\rho_{L}$} ;
\node[scale=0.9] (textR) at (9.9,0){$\rho_{R}$} ;

    \node[scale=0.9] (text1) at (-2.2,-0.7){$1$} ;    
    \node[scale=0.9] (text1) at (6.8,-0.7){$N$} ;   
  	
  \draw [->,line width=1pt] (-3.9,0.475) to [bend right,in=135,out=45] (Z);
   \draw [->,line width=1pt] (D) to [bend right,in=135,out=45] (6.9+1.6,0.475);

	\end{tikzpicture}	
\caption{\label{fig:openASEP} Visualization of the open ASEP for parameters $(q,\alpha,\beta,\gamma,\delta)$  on a segment of length $N$.}
\end{figure}

Note that for all $\alpha,\beta>0$, the open ASEP on a segment of length $N$ has a unique stationary measure $\mu_N=\mu^{(q,\a,\b,\c,\d)}_N$ for all $n\in \mathbb N$, where $\a,\b,\c,\d$ are given by \eqref{eq:defining ABCD}. It is a straightforward computation to check that for $\a \b =1$, we have that $\rho_L=\rho_R$ and that the invariant measure $\mu_N$ is a homogeneous Bernoulli product measure with parameter $\rho_L$ on $\{0,1\}^{N}$. Our goal is to show that for general boundary parameters, the stationary measure $\mu_N$ can be well-approximated by simpler measures, which we now define. 
\begin{definition}\label{def:exc and mea}
Fix $N\in\mathbb N$. We define probability measures $\mathbb{P}_N^{\exc}$, $\mathbb{P}_N^{\mea}$, and $\mathbb{P}_N^{\revmea}$ on $\{0,1\}^N$ as follows. Suppose $\nu$ is the probability measure on $\{-1,0,1\}$ given by $\nu(-1)=\nu(1)=\frac{1}{4}$ and $\nu(0)=\frac{1}{2}$. Consider the $\nu$-random walk $S_i$, $i\in\llbracket0,N\rrbracket$ started at $S_0=0$.  We define the process $S^{\exc}_i$, $i\in\llbracket0,N\rrbracket$ as $S_i$ conditioned on  $\{S_i\geq0\mbox{ for }i\in\llbracket0,N\rrbracket, S_N=0\}$; and the process $S^{\mea}_i$, $i\in\llbracket0,N\rrbracket$ as $S_i$ conditioned on  $\{S_i\geq0\mbox{ for }i\in\llbracket0,N\rrbracket\}$. Suppose $\kappa_i$, $i\in\llbracket1,N\rrbracket$ are i.i.d. $\ber(\frac{1}{2})$ random variables independent from $S^{\exc}_i$ and $S^{\mea}_i$. Denote  
\[
\tau_i^{\star}={\bf1}_{\{S^{\star}_i-S^{\star}_{i-1}=1\}}+\kappa_i{\bf1}_{\{S^{\star}_i-S^{\star}_{i-1}=0\}}
\qquad\mbox{ for } \star\in\{\exc,\mea\},\,\mbox{ }i\in\llbracket1,N\rrbracket,
\] 
and denote $\tau^{\revmea}_i=1-\tau_{N+1-i}^{\mea}$ for $i\in\llbracket1,N\rrbracket$.
For each $\star\in\{\exc,\mea,\revmea\}$, we write $\mathbb{P}_N^\star$ to denote the measure on $\{0,1\}^N$ of the random vector $(\tau_i^\star)_{i\in\llbracket1,N\rrbracket}$. 
\end{definition}
In words, we obtain the measures $\mathbb{P}_N^\mea$ on $\{0,1\}^N$ by considering a lazy simple random walk conditioned to stay non-negative, and then assigning a particle for every up step, an empty site for every down step, and placing a particle according to the outcome of an independent fair coin flip for every horizontal step. The construction of the measure  $\mathbb{P}_N^\exc$ is analogous, considering a a lazy simple random walk conditioned to stay non-negative and to return to the origin after $N$ steps. 
Intuitively, the measures $\mathbb{P}_N^\star$ for $\star\in\{\exc,\mea,\revmea\}$ can be seen as the discrete analogues of the (renormalized) sum of a standard Brownian motion and  
a Brownian excursion of length $N$, a Brownian meander, and a time-reversed Brownian meander, respectively.

\subsection{Statement of the main results}\label{sec:MainResults}

Let us now state the main results on approximating the invariant measure of the open ASEP using the above measures. 
We start with the stationary measure $\mu_N^{(0,\a,\b,\c,\d)}$ of the open TASEP with parameters $\a,\b,\c,\d$ (and $q=\gamma=\delta=0)$. 
We have the following special case, which was already observed in \cite{derrida2004asymmetric}, and for which we give a short proof at the end of Section~\ref{sec:MPAs}.
\begin{proposition}\label{prop:SpecialExcursion} In the special case of the open TASEP with $\alpha=\beta=1$ and $\gamma=\delta=q=0$, we have that $\mu_N^{(0,0,0,0,0)}=\mathbb{P}_N^{\exc}$ for all $N\in \mathbb N$. 
\end{proposition}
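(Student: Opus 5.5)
The plan is to use the matrix product ansatz for the open TASEP (Derrida--Evans--Hakim--Pasquier) with an explicit tridiagonal representation of the matrices, to read off the resulting weights as weighted Motzkin paths, and to match these weights with the law in Definition~\ref{def:exc and mea}. With $\alpha=\beta=1$ and $q=\gamma=\delta=0$, the DEHP algebra reads $DE=D+E$, $\langle W|E=\langle W|$ and $D|V\rangle=|V\rangle$, and then $\mu_N(\tau)\propto\langle W|\prod_{i=1}^N(\tau_iD+(1-\tau_i)E)|V\rangle$. I would use the standard representation on $\ell^2(\mathbb{Z}_{\geq0})$ with $\langle W|=e_0^\top$, $|V\rangle=e_0$, and
\[
D=\tfrac12(I+A^{+})+\ldots,
\]
where $A^{\pm}$ are the raising and lowering operators. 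Concretely, I would take
\[
D=I+A^{+},\qquad E=I+A^{-},
\]
with $A^{+}e_k=e_{k+1}$ and $A^{-}e_k=e_{k-1}$ (and $A^{-}e_0=0$). One must orient these so that $e_0^\top E=e_0^\top$ and $De_0=e_0$; if the orientation is reversed, swap the roles.

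The first step is to verify $DE=D+E$. We have $DE=(I+A^+)(I+A^-)=I+A^++A^-+A^+A^-$. Since $A^+A^-=I-P_0$, where $P_0$ is the projection onto $e_0$, this gives $DE=D+E-P_0$, so the relation fails on the $e_0$ component. I would therefore instead use the representation in which the lowering operator acts on the left: put $D=I+L$ and $E=I+R$, with matrix entries $D_{k,k}=D_{k,k-1}=1$ and $E_{k,k}=E_{k,k+1}=1$. Then $DE=I+L+R+LR$ with $LR=I$ on the whole space (going up then down is always allowed), so $DE=D+E$ holds exactly. The boundary conditions also hold: $e_0^\top E=e_0^\top+e_1^\top$ fails, so I would choose $\langle W|=e_0^\top$ together with $e_0^\top D=e_0^\top$ and $Ee_0=e_0$, which is consistent with the relations $\langle W|E=\langle W|$ and $D|V\rangle=|V\rangle$ after exchanging the roles of $D$ and $E$ in the product. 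Pinning down this orientation so that particles correspond to up steps is the main bookkeeping obstacle. Alternatively, one can simply check the stationarity equations directly.

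The second step is to expand the product. Writing $D=I+(\text{step}+1)$ and $E=I+(\text{step}-1)$, the quantity $\langle W|\prod(\tau_iD+(1-\tau_i)E)|V\rangle$ becomes a sum over lattice paths from $0$ to $0$ that stay nonnegative. A site with $\tau_i=1$ contributes either an up step or a horizontal step, and a site with $\tau_i=0$ contributes either a down step or a horizontal step, each with weight $1$. So $\mu_N(\tau)$ is proportional to the number of Motzkin paths $S$ compatible with $\tau$, meaning that $\tau_i=1$ on up steps, $\tau_i=0$ on down steps, and $\tau_i$ is arbitrary on flat steps.

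The third step is the comparison with $\mathbb{P}_N^{\exc}$. There, each path $S$ has probability proportional to $4^{-N}2^{\#\text{flat}}$, and given $S$, the configuration $\tau$ has conditional probability $2^{-\#\text{flat}}$. Hence $\mathbb{P}_N^{\exc}(\tau)\propto\#\{S \text{ compatible with } \tau\}$. This coincides with the weight from the second step, and normalizing both sides gives $\mu_N=\mathbb{P}_N^{\exc}$.
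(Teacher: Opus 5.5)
Your route is the paper's own. The paper applies Theorem~\ref{thm:TASEP rep} at $\a=\b=\c=\d=0$, where $\upchi_x(0,0)=\mathbf 1_{\{x=0\}}$ gives $\langle W|=e_0^\top$ and $|V\rangle=e_0$. Hence $\mu_N$ is the $\Phi$-pushforward of the uniform measure on $\mathrm{BCMot}_N^{0,0}$, and Lemma~\ref{lem:exc and mea projection} then identifies this with $\mathbb P_N^{\exc}$. Your Steps 2 and 3 are exactly this argument and are correct. Counting uncoloured Motzkin paths compatible with $\tau$ is the same as counting bicoloured $\bg$ with $\Phi(\bg)=\tau$, and your computation of $\mathbb P_N^{\exc}(\tau)$ is right.

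Step 1, however, does not hold up as written.
\begin{itemize}
\item Your ``second'' choice $D_{k,k}=D_{k,k-1}=1$, $E_{k,k}=E_{k,k+1}=1$ is literally the same pair as $D=I+A^+$, $E=I+A^-$, which you had just ruled out.
\item The claim $LR=I$ is false. In fact $(LR)_{h,k}=\mathbf 1_{\{h\geq 1,\,k=h\}}$, so again $DE=D+E-P_0$.
\item What this pair actually satisfies is $RL=I$, i.e.\ $ED=D+E$, together with $e_0^\top D=e_0^\top$ and $Ee_0=e_0$. So it is a DEHP representation only after renaming $D\leftrightarrow E$ in the bulk relation \emph{and} in both boundary relations.
\item Swapping roles only in the boundary conditions, while keeping the incorrect bulk identity $DE=D+E$, is not a valid verification.
\end{itemize}

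The consistent choice is the one your Step 2 tacitly uses. Read $\langle W|M_1\cdots M_N|V\rangle$ row-to-column and take $D=I+\mathbf d$ with $\mathbf d_{k,k+1}=1$ (up step) and $E=I+\mathbf e$ with $\mathbf e_{k,k-1}=1$ (down step). Then $\mathbf d\mathbf e=I$ exactly, so $DE=D+E$. This is the ``up-then-down never leaves $\mathbb Z_{\geq0}$'' intuition you wrote down, but it requires this orientation. The boundary relations $e_0^\top E=e_0^\top$ and $De_0=e_0$ hold because row $0$ of $E$ and column $0$ of $D$ equal $e_0^\top$ and $e_0$ respectively. These are exactly the matrices in \eqref{eq:DandE}.

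With this correction your proof is complete, once you also note two routine points needed for Lemma~\ref{lem:MPA}. All products are finite, since the boundary vectors are finitely supported and the matrices are banded. The denominator equals $Z_N(0,0)\geq 1$.
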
 The exact representation in Proposition~\ref{prop:SpecialExcursion} motivates the following approximation result for the entire maximal current phase of the open TASEP. As a measure of distance, we consider for two probability measures $\nu,\nu'$ on a common probability space $\Omega$ the total variation distance
\begin{equation}\label{def:TVdistance}
    \TV{\nu - \nu'} := \frac{1}{2}\sum_{\omega \in \Omega} | \nu(\omega) - \nu'(\omega)| .  
\end{equation}

\begin{figure}
    \centering
    \begin{tikzpicture}[scale=0.8]
 \draw[scale = 1,domain=6.7:11,smooth,variable=\x,dotted,thick] plot ({\x},{1/((\x-7)*1/3+2/3)*3+5});
 \draw[->] (5,5) to (5,10.2);
 \draw[->] (5.,5) to (11,5);
   \draw[dashed] (5,8) to (8,8);
  \draw[dashed] (8,8) to (8,5);
   \draw[dashed] (8,8) to (10.2,10.2);
   \node [left] at (5,8) {$1$};
   \node[below] at (8,5) {$1$};
     \node [below] at (11,5) {$\a$};
   \node [left] at (5,10) {$\b$};
 \draw[dashed] (8,4.9) to (8,5.1);
  \draw[dashed] (4.9,8) to (5.1,8);
 \node [below] at (5,5) {$(0,0)$};
    \node [above] at (6.5,8.5) {LD};\node [above] at (8,8.7) {LD};
    \node at (10.6,10) {\small coexistence line }; 
    \node [below] at (10,6) {HD};  \node [below] at (10,8) {HD}; 
 \node [below] at (6.5,6) {MC}; 
 \node at (6.4,10) {\small{fan}};
 \node at (7.6,10) {\small{shock}};
 \end{tikzpicture} 
        \caption{    \label{fig:Phase} Visualization of the different regimes: Low density (LD), high density (HD), and maximal current (MC) phase. Moreover, the curve $\a \b =1$ separates the fan and shock region, while the case $\a=\b>1$ is called the coexistence line. }
\end{figure}

\begin{theorem}\label{thm:main open TASEP}
    Assume $\a,\b\geq0$ and $\mathsf{c},\mathsf{d}\in(-1,0]$. There exists a constant $C<\infty$ depending only on $\a,\b,\c$ and $\d$, such that the following hold.
    \begin{enumerate}
        \item [(1)] In the maximal current phase $\a,\b\in[0,1)$, for every $N,m,k\in\mathbb N$ satisfying 
        $1\le m\le N-k+1\le N$,  
\[
\TV{
\mu_N^{(0,\a,\b,\c,\d)}
\left(
(\tau_i)_{i\in\llbracket m,N-k+1\rrbracket}\in\cdot
\right)
-
\mathbb{P}_N^{\mathsf{exc}}
\left(
(\tau_i)_{i\in\llbracket m,N-k+1\rrbracket}\in\cdot
\right)}
\le
C/\min(m,k) .
\]
In two special cases in the maximal current phase, slightly stronger approximation results hold.  
Specifically, in the case when $\a\in[0,1)$ and $\b=\d=0$, for every $N\in\mathbb N$ and $m\in\llbracket1,N\rrbracket$, 
\[
\TV{
\mu_N^{(0,\a,0,\c,0)}
\left(
(\tau_i)_{i\in\llbracket m,N \rrbracket}\in\cdot
\right)
-
\mathbb{P}_N^{\mathsf{exc}}
\left(
(\tau_i)_{i\in\llbracket m,N \rrbracket}\in\cdot
\right)}
\le
C/m ;
\]
 and in the case when  $\a=\c=0$ and $\b\in[0,1)$, for every $N\in\mathbb N$ and $k\in\llbracket1,N\rrbracket$, 
\[
\TV{
\mu_N^{(0,0,\b,0,\d)}
\left(
(\tau_i)_{i\in\llbracket 1,N-k+1\rrbracket}\in\cdot
\right)
-
\mathbb{P}_N^{\mathsf{exc}}
\left(
(\tau_i)_{i\in\llbracket 1,N-k+1\rrbracket}\in\cdot
\right)}
\le
C/k .
\]
        \item [(2)] On the boundary of the maximal current phase $\a\in[0,1)$ and $\b=1$, for every $N\in\mathbb N$ and $m\in\llbracket1,N\rrbracket$, 
        \[
\TV{
\mu_N^{(0,\a,1,\c,\d)}
\left(
(\tau_i)_{i\in\llbracket m,N \rrbracket}\in\cdot
\right)
-
\mathbb P_N^{\mathsf{mea}}
\left(
(\tau_i)_{i\in\llbracket m,N \rrbracket}\in\cdot
\right)}
\le
C/m .
\]
        \item [(3)] On the boundary of the maximal current phase $\a=1$ and $\b\in[0,1)$, for every $N\in\mathbb N$ and $k\in\llbracket1,N\rrbracket$, 
        \[
\TV{
\mu_N^{(0,1,\b,\c,\d)}
\left(
(\tau_i)_{i\in\llbracket 1,N-k+1 \rrbracket}\in\cdot
\right)
-
\mathbb P_N^{\mathsf{revmea}}
\left(
(\tau_i)_{i\in\llbracket 1,N-k+1 \rrbracket}\in\cdot
\right)}
\le
C/k .
\]
    \end{enumerate} 
\end{theorem}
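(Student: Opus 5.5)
We plan to work with the matrix product ansatz for the open TASEP and its representation through weighted bicolored Motzkin paths. With $q=\gamma=\delta=0$ one has $\c=\d=0$; the general $\c,\d\in(-1,0]$ enter only through the Askey--Wilson-type boundary vectors. We use the standard tridiagonal representation with $D=\mathbf{1}+\mathbf{D}'$, $E=\mathbf{1}+\mathbf{E}'$, where $\mathbf{D}',\mathbf{E}'$ are shifts on $\ell^2(\mathbb{Z}_{\ge0})$. The boundary vectors $\langle W|$ and $|V\rangle$ then have geometric-type coordinates $\langle W|n\rangle\propto \a^n$ and $\langle n|V\rangle\propto\b^n$, up to the $\c,\d$-dependent corrections.

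In this representation $\mu_N(\tau)$ is proportional to
\[
\sum_{\text{paths } h} \langle W|h_0\rangle\,\Big(\prod_i \text{weight}(\tau_i,h_{i-1},h_i)\Big)\,\langle h_N|V\rangle.
\]
Here $h$ is a lazy path with steps $\pm1,0$. Up steps force $\tau_i=1$ and down steps force $\tau_i=0$, while flat steps carry weight $1$ for either value of $\tau_i$, which is the fair coin. When the path starts and ends at height $0$ with trivial boundary weights, this is exactly $\mathbb{P}^{\exc}_N$; the same computation gives Proposition~\ref{prop:SpecialExcursion}. So $\mu_N$ is the $\tau$-marginal of a joint law on (path, coins) in which the lazy walk is tilted by $\a^{h_0}\b^{h_N}$, up to bounded corrections from $\c,\d$, and the starting and ending heights are free.

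For part (1), with $\a,\b<1$, the boundary heights $h_0$ and $h_N$ are tight, with exponential tails. Conditioned on $(h_0,h_N)=(x,y)$, the path is a lazy walk bridge from $x$ to $y$ conditioned to stay nonnegative. The proof then reduces to a total variation comparison. The restriction to $\llbracket m,N-k+1\rrbracket$ of a nonnegative bridge from $x$ to $y$ should be compared with the corresponding restriction of the excursion from $0$ to $0$. We will couple them by matching the heights $(h_{m-1},h_{N-k+1})$. The laws of the interior segment given these two heights coincide, since both are nonnegative bridges, so it suffices to bound the total variation distance between the laws of $(h_{m-1},h_{N-k+1})$. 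We compute this via the ballot-type estimate: the probability that a lazy walk from $x$ to $z$ in $m$ steps stays nonnegative is asymptotically $c(x+1)(z+1)m^{-3/2}e^{-z^2/(cm)}$ uniformly, with the corresponding local limit. For each fixed $x$, the ratio of the induced density of $h_{m-1}$ to the density from $x=0$ is $1+O((1+x)/\sqrt m)$ in the bulk. This first-order estimate gives only $\sqrt{m}$ decay, so we need the sharper cancellation: the leading correction is of order $x/\sqrt m$ times an odd-type function, and we expect it to cancel so that the true error is $O(x^2/m)$. That yields $C/\min(m,k)$ after averaging over the exponential tails of $x$ and $y$. The special cases $\b=\d=0$ (respectively $\a=\c=0$) pin $h_N=0$ (respectively $h_0=0$), so only one end contributes.

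For part (2), $\b=1$ removes the tilt at the right end, so $h_N$ is free. Here the process is compared with the meander, using the same two-height coupling but only at the left cut $m-1$; the right end is matched exactly with no error. Part (3) follows from part (2) by spatial reversal and particle–hole exchange, which maps the TASEP with parameters $(\a,\b,\c,\d)$ to the one with $(\b,\a,\d,\c)$, together with the definition of $\mathbb{P}^{\revmea}_N$.

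The main obstacle will be obtaining the sharp $1/m$ rate rather than $1/\sqrt m$. This requires precise (second-order) asymptotics for the number of nonnegative Motzkin paths between given heights, uniform in the endpoints. Alternatively, one can use an exact formula, the reflection principle for lazy walks written via trinomial/Bessel-type expressions, and control the total variation of $h_{m-1}$ through an explicit ratio. We would first try the exact reflection formula $p_m(x,z)-p_m(x,-z-2)$ and Taylor-expand in $x/\sqrt m$, verifying that the linear term integrates to zero against the limiting Rayleigh-type density.
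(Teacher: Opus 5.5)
Your architecture matches the paper's proof. You decompose $\mu_N$ over the boundary heights $(x,y)$ into pushforwards $\Uptheta_N^{x,y}$ of uniform measures on $\mathrm{BCMot}_N^{x,y}$. You compare each component with $\Uptheta_N^{0,0}=\mathbb P_N^{\mathsf{exc}}$ by conditioning on the cut heights $\gamma_{m-1},\gamma_{N-k+1}$, and you sum against exponentially small boundary weights. Part (3) via particle--hole duality and the pinned special cases are exactly as in the paper. However, two steps are wrong as stated.

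\textbf{The boundary weights are signed.} They are not ``$\mathsf a^{n}$ up to bounded corrections.'' With $\mathbf D,\mathbf E$ as in \eqref{eq:DandE}, the condition $\langle W|(\alpha\mathbf E-\gamma\mathbf D)=\langle W|$ reads $\alpha(W_n+W_{n+1})-\gamma(W_n+W_{n-1})=W_n$ with $W_{-1}=0$. Its characteristic roots are $\mathsf a,\mathsf c$, so $W_n\propto\upchi_n(\mathsf a,\mathsf c)=\sum_{\ell=0}^n\mathsf a^{n-\ell}\mathsf c^{\ell}$, and likewise $V_n\propto\upchi_n(\mathsf b,\mathsf d)$. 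These are signed whenever $\mathsf c<0$. This already occurs for pure TASEP inside the maximal current phase: $\gamma=0$ and $\alpha>1$ give $\mathsf a=0$, $\mathsf c=1/\alpha-1$, and $W_n=\mathsf c^n$. The positive path measure of Theorem~\ref{thm:Motzkin representation} at $q=0$ does not rescue this: it carries weights $1+\mathsf c$, $1+\mathsf d$, $1-\mathsf c\mathsf d$ at every visit to height $0$, so given the endpoints the path is not a uniform bridge. Hence there is no probability measure on paths to condition on. You have the signed mixture $\mu_N=\mathcal Z_N^{-1}\sum_{x,y}\upchi_x(\mathsf a,\mathsf c)\upchi_y(\mathsf b,\mathsf d)Z_N(x,y)\Uptheta_N^{x,y}$, and the error is at most $\mathcal Z_N^{-1}\sum_{x,y}|\upchi_x\upchi_y|Z_N(x,y)\TV{\Uptheta_{N,I}^{x,y}-\Uptheta_{N,I}^{0,0}}$. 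The argument only closes with a quantitative lower bound $\mathcal Z_N\ge c\,4^N(N+1)^{-3/2}$ (or $c\,4^N(N+1)^{-1/2}$ if $\mathsf b=1$) on a normalizer with possible cancellations. The paper gets this from $\sum_x\upchi_x(\mathsf a,\mathsf c)\sin((x+1)\theta)=(1-\mathsf a\mathsf c)\sin\theta/[(1-2\mathsf a\cos\theta+\mathsf a^2)(1-2\mathsf c\cos\theta+\mathsf c^2)]\ge0$. Combined with the sine formula of Lemma~\ref{lem:count biMot}, this gives the integral in Theorem~\ref{thm:TASEP rep} with nonnegative integrand, hence $\mathcal Z_N\ge c\,Z_N(0,0)$. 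For the same reason, in part (2) the right end is not matched ``with no error'' when $\mathsf d\ne0$. There $\upchi_y(1,\mathsf d)=(1-\mathsf d^{y+1})/(1-\mathsf d)$, and the signed remainder $-\mathsf d^{y+1}/(1-\mathsf d)$ must be bounded separately; it costs $O(1/N)$.

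\textbf{The proposed cancellation would not give the $1/m$ rate.} Suppose the density ratio of $\gamma_{m-1}$ were $1+\frac{x}{\sqrt m}g(z/\sqrt m)+O(x^2/m)$ with $g\not\equiv0$. Then $\int g=0$ against the limiting density only fixes the normalization, and the total variation stays of order $\frac{x}{\sqrt m}\int|g|$. What is true, and what you need, is that the linear term vanishes identically. Indeed, $Z_m(x,z)=\binom{2m}{m+x-z}-\binom{2m}{m+x+z+2}$ is antisymmetric in $x+1$ (reflection at $-1$). Equivalently, $Z_m(x,z)=\sum_{j=0}^{x}f_m(z-x+2j)$ with $f_m(r)=\binom{2m}{m+r}-\binom{2m}{m+r+2}$ and shifts symmetric about $0$. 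So $Z_m(x,z)/(x+1)-Z_m(0,z)$ is a sum of second differences of $f_m$. This is Lemma~\ref{lem:comparison kernels}: $|Z_m(x,z)/(x+1)-Z_m(0,z)|\le 4^{m+3}(x+1)^2(x+z+2)(m+1)^{-5/2}e^{-z^2/(12(m+1))}$ for $(x+2)^2\le m+1$. Summing this pointwise bound against $Z_{N-m}(z,y)$ and dividing by $Z_N(0,y)$ gives $\TV{\Xi^{x,y}_{N,m}-\Xi^{0,y}_{N,m}}\le 2^{26}(x+1)^4(y+1)^2/(m+1)$ (Proposition~\ref{prop:marginal law comparison}), which is summable against $|\upchi_x\upchi_y|$. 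With these two repairs your plan becomes the paper's proof.
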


As the next theorem shows, the approximation of the stationary measure in Theorem~\ref{thm:main open ASEP} is sharp.

\begin{theorem}\label{thm:converse TASEP} 
For each $x,y\in\mathbb R\setminus\{1\}$, we denote $\eta(x,y)=x(1-x)^{-2}+y(1-y)^{-2}$. Assume $\a,\b\in[0,1)$ and $\c,\d\in(-1,0]$ satisfying  $\eta(\a,\c)\eta(\b,\d)\neq0$. Then there exists a constant $C>0$ depending only on $\a,\b,\c,\d$, such that for every $m,k\in\mathbb N$ there exists an integer $N_0\geq m+k-1$ such that for each $N\geq N_0$,
            \[
\TV{
\mu_{N}^{(0,\a,\b,\c,\d)}
\left(
(\tau_i)_{i\in\llbracket m,N-k+1\rrbracket}\in\cdot
\right)
-
\mathbb{P}_{N}^{\mathsf{exc}}
\left(
(\tau_i)_{i\in\llbracket m,N-k+1\rrbracket}\in\cdot
\right)}
\ge
C/\min(m,k) .
\] 
\end{theorem}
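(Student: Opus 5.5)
We prove the lower bound by exhibiting a single test statistic that separates the two measures. Since the stationary measure differs from the excursion measure only through the boundary weights, the natural choice is a linear statistic of the occupation variables near the left or right end of the observed window. Concretely, use the matrix product ansatz and its representation through weighted bicolored Motzkin paths (Section~\ref{sec:MPAs}). In the totally asymmetric case, $\mu_N^{(0,\a,\b,\c,\d)}$ is the law of the occupation variables read off from a Motzkin path whose law is the excursion path law $\mathbb P_N^{\exc}$ (Proposition~\ref{prop:SpecialExcursion}), tilted by boundary factors. These factors depend on the path heights at the two endpoints through generating functions in $(\a,\c)$ and $(\b,\d)$. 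After unfolding, the path under $\mu_N$ is an excursion whose height $h_0$ at the left end and $h_N$ at the right end are random. They have geometric-type tails, with laws determined by $(\a,\c)$ and $(\b,\d)$ respectively. The quantity $\eta(\a,\c)=\a(1-\a)^{-2}+\c(1-\c)^{-2}$ should be exactly the nonzero mean shift, or the first-order coefficient, of the boundary height distribution relative to the $\a=\c=0$ case.

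First, assume without loss of generality $m\le k$; the other case follows by spatial reversal and particle–hole symmetry, which swaps $(\a,\c)$ with $(\b,\d)$. As test function take $f(\tau)=\tau_m$, or better an average over sites near $m$. Compute $\mu_N(\tau_m=1)-\mathbb P_N^{\exc}(\tau_m=1)$ asymptotically as $N\to\infty$ with $m,k$ fixed. Under the excursion measure, $\mathbb P(\tau_m=1)-\tfrac12$ equals half the expected increment of $S$ at step $m$. The increment at time $j$ of a random walk excursion (or meander) started from height $h$ has mean of order $1/(h+\sqrt{j})^{\cdot}$; more precisely, the Doob $h$-transform by the harmonic function $x+1$ gives $\mathbb E[\text{step}\mid S_{j-1}=x]\approx \frac{1}{2(x+1)}$. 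One then shows that the density profile $\mathbb E[\tau_m]-\tfrac12$ under $\mu_N$ differs from that under $\mathbb P_N^{\exc}$ by $c\,\eta(\a,\c)\,m^{-3/2}+o(m^{-3/2})$ in the $N\to\infty$ limit. This uses the explicit local limit theorem for walks conditioned positive, started at height $h$ rather than $0$.

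A one-site test only gives an order $m^{-3/2}$ bound, which is too weak. So instead use the block statistic $F=\sum_{i=m}^{2m}\tau_i$, or equivalently the path height $S_{2m}-S_{m}$. Its mean difference is of order $\eta(\a,\c)\,m^{-1/2}$, coming from the difference in expected height $\mathbb E[S_{2m}]$, of order $\sqrt m$, between walks started at height $h_0$ and at $0$; the difference is $\Theta(1)\cdot m^{-1/2}\cdot\mathbb E h_0$-type corrections. Its standard deviation is of order $\sqrt m$. To get $1/m$ in total variation, the better route is the two-point comparison of the conditional laws of $S_{m-1}$ given the start. Under an excursion, $\mathbb P(S_{m-1}=x)\propto x\,e^{-x^2/m}m^{-3/2}$. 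Starting from height $h$ multiplies this by $(1+h\cdot(\text{something})/x+\dots)$, which changes the law of $S_{m-1}$ in total variation by order $\mathbb E[h_0]/\sqrt m$. That is also too large, so cancellation must occur.

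Indeed, the identity $\mu_N^{(0,0,0,0,0)}=\mathbb P^{\exc}_N$ says that the heights do not directly encode the boundary. Instead, the correct coupling works by removing the first $m$ sites and comparing the induced law on the remaining path, which is a ratio of weighted Motzkin partition functions. I would therefore compute the Radon–Nikodym derivative of the restriction of $\mu_N$ to $\llbracket m,N-k+1\rrbracket$ against that of $\mathbb P^{\exc}_N$. It should take the form $1+\eta(\a,\c)\,g_m(S_{m-1})+\eta(\b,\d)\,g_k(\cdot)+O(\min(m,k)^{-2})$, with $g_m(x)$ of size $1/m$ and nondegenerate. For example, $g_m(x)\approx (x^2/m-c)/m$ arises from a second-order expansion of $\sum_h w(h)\,\mathbb P_h(\cdot)/\mathbb P_0(\cdot)$, after the first-order term cancels by normalization. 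This expansion is where $\eta$ appears, since $\eta(x,y)$ is the second derivative-type moment of the boundary generating function $(1-x)^{-1}(1-y)^{-1}$-structure. Then $\TV{\cdot}=\tfrac12\mathbb E^{\exc}|\eta g_m+\dots|\ge C/m$, because $g_m(S_{m-1})\cdot m$ converges in law to a nondegenerate function of a Rayleigh-type variable. Finally, choose $N_0$ large so that the right boundary correction is negligible when $m<k$. When $m=k$, the two corrections need not cancel, because they depend on asymptotically independent heights. This is the reason the hypothesis is the product $\eta(\a,\c)\eta(\b,\d)\ne0$.

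The main obstacle is deriving this second-order expansion of the density ratio with a uniform error $o(1/m)$ and identifying its coefficient as exactly $\eta$. I would attempt it via explicit ballot-type formulas for Motzkin path counts from height $h$ to height $x$ in $m$ steps, expanded to order $h^2/m$.
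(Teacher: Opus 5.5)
\textbf{Gap: your final step bounds the wrong total variation distance.} The theorem is about the law of the occupation variables on $I=\llbracket m,N-k+1\rrbracket$, and these do not determine $S_{m-1}$. The prefix is unobserved, and horizontal steps are hidden behind the coin flips.

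On path space the density is indeed a function of $(\gamma_{m-1},\gamma_{N-k+1})$, roughly of the form you guess. The density of the occupation marginals, however, is its conditional expectation given $(\tau_i)_{i\in I}$. Writing $\mu_{N,I}$ and $\mathbb P^{\exc}_{N,I}$ for the two marginals on $\{0,1\}^I$,
\[
\TV{\mu_{N,I}-\mathbb P^{\exc}_{N,I}}=\tfrac12\,\mathbb E^{\exc}\Bigl|\mathbb E^{\exc}\bigl[\eta(\a,\c)g_m(S_{m-1})+\cdots\,\big|\,(\tau_i)_{i\in I}\bigr]\Bigr|
\le\tfrac12\,\mathbb E^{\exc}\bigl|\eta(\a,\c)g_m(S_{m-1})+\cdots\bigr|.
\]
So nondegeneracy of $m\,g_m(S_{m-1})$ gives only an upper bound; this is exactly the mechanism behind the upper bound in Theorem~\ref{thm:main open TASEP}. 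What is missing is a proof that the observed window still carries an order-one signal about $S_{m-1}/\sqrt m$, so that the projected perturbation stays of order $1/m$.

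\textbf{How the paper closes this.} The block statistic you set aside is what does it, but ``mean shift $m^{-1/2}$, standard deviation $\sqrt m$'' is not enough on its own. Cauchy--Schwarz with variances of order $m$ under each measure gives only $\TV{\cdot}\gtrsim m^{-2}$. You need second-moment control under the \emph{variation} measure of the difference.

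The paper first passes to the half-line limit with a fixed window (Lemma~\ref{lem:limit to half space}). It then writes the difference as a signed mixture $\Delta_{n,L}=\sum_x(\upnu^{\a,\c}_{n-1}(x)-\upnu^{0,0}_{n-1}(x))\mathsf P_{L,x}$ (Lemma~\ref{lem:mathsf P}). Two facts make this mixture useful:
\begin{itemize}
\item $\sum_x|\upnu^{\a,\c}_{n-1}(x)-\upnu^{0,0}_{n-1}(x)|=O(1/n)$ (Lemma~\ref{lem:diff of nu});
\item $\int(\sum_i\upepsilon_i-L/2)^2\,d\mathsf P_{L,x}\le L$ uniformly in $x$.
\end{itemize}
Together they give $\int X_n^2\,d|\Delta_{n,n}|=O(1)$ for $X_n=\sum_{i=n}^{2n-1}\tau_i-n/2$. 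Meanwhile $\int X_n\,d\Delta_{n,n}=d_n+\dots+d_{2n-1}$ has exact order $|\eta(\a,\c)|\,n^{-1/2}$, since $d_j\sim-\eta(\a,\c)j^{-3/2}/\sqrt\pi$ (Lemma~\ref{lem:asym of dn}). Testing against the bounded function $\sin(\lambda X_n/\sqrt n)$ with $\lambda$ small then yields $\TV{\Delta_{n,n}}\ge c/n$ (Proposition~\ref{prop:comparison HL measures}).

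\textbf{Two smaller points.} First, taking $N$ large does not make the right-boundary correction negligible on the full window; it stays of order $1/k$ for every $N$. The paper removes it by restricting to a sub-window $\llbracket m,\ell\rrbracket$ with $\ell$ fixed, using that total variation can only decrease under restriction. Second, the hypothesis $\eta(\a,\c)\eta(\b,\d)\neq0$ is needed only so that whichever of $m,k$ is smaller can be handled at the corresponding boundary. It is not there to rule out a cancellation at $m=k$.
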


Let us stress that $\eta(\a,\c)\neq0$ is not a purely technical assumption, as it can be seen from Proposition~\ref{prop:SpecialExcursion} where $a=c=0$ and so the measure $\mathbb{P}_N^{\exc}$ exactly describes the stationary measure of the open TASEP. However, we leave a lower bound in the case $\eta(\a,\c)=0$ for $\a,\c \neq 0$ to future investigations. 

Next, we provide a similar quantitative approximation result on the stationary measure of the open ASEP in the maximal current phase for general boundary parameters.

\begin{theorem}\label{thm:main open ASEP}
    Assume $q\in[0,1)$, $\a,\b\geq0$ and $\mathsf{c},\mathsf{d}\in(-1,0]$. For $N\in\mathbb N$, consider the stationary measure $\mu_N^{(q,\a,\b,\c,\d)}$ of open ASEP. There exists $C<\infty$ depending only on $\a,\b,\c,\d$ and $q$, such that the following hold.
    \begin{enumerate}
        \item [(1)] In the maximal current phase $\a,\b\in[0,1)$, for every $N,m,k\in\mathbb N$ satisfying 
        $1\le m\le N-k+1\le N$,  
\[
\TV{
\mu_N^{(q,\a,\b,\c,\d)}
\left(
(\tau_i)_{i\in\llbracket m,N-k+1\rrbracket}\in\cdot
\right)
-
\mathbb{P}_N^{\mathsf{exc}}
\left(
(\tau_i)_{i\in\llbracket m,N-k+1\rrbracket}\in\cdot
\right)}
\le
C/\min(m,k)^{1/2} .
\]
        \item [(2)] On the boundary of the maximal current phase $\a\in[0,1)$ and $\b=1$, for every $N\in\mathbb N$ and $m\in\llbracket1,N\rrbracket$, 
        \[
\TV{
\mu_N^{(q,\a,1,\c,\d)}
\left(
(\tau_i)_{i\in\llbracket m,N \rrbracket}\in\cdot
\right)
-
\mathbb P_N^{\mathsf{mea}}
\left(
(\tau_i)_{i\in\llbracket m,N \rrbracket}\in\cdot
\right)}
\le
C/m^{1/2} .
\]
        \item [(3)] On the boundary of the maximal current phase $\a=1$ and $\b\in[0,1)$, for every $N\in\mathbb N$ and $k\in\llbracket1,N\rrbracket$, 
        \[
\TV{
\mu_N^{(q,1,\b,\c,\d)}
\left(
(\tau_i)_{i\in\llbracket 1,N-k+1 \rrbracket}\in\cdot
\right)
-
\mathbb P_N^{\mathsf{revmea}}
\left(
(\tau_i)_{i\in\llbracket 1,N-k+1 \rrbracket}\in\cdot
\right)}
\le
C/k^{1/2} .
\]
    \end{enumerate} 
\end{theorem}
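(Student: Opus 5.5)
We reduce Theorem~\ref{thm:main open ASEP} to the totally asymmetric result, Theorem~\ref{thm:main open TASEP}, by comparing weighted path measures. The first step is the matrix product ansatz in the Uchiyama--Sasamoto--Wadati representation with Askey--Wilson type tridiagonal matrices $\mathbf D,\mathbf E$. We write $\mu_N^{(q,\a,\b,\c,\d)}$ as the marginal of a measure on weighted bicolored Motzkin paths of length $N$. Each path comes with an auxiliary coin per horizontal step, and its weight factorizes as
\[
\langle W|\,\text{path}\,|V\rangle\cdot\prod(\text{step weights depending on }q\text{ and the height}).
\]
For $q=\gamma=\delta=0$ this reduces to the TASEP path model underlying Theorem~\ref{thm:main open TASEP}.

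We then express the $q$-deformation as a change of measure. Its Radon--Nikodym density with respect to a reference path law is a function of the path heights. The key observation is that the step weights are $q$-deformations whose height dependence decays geometrically in the height, approximately like $1+O(q^{h})$. Moreover, the boundary vectors $\langle W|$ and $|V\rangle$ can be expanded in the Askey--Wilson orthogonal polynomial basis. This Fourier-type decomposition writes the weight of a configuration as
\[
\sum_{j,\ell\ge0}\hat w_j\,\hat v_\ell\,Z_N^{(j,\ell)}(\tau),
\]
where $Z^{(j,\ell)}$ is a TASEP-like path partition function with endpoints at heights $j,\ell$. The coefficients decay geometrically in $j,\ell$ because $\a,\b<1$ in the maximal current phase. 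In the TASEP case the same kind of expansion exists, which lets us compare the two term by term.

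The third step handles the conditioned paths. Under the reference (excursion-type) law, the path at positions $\llbracket m,N-k+1\rrbracket$ is at height of order $\min(m,k)^{1/2}$ with high probability. There the $q$-dependent corrections, of order $q^{h}$, are negligible. Near the two ends, the corrections only change the law of the path over its first $m$ and last $k$ steps. We couple the ASEP path measure with the TASEP path measure, or directly with $\mathbb P_N^{\exc}$, by gluing at the first time after $m/2$ when the path reaches height about $\sqrt m$. Local limit estimates for lazy walks then show that the law of the middle segment, given its endpoint heights $h_1,h_2$, is exactly a conditioned walk bridge. The endpoint height distributions differ from those of $\mathbb P_N^{\exc}$ by a relative error $O(1/h)$, which averages to $O(\min(m,k)^{-1/2})$. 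This explains the weaker exponent $1/2$ compared with Theorem~\ref{thm:main open TASEP}, where exact Catalan-type combinatorics gave $1/\min(m,k)$.

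For parts (2) and (3), with $\b=1$ (respectively $\a=1$), the right-boundary coefficients $\hat v_\ell$ no longer decay. The endpoint at $N$ is then free, which yields the meander $\mathbb P_N^{\mea}$. The same argument applies with the bridge replaced by a walk conditioned to stay nonnegative. Part (3) follows from part (2) by particle-hole symmetry and spatial reversal, which exchanges $(\a,\c)\leftrightarrow(\b,\d)$.

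The main obstacle is controlling the signed Fourier expansion uniformly in $N$. The coefficients $\hat w_j$ and the $q$-step weights need not be positive, so we must show that the total mass of cancellations is dominated by the main term. We would first try to prove an a priori bound: the partition function with endpoint heights $j,\ell$ is at most $C\rho^{j+\ell}$ times the $j=\ell=0$ term, for some $\rho$ with $\max(\a,\b)<\rho<1$. This would follow from the ballot-type formula for lazy walk paths with shifted endpoints. The error is then absorbed into the $C/\min(m,k)^{1/2}$ bound.
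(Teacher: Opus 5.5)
The architecture of your outline is right: Motzkin paths with bulk weights satisfying $1-\upomega_i\le q^{\gamma_i}$, heights of order $\sqrt m$ on $\llbracket m,N-k+1\rrbracket$, and a boundary effect of relative size $1/h$ averaged over $h\asymp\sqrt m$. However, the one step that carries the theorem is asserted rather than proved.

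Condition on $\gamma_{m-1}=x$ (your $h$) and on everything to its right. Relative to the endpoint-only measure with weights $\a^{\gamma_0}\b^{\gamma_N}\uptheta(\gamma_N)$, the ASEP path measure then carries the factor $F_{m-1}(x)/G_{m-1}(x)$. Here $F_{m-1}(x)=\sum_z\a^zR_{m-1}^{(q,\c,\d)}(z,x)$ is the $\upomega$-weighted path count and $G_{m-1}(x)=\sum_z\a^zZ_{m-1}(z,x)$ is the unweighted one.

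Your claim that the height at the cut has a law within relative error $O(1/h)$ of the reference law is therefore exactly the estimate
\[
\left|\frac{F_{m-1}(x)}{G_{m-1}(x)}-\Upgamma\right|\le C\left(\frac{1}{x+1}+\frac{x+1}{m}\right)
\]
for some constant $\Upgamma\in(0,1]$, together with its mirror image at the right end. Local limit estimates for the unweighted lazy walk do not by themselves give this. The weights $\upomega_i$ are an $O(1)$ perturbation precisely near the boundary, where the walk spends the time that determines the ratio. What you need is the asymptotics of the weighted kernel itself.

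The stopping-time gluing runs into the same issue. The segment before the gluing time carries the non-negligible weights. You would need the conditional expectation of its total weight, given the gluing data, to be constant up to $O(m^{-1/2})$, which is the same missing estimate.

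The paper obtains this estimate from a spectral formula for the weighted kernel:
\[
R_N^{(q,\c,\d)}(x,y)=\frac{(q;q)_y}{(\c\d;q)_y}\int_0^\pi(2+2\cos\theta)^N\phi_x(\theta)\phi_y(\theta)\,d\nu(\theta),
\]
where the $\phi_j$ are the Askey--Wilson polynomials that diagonalize the tridiagonal transfer matrix. A residue computation rewrites $\phi_x\,d\nu$ as $\frac{2}{\pi}\sin\theta\,\mathrm{Im}\bigl(e^{\mathtt i(x+1)\theta}h(\theta)\bigr)\,d\theta$ plus a term exponentially small in $x$. After subtracting a constant multiple of the unweighted symbol, one is left with a Fourier series with zero total sum. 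A cancellation lemma then gives the rate $1/(x+1)+(x+1)/m$.

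This is combined with three further ingredients: $\mathbb E[(\gamma_{m-1}+1)^{-1}+\gamma_{m-1}/m]\lesssim m^{-1/2}$; $\sum_{i\ge m}\mathbb E[q^{\gamma_i}]\lesssim m^{-1/2}$ under the endpoint-only measure; and an $L^1$-density bound for total variation. The second ingredient needs local bounds of the form $\mathbb E[q^{\gamma_i}]\lesssim i^{-3/2}$, not merely ``height of order $\sqrt m$ with high probability''. Together these yield the theorem.

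Relatedly, your Fourier decomposition is aimed at the wrong object. In the path representation the endpoint weights are already explicit. Expanding the boundary vectors only produces fixed-endpoint partition functions that are still the $q$-weighted $R_N^{(q,\c,\d)}(j,\ell)$, not TASEP counts. So ``comparing term by term'' with TASEP is precisely the unresolved comparison between weighted and uniform bridges.

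The obstacle you single out does not arise either. In this representation all weights are positive ($\upomega_i\in(0,1]$, $\uptheta>0$), so there are no signed cancellations to control. Controlling the geometric endpoint tails is the easy part: it is what gives the $O(1/m+1/k)$ comparison of the endpoint-only measure with $\mathbb P_N^{\exc}$.
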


While the lower bounds of order $\min(m,k)^{-1}$ from Theorem~\ref{thm:converse TASEP} can be extended to the open ASEP in the maximal current phase, it is an open question to determine whether the upper bounds in Theorem~\ref{thm:main open ASEP} of order $\min(m,k)^{-1/2}$ are sharp. Let us mention that the above theorem extends the results in \cite{nestoridi2024approximating}, where a (non-quantitative) total variation approximation by homogeneous Bernoulli product measures was achieved sufficiently far away from the boundaries for the three-parameter open ASEP, i.e. when $\gamma=\delta=0$.

While our focus lies in the following on establishing the above approximation results for the open ASEP in the maximal current phase, the same techniques allow also to approximate the stationary measure of the open ASEP in the high and low density phase of the fan region. To this end, for a finite set \(I\subseteq\mathbb N\) and \(p\in[0,1]\), we write $\ber_I(p)$ to denote Bernoulli product measure on $\{0,1\}^I$ with density $p$. Specifically, set
\begin{equation*}
    \ber_I(p)=\otimes_{i\in I} ((1-p)\delta_0+p\delta_1 ) , 
\end{equation*}
where \(\delta_x\) denotes the unit point mass at \(x\).
\begin{theorem}\label{thm:main open ASEP HLD}     Assume $q\in[0,1)$, $\a,\b\geq0$ and $\mathsf{c},\mathsf{d}\in(-1,0]$. For $N\in\mathbb N$, recall the stationary measure $\mu_N^{(q,\a,\b,\c,\d)}$. There exist $C,c\in(0,\infty)$ depending only on $\a,\b,\c,\d,q$, such that the following hold.
    \begin{enumerate}
        \item [(1)] In the high density phase of the fan region, i.e., $\b>1$ and $\a\b<1$,  for every $N\in\mathbb N$ and $m\in\llbracket1,N\rrbracket$, 
\begin{equation}\label{eq:HDbound1}
    \TV{ \mu_N^{(q,\a,\b,\c,\d)}
    \left( (\tau_i)_{i\in\llbracket m,N\rrbracket}\in\cdot\,\right)
    -\ber_{\llbracket m,N\rrbracket}\left( \b/(1+\b)\right)}
    \le C e^{-cm}.
\end{equation}
                \item [(2)] In the low density phase of the fan region, i.e., $\a>1$ and $\a\b<1$,  for every $N\in\mathbb N$ and $k\in\llbracket1,N\rrbracket$, 
               \begin{equation}\label{eq:HDbound2}
                    \TV{ \mu_N^{(q,\a,\b,\c,\d)}
    \left( (\tau_i)_{i\in\llbracket 1,N-k+1\rrbracket}\in\cdot\,\right)
    -\ber_{\llbracket 1,N-k+1\rrbracket}\left( 1/(1+\a)\right)}
    \le C e^{-ck}.
               \end{equation} 
\end{enumerate}
In \eqref{eq:HDbound1}, the constant \(c\) depends only on \(q,\b\), while in \eqref{eq:HDbound2}, it may be chosen to depend only on \(q,\a\).
\end{theorem}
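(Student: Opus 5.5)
By particle--hole exchange combined with spatial reversal, which maps the parameters $(\a,\b,\c,\d)$ to $(\b,\a,\d,\c)$, part (2) follows from part (1). So it suffices to treat the high density phase $\b>1$, $\a\b<1$, which forces $\a<1$.

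The plan is to work with the Askey--Wilson (Uchiyama--Sasamoto--Wadati / Bryc--Wesołowski) representation of the matrix product ansatz. In this representation, the joint generating function $\mathbb E_{\mu_N}[\prod_i t_i^{\tau_i}]$ is an expectation over an Askey--Wilson process, or equivalently a weighted Motzkin path (transfer-operator) expansion. For $\b>1$, the Askey--Wilson measure attached to the right boundary has an atom at $y_0=\frac12(\b+\b^{-1})$. Because $\a\b<1$, this atom is the unique atom that dominates, and the continuous part is exponentially subdominant in $N$ after normalisation by $Z_N$.

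\textbf{Step 1.} Write the marginal on $\llbracket m,N\rrbracket$ by splitting the matrix product as $\langle W| X_1\cdots X_{m-1}\cdot X_m\cdots X_N|V\rangle$. The part on $\llbracket m,N\rrbracket$ is then encoded by the vector $\langle W|(D+E)^{m-1}$. In the Askey--Wilson basis, the relevant quantity is a spectral decomposition $\langle W|(D+E)^{m-1} = \int (2+2x)^{m-1}\,\langle x|\,\pi(dx)$, where $\pi$ is the Askey--Wilson measure with parameters $(\a,\b,\c,\d,q)$ and $\langle x|$ is the corresponding generalized eigenvector built from Askey--Wilson polynomials.

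\textbf{Step 2.} Show that the contribution of the atom at $y_0$ gives exactly the product measure $\ber(\b/(1+\b))$ on $\llbracket m,N\rrbracket$. At the atom, the vector $\langle y_0|$ is a left eigenvector of both $D$ and $E$ on the relevant subspace. Equivalently, the conditional law of the Askey--Wilson process, started at the atom at "time" $m$ and with right boundary $\b$, yields a Bernoulli product with density $\b/(1+\b)$. This is the same computation as for the line $\a\b=1$, where $\mu_N$ is exactly product.

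\textbf{Step 3.} Bound the remaining contribution. The remainder comes from the continuous spectrum on $[-1,1]$ and from the other atoms, all of which lie strictly below $y_0$, with the gap depending only on $q,\b$. Each such term carries a weight $(2+2x)^{m-1}$, while the normalisation is of order $(2+2y_0)^{m-1}$ times a quantity of order one in $N-m$. Hence, after dividing by the total mass, the relative weight is at most $C\big((2+2x_{\max})/(2+2y_0)\big)^{m}\le Ce^{-cm}$. For this to hold, one must show that the $N-m$ factors on the right do not amplify these terms relative to the atom. That follows because the atom is also dominant for the right-hand block, using $\a\b<1$ so that no left atom competes. The total variation distance is then bounded by the relative mass of the remainder, and the ratio shows that $c$ depends only on $q$ and $\b$.

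\textbf{Main obstacle.} The hardest step is Step 3: obtaining uniform-in-$N$ control of the signed remainder. The Askey--Wilson measure may be signed when $\c,\d$ are negative or when atoms lie outside the standard range. I would first treat $q=0$ (TASEP), where Motzkin-path estimates give a direct transfer-matrix bound. I would then extend to general $q$ through the Fourier decomposition via Askey--Wilson polynomials mentioned in the introduction. Concretely, this means expanding the density of the change of measure in orthogonal polynomials and bounding the coefficients geometrically.
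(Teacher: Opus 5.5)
Your reduction of (2) to (1) by particle--hole duality is fine, but there is a genuine gap: Step~3, which carries the entire content of the theorem, is asserted rather than proved.

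\textbf{The missing bound.} A total variation bound requires $\sum_{\boldsymbol\upepsilon}|R(\boldsymbol\upepsilon)|\le Ce^{-cm}Z_N$. Here $R(\boldsymbol\upepsilon)$ is the non-atomic part of the unnormalized weight of a pattern $\boldsymbol\upepsilon\in\{0,1\}^{\llbracket m,N\rrbracket}$. This is a sum of absolute values of signed quantities over $2^{N-m+1}$ patterns. Spectral information controls only signed totals such as $\sum_{\boldsymbol\upepsilon}\langle x|X_{\boldsymbol\upepsilon}|V\rangle=(2+2x)^{N-m+1}\langle x|V\rangle$. The remark that ``the atom is also dominant for the right-hand block'' does not give $\sum_{\boldsymbol\upepsilon}|\langle x|X_{\boldsymbol\upepsilon}|V\rangle|\le C(2+2y_0)^{N-m+1}$ uniformly over the continuous spectrum and the lower atoms. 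You also never establish a positivity structure that would make such a bound available.

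\textbf{The setup.} A vector identity $\langle W|(\mathbf D+\mathbf E)^{m-1}=\int(2+2x)^{m-1}\langle x|\,\pi(dx)$ with $\pi$ depending on $\b$ requires a representation in which $\mathbf D+\mathbf E$ itself depends on $\b$ (e.g.\ Uchiyama--Sasamoto--Wadati, where $\langle W|,|V\rangle$ are basis vectors). There you would still need positivity or absolute summability of the entries, which you do not address. In the nonnegative path representations used in the paper (Theorems~\ref{thm:TASEP rep} and~\ref{thm:Motzkin representation}) the situation is worse:
\begin{itemize}
\item the right vector grows like $\b^y$;
\item the spectral decomposition of the left vector has no atom at all, since $\a<1$;
\item the pairings $\langle x|X_{\boldsymbol\upepsilon}|V\rangle$ diverge for $x\in[-1,1]$.
\end{itemize}

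\textbf{Step 2.} Your joint-eigenvector claim is likewise unproved, and in these representations it fails exactly at the wall. With $\mathbf D,\mathbf E$ as in \eqref{eq:DandE} and $|\b\rangle=[1,\b,\b^2,\dots]^T$, one has $\mathbf D|\b\rangle=(1+\b)|\b\rangle$ but $(\mathbf E|\b\rangle)_0=1\neq1+\b^{-1}$. So the product structure holds only up to a boundary defect at height $0$. Showing that this defect is $O(e^{-cm})$ in total variation, uniformly in $N$, is the theorem. Also, in the Askey--Wilson process picture the ``time'' is the generating-function variable $t$, not the site index $m$.

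\textbf{The missing idea.} The paper uses a change of measure that makes positivity explicit. Let $\mathbb Q$ be the law of an unconstrained bicolored path with $\gamma_0\sim\mathrm{Geom}(\a\b)$ and i.i.d.\ steps (up, down, black horizontal, white horizontal) with probabilities proportional to $\b^2,1,\b,\b$. This is the $\b^{\gamma}$-transform ignoring the wall.
\begin{itemize}
\item $\Phi_\#\mathbb Q$ is exactly $\ber(\b/(1+\b))$.
\item $\mathbb P_N^{(q,\a,\b,\c,\d)}$ has $\mathbb Q$-density proportional to $\mathsf Y=\mathbf 1_{\{\bg\in\mathrm{BCMot}_N\}}\uptheta(\gamma_N)\prod_i\upomega_i$.
\item Since $\mathbb E_{\mathbb Q}[\varsigma^{\gamma_i}]\le\upeta^i$ with $\varsigma=\max(q,\b^{-1})$ and $\upeta<1$, three error terms are $O(\upeta^m)$ in $L^1(\mathbb Q)$: the probability of hitting $-1$ after time $m$, the defect $1-\prod_{i>m}\upomega_i$, and $|\uptheta(\gamma_N)-\uptheta_\infty|$.
\item Hence $\mathsf Y$ is $L^1(\mathbb Q)$-close to a function $\mathsf L_{m-1}$ of the first $m-1$ steps only, and reweighting by $\mathsf L_{m-1}$ leaves the law of steps $m,\dots,N$ unchanged.
\end{itemize}
Together with $\inf_N\mathbb E_{\mathbb Q}[\mathsf Y]>0$, this gives the bound for all $q$ at once. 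No separate TASEP step or Fourier expansion is needed.

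Incidentally, $\upeta=(2+\b\varsigma+(\b\varsigma)^{-1})/(2+\b+\b^{-1})$ is exactly your ratio $(2+2x_{\max})/(2+2y_0)$, with $x_{\max}$ either $1$ or the next atom $\frac12(\b q+(\b q)^{-1})$. So your heuristic rate is correct. What is missing is an argument that turns it into a total variation bound uniform in $N$.
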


Again, let us mention that Theorem~\ref{thm:main open ASEP HLD} extends the results in \cite{nestoridi2024approximating} for general boundary parameters.

\subsection{Moderate deviation estimates } 

Next, we have the following moderate deviation estimate on the number of particles under the stationary measure for the open ASEP in the maximal current phase. 

\begin{theorem}\label{thm:moderate deviation MC}
Assume $q\in[0,1)$, $\a,\b\in[0,1]$, $\a\b<1$, and $\c,\d\in(-1,0]$. 
For \(N\in\mathbb N\), let \(\mu_N^{(q,\a,\b,\c,\d)}\) be the stationary measure of the open ASEP on the lattice $\llbracket1,N\rrbracket$, and let \(\tau_i\in\{0,1\}\) denote the occupation variable at
site \(i\). There exists a constant \(C\in(0,\infty)\), depending only on \(q,\a,\b,\c,\d\), such that, for every \(N\in\mathbb N\), every nonempty set
\(I\subseteq\llbracket1,N\rrbracket\), and every \(x\geq0\),
\[ 
 \mu_N^{(q,\a,\b,\c,\d)}
 \left(
 \left|\sum_{i\in I}\tau_i-\frac{|I|}{2}\right|
 \geq x\sqrt{|I|}
 \right)
 \leq C e^{-x^2/2}.
\]
\end{theorem}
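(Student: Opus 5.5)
We plan to follow the strategy announced in the introduction: represent $\mu_N$ as the marginal of an auxiliary path measure and prove a sub-Gaussian bound for the exponential moments of $S_I:=\sum_{i\in I}\tau_i-|I|/2$. By Chernoff it suffices to show that, for all $\lambda\in\mathbb R$,
\[
\log \mathbb E_{\mu_N}\bigl[e^{\lambda S_I}\bigr]\le \tfrac{\lambda^2}{8}|I| + C_0 ,
\]
with $C_0$ depending only on $q,\a,\b,\c,\d$. Choosing $\lambda=4x/\sqrt{|I|}$ then gives $\mu_N(S_I\ge x\sqrt{|I|})\le e^{C_0}e^{-2x^2}$. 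The lower tail is handled by $\lambda<0$, or by particle–hole symmetry combined with spatial reversal, which maps the parameter set to itself.

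The matrix product ansatz writes $\mu_N(\tau)\propto \langle W|\prod_i(\tau_i D+(1-\tau_i)E)|V\rangle$. In the maximal current phase, including the boundaries $\a=1$ or $\b=1$ with $\a\b<1$, we use a representation, via Motzkin paths for $q=0$ and via Askey--Wilson measures in general, in which $D+E$ acts as a transfer operator. Conditionally on the underlying path (or spectral variable), the occupations are then independent or Markov with tilted local weights. Tilting by $e^{\lambda\sum_{i\in I}\tau_i}$ replaces $D$ by $e^{\lambda}D$ at the sites of $I$, so it is natural to study $f(\lambda)=\log Z_N(\lambda)$, where $Z_N(\lambda)=\langle W|\prod_i M_i(\lambda)|V\rangle$ and $M_i(\lambda)=e^{\lambda\mathbf 1_{i\in I}}D+E$. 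We then have $f''(\lambda)=\mathrm{Var}_\lambda(\sum_{i\in I}\tau_i)=\sum_{i\in I}\mathrm{Var}_\lambda(\tau_i)+\sum_{i\ne j}\mathrm{Cov}_\lambda(\tau_i,\tau_j)$.

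The key step is to show $\mathrm{Cov}_\lambda(\tau_i,\tau_j)\le 0$ for $i\ne j$ and all $\lambda$, uniformly in the tilt. For this we write the covariance in determinantal form. For $i<j$,
\[
Z_N^2\,\mathrm{Cov}(\tau_i,\tau_j)=\langle W|A\,D\,B\,D\,C|V\rangle\langle W|A\,M\,B\,M\,C|V\rangle-\langle W|A\,D\,B\,M\,C|V\rangle\langle W|A\,M\,B\,D\,C|V\rangle ,
\]
with $M=D+E$ and $A,B,C$ the products of the intermediate transfer matrices. Expanding in the path basis expresses this as a sum of $2\times2$ minors of products of the relevant kernels. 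Nonpositivity then follows if the transfer kernels are totally nonnegative of order two in the appropriate ordering (height of the path). In the TASEP case $D$ and $E$ are bidiagonal with nonnegative entries, and TN$_2$ reduces to log-concavity of the weight sequences along heights. For general $q$ we would use the Askey--Wilson representation, where we expect log-concavity (in the height or spectral variable) of the $q$-Hermite/Askey--Wilson weights to give the same conclusion, and TN$_2$ is preserved under products and positive diagonal tilts. This step is the main obstacle: the signs must be arranged so that the minors come out with the right sign for all boundary parameters $\c,\d\in(-1,0]$, which can make some boundary weights negative. We would first try to absorb the boundary vectors into a positive auxiliary path measure, possibly after a gauge transformation, before invoking TN$_2$.

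Given negative correlations, $f''(\lambda)\le\sum_{i\in I}\mathrm{Var}_\lambda(\tau_i)\le |I|/4$. Integrating twice around $\lambda=0$ gives
\[
\log\mathbb E\bigl[e^{\lambda S_I}\bigr]\le \lambda\bigl(\mathbb E[\textstyle\sum_{i\in I}\tau_i]-|I|/2\bigr)+\lambda^2|I|/8 .
\]
It remains to bound the mean deviation $\bigl|\sum_{i\in I}(\mathbb E\tau_i-\tfrac12)\bigr|$. We would show $|\mathbb E\tau_i-\tfrac12|\le C\bigl(\min(i,N+1-i)\bigr)^{-1/2}$, using the one-point marginal asymptotics in the maximal current phase (via the Askey--Wilson formulas, or Theorem~\ref{thm:main open ASEP} applied to single sites, together with $\mathbb P^{\exc}_N,\mathbb P^{\mea}_N$ having one-point densities $\tfrac12+O(\cdot^{-1/2})$). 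This yields a mean deviation of at most $C\sqrt{|I|}$ for every $I$, since summing $\min(i,N+1-i)^{-1/2}$ over $|I|$ sites is maximized by sites near the boundaries.

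The resulting linear term $\lambda C\sqrt{|I|}$ is harmless. The bound becomes $e^{C x}e^{-2x^2}$, which is at most $C'e^{-x^2/2}$ after adjusting constants. Alternatively, we can run the Chernoff bound with $\lambda=x/\sqrt{|I|}$, giving the exponent $-x^2+x^2/8+Cx$, which again is dominated by $C'e^{-x^2/2}$ for all $x\ge0$.
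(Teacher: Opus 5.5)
\textbf{There is a genuine gap, at the step you yourself flag as the main obstacle.} Your argument needs $\mathrm{Cov}_\lambda(\tau_i,\tau_j)\le0$ under tilts of the \emph{stationary measure} $\mu_N$ itself, for all $q\in[0,1)$ and $\c,\d\in(-1,0]$. The proposal only expresses the hope that some log-concavity will deliver this. For the representations of $\mu_N$ at hand, the hypotheses of the TN$_2$ argument actually fail.

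\begin{itemize}
\item In the TASEP representation with the constant bidiagonal $\mathbf D,\mathbf E$, the boundary vectors have entries $\upchi_j(\a,\c)=(\a^{j+1}-\c^{j+1})/(\a-\c)$. These are negative for odd $j$ whenever $|\c|>\a$ (for $\a=0$ they equal $\c^j$), so the path measure is signed.
\item In the positive representation of Theorem~\ref{thm:Motzkin representation}, the transfer matrices carry the height-dependent weights $\upomega_i$. As a result, $\boldsymbol w\mathbf E$ and $\boldsymbol w\mathbf D$ are no longer the two shifts $p_h,p_{h-1}$ of one log-concave sequence, which is exactly what makes the $2\times2$ minors of $\boldsymbol{\mathsf X},\boldsymbol{\mathsf Y}$ nonnegative.
\item In that same representation, for $\b>0$ the right boundary vector $h\mapsto\b^h\uptheta(h)$ is log-convex when $\c\d>q$. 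Indeed, $\uptheta(h+1)/\uptheta(h)=(1-\c\d q^h)/(1-q^{h+1})$ is then increasing in $h$. Already at $q=0$ one has $\uptheta(0)=1$ and $\uptheta(h)=1-\c\d$ for $h\ge1$.
\end{itemize}

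Your closing suggestion of passing to a positive auxiliary path measure points in the right direction. The point, however, is not to rewrite $\mu_N$ exactly (say by a gauge transformation) but to \emph{dominate} it.

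\textbf{How to close the gap.} After reducing to $\a<1$ by particle--hole duality, let $\mathsf Q_N^{\a,\b}(\bg)\propto\a^{\gamma_0}\b^{\gamma_N}$ on $\mathrm{BCMot}_N$. This is the path measure of Theorem~\ref{thm:Motzkin representation} with the bulk weights and $\uptheta$ stripped off. Since $\upomega_i\in(0,1]$ and $(\c\d;q)_\infty\le\uptheta\le(q;q)_\infty^{-1}$, you get $\mathbb P_N^{(q,\a,\b,\c,\d)}(E)\le C\,\mathsf Q_N^{\a,\b}(E)$ for every event $E$, provided $\inf_N\mathrm E_N^{\a,\b;\uptheta}[\prod_i\upomega_i]>0$.

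That lower bound follows from $\prod_i\upomega_i\ge\exp(-C\sum_iq^{\gamma_i})$ together with $\sup_N\sum_i\mathrm E_N^{\a,\b;\uptheta}[q^{\gamma_i}]<\infty$. This is where the maximal-current assumption really enters: Proposition~\ref{prop:endpt wt compare} gives summable bounds, of order $\min(i,N-i)^{-3/2}$ when $\b<1$.

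Under tilts of $\mathsf Q_N^{\a,\b}$, the transfer matrices are the constant $\mathbf E+t\mathbf D$ and the boundary vectors are geometric, hence log-concave. So your determinantal TN$_2$ argument goes through exactly as you sketch it (this is Lemma~\ref{lem:negatively correlated}). Your Chernoff step then gives the tail bound under $\mathsf Q_N^{\a,\b}$, which transfers to $\mu_N$ at the cost of a constant factor.

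\textbf{On the centering.} After this change of measure, the mean must be controlled under $\mathsf Q_N^{\a,\b}$, not under $\mu_N$. Your single-site argument still works there, because $\Phi_\#\mathsf Q_N^{\a,\b}=\mu_N^{(0,\a,\b,0,0)}$ is itself covered by Theorem~\ref{thm:main open TASEP}. This is a legitimate alternative to the paper's route, which instead uses monotonicity of $k\mapsto\mathsf E_N^{\a,\b}[\tau_k]$ together with interval moderate deviations.
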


Let us stress that this is not a simple consequence of the above approximation results as we allow for general $x>0$, as well as general sets $I$. Instead, we require a novel type of negative correlation estimates for the stationary measure of a path measure closely related to the open TASEP; see Section~\ref{sec:LogConcave}.

\subsection{Related work}

The stationary measure of the open ASEP is intensively investigated over the past decades. A first study dates back to Liggett in \cite{liggett1975ergodic}, who identified the different phases depending on the boundary parameters. In \cite{derrida1993exact}, Derrida, Evans, Hakim and Pasquier use the celebrated matrix product ansatz to express the stationary measure of the open TASEP; see also \cite{blythe2007nonequilibrium} for an introduction to this technique. A solution to matrix product ansatz for the open ASEP with general boundary parameters was found by Uchiyama, Sasamoto and Wadati in \cite{uchiyama2004asymmetric}, establishing a connection between the open ASEP and Askey--Wilson orthogonal polynomials from \cite{askey1985some}. This led to a series of works \cite{Bryc2017,bryc2019limit,wang2023askey} studying the fluctuations, and more generally the Laplace transform of the height function representation, under the stationary measures of the open ASEP using Askey--Wilson processes and Askey--Wilson (signed) measures, which are defined using Askey--Wilson orthogonal polynomials. 
Let us mention that the matrix product ansatz representation is in general not unique and closely related to combinatorial objects; see for example  \cite{corteel2011tableaux} for a staircase tableaux representation and \cite{duchi2005combinatorial} for a connection between the stationary open TASEP and Catalan numbers. 

In \cite{enaud2004large}, Enaud and Derrida provide a solution to the matrix product ansatz, where the stationary measure of the open ASEP can be represented as the sum of two random walks, and which can in return be interpreted as a measure on the space of weighted bi-colored Motzkin paths; see also \cite{hegde2024large}.  
A similar combinatorial description of the stationary measure using bi-colored Motzkin paths was given in \cite{brak2006combinatorial} by Brak et al. for the three-parameter open ASEP where $\gamma=\delta=0$. More generally, the matrix product ansatz can be used to study the fluctuations of Motzkin paths with weights; see  \cite{BrycWang2019Motzkin,BrycWang2024MotzkinII,BrycKuznetsovWesolowski2025Motzkin}. Let us mention that the matrix product ansatz can also be used to study the stationary measure of the ASEP on the semi-infinite lattice \cite{grosskinsky2004phase,SasamotoWilliams2014,yang2025limits}, and for the open ASEP with the presence of light particles \cite{Uchiyama2008,SY:openLight}.

Recently, in a series of papers, Bryc et al. investigate the two-layer representation of the open TASEP and the open ASEP \cite{bryc2024two,bryc2024stationary,bryc2024limitCoex}, introduced first for integrable models on the strip in \cite{barraquand2024stationary}. This allows for a simple description of the stationary measure of the open ASEP (assuming Liggett's condition when $q\neq 0$), and hence a way to access for example the limiting density fluctuations or large deviations for the number of particles under the stationary measure; see e.g.~\cite{hegde2024large,SchmidYangShock,SchmidYangCoexistence}.  

In a broader context, \cite{gabrielli2026mixtures} shows that the measures representable by the matrix product ansatz (under certain non-negativity assumptions) can be written as a mixture of inhomogeneous product measures where law of the mixture is described by a Markov bridge. This is reminiscent of the construction of the two-layer representation from \cite{bryc2024two} (and the authors of \cite{gabrielli2026mixtures} verify the correspondence for the open TASEP). Let us mention that their results can also be interpreted as a type of Hall-Littlewood representation for the stationary measure of the open (T)ASEP. It would be interesting to see whether the results in the present article, relying on different path representations, can be generalized to their setup. 

Complementing local approximation results to the aforementioned results on the macroscopic properties of the stationary measure of the open ASEP were achieved in \cite{nestoridi2024approximating,yang2025limits}.
An approximation of the stationary measure of the open ASEP in the bulk of the segment using homogeneous Bernoulli-product measures first appeared in \cite{nestoridi2024approximating} for the special case of the three-parameter ASEP, relying on the representation from \cite{brak2006combinatorial}. In \cite{yang2025limits}, the law of the open ASEP is near the boundaries approximated using Askey--Wilson half-line measures. Moreover, at a  microscopic level, the decay of the particle density away from the boundary was determined by Sasamoto in \cite{sasamoto2000density} while correlation were studied for example in \cite{uchiyama2005correlation}. Let us stress that while it is possible to study correlations, and hence approximations, of the stationary measure of the open ASEP directly using contour integrals formulations in the Uchiyama--Sasamoto--Wadati representation from \cite{uchiyama2004asymmetric}, these exact representations are in general hard to evaluate, hence requiring us to take a different approach. 


\subsection{Results in the other parts of the phase diagram}

The present manuscript is the third in a series of three articles on sharp approximations for the stationary measure of the open ASEP. For convenience of the reader, and to distinguish the results in the present paper from the complementary approximation results in the shock region, let us summaries the main results from \cite{SchmidYangShock} and \cite{SchmidYangCoexistence}.

In \cite{SchmidYangShock}, we characterize the stationary measure of the open ASEP on a segment of length $N$ using its two-layer representation. The main results are two-folded. 
First, we introduce a general framework in order to compare the stationary measure of the open TASEP and the stationary measure of the open ASEP. More precisely, we consider the polynomial factors from \cite{bryc2024stationary} in the Radon-Nikodym derivative with respect to the two-layer representation of the open TASEP, to which we refer as Bryc polynomials, indexed by compositions of $N$. We provide a probabilistic characterization of the Bryc polynomials, i.e.~we show that they can be expressed by the moment generating function for a certain inhomogeneous birth-death chain run for $N$ steps. In return, we study this birth-death chain in depth and control its exponential moments, and hence the coefficients in the Bryc polynomials, for typical compositions. 

Second, we show that the aforementioned characterization of the Bryc polynomials can be used to obtain a sharp quantitative approximation of the stationary measure in terms of homogeneous Bernoulli-product measures for the open ASEP in the high and low density phase in the shock region. 

The work \cite{SchmidYangCoexistence} follows overall a similar strategy for sharp approximation results of the stationary measure for the open ASEP on the coexistence line, where we see the presence of a shock. Besides an increase in technical difficulty to achieve similarly sharp approximation results as in the high and low density phase, the key difference is to control precisely the shock location. In particular, we provide a sharp approximation of the shock location in total variation distance with respect to the uniform measure on the segment. 
Let us mention that in both \cite{SchmidYangShock} and \cite{SchmidYangCoexistence}, as well as for the applications in \cite{SchmidYangOpenKPZ} on constructing stationary measures for the open KPZ equation, we work under the parameter constraints
\begin{equation} \label{eq:Liggett's condition}
\gamma=q(1-\alpha),\quad\delta=q(1-\beta),\quad \alpha,\beta\in(0,1] 
\end{equation}
for the open ASEP, which is known as Liggett's condition \cite{liggett1975ergodic}. This is an essential input for the two-layer representation from \cite{bryc2024stationary}. In the present article, we allow for general boundary parameters $\alpha,\beta>0$ and  $\gamma,\delta \geq 0$, requiring a different approach, which we now outline.

\subsection{Overview of the proof ideas}\label{sec:Overview}

In order to approximate the stationary measure of the open ASEP, we rely on the matrix product ansatz by Derrida, Evans, Hakim and Pasquier in \cite{derrida1993exact}. We consider two different solutions for the matrix product ansatz: the first one taken from \cite{hegde2024large}, where the solution of Uchiyama, Sasamoto, and Wadati from \cite{uchiyama2004asymmetric} is interpreted in a two-layer representation; the second one corresponding to the Enaud-Derrida representation from \cite{enaud2004large}, which we specify for the open TASEP, i.e.~when $q=\gamma=\delta=0$. In both representations, we express the height function for a stationary open ASEP configuration using reweighted bicolored Motzkin paths, with fair coin flips for all horizontal steps. 
We will keep the perspective of representing the stationary measure of the open ASEP on the enlarged path space. 
Our main contribution is to control the reweighting in the above bicolored Motzkin representations explicitly, allowing for a precise comparison between the open TASEP, for which the stationary measure can be controlled directly using combinatorial bounds, and the open ASEP. 

We first establish a sharp approximation result for the stationary measure of the open TASEP where the weights for the aforementioned bicolored Motzkin paths from the Enaud-Derrida representation take a simple form. More precisely, we reduce the question of approximating the stationary measure of the open TASEP to effective bounds on the number of bicolored Motzkin path with certain starting and ending point. 

The approximation of the stationary measure for the open ASEP with $q \in (0,1)$ is more delicate. Intuitively, the local weights will away from the $x$-axis be $1$ up to an exponentially small error. This shows that for typical bicolored Motzkin paths in the maximal current phase, the total weight is determined by the law of the path close to the endpoints. To formalize this heuristics, we introduce a Fourier type decomposition to express the weights with respect to Askey-Wilson orthogonal polynomials; see also \cite{askey1985some}. In particular, this enables us to approximate the ratio between the partition function between weighted and unweighted bicolored paths, where we fix one endpoint and weight the starting point according to a suitable endpoint measure. Including now a suitable choice for the starting and endpoint measures, we convert this into an approximation for the stationary measure of the open ASEP. 
To our best knowledge, this is the first time this perspective on the weights of the Uchiyama--Sasamoto--Wadati representation is used to study the stationary measure of the open ASEP. 

For the approximation of the stationary measure for the open ASEP in the high and low density phase, we proceed similarly. In contrast to the maximal current phase, the underlying bicolored Motzkin path will be well-approximated by a biased random walk path, ensuring that the total reweighting from the Uchiyama--Sasamoto--Wadati representation will be away from the starting location of the path only have an exponentially small impact on the law of the path. 

To show that the approximation results are sharp for the open TASEP in maximal current phase, we study the expectation of the occupancy variables for a stationary configuration. To this end, we compare the stationary measure of the open TASEP to an Askey-Wilson type measure on the semi-infinite lattice, which we obtain from the Enaud--Derrida representation of the matrix product ansatz; see also \cite{yang2025limits} for a similar construction in the high and low density phase. Computing the leading order of the expectation of the occupancy variables for different choices of boundary parameters in this measure on the semi-infinite lattice gives the desired result.   

In order to establish moderate deviation estimates for the number of particles in the stationary measure of the open ASEP in the maximal current phase, we use a novel covariance identity for a path measure on space of bicolored Motzkin paths, closely related to the Uchiyama--Sasamoto--Wadati representation without bulk weights. Heuristically, fluctuations under this path measure will dominate the fluctuation of the number of particles under the stationary measure of the open ASEP. 
At the same time, we show that this measure has negative correlations, allowing for sharp estimates on the fluctuations by using Chernoff bounds. 
In order to prove negative correlations under this path measure, we first express the covariance between different steps of the path by the matrix product ansatz as a determinant of a certain $2\times2$ matrix, containing only the partition functions of the aforementioned measure. We then prove that the involved matrices are totally non-negative using log-concavity as well as, crucially, the exact choice and the properties of the matrices in the matrix product ansatz for expressing the correlations. 

\subsection{Outline of the paper}

In Section~\ref{sec:Preliminaries}, we recall the matrix product ansatz for the open ASEP as well as basic combinatorial estimates on bi-colored Motzkin paths. In Section~\ref{sec:TASEP}, we establish Theorem~\ref{thm:main open TASEP} on approximating the stationary measure of the open TASEP in the maximal current phase. In Section~\ref{sec:ASEP}, we prove the main approximation result from Theorem~\ref{thm:main open ASEP} on the open ASEP in the maximal current phase. Corresponding results from Theorem~\ref{thm:main open ASEP HLD} for the open ASEP in the high and low density phase of the fan region are provided in Section~\ref{sec:HighLow}. In Section~\ref{sec:Converse}, we show the lower bound in Theorem~\ref{thm:converse TASEP} on the approximation of the stationary measure the open TASEP. In Section~\ref{sec:Moderate}, we establish moderate deviation bounds for the total number of particles under the stationary measure of the open ASEP in the maximal current phase,  proving Theorem~\ref{thm:moderate deviation MC}.

\subsection*{Acknowledgments}
We thank Davide Gabrielli and Federica Iacovissi for helpful discussions. 
D.S. was partially supported by the Packard Foundation via Amol Aggarwal's Packard Fellowships for Science and Engineering.
Z.Y. was partially supported by the Miller Institute for Basic Research in Science at the University of California, Berkeley, Ivan Corwin's NSF grants DMS-1811143, DMS-2246576, and the Fernholz Foundation's ``Summer Minerva Fellows'' program. Both authors were supported by Ivan Corwin's Simons Foundation Grant 929852.

\subsection*{AI declaration}  

This project was initiated in fall 2024, and the proofs were written down by the end of August 2026. The authors used AI to assist with editing the manuscript, performing some computations, and checking arguments. During this process, AI suggested using log-concavity and total nonnegativity in Section~\ref{sec:LogConcave}. Apart from this suggestion, all proof ideas in this article were developed by the authors. 
\section{Preliminaries on the stationary measures of the open ASEP}\label{sec:Preliminaries}

In the following, we collect preliminary results on the matrix product ansatz for the open ASEP using bicolored Motzkin paths, as well as several useful combinatorial estimates. 

\subsection{Bicolored Motzkin paths} 
\begin{definition} \label{def:bicolored Motzkin path}
Fix $N\in\mathbb{Z}_{\geq0}$. A bicolored Motzkin path  of length $N$ is a sequence $\bg=(\gamma_0,\dots,\gamma_N)$ such that $\gamma_i\in\mathbb{Z}_{\ge 0}$ for $i\in\llbracket0,N\rrbracket$ with increments $\gamma_i-\gamma_{i-1}\in\{-1,0,1\}$ for $i\in\llbracket1,N\rrbracket$. Each horizontal step of the path, i.e., $\gamma_i-\gamma_{i-1}=0$, is colored either black or white; the $\pm1$ steps are uncolored. 
For every $x,y\in\mathbb{Z}_{\ge0}$, let $\mathrm{BCMot}^{x,y}_N$ denote the set of bicolored Motzkin paths with $\gamma_0=x$ and $\gamma_N=y$, and set $Z_N(x,y)$  the number of  elements in $\mathrm{BCMot}^{x,y}_N$. 
For every $x\in\mathbb Z_{\geq0}$, let $\mathrm{BCMot}^{x,\mathrm{free}}_N =\cup_{y\ge0}\mathrm{BCMot}^{x,y}_N$ denote the set of bicolored Motzkin paths with $\gamma_0=x$, and set $Z_N(x,\mathrm{free})=\sum_{y\ge0}Z_N(x,y)$ the number of  elements in $\mathrm{BCMot}^{x,\mathrm{free}}_N$.
We note that $Z_N(x,y)$ and $Z_N(x,\mathrm{free})$ are finite.
\end{definition}

\begin{definition}\label{def:bimot project to particles}
    Let $N\in\mathbb{N}$ and $\bg=(\gamma_0,\dots,\gamma_N)$ be a bicolored Motzkin path of length $N$. Define $\Phi(\bg)=(\tau_1,\dots,\tau_N)\in\{0,1\}^N$ by setting $\tau_i=1$ if either $\gamma_i-\gamma_{i-1}=1$ or $\gamma_i-\gamma_{i-1}=0$ and this horizontal step is colored black; and $\tau_i=0$ if either $\gamma_i-\gamma_{i-1}=-1$ or $\gamma_i-\gamma_{i-1}=0$ and this horizontal step is colored white.
\end{definition}

We refer to Figure~\ref{fig:MotzkinPaths} for visualization of a path  $\bg$ in $\mathrm{BCMot}^{0,0}_N$, and its projection to a configuration $\Phi(\bg)$ of the open ASEP. 

\begin{lemma}\label{lem:count biMot}
Let $N,x,y\in\mathbb{Z}_{\geq0}$.  
Then $Z_N(x,y)\geq 1$ if and only if $|x-y|\leq N$. 
Furthermore,
\begin{equation}\label{eq:ZNxy}
    Z_N(x,y)=
    \binom{2N}{N+x-y}
    -
    \binom{2N}{N+x+y+2}.
\end{equation}
Here, we use the convention that, for $n\in\mathbb{Z}_{\geq0}$ and $k\in\mathbb{Z}$, $\binom{n}{k}=0$ if $k\notin\llbracket0,n\rrbracket$. 
Moreover, for $N,x,y\in\mathbb{Z}_{\geq0}$,
\[
Z_N(x,y)=\frac{2^{N+1}}{\pi}\int_0^\pi
 (1+ \cos\theta)^N
\sin((x+1)\theta)\sin((y+1)\theta)\,d\theta.
\] \end{lemma}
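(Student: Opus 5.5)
The plan is to reduce bicolored Motzkin paths to ordinary $\pm1$ simple random walk paths of length $2N$. Then the count follows from the reflection principle, and the integral formula from the spectral decomposition of the transfer matrix. The main tool is the standard bijection sending each step of a bicolored Motzkin path to a pair of $\pm1$ steps: an up step goes to $(+1,+1)$, a down step to $(-1,-1)$, a black horizontal step to $(+1,-1)$, and a white horizontal step to $(-1,+1)$. Under this map a path $\bg\in\mathrm{BCMot}^{x,y}_N$ becomes a simple walk $w_0,\dots,w_{2N}$ with $w_{2i}=2\gamma_i$ at even times. The condition $\gamma_i\ge 0$ should correspond to $w_j\ge -1$ for all $j$, since odd-time values are $2\gamma_{i-1}\pm1$ and $w_{2i-1}=-1$ is allowed exactly when $\gamma_{i-1}=\gamma_i=0$ (a white step at height $0$, or a black one after shifting; I need to check which convention makes this symmetric).

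To make this clean, I would instead use the well-known identity $Z_N(x,y)=\#\{$simple walks of length $2N$ from $x$ to $y$ staying $\ge 0\}$ with a shifted bijection. Pair the steps so that the intermediate walk is $u_{2i}=\gamma_i$ and $u_{2i+1}=\gamma_i$ or $\gamma_i\pm 1$ appropriately. Concretely, map an up step to $(+1,0)$-type pairs using a walk on half-integers; equivalently, observe that the transfer matrix of bicolored Motzkin paths on $\mathbb Z_{\ge0}$ is $T=2I+A$, where $A$ is the adjacency matrix of the half-line $\mathbb Z_{\ge 0}$. Moreover $A^2=(\text{adjacency})^2$ on the doubled lattice relates to $T$ via $A_{\text{half-line, doubled}}^2$ restricted to even sites, which equals $2I+A$ except at the boundary. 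The boundary defect (vertex $0$ has only one neighbour, giving $A^2_{00}=1$ instead of $2$) is exactly what is fixed by allowing the walk to visit $-1$, i.e.\ by counting walks staying $\ge -1$.

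So I would establish $Z_N(x,y)=\#\{$simple walks of length $2N$ from $2x$... $\}$ in the form: walks from $x$ to $y$ of $2N$ steps avoiding $-1$. Reflection about $-1$ then gives $\binom{2N}{N+(y-x)}-\binom{2N}{N+x+y+2}$ after symmetrizing $\binom{2N}{N+y-x}=\binom{2N}{N+x-y}$. This is \eqref{eq:ZNxy}. The existence claim $Z_N(x,y)\ge1\iff |x-y|\le N$ is immediate directly: if $|x-y|\le N$, take $|x-y|$ monotone steps toward $y$ and fill the rest with horizontal steps, and the path stays nonnegative; conversely each step changes the height by at most $1$.

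For the integral formula, I would diagonalize $T=2I+A$ on $\ell^2(\mathbb Z_{\ge0})$ using the eigenfunctions $\psi_\theta(x)=\sin((x+1)\theta)$. These satisfy $A\psi_\theta=2\cos\theta\,\psi_\theta$, with the boundary condition $\psi_\theta(-1)=0$ automatic, so $T\psi_\theta=2(1+\cos\theta)\psi_\theta$. They are complete via the orthogonality $\frac{2}{\pi}\int_0^\pi\sin((x+1)\theta)\sin((y+1)\theta)\,d\theta=\delta_{xy}$. Hence $Z_N(x,y)=\langle\delta_x,T^N\delta_y\rangle=\frac{2}{\pi}\int_0^\pi 2^N(1+\cos\theta)^N\sin((x+1)\theta)\sin((y+1)\theta)\,d\theta$, matching the stated constant $2^{N+1}/\pi$.

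This spectral argument actually also yields \eqref{eq:ZNxy} independently. Write $(1+\cos\theta)^N=2^{-N}(e^{i\theta/2}+e^{-i\theta/2})^{2N}$, expand binomially, and use $2\sin a\sin b=\cos(a-b)-\cos(a+b)$ together with $\frac1\pi\int_0^\pi\cos(k\theta)\,d\theta=\mathbf 1_{k=0}$ for integer $k$, extended appropriately for half-integer frequencies. The main obstacle is bookkeeping: checking the boundary behaviour at height $0$ (that $T$ has $T_{00}=2$, both colors allowed, matching the Dirichlet condition at $-1$), and matching the frequencies in the binomial expansion to produce exactly $N+x-y$ and $N+x+y+2$. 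I would carry this out via the expansion $(2+2\cos\theta)^N=\sum_j\binom{2N}{j}e^{i(j-N)\theta}$, which makes both terms appear directly.
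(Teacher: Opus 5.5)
Your proposal is correct. The combinatorial half is the paper's argument; the derivation of the integral formula takes a genuinely different route.

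\textbf{Combinatorial half.} This matches the paper up to a shift. The paper encodes $\eta_{2i}=2\gamma_i+1$, requires $\eta\ge 0$, and reflects across $-1$. Your first encoding $w_{2i}=2\gamma_i$ with $w_j\ge -1$ is equally valid. There is no asymmetry to fix: among admissible walks, $w_{2i-1}=-1$ occurs exactly for a white step at height $0$. Reflecting across $-2$ sends bad walks to walks from $-2x-4$ to $2y$, which gives $\binom{2N}{N+y-x}-\binom{2N}{N+x+y+2}$.

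Your later ``clean'' restatements are wrong as written. Walks of $2N$ steps from $x$ to $y$ staying $\ge 0$ (equivalently, avoiding $-1$, reflected about $-1$) do not exist when $x-y$ is odd. Otherwise they are counted by $\binom{2N}{N+(y-x)/2}-\binom{2N}{N+(x+y+2)/2}$. For example, with $N=1$ and $x=y=0$ this gives $1$, while $Z_1(0,0)=2$. Keep the doubled endpoints: $2x\to 2y$ with barrier $-2$, or $2x+1\to 2y+1$ with barrier $-1$.

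\textbf{Integral formula.} You write $Z_N(x,y)=(T^N)_{x,y}$ with $T=2I+A$ and use the Dirichlet eigenfunctions $\sin((x+1)\theta)$, which gives the integral representation directly. The expansion $(2+2\cos\theta)^N=\sum_j\binom{2N}{j}e^{\mathtt{i}(j-N)\theta}$ together with product-to-sum then yields \eqref{eq:ZNxy} with no reflection argument at all. The paper goes the other way. It first proves \eqref{eq:ZNxy} by reflection, then computes $\int_0^\pi(1+\cos\theta)^N\cos(k\theta)\,d\theta=\pi 2^{-N}\binom{2N}{N+k}$ and checks that the integral reproduces the binomial expression.

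Your version explains where the sine kernel comes from and derives the whole lemma from one spectral identity. It needs no $\ell^2$ completeness theory, only the pointwise inversion $\mathbf 1_{\{x=y\}}=\frac{2}{\pi}\int_0^\pi\sin((x+1)\theta)\sin((y+1)\theta)\,d\theta$. After that, applying the finite-range matrix $T^N$ under the integral is a finite-sum manipulation. The paper's version avoids eigenfunctions entirely and uses only the binomial theorem.

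Only integer frequencies occur, as your final expansion shows, so the remark about half-integer frequencies can be dropped.
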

Note that the quantity $4^{-N} Z_N(x,y)$ can also be interpreted as the probability that a lazy simple random walk started at $x$ ends at $y$ after $N$ steps and stays non-negative.
\begin{proof}
The first assertion follows from the fact that each step of a bicolored Motzkin path is in $\{-1,0,1\}$.
For each $N,x,y\in\mathbb{Z}_{\geq0}$, let $\mathrm{Dyck}_N^{x,y}$ be the set of paths $\boldsymbol{\eta}=(\eta_0,\ldots,\eta_{2N})$ in $\mathbb Z_{\geq0}$ satisfying $\eta_0=2x+1$, $\eta_{2N}=2y+1$, and $\eta_j-\eta_{j-1}\in\{-1,1\}$ for $j\in\llbracket1,2N\rrbracket$. Define the map 
    $\mathrm{BCMot}^{x,y}_N\to\mathrm{Dyck}_N^{x,y}$, $\boldsymbol{\gamma}\mapsto\boldsymbol{\eta}$ by 
    \[
    (\eta_{2i-1}-\eta_{2i-2},\eta_{2i}-\eta_{2i-1})=\begin{cases}
        (1,1) &\mbox{ if }\gamma_i-\gamma_{i-1}=1,\\
        (-1,-1) &\mbox{ if }\gamma_i-\gamma_{i-1}=-1,\\
        (1,-1) &\mbox{ if }\gamma_i-\gamma_{i-1}=0\mbox{ is colored black},\\
        (-1,1) &\mbox{ if }\gamma_i-\gamma_{i-1}=0\mbox{ is colored white},\\
    \end{cases}
    \]
for each $i\in\llbracket1,N\rrbracket$.
One can check that this map is a bijection from $\mathrm{BCMot}^{x,y}_N$ to $\mathrm{Dyck}_N^{x,y}$. 
For each $N\in\mathbb Z_{\geq0}$ and $x,y\in\mathbb Z$, let $\mathrm{Path}_{N}^{x,y}$ be the set of paths $\boldsymbol{\eta}=(\eta_0,\ldots,\eta_{2N})$ in $\mathbb Z$ satisfying $\eta_0=2x+1$, $\eta_{2N}=2y+1$, and $\eta_j-\eta_{j-1}\in\{-1,1\}$ for $j\in\llbracket1,2N\rrbracket$. 
When $x,y\in\mathbb Z_{\geq0}$, a path $\boldsymbol{\eta}\in\mathrm{Path}_{N}^{x,y}$ is not in $\mathrm{Dyck}_{N}^{x,y}$ if and only if it visits $-1$. For such a path, let 
$T=\min\{j\in\llbracket0,2N\rrbracket:\eta_j=-1\}$ and define a new path by
$\widetilde\eta_j=-2-\eta_j$ for $j\in\llbracket0,T\rrbracket$ and $\widetilde\eta_j=\eta_j$ for $\llbracket T+1,2N\rrbracket$. This reflection is a bijection from $\mathrm{Path}_{N}^{x,y}\setminus\mathrm{Dyck}_{N}^{x,y}$ onto $\mathrm{Path}_{N}^{-x-2,y}$. 
Note  that $\lvert\mathrm{Path}_{N}^{x,y}\rvert
=
\binom{2N}{N+x-y}$ for each $N\in\mathbb Z_{\geq0}$, $x,y\in\mathbb Z$.
Subtraction proves the first formula for $ Z_N(x,y)$. 

For every $k\in\mathbb Z_{\geq0}$, the evenness of $(1+\cos\theta)^N$, the binomial theorem, and binomial symmetry gives
\begin{align*}
\int_0^\pi (1+\cos\theta)^N\cos(k\theta)\,d\theta
 =\frac12\int_{-\pi}^{\pi}
(1+\cos\theta)^N e^{\mathtt{i}k\theta}\,d\theta =\frac{1}{2^{N+1}}\int_{-\pi}^{\pi}
e^{-\mathtt{i}N\theta}(1+e^{\mathtt{i}\theta})^{2N}
e^{\mathtt{i}k\theta}\,d\theta =\frac{\pi}{2^N}\binom{2N}{N+k}.
\end{align*}
 Using
$\sin((x+1)\theta)\sin((y+1)\theta)
=\frac12\left(
\cos((x-y)\theta)-\cos((x+y+2)\theta)
\right)$,
  we obtain
\begin{align*}
 \frac{2^{N+1}}{\pi}\int_0^\pi
(1+\cos\theta)^N
\sin((x+1)\theta)\sin((y+1)\theta)\,d\theta  =
\binom{2N}{N+|x-y|}
-\binom{2N}{N+x+y+2}=Z_N(x,y),
\end{align*} 
which concludes the proof of the second formula for $Z_N(x,y)$.
\end{proof}

Using the above measures on the bicolored Motzkin paths, we have a simple way to obtain the respective measures on particle configurations.

\begin{lemma}\label{lem:exc and mea projection}
Fix $N\in\mathbb N$. Consider the probability measures $\mathbb{P}^{\exc}_N$ and $\mathbb{P}^{\mea}_N$ from Definition \ref{def:exc and mea}, the map \(\Phi\) from Definition \ref{def:bimot project to particles}, and the sets $\mathrm{BCMot}^{x,y}_N$ and $\mathrm{BCMot}^{x,\mathrm{free}}_N$ from Definition \ref{def:bicolored Motzkin path}.
Then the measure $\mathbb{P}^{\exc}_N$ is the pushforward of the uniform measure on \(\mathrm{BCMot}^{0,0}_N\) under the map $\Phi$, and the measure $\mathbb{P}^{\mea}_N$ is the pushforward of the uniform measure on $\mathrm{BCMot}^{0,\mathrm{free}}_N$ under the map $\Phi$.
\end{lemma}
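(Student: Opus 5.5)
The plan is to build an explicit coupling at the level of paths. We show that the joint law of the lazy walk path and the coin flips, under each conditioning, coincides with the uniform measure on the corresponding set of bicolored Motzkin paths. Once this is established, the statement follows immediately, because the map $\Phi$ from Definition~\ref{def:bimot project to particles} reproduces exactly the rule defining $\tau^\star_i$ in Definition~\ref{def:exc and mea}.

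Concretely, identify a bicolored Motzkin path $\bg$ with a pair $((\gamma_i)_{i\in\llbracket0,N\rrbracket},(\kappa_i)_{i\in H(\bg)})$. Here $H(\bg)$ is the set of horizontal steps, and $\kappa_i=1$ if step $i$ is black and $\kappa_i=0$ if it is white. The coins $\kappa_i$ at non-horizontal steps are irrelevant to $\tau^\star$, so we marginalize them out.

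Consider first the unconditioned $\nu$-walk $S$ started at $0$, together with the independent fair coins. The probability of a given path $s$ of length $N$ with $h$ horizontal steps is $(1/4)^{N-h}(1/2)^h$. Fixing in addition the coin values on its $h$ horizontal steps gives probability $(1/4)^{N-h}(1/2)^h(1/2)^h=4^{-N}$. Thus every pair (lattice path, coloring of horizontal steps) has the same weight $4^{-N}$; equivalently, every bicolored lattice path receives mass $4^{-N}$.

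Now apply the conditioning. The event $\{S_i\ge0\ \forall i,\ S_N=0\}$ restricted to such pairs is exactly $\mathrm{BCMot}^{0,0}_N$. The event $\{S_i\ge 0\ \forall i\}$ is exactly $\mathrm{BCMot}^{0,\mathrm{free}}_N$. Conditioning a uniform measure on a subset gives the uniform measure on that subset, which has positive mass since it contains the flat path. Independence of the coins from $S$ is preserved by this argument, because we conditioned only on an event measurable with respect to $S$, and the joint weights factor as computed above.

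Finally, under this identification $\tau_i^\star$ equals $1$ exactly on up steps and on horizontal steps with $\kappa_i=1$, i.e.\ on black horizontal steps. This is $\Phi(\bg)_i$, so the two pushforwards coincide. There is no real obstacle. The only point needing care is that the coins on non-horizontal steps must be marginalized out first, so that the uniform-weight computation is over bicolored paths rather than over pairs (path, full coin vector).
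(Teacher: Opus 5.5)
Your proposal is correct and follows essentially the same argument as the paper. In both, each bicolored step sequence gets probability $4^{-N}$ under the walk-plus-coins construction, conditioning on the $S$-measurable events gives the uniform measures on $\mathrm{BCMot}^{0,0}_N$ and $\mathrm{BCMot}^{0,\mathrm{free}}_N$, and $\Phi$ reproduces pathwise the rule defining $\tau^\star_i$.
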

\begin{proof}
Let $\boldsymbol{S}=(S_i)_{i\in\llbracket0,N\rrbracket}$ be the $\nu$-random walk from Definition \ref{def:exc and mea}, and consider the events
$E_{\exc}
=
\left\{
S_i\geq0\ \text{for }i\in\llbracket0,N\rrbracket,
\ S_N=0
\right\}$ and  
$E_{\mea}
=
\left\{
S_i\geq0\ \text{for }i\in\llbracket0,N\rrbracket
\right\}$. 
Consider a probability space carrying the random walk $\boldsymbol{S}$ together with i.i.d. $\ber(1/2)$ random variables $\kappa_i$, $i\in\llbracket1,N\rrbracket$ independent of $\boldsymbol{S}$. Color the $i$th horizontal step of $\boldsymbol{S}$ black if $\kappa_i=1$ and white if $\kappa_i=0$. 
Observe that every fixed bicolored step sequence of length $N$ has probability $4^{-N}$. Hence, conditional on $E_{\exc}$, every path in $\mathrm{BCMot}^{0,0}_N$ has probability $Z_N(0,0)^{-1}$, and, conditional on $E_{\mea}$, every path in $\mathrm{BCMot}^{0,\mathrm{free}}_N$ has probability
$Z_N(0,\mathrm{free})^{-1}$. Note also that conditioning does not change the joint law of the variables $(\kappa_i)$ or their independence from the
conditioned walk.  Finally, for $\star\in\{\exc,\mea\}$ and $i\in\llbracket1,N\rrbracket$, the definition of the coloring and of $\Phi$ gives, pathwise,
 $ [\Phi(\bg^\star) ]_i
=
{\bf1}_{\{S_i^\star-S_{i-1}^\star=1\}}
+
\kappa_i
{\bf1}_{\{S_i^\star-S_{i-1}^\star=0\}}
=
\tau_i^\star$, 
where $\bg^\star$ denotes the corresponding conditioned bicolored path. Hence applying $\Phi$ to the two uniform path measures gives $\mathbb{P}^{\exc}_N$ and
$\mathbb{P}^{\mea}_N$, respectively.
\end{proof}

\subsection{Representation of the stationary measures of open ASEP}\label{sec:MPAs}

A key tool to access the stationary measure of the open ASEP is the celebrated matrix product ansatz, introduced in \cite{derrida1993exact}  by Derrida, Evan, Hakim and Pasquier, which we state as the following lemma; see also~\cite{blythe2007nonequilibrium} for an introduction.

\begin{lemma}\label{lem:MPA} Assume \eqref{eq:conditions open ASEP} holds. Also, assume that there exist matrices $\mathbf{D}$, $\mathbf{E}$, a row vector $\langle W|$, and a column vector $|V\rangle$, all of the same (possibly infinite) dimension, and that all admissible finite products of these matrices and vectors are well-defined (i.e., convergent if they are infinite-dimensional) and satisfy the property of associativity. We assume further that
\begin{equation}
    \label{eq:DEHP algebra} 
        \mathbf{D}\mathbf{E}-q\mathbf{E}\mathbf{D}=\mathbf{D}+\mathbf{E}, \qquad
        \langle W|(\alpha\mathbf{E}-\gamma\mathbf{D})=\langle W|, \qquad
        (\beta\mathbf{D}-\delta\mathbf{E})|V\rangle=|V\rangle
\end{equation}
    (which is commonly known as the DEHP algebra). Recall $\a,\b,\c,\d$ given by \eqref{eq:defining ABCD}.
    Then the stationary measure of the open ASEP can be written as
\begin{equation} \label{eq:MPA open ASEP}
\mu^{(q,\a,\b,\c,\d)}_N (\tau_1,\dots,\tau_N )=\frac{ \langle W | \prod_{i=1}^N({\bf1}_{\{\tau_i=0\}}\mathbf{E}+{\bf1}_{\{\tau_i=1\}} \mathbf{D} ) | V\rangle}{ \langle W |(\mathbf E+\mathbf D)^N|V \rangle},
\end{equation}
for any $(\tau_1,\dots,\tau_N)\in\{0,1\}^N$, assuming that the denominator $\langle W|(\mathbf E+\mathbf D)^N|V\rangle$ is nonzero. 
\end{lemma}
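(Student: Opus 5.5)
The plan is the standard argument that a measure of the form \eqref{eq:MPA open ASEP} satisfies the stationarity equation $\mu_N L = 0$, where $L$ is the generator of the open ASEP. Write the unnormalized weight as $f(\tau)=\langle W|\prod_{i=1}^N X_{\tau_i}|V\rangle$, with $X_0=\mathbf{E}$ and $X_1=\mathbf{D}$. It suffices to show that $f$ satisfies the global balance equations
\[
\sum_{\tau'}\big(f(\tau')r(\tau'\to\tau)-f(\tau)r(\tau\to\tau')\big)=0 \quad\text{for every } \tau\in\{0,1\}^N .
\]
Normalizing by $\langle W|(\mathbf{E}+\mathbf{D})^N|V\rangle\neq0$ then gives a probability measure, provided the weights are nonnegative; this holds in the representations used later. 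Uniqueness of the stationary measure (irreducibility, since $\alpha,\beta>0$) identifies it with $\mu_N$. Note that the normalized expression would be stationary even as a signed measure, and uniqueness applies there too, because the kernel of $L^T$ is one-dimensional.

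Next, decompose the generator into local pieces: the bulk bond terms $h_{i,i+1}$ acting on sites $i,i+1$, the left boundary term $h_1$ and the right boundary term $h_N$. Apply the adjoint action to the matrix product. For a bond, the net flux into the pair $(\tau_i,\tau_{i+1})$, replaced by its matrix symbol, is
\[
\mathbf{D}\mathbf{E}-q\mathbf{E}\mathbf{D}\ \text{for the pair }(1,0)\text{ or }(0,1)\text{ with appropriate signs.}
\]
Concretely, define $\hat X_0=1$ and $\hat X_1=-1$, i.e.\ scalar multiples of the identity. The key local identities to check are
\[
(\text{bond action})\,X_aX_b = \hat X_aX_b - X_a\hat X_b,
\]
which follow from $\mathbf{D}\mathbf{E}-q\mathbf{E}\mathbf{D}=\mathbf{D}+\mathbf{E}$ by checking the four cases $(a,b)$. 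For $(0,0)$ and $(1,1)$ both sides vanish. For $(1,0)$ the left side is $-(\mathbf{D}\mathbf{E}-q\mathbf{E}\mathbf{D})$ and the right side is $-\mathbf{E}-\mathbf{D}$, which agree, and $(0,1)$ is the negative. The boundary identities are
\[
\langle W|(\text{left action})X_a=-\langle W|\hat X_a, \qquad (\text{right action})X_a|V\rangle=\hat X_a|V\rangle,
\]
and these follow from $\langle W|(\alpha\mathbf{E}-\gamma\mathbf{D})=\langle W|$ and $(\beta\mathbf{D}-\delta\mathbf{E})|V\rangle=|V\rangle$. For example, the left action gives $\gamma\mathbf{D}-\alpha\mathbf{E}$ for $a=0$ and $\alpha\mathbf{E}-\gamma\mathbf{D}$ for $a=1$.

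Summing over all local terms, the right-hand sides telescope. With the signs fixed consistently the total is
\[
-\langle W|\hat X_{\tau_1}\cdots|V\rangle+\sum_{i}\big(\cdots\hat X_{\tau_i}X_{\tau_{i+1}}\cdots-\cdots X_{\tau_i}\hat X_{\tau_{i+1}}\cdots\big)+\langle W|\cdots\hat X_{\tau_N}|V\rangle ,
\]
which vanishes. Associativity and finiteness of all products, assumed in the statement, justify these manipulations in infinite dimensions.

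The main obstacle is purely bookkeeping: fixing a consistent sign convention for $\hat X$ so that the bulk and both boundary identities telescope together, keeping in mind that the left and right boundary enter with opposite signs. Everything else is a finite case check.
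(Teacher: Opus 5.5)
Your proof is correct. It is the standard Derrida--Evans--Hakim--Pasquier cancellation argument: local divergence identities with the scalar ``hat'' matrices $\hat X_0=1$, $\hat X_1=-1$, telescoping over the bonds and the two boundaries, and then uniqueness of the stationary measure. The paper does not reprove the lemma; it takes it from \cite{derrida1993exact,blythe2007nonequilibrium}. Your remark that the left kernel of the irreducible generator is one-dimensional, so nonnegativity of the weights is not needed, is the right way to finish. The paper later applies the lemma in Theorem~\ref{thm:TASEP rep} with boundary vectors whose entries can be negative.
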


\begin{remark} \label{rmk:denomenator 0} 
As noted by \cite{mallick1997finite,essler1996representations}, assuming $\langle W| V\rangle\neq0$ and finite,  the denominator $\langle W|(\mathbf E+\mathbf D)^N|V\rangle$ of the matrix   ansatz \eqref{eq:MPA open ASEP} being nonzero can be guaranteed by ${q^{ \ell}}\a\b\c\d\neq1$ for all $\ell\in\mathbb Z_{\geq0}$.
As shown in \cite[Appendix A]{mallick1997finite}, in this case, for any $n_1,m_1,\dots,n_k,m_k\in\mathbb Z_+$, the ratio
$\langle W|\mathbf{D}^{n_1}\mathbf{E}^{m_1}\dots\mathbf{D}^{n_k}\mathbf{E}^{m_k}|V\rangle / \langle W|V\rangle$
only depends on the parameters $q,\alpha,\beta,\gamma,\delta$ and the choice of $n_i,m_i$. In particular, this ratio does not depend on the specific examples of $\mathbf{D}$, $\mathbf{E}$, $\langle W|$,   $|V\rangle$ that satisfy the DEHP algebra \eqref{eq:DEHP algebra}.  \end{remark} 

While it is well-known how to construction solutions to the matrix product ansatz, we will in the following require two different solutions --- one for the open ASEP and one specifically to the open TASEP --- whose exact forms will play a crucial role in our analysis.

\begin{figure}
    \centering
\begin{tikzpicture}[scale=1]
\draw [->,line width=1pt] (0,0) to (12.5,0); 	
\draw [->,line width=1pt] (0,0) to (0,3.3); 	

	\node[shape=circle,scale=1,draw] (A1) at (0.5,-1){};
	\node[shape=circle,scale=1,draw] (A2) at (1.5,-1){};
	\node[shape=circle,scale=1,draw] (A3) at (2.5,-1){};
	\node[shape=circle,scale=1,draw] (A4) at (3.5,-1){};
	\node[shape=circle,scale=1,draw] (A5) at (4.5,-1){};
	\node[shape=circle,scale=1,draw] (A6) at (5.5,-1){};
	\node[shape=circle,scale=1,draw] (A7) at (6.5,-1){};
	\node[shape=circle,scale=1,draw] (A8) at (7.5,-1){};
	\node[shape=circle,scale=1,draw] (A9) at (8.5,-1){};
	\node[shape=circle,scale=1,draw] (A10) at (9.5,-1){};
	\node[shape=circle,scale=1,draw] (A11) at (10.5,-1){};
	\node[shape=circle,scale=1,draw] (A12) at (11.5,-1){};

 \draw [->,line width=1pt] (-0.4,-1+0.4) to [bend right,in=135,out=45] (A1);
 \draw [->,line width=1pt] (A12) to [bend right,in=135,out=45] (12.4,-1+0.4);	
	
	\node[shape=circle,scale=0.8,fill=red] (CB) at (0.5,-1) {}; 
	\node[shape=circle,scale=0.8,fill=red] (CB) at (2.5,-1) {}; 
	\node[shape=circle,scale=0.8,fill=red] (CB) at (3.5,-1) {}; 
	\node[shape=circle,scale=0.8,fill=red] (CB) at (5.5,-1) {}; 
	\node[shape=circle,scale=0.8,fill=red] (CB) at (7.5,-1) {}; 
	\node[shape=circle,scale=0.8,fill=red] (CB) at (10.5,-1) {};

	\draw [line width=1pt] (A1) to (A2); 	
	\draw [line width=1pt] (A2) to (A3); 	
	\draw [line width=1pt] (A3) to (A4); 	
	\draw [line width=1pt] (A4) to (A5); 	
	\draw [line width=1pt] (A5) to (A6); 	
	\draw [line width=1pt] (A6) to (A7); 	
	\draw [line width=1pt] (A7) to (A8); 	
	\draw [line width=1pt] (A8) to (A9); 	
	\draw [line width=1pt] (A9) to (A10); 	
	\draw [line width=1pt] (A10) to (A11); 	
	\draw [line width=1pt] (A11) to (A12); 	
 

	\draw [->,line width=1pt] (0,0) to (1,1); 	 
 	\draw [->,line width=1pt] (1,1) to (2,1); 	 
 	\draw [->,line width=1pt] (2,1) to (3,2); 	 
 	\draw [->,line width=1pt] (3,2) to (4,3); 	
 	\draw [->,line width=1pt] (4,3) to (5,2); 	 
 	\draw [->,line width=1pt] (5,2) to (6,2); 	 
 	\draw [->,line width=1pt] (6,2) to (7,1); 	 
 	\draw [->,line width=1pt] (7,1) to (8,1); 	
 	\draw [->,line width=1pt] (8,1) to (9,1); 	

 \node[scale=1.5] (H1) at (1.5,1.4) {$\circ$};	 	
 \node[scale=1.5] (H2) at (5.5,2.4) {$\bullet$};		
 \node[scale=1.5] (H3) at (7.5,1.4) {$\bullet$};	 	
 \node[scale=1.5] (H4) at (8.5,1.4) {$\circ$};	
 	 
 	\draw [->,line width=1pt] (9,1) to (10,0); 	
 	\draw [->,line width=1pt] (10,0) to (11,1); 	 
 	\draw [->,line width=1pt] (11,1) to (12,0); 		
 

\end{tikzpicture}
 \caption{Correspondence between bicolored Motzkin paths and the particle configuration of the open ASEP on a segment of length $N=12$. For all horizontal edges, we write $\circ$ for coloring the edge white, and $\bullet$ for coloring the edge black.}
    \label{fig:MotzkinPaths}
\end{figure}

\begin{definition}\label{def:measure Motzkin} Fix $N\in\mathbb Z_{\geq0}$ and $q\in[0,1)$, and assume
    $\a,\b\geq0$, $\c,\d\in(-1,0]$, and $\a\b<1$. Let $\mathrm{BCMot}_N=\bigcup_{x,y\geq0}\mathrm{BCMot}^{x,y}_N$. For each $\bg\in\mathrm{BCMot}_N$, $i\in\llbracket1,N\rrbracket$, let
    \[
    \upomega_i(\bg)=\begin{cases}
         1-q^{\gamma_i}&\qquad\mbox{if } \gamma_i-\gamma_{i-1}=1,\\
         1-\c\d q^{\gamma_i}&\qquad\mbox{if } \gamma_i-\gamma_{i-1}=-1,\\
         1+\d q^{\gamma_i} &\qquad\mbox{if } \gamma_i-\gamma_{i-1}=0\mbox{ is colored black},\\
         1+\c q^{\gamma_i} &\qquad\mbox{if } \gamma_i-\gamma_{i-1}=0\mbox{ is colored white}.
    \end{cases}
    \]
    Denote $\uptheta(h)= (\c\d;q)_{h}/(q;q)_{h}$ for $h\in\mathbb Z_{\geq0}$ and define, for each $\bg\in\mathrm{BCMot}_N$, 
    \[
    \mathrm{wt}_N^{(q,\a,\b,\c,\d)}(\bg)=\a^{\gamma_0}\b^{\gamma_N} \uptheta(\gamma_N) \prod_{i=1}^N\upomega_i(\bg).
    \]
    We then define the normalization constant
    \[Z_N^{(q,\a,\b,\c,\d)}=\sum_{\bg\in\mathrm{BCMot}_N}\mathrm{wt}_N^{(q,\a,\b,\c,\d)}(\bg),\]
    which, as shown in Theorem \ref{thm:Motzkin representation} below, is finite. Define a probability measure on $\mathrm{BCMot}_N$ by
    \begin{equation}\label{def:FullWeightMeasure}
        \mathbb{P}_N^{(q,\a,\b,\c,\d)}(\bg)=\mathrm{wt}_N^{(q,\a,\b,\c,\d)}(\bg)/Z_N^{(q,\a,\b,\c,\d)}.
    \end{equation}
\end{definition}
The next statement records an exact representation of the partition function $Z_N^{(q,\a,\b,\c,\d)}$. Here and in the following, we write
\begin{equation}\label{def:Pochhammer}
    (a;q)_j := \prod_{i=1}^{j} (1-aq^{i-1})
\end{equation} with $(a;q)_{\infty} := \lim_{j \rightarrow \infty} (a;q)_j$ for all $a \in \mathbb R$, $j\in \mathbb \mathbb N$, and $q \in [0,1)$ for the $q$\textit{-Pochhammer symbol}. Moreover, we use the convention that $(a,b;q)_{\infty}:=(a;q)_{\infty}(b;q)_{\infty}$. 
\begin{theorem}\label{thm:Motzkin representation}
Fix $N\in\mathbb N$ and $q\in[0,1)$.
Assume $\a,\b\geq0$, $\c,\d\in(-1,0]$ and $\a\b<1$. Then  $Z_N^{(q,\a,\b,\c,\d)}$ is finite and is a continuous function of $(\a,\b,\c,\d)$. In particular, $\mathbb{P}_N^{(q,\a,\b,\c,\d)}$ is a well-defined probability measure on $\mathrm{BCMot}_N$. The pushforward of this measure under the map $\Phi$ from Definition \ref{def:bimot project to particles} is the stationary measure $\mu_N^{(q,\a,\b,\c,\d)}$ of the open ASEP. Moreover, if additionally $q^{\ell}\a^2\neq1$ and
$q^{\ell}\b^2\neq1$ for every $\ell\in\mathbb Z_{\geq0}$, then
\[
Z_N^{(q,\a,\b,\c,\d)}
=
(q,\a\c,\a\d,\b\c,\b\d,\c\d;q)_\infty
\oint_{\mathcal C}
\frac{(1+z)^{2N}}{4\pi\mathtt{i}\,z^{N+1}} 
\frac{(z^2,z^{-2};q)_\infty}
{(\a z,\a/z,\b z,\b/z,\c z,\c/z,\d z,\d/z;q)_\infty}
\,dz ,
\]
where  \(\mathcal C\) is a positively oriented contour that encloses $\{0\}\cup\{
\mathfrak e q^k:
\mathfrak e\in\{\a,\b,\c,\d\},\ k\in\mathbb Z_{\geq0} \}$
and excludes $\{
(\mathfrak e q^k)^{-1}:
\mathfrak e\in\{\a,\b,\c,\d\},\ k\in\mathbb Z_{\geq0},\
\mathfrak e q^k\neq0\}$. 
\end{theorem}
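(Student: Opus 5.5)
The plan is to build an explicit solution of the DEHP algebra \eqref{eq:DEHP algebra} on $\ell^2(\mathbb Z_{\ge0})$, in the spirit of the Uchiyama--Sasamoto--Wadati representation as reinterpreted in \cite{hegde2024large}, and to expand the matrix product as a sum over bicolored Motzkin paths. Write $\mathbf D=\frac{1}{1-q}(\mathbf 1+\mathbf d)$ and $\mathbf E=\frac{1}{1-q}(\mathbf 1+\mathbf e)$. The operators $\mathbf d,\mathbf e$ should be tridiagonal, since tridiagonal matrices are what turn matrix products into lattice paths. Their entries will be chosen from the Askey--Wilson three-term recurrence after a diagonal gauge transformation, so that the following dictionary holds:
\begin{itemize}
\item an up step $h-1\to h$ carries weight $1-q^h$;
\item a down step $h\to h-1$ carries weight $1-\c\d q^{h-1}$, up to the index convention in Definition~\ref{def:measure Motzkin};
\item horizontal steps at height $h$ carry weight $1+\d q^h$ (black, from $\mathbf D$) or $1+\c q^h$ (white, from $\mathbf E$);
\item the common factor $(1-q)^{-1}$ per site is absorbed into the normalization.
\end{itemize}
The boundary vectors will be $\langle W|=\sum_x \a^x\langle x|$ and $|V\rangle=\sum_y \b^y\uptheta(y)|y\rangle$.

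The first step is to verify \eqref{eq:DEHP algebra} for these operators. The bulk relation $\mathbf D\mathbf E-q\mathbf E\mathbf D=\mathbf D+\mathbf E$ becomes $\mathbf d\mathbf e-q\mathbf e\mathbf d=(1-q)^2\cdot$const, which is a finite check on the diagonals at distances $0,\pm1,\pm2$. The boundary relations $\langle W|(\alpha\mathbf E-\gamma\mathbf D)=\langle W|$ and $(\beta\mathbf D-\delta\mathbf E)|V\rangle=|V\rangle$ reduce to two-term recursions in the coefficients. Using $\a\c=-\gamma/\alpha$, $\a+\c=(1-q-\alpha+\gamma)/\alpha$ (and the analogous identities for $\b,\d$), these recursions are solved by the geometric vector $\a^x$ and by $\b^y\uptheta(y)$. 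The diagonal gauge can be chosen so that the left eigenvector is exactly geometric and the $q$-Pochhammer factor $\uptheta$ sits entirely on the right.

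The second step is to expand $\langle W|\prod_i(\mathbf 1_{\tau_i=0}\mathbf E+\mathbf 1_{\tau_i=1}\mathbf D)|V\rangle$ over intermediate indices. This gives $\sum_{\bg:\Phi(\bg)=\tau}\mathrm{wt}_N(\bg)$ up to the factor $(1-q)^{-N}$. Summing over $\tau$ gives $Z_N^{(q,\a,\b,\c,\d)}$, and Lemma~\ref{lem:MPA} identifies the pushforward under $\Phi$ with $\mu_N$.

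The crucial point, and the main obstacle, is that $\langle W|$ and $|V\rangle$ are infinite vectors with possibly $\a\ge1$ or $\b\ge1$, so convergence and associativity must be justified before Lemma~\ref{lem:MPA} applies. For finiteness, note that all weights $\upomega_i$ are nonnegative and bounded by $2$ (since $\c,\d\in(-1,0]$). A path of length $N$ satisfies $|\gamma_0-\gamma_N|\le N$, so $\a^{\gamma_0}\b^{\gamma_N}\le (\a\b)^{\min(\gamma_0,\gamma_N)}\max(\a,\b,1)^{N}$. The number of paths with $\min(\gamma_0,\gamma_N)=h$ grows only linearly in $h$ for fixed $N$, and $\uptheta$ is bounded. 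Hence $\a\b<1$ gives absolute convergence, locally uniformly in $(\a,\b,\c,\d)$, which yields continuity. Absolute convergence of all the finite sums over path segments also justifies associativity, and positivity of $Z_N$ ensures the denominator is nonzero.

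For the contour formula, assume first $\a,\b<1$, and later extend by analytic continuation in $\a,\b$ as long as poles do not cross $\mathcal C$, which is exactly the condition $q^\ell\a^2,q^\ell\b^2\ne1$. In this regime I would use the spectral theorem for the Askey--Wilson Jacobi operator $\mathbf d+\mathbf e$, which is multiplication by $2\cos\theta$ in the basis of orthonormal Askey--Wilson polynomials $p_n(\cos\theta)$. This is the $q$-analogue of the sine representation in Lemma~\ref{lem:count biMot}. Then $(1-q)^N(\mathbf D+\mathbf E)^N$ acts as $(2+2\cos\theta)^N=(1+z)^{2N}/z^N$ with $z=e^{\mathtt i\theta}$. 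The boundary vectors pair with $p_n$ through the generating functions $\sum_n \a^n p_n$ and $\sum_n\b^n\uptheta(n)p_n$, which by the Poisson-kernel identities produce the factors $(\a z,\a/z;q)_\infty^{-1}$ and $(\b z,\b/z;q)_\infty^{-1}$ against the Askey--Wilson weight $(z^2,z^{-2};q)_\infty/(\c z,\c/z,\d z,\d/z;q)_\infty$. The total mass of that weight supplies the constant $(q,\a\c,\a\d,\b\c,\b\d,\c\d;q)_\infty$. Turning the $\theta$-integral into $\oint dz/(4\pi\mathtt i z)$ over $|z|=1$, and then deforming to $\mathcal C$ to pick up the residues at $\mathfrak e q^k$ when $\mathfrak e>1$, completes the argument.
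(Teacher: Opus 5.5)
Your proof is correct, but it takes a different route from the paper. The paper argues almost entirely by citation:
\begin{itemize}
\item it maps $\mathrm{BCMot}_N$ bijectively onto the two-layer state space $\mathcal{TL}_N$ of \cite{hegde2024large}, checks that $\mathrm{wt}_N^{(q,\a,\b,\c,\d)}$ becomes the two-layer Gibbs weight, and imports finiteness and the pushforward identity from \cite[Theorem~2.5]{hegde2024large};
\item for the contour formula it writes $Z_N$ through the Enaud--Derrida representation, passes to the Uchiyama--Sasamoto--Wadati representation via the representation-independence in Remark~\ref{rmk:denomenator 0}, matches the two values of $\langle W|V\rangle$, and quotes \cite[(6.1)]{uchiyama2004asymmetric}.
\end{itemize}

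You instead verify everything by hand, and the checks go through. Your dictionary forces $\mathbf D$ to be upper bidiagonal and $\mathbf E$ lower bidiagonal. The bulk relation then reads $\mathbf d\mathbf e-q\mathbf e\mathbf d=(1-q)\mathbf 1$. The boundary relations are three-term rather than two-term recursions, but $\a^x$ and $\b^y\uptheta(y)$ satisfy them identically once you use $\alpha(1+\a)(1+\c)=1-q$, $\gamma=-\alpha\a\c$, and their right-boundary analogues. Nonnegativity and finiteness of all entries give associativity.

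For $\a,\b<1$, your spectral step is exactly Lemma~\ref{lem:orth} paired with the Al-Salam--Chihara generating function $\sum_j\phi_j(\theta)t^j=(\c t,\d t;q)_\infty/(te^{\mathtt i\theta},te^{-\mathtt i\theta};q)_\infty$ at $t=\a$ and $t=\b$. This generating function, not a Poisson kernel, is the identity you need. Since $\uptheta(y)(q;q)_y/(\c\d;q)_y=1$, both boundary vectors produce the same generating function. The factors $(\a\c,\a\d,\b\c,\b\d;q)_\infty$ come from its numerator; only $(q,\c\d;q)_\infty$ is the normalization of $\nu$, not the "total mass" of the full weight.

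For the continuation, state it precisely:
\begin{itemize}
\item $Z_N$ is a power series in $\a$ convergent for $|\a|<1/\b$;
\item the contour integral is analytic on that disk minus the discrete set where the families $\{\mathfrak e q^k\}$ and $\{(\mathfrak e q^k)^{-1}\}$ collide, which for real parameters is exactly $q^\ell\a^2=1$;
\item you bypass these points through complex $\a$, in particular around $\a=1$.
\end{itemize}
The obstruction is this collision (pinching), not poles "crossing" a fixed $\mathcal C$.

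Your route is self-contained and reuses tools the paper needs in Section~\ref{sec:ASEP} anyway. It also explains why $\uptheta$ sits on the right boundary and why $q^\ell\a^2,q^\ell\b^2\neq1$ is a pinching condition. The paper's route is much shorter, but it rests on three external representations and the Mallick--Sandow uniqueness statement.
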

\begin{proof}
We construct a bijection between $\mathrm{BCMot}_N$ and the state space $\mathcal{TL}_N$ consisting of 
\begin{equation}
\boldsymbol{\lambda}=(\boldsymbol{\lambda}_1,\boldsymbol{\lambda}_2)\in\mathbb{Z}^{2N+2},\quad \boldsymbol{\lambda}_i=(\lambda_i(0),\dots,\lambda_i(N)),\quad i=1,2,
\end{equation}
such that
\begin{equation*}
    \lambda_1(0)=0 , \qquad \lambda_1(j) \geq \lambda_2(j) \text{ for all } j \in \llbracket 0,N\rrbracket , \qquad \lambda_i(j) - \lambda_i(j-1) \in \{0,1\} \text{ for all } i\in \{1,2\}, j\in \llbracket 1,N\rrbracket ;
\end{equation*}
see \cite[Definition 2.4]{hegde2024large}. Indeed,  for $\bg=(\gamma_0,\dots,\gamma_N)\in\mathrm{BCMot}_N$, let
\[\boldsymbol{\lambda}=(\boldsymbol{\lambda}_1,\boldsymbol{\lambda}_2)\in\mathbb{Z}^{2N+2},\quad \boldsymbol{\lambda}_i=(\lambda_i(0),\dots,\lambda_i(N)),\quad i=1,2,\]
be given by
\begin{equation}\label{eq:lambda bij}
    \lambda_1(0)=0,\qquad \lambda_1(j)-\lambda_1(j-1)=\Phi(\bg)_j, \mbox{  }j\in\llbracket1,N\rrbracket,\qquad \lambda_2(j)=\lambda_1(j)-\gamma_j,\mbox{ }j\in\llbracket0,N\rrbracket.
\end{equation} One can check that this is a bijection between $\mathrm{BCMot}_N$ and $\mathcal{TL}_N$, and that it maps the weight $\mathrm{wt}_N^{(q,\a,\b,\c,\d)}$ from Definition \ref{def:measure Motzkin} to the two-layer Gibbs weight from \cite[(2.3)]{hegde2024large}.  
Therefore, \cite[Theorem~2.5]{hegde2024large} proves the asserted
pushforward identity and gives
$0<Z_N^{(q,\a,\b,\c,\d)}<\infty$.

    For the Enaud--Derrida representation $\mathbf D_{\mathrm{ED}}, \mathbf E_{\mathrm{ED}}, \langle W_{\mathrm{ED}}|, |V_{\mathrm{ED}}\rangle$ originally constructed in \cite{enaud2004large}, from \cite[(2.8)]{hegde2024large} we have
 $Z_N^{(q,\a,\b,\c,\d)}
=(1-q)^N
\langle W_{\mathrm{ED}}|
(\mathbf D_{\mathrm{ED}}+\mathbf E_{\mathrm{ED}})^N
|V_{\mathrm{ED}}\rangle$ under the assumption that $\a\b<1$. Moreover,  \cite[Lemma~2.3]{hegde2024large} gives 
$\langle W_{\mathrm{ED}}|V_{\mathrm{ED}}\rangle
= (\a\b\c\d;q)_\infty/(\a\b;q)_\infty$. 
The continuity of $Z_N^{(q,\a,\b,\c,\d)}$ follows from all the entries of this representation being continuous and the local uniform convergence of infinite sums.
In view of Remark \ref{rmk:denomenator 0},
\[ 
\langle W_{\mathrm{ED}}|
(\mathbf D_{\mathrm{ED}}+\mathbf E_{\mathrm{ED}})^N
|V_{\mathrm{ED}}\rangle/
\langle W_{\mathrm{ED}}|V_{\mathrm{ED}}\rangle 
= 
\langle W_{\mathrm{U}}|
(\mathbf D_{\mathrm{U}}+\mathbf E_{\mathrm{U}})^N
|V_{\mathrm{U}}\rangle/\langle W_{\mathrm{U}}|V_{\mathrm{U}}\rangle ,
\]
where \(\mathrm U\) denotes the Uchiyama--Sasamoto--Wadati representation \cite{uchiyama2004asymmetric}. By
\cite[(3.12), (4.18)]{uchiyama2004asymmetric}, 
$\langle W_{\mathrm U}|V_{\mathrm U}\rangle
= (\a\b\c\d;q)_\infty/ (q,\a\b,\a\c,\a\d,\b\c,\b\d,\c\d;q)_\infty $. 
The integral formula follows from \cite[(6.1)]{uchiyama2004asymmetric}.
\end{proof}

Let us stress that in the above theorem, we allow for the open ASEP with general boundary parameters, assuming only that $\a\b<1$. The next theorem gives an alternative representation for the open TASEP, which is reminiscent of the representation of the open ASEP stationary measure via signed measure on bicolored Motzkin paths from  \cite{brak2006combinatorial}; see also \cite{CW:CombinatoricsTASEP} for the connection between the matrix product ansatz, bicolored Motzkin paths and Dyck paths.

\begin{theorem}\label{thm:TASEP rep}
Fix $N\in\mathbb N$. Assume $\a,\b\in[0,1]$, $\c,\d\in(-1,0]$, and $\a\b<1$. 
Denote $\upchi_j(u,v)=\sum_{\ell=0}^{j}u^{j-\ell}v^\ell$ for every $u,v\in\mathbb R$ and $j\in\mathbb Z_{\geq0}$.
For each $\bg=(\gamma_0,\dots,\gamma_N)\in\mathrm{BCMot}_N$, define 
\[
\Uplambda(\bg)= \upchi_{\gamma_0}(\a,\c)\upchi_{\gamma_N}(\b,\d)/\mathcal Z_N,
\]
where $\mathcal Z_N=\sum_{\bg\in\mathrm{BCMot}_N}\upchi_{\gamma_0}(\a,\c)\upchi_{\gamma_N}(\b,\d)$. Then $\Uplambda$ defines a finite signed measure on $\mathrm{BCMot}_N$ of total mass one,
and its pushforward under the map $\Phi$ from
Definition~\ref{def:bimot project to particles} is the stationary
measure $\mu_N^{(0,\a,\b,\c,\d)}$ of open TASEP.  Moreover, we have the integral identity
\[\mathcal Z_N
=
\frac{2^{N+1}(1-\a\c)(1-\b\d)}{\pi}
\int_0^\pi
\frac{d\theta(1+ \cos\theta)^N\sin^2\theta}
{\left(1-2\a\cos\theta+\a^2\right)
 \left(1-2\b\cos\theta+\b^2\right)
 \left(1-2\c\cos\theta+\c^2\right)
 \left(1-2\d\cos\theta+\d^2\right)}. \]   
\end{theorem}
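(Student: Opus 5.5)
We will realize the measure $\Uplambda$ through a concrete solution of the DEHP algebra \eqref{eq:DEHP algebra} with $q=\gamma=\delta=0$, namely an Enaud--Derrida type representation on $\ell^2(\mathbb Z_{\geq0})$. Let $\{|n\rangle\}_{n\geq0}$ be the standard basis and $\mathbf S^\pm$ the shift operators, $\mathbf S^+|n\rangle=|n+1\rangle$, $\mathbf S^-|n\rangle=|n-1\rangle$ (with $\mathbf S^-|0\rangle=0$). Set
\[
\mathbf D=\mathbf 1+\mathbf S^+ ,\qquad \mathbf E=\mathbf 1+\mathbf S^- .
\]
Then $\mathbf D\mathbf E=\mathbf 1+\mathbf S^++\mathbf S^-+\mathbf S^+\mathbf S^-$ and $\mathbf D+\mathbf E=2\cdot\mathbf 1+\mathbf S^++\mathbf S^-$. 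Hence $\mathbf D\mathbf E-\mathbf D-\mathbf E=\mathbf S^+\mathbf S^--\mathbf 1=-|0\rangle\langle 0|$, which is not $0$, so this choice fails. The standard fix is to use the bicolored expansion in which $\mathbf D$ and $\mathbf E$ each contain one horizontal "color": take $\mathbf D=\mathbf 1+\mathbf S^-$ acting as "black horizontal + down" and $\mathbf E=\mathbf 1+\mathbf S^+$ as "white horizontal + up", or the reverse. Since $\Phi$ sends up steps to $\tau=1$, the correct choice is $\mathbf D=\mathbf 1+\mathbf S^{+}$ (up or black) and $\mathbf E=\mathbf 1+\mathbf S^{-}$ (down or white), written with row vectors acting from the left. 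One then checks $\mathbf D\mathbf E=\mathbf D+\mathbf E$ in the orientation where the identity $\mathbf S^-\mathbf S^+=\mathbf 1$ appears, i.e.\ choosing the ordering so that the product is $\mathbf 1+\mathbf S^++\mathbf S^-+\mathbf S^-\mathbf S^+$ read along the path. Fixing this bookkeeping, namely which of $\langle n|\mathbf S^{\pm}$ raises the height, is the first routine step.

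Next we solve for the boundary vectors. We seek $\langle W|=\sum_n w_n\langle n|$ with $\alpha\langle W|\mathbf E=\langle W|$ and $|V\rangle=\sum_n v_n|n\rangle$ with $\beta\mathbf D|V\rangle=|V\rangle$. These give two-term linear recursions. With $q=\gamma=0$ we have $\a=(1-\alpha)/\alpha$ and $\c=0$ from \eqref{eq:phi}, so the recursion yields $w_n\propto \a^n=\upchi_n(\a,0)$, and likewise $v_n\propto\b^n$. This suggests that for $q=0$ but general $\gamma,\delta$, i.e.\ $\c,\d\neq0$, the relevant TASEP algebra is \eqref{eq:DEHP algebra} with $\gamma,\delta$ present. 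In that case the eigen-recursion is second order with characteristic roots $\a,\c$ (the two values of $\phi_\pm$), and the solution with $w_{-1}=0$ is exactly the complete homogeneous polynomial $\upchi_n(\a,\c)=(\a^{n+1}-\c^{n+1})/(\a-\c)$. This is the key verification; the roots match because $\a\c=-\gamma/\alpha$ and $\a+\c=(1-\alpha+\gamma)/\alpha$. Expanding $\langle W|\prod_i(\cdot)|V\rangle$ over basis indices then gives a sum over bicolored Motzkin paths $\bg$ with weight $\upchi_{\gamma_0}(\a,\c)\upchi_{\gamma_N}(\b,\d)$. Grouping by $\Phi(\bg)$ and applying Lemma~\ref{lem:MPA} yields the pushforward claim, with normalization $\mathcal Z_N$.

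For finiteness, we use the bound $|\upchi_j(u,v)|\le (j+1)\max(|u|,|v|)^j$ together with $Z_N(x,y)\le 4^N$ and $Z_N(x,y)=0$ for $|x-y|>N$. The double sum $\sum_{x,y}$ converges when $\a\b<1$ and $\a,\b\le1$, since along pairs with $|x-y|\le N$ the product behaves like $(\a\b)^{x}$ up to polynomial factors, and $|\c|,|\d|<1$. Convergence also makes the associativity hypothesis in Lemma~\ref{lem:MPA} legitimate, and it shows $\mathcal Z_N>0$ (for instance by continuity from $\c=\d=0$, or from the integral below, whose integrand is positive). Total mass one is then immediate.

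For the integral identity, we insert the sine formula of Lemma~\ref{lem:count biMot} into $\mathcal Z_N=\sum_{x,y}\upchi_x(\a,\c)\upchi_y(\b,\d)Z_N(x,y)$ and interchange sum and integral. It then suffices to evaluate the generating function
\[
\sum_{x\ge0}\upchi_x(\a,\c)\sin((x+1)\theta)=\frac{(1-\a\c)\sin\theta}{(1-2\a\cos\theta+\a^2)(1-2\c\cos\theta+\c^2)},
\]
which follows from $\sum_x\upchi_x(\a,\c)z^x=1/((1-\a z)(1-\c z))$ evaluated at $z=e^{\pm\mathtt i\theta}$ and taking imaginary parts. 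The main obstacle is the case $\a=1$ or $\b=1$, where this series does not converge absolutely. There we would prove the identity for $\a,\b<1$ and pass to the limit, using continuity of both sides: the left side by dominated convergence using $\a\b<1$, and the right side because the factor $\sin^2\theta$ cancels the singularity at $\theta=0$ of only one of the two denominators, which suffices since $\a\b<1$ rules out $\a=\b=1$.
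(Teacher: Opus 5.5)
Your argument is correct. For the representation and the pushforward it coincides with the paper's proof: the same bidiagonal $\mathbf D,\mathbf E$, boundary vectors $W_n=\upchi_n(\a,\c)$ and $V_n=\upchi_n(\b,\d)$, expansion over bicolored Motzkin paths, absolute convergence, and Lemma~\ref{lem:MPA}. Your second-order recursion with $W_{-1}=0$ is exactly the check the paper leaves as ``one can check''.

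Three points of presentation need fixing.
\begin{itemize}
\item In your own ket convention $\mathbf S^+|n\rangle=|n+1\rangle$, the pair that works is $\mathbf D=\mathbf 1+\mathbf S^-$ and $\mathbf E=\mathbf 1+\mathbf S^+$, i.e.\ upper and lower bidiagonal respectively. Then $\mathbf D\mathbf E$ contains $\mathbf S^-\mathbf S^+=\mathbf 1$. Since $\langle n|\mathbf S^-=\langle n+1|$, this $\mathbf D$ does encode ``up or black''. Your sentence ``the correct choice is $\mathbf D=\mathbf 1+\mathbf S^{+}$ \dots written with row vectors acting from the left'' literally reinstates the pair you had just shown to fail. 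The operator identity $\mathbf D\mathbf E=\mathbf D+\mathbf E$ does not depend on which side vectors act from.
\item The summability rate is $\max(\a,|\c|)\max(\b,|\d|)<1$, not $\a\b$.
\item ``Continuity from $\c=\d=0$'' does not give $\mathcal Z_N>0$. Your other justification does: the integrand is positive and $(1-\a\c)(1-\b\d)>0$. This is not circular, because your integral identity uses only Lemma~\ref{lem:count biMot}, so positivity is available before you invoke Lemma~\ref{lem:MPA}.
\end{itemize}

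The genuine difference is in the integral identity, and with it positivity of $\mathcal Z_N$.
\begin{itemize}
\item The paper computes $\langle W|V\rangle$. It then uses the representation-independence of Remark~\ref{rmk:denomenator 0} with the Enaud--Derrida normalization to get $\mathcal Z_N=Z_N^{(0,\a,\b,\c,\d)}/((1-\a\d)(1-\b\c)(1-\c\d))$. Finally it quotes the Uchiyama--Sasamoto--Wadati contour integral from Theorem~\ref{thm:Motzkin representation}, so positivity comes from the positive weights there.
\item You instead substitute the sine formula for $Z_N(x,y)$ from Lemma~\ref{lem:count biMot}, apply Fubini for $\a,\b<1$, and sum $\sum_x\upchi_x(\a,\c)\sin((x+1)\theta)$ in closed form. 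This is the same computation the paper performs later in Lemma~\ref{lem:limit to half space}, and the product of the two sine series is exactly the stated integrand.
\end{itemize}
Your route is elementary and self-contained: it avoids the cited uniqueness result, the two-layer/Enaud--Derrida identification, and the contour integral. The paper's route is shorter only because that machinery is already in place for Theorem~\ref{thm:Motzkin representation}.

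Your limiting step at $\a=1$ or $\b=1$ is sound.
\begin{itemize}
\item On the series side, $|\upchi_x(\a,\c)|\le x+1$ for $\a\le 1$, the other boundary factor decays geometrically, and $Z_N(x,y)\le 4^N\mathbf 1_{\{|x-y|\le N\}}$. So dominated convergence applies.
\item On the integral side, $\sin^2\theta\le 2(1-\cos\theta)\le \a^{-1}(1-2\a\cos\theta+\a^2)$ gives a uniform bound near $\a=1$. The remaining denominators stay bounded below because $\a\b<1$ and $\c,\d>-1$.
\end{itemize}
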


 \begin{proof}
One can check that the infinite matrices 
\begin{equation}\label{eq:DandE}
    \mathbf{D}= \begin{bmatrix}
  1 & 1 & 0 & 0 &\dots\\
  0 & 1 & 1 &0 &\dots\\
  0& 0 & 1 &  1 \\
   \vdots & & \ddots&\ddots \\
\end{bmatrix},\quad
   \mathbf{E}= \begin{bmatrix}
  1  & 0 & 0 & \dots& \\
  1  & 1  & 0&\dots \\
  0& 1  & 1  &  &\\
   \vdots & & \ddots & \ddots\\
\end{bmatrix}, 
\end{equation}     
and infinite vectors:
\[
\langle W | =[\upchi_0(\a,\c),\upchi_1(\a,\c),\upchi_2(\a,\c),\dots], \quad\quad\quad
|V\rangle= [\upchi_0(\b,\d),\upchi_1(\b,\d),\upchi_2(\b,\d),\dots]^T,
\]
satisfy the DEHP algebra \eqref{eq:DEHP algebra} with $q=0$. Since $|\upchi_h(u,v)|
\leq (h+1)\max(|u|,|v|)^{h}$ for each $u,v\in\mathbb R$ and $h\in\mathbb Z_{\geq0}$, we have 
$\sum_{x,y\geq0}
|W_x|\,
|(\mathbf M_1\cdots\mathbf M_m)_{x}^{y}|\,
|V_y|<\infty$ for every finite word $\mathbf M_1\cdots\mathbf M_m$, with
$\mathbf M_i\in\{\mathbf D,\mathbf E\}$. Moreover,
$\sum_{\bg\in\mathrm{BCMot}_N} |
\upchi_{\gamma_0}(\a,\c)\upchi_{\gamma_N}(\b,\d) |
=
\sum_{x,y\geq0}
Z_N(x,y)|W_xV_y|<\infty$, 
because $Z_N(x,y)=0$ unless $|x-y|\leq N$, and so 
$Z_N(x,y)\leq4^N$.  For every $\bg\in\mathrm{BCMot}_N$, let $\boldsymbol{\lambda}\in\mathcal{TL}_N$ be given by the bijection \eqref{eq:lambda bij}, then one can check  
\[
 \upchi_{\gamma_0}(\a,\c)\upchi_{\gamma_N}(\b,\d) = W_{\lambda_1(0)-\lambda_2(0)}\cdot\prod_{j=1}^N
    \left({\bf1}_{\lambda_1(j)-\lambda_1(j-1)=0}\mathbf{E}+{\bf1}_{\lambda_1(j)-\lambda_1(j-1)=1}\mathbf{D}\right)_{\lambda_1(j-1)-\lambda_2(j-1)}^{\lambda_1(j)-\lambda_2(j)}
    \cdot V_{\lambda_1(N)-\lambda_2(N)}.
\] 
Consequently, for every
$\boldsymbol{\tau}=(\tau_1,\dots,\tau_N)\in\{0,1\}^N$,  
\[
\sum_{ \bg\in\mathrm{BCMot}_N ,\,
                 \Phi(\bg)=\boldsymbol{\tau}} 
\upchi_{\gamma_0}(\a,\c)\upchi_{\gamma_N}(\b,\d)
=
\left\langle W\Bigg{|}
\prod_{j=1}^N
\left(
(1-\tau_j)\mathbf E
+
\tau_j\mathbf D
\right)
\Bigg{|}V\right\rangle.
\]
In particular,  
 $ \mathcal Z_N=\sum_{\bg\in\mathrm{BCMot}_N}
\upchi_{\gamma_0}(\a,\c)\upchi_{\gamma_N}(\b,\d)
=
\langle W|(\mathbf D+\mathbf E)^N|V\rangle$. 
We conclude the pushforward identity from the matrix product ansatz stated as Lemma \ref{lem:MPA}.
We next prove the integral identity. Compute
\begin{equation}\label{eq:wvytuye}
    \langle W|V \rangle=  (1-\a\b\c\d)/ ((1-\a\b)(1-\a\d)(1-\b\c)(1-\c\d)).\end{equation} 
For the Enaud--Derrida representation, taking $q=0$ in \cite[(2.8) and Lemma 2.3]{hegde2024large} gives 
\[
Z_N^{(0,\a,\b,\c,\d)}
= 
\langle W_{\mathrm{ED}}|
(\mathbf D_{\mathrm{ED}}+\mathbf E_{\mathrm{ED}})^N
|V_{\mathrm{ED}}\rangle,\qquad \langle W_{\mathrm{ED}}|V_{\mathrm{ED}}\rangle
= (1-\a\b\c\d)/(1-\a\b)   ,
\]
and hence by Remark \ref{rmk:denomenator 0} and \eqref{eq:wvytuye}, 
$\mathcal Z_N= 
 Z_N^{(0,\a,\b,\c,\d)}/( (1-\a\d)(1-\b\c)(1-\c\d) )$. When  $\a,\b<1$, the integral identity follows from Theorem \ref{thm:Motzkin representation}. When $\a=1$ or $\b=1$ it can be shown by a continuity argument.
\end{proof}

As a direct consequence of the above statements, we obtain  Proposition~\ref{prop:SpecialExcursion}.

\begin{proof}[Proof of Proposition~\ref{prop:SpecialExcursion}]
Since $\upchi_x(0,0)={\bf1}_{\{x=0\}}$ for $x\in\mathbb Z_{\geq0}$, at \(\a=\b=\c=\d=0\), the path measure \(\Uplambda\) in Theorem~\ref{thm:TASEP rep} is the uniform  measure on \(\mathrm{BCMot}_N^{0,0}\). Hence \(\mu_N^{(0,0,0,0,0)}\) is the pushforward under \(\Phi\) of the uniform probability measure on \(\mathrm{BCMot}_N^{0,0}\), and the claim follows from Lemma~\ref{lem:exc and mea projection}.
\end{proof}

Finally, we record the well-known particle-hole duality of the stationary measures of the open ASEP; see, for example, \cite[Section 4.2]{wang2023askey} for a proof of this result.
\begin{lemma} \label{lem:particle hole}
Fix $N\in\mathbb N$. Assume $\a,\b,\c,\d, q$ satisfy \eqref{eq:conditions qABCD}.
    The stationary measures $\mu_N^{(q,\a,\b,\c,\d)}$ and $\mu_N^{(q,\b,\a,\d,\c)}$ for open ASEPs are related by a transformation of occupation variables $\widehat{\tau}_i=1-\tau_{N+1-i}$ for $i\in\llbracket1,N\rrbracket$. 
\end{lemma}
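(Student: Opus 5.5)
The plan is to prove the duality directly from the matrix product ansatz (Lemma~\ref{lem:MPA}), using the fact that the ratios of matrix elements do not depend on the chosen representation (Remark~\ref{rmk:denomenator 0}). Start from a solution $(\mathbf D,\mathbf E,\langle W|,|V\rangle)$ of the DEHP algebra \eqref{eq:DEHP algebra} with parameters $(q,\alpha,\beta,\gamma,\delta)$. Set $\widehat{\mathbf D}=\mathbf E^{T}$, $\widehat{\mathbf E}=\mathbf D^{T}$, $\langle \widehat W|=(|V\rangle)^{T}$ and $|\widehat V\rangle=(\langle W|)^{T}$.

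Transposing $\mathbf D\mathbf E-q\mathbf E\mathbf D=\mathbf D+\mathbf E$ gives $\mathbf E^T\mathbf D^T-q\mathbf D^T\mathbf E^T=\mathbf D^T+\mathbf E^T$, that is, $\widehat{\mathbf D}\widehat{\mathbf E}-q\widehat{\mathbf E}\widehat{\mathbf D}=\widehat{\mathbf D}+\widehat{\mathbf E}$. Transposing $(\beta\mathbf D-\delta\mathbf E)|V\rangle=|V\rangle$ gives $\langle\widehat W|(\beta\widehat{\mathbf E}-\delta\widehat{\mathbf D})=\langle\widehat W|$. Likewise, transposing the $\langle W|$ relation gives $(\alpha\widehat{\mathbf D}-\gamma\widehat{\mathbf E})|\widehat V\rangle=|\widehat V\rangle$. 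So the hatted objects solve the DEHP algebra with $(\alpha,\gamma)$ and $(\beta,\delta)$ interchanged. By \eqref{eq:defining ABCD}, this corresponds exactly to the parameters $(q,\b,\a,\d,\c)$.

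Next compute the numerator of \eqref{eq:MPA open ASEP} in the hatted representation for a configuration $\widehat\tau_i=1-\tau_{N+1-i}$. Writing $\widehat{\mathbf X}_i$ for the hatted matrix at site $i$, it is $\widehat{\mathbf D}=\mathbf E^T$ if $\tau_{N+1-i}=0$ and $\widehat{\mathbf E}=\mathbf D^T$ if $\tau_{N+1-i}=1$. Hence $\widehat{\mathbf X}_i=(\mathbf X_{N+1-i})^T$, and
\[
\langle\widehat W|\widehat{\mathbf X}_1\cdots\widehat{\mathbf X}_N|\widehat V\rangle=\left(\langle W|\mathbf X_1\cdots\mathbf X_N|V\rangle\right)^T=\langle W|\mathbf X_1\cdots\mathbf X_N|V\rangle .
\]
The denominators $\langle W|(\mathbf D+\mathbf E)^N|V\rangle$ agree by the same identity. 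Lemma~\ref{lem:MPA} then yields $\mu_N^{(q,\b,\a,\d,\c)}(\widehat\tau)=\mu_N^{(q,\a,\b,\c,\d)}(\tau)$.

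The main obstacle is technical. We need a representation in which all products are absolutely convergent, so that transposition and associativity are legitimate and the denominator is nonzero. I would first try the Uchiyama--Sasamoto--Wadati representation under the generic condition $q^\ell\a\b\c\d\neq1$ and pass to other parameters by continuity of the stationary measure in $(\alpha,\beta,\gamma,\delta)$. The stationary measure of an irreducible finite chain is the normalized vector of cofactors of the generator, hence a rational function of the rates. Alternatively, one can bypass the matrix product ansatz entirely and argue at the level of dynamics. The map $\tau\mapsto\widehat\tau$ conjugates the open ASEP generator with rates $(\alpha,\beta,\gamma,\delta)$ into the one with rates $(\beta,\alpha,\delta,\gamma)$: particles moving right at rate $1$ become holes moving right, i.e.\ particles of $\widehat\tau$ moving right at rate $1$, and left entry at rate $\alpha$ becomes right exit at rate $\alpha$. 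The unique stationary measures then correspond, which is the cleanest route and the one I would write down.
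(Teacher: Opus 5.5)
Your proof is correct. The paper gives no argument for this lemma and only cites \cite[Section~4.2]{wang2023askey}, so your proposal is a self-contained proof rather than a variant of one in the paper.

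The route you say you would write down, conjugating the generator, is the right one. The map $\tau\mapsto\widehat\tau$ is an involution of $\{0,1\}^N$. It sends each transition of the chain with rates $(\alpha,\beta,\gamma,\delta)$ to a transition of the same rate in the chain with rates $(\beta,\alpha,\delta,\gamma)$. Hence the image of a stationary law satisfies global balance for the new chain. Uniqueness, which only needs $\beta,\alpha>0$, then identifies it. Applying $\phi_\pm$ to the swapped pairs in \eqref{eq:defining ABCD} shows the new chain is the one labelled $(q,\b,\a,\d,\c)$.

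Your one-line description of the bulk moves is compressed. The clean bookkeeping is as follows. A $\tau$-particle jumping $i\to i+1$ at rate $1$ is a $\widehat\tau$-particle jumping $N-i\to N+1-i$ at rate $1$, and likewise for the rate-$q$ jumps. Entry/exit at site $1$ at rates $\alpha,\gamma$ becomes exit/entry at site $N$ at rates $\alpha,\gamma$. Symmetrically, entry/exit at site $N$ at rates $\delta,\beta$ becomes exit/entry at site $1$ at rates $\delta,\beta$. This argument covers the whole parameter range and needs no convergence, non-degeneracy or continuity input.

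Your first route, transposing a DEHP solution, is also correct. Its advantage is that it shows the duality inside the path representations the paper uses. For the open TASEP solution \eqref{eq:DandE} one has $\mathbf D^T=\mathbf E$. So the transposed solution is the same pair of matrices with the boundary vectors $[\upchi_j(\a,\c)]_j$ and $[\upchi_j(\b,\d)]_j$ exchanged. In the setting of Theorem~\ref{thm:TASEP rep}, this is just reversal of the bicolored Motzkin path with the colors of horizontal steps flipped.

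For general $q$, however, this route costs exactly the overhead you identify. You need a representation with finite products and nonzero denominator, followed by a density argument using that the stationary law is rational in the rates. The dynamical proof avoids all of this. Also, Remark~\ref{rmk:denomenator 0} is not needed in that route. Lemma~\ref{lem:MPA} applies directly to the transposed solution, since each scalar $\langle\widehat W|\cdots|\widehat V\rangle$ is literally the same multiple sum as the corresponding original one.
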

 
The particle-hole duality will in particular allow us to transfer results between the high density and the low density phase of the open ASEP.

\subsection{Some useful estimates on combinatorial quantities }\label{sec:Combinatorial}

In the following, we collect some useful combinatorial estimates, most of them corresponding to  probabilities of certain events of lazy simple random walks, which we will apply throughout this article. While such bounds can in principle be deduced from local limit theorems for simple random walks --- see for example \cite{LL:RWIntroduction} --- we will provide self-contained elementary, purely combinatorial proofs of Lemma~\ref{lem:bounds on combinatorial numbers} to Lemma~\ref{lem:TechnicalCombiInput} in Appendix~\ref{app:Combinatorics}.

\begin{lemma}\label{lem:bounds on combinatorial numbers}
    For every $n\in\mathbb Z_{\geq0}$ and $m\in\llbracket-n,n\rrbracket$,
    \[
 \frac{4^{n-1}}{e^2\sqrt{n+1}}
\exp\!\left(-(\log4)\frac{m^2}{n+1}\right) \leq \binom{2n}{n+m}\leq  \frac{4^n}{\sqrt{n+1}}
\exp\!\left(- \frac{m^2}{n+1}\right).
    \]
\end{lemma}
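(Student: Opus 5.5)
The plan is to prove both inequalities directly from the product formula for the central binomial coefficient together with elementary bounds on the ratio $\binom{2n}{n+m}/\binom{2n}{n}$. By symmetry $\binom{2n}{n+m}=\binom{2n}{n-m}$, it suffices to treat $m\in\llbracket0,n\rrbracket$. The argument splits into two independent tasks. The first is a two-sided bound on the central term $\binom{2n}{n}$ in terms of $4^n/\sqrt{n+1}$. The second is a two-sided bound on the ratio
\[
R_{n,m}:=\frac{\binom{2n}{n+m}}{\binom{2n}{n}}=\prod_{j=1}^{m}\frac{n-j+1}{n+j},
\]
namely an upper bound by $e^{-m^2/(n+1)}$ and a lower bound by a constant times $e^{-(\log 4)m^2/(n+1)}$.

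For the central term, I would use the standard monotonicity of $a_n:=\binom{2n}{n}\sqrt{n+1}\,4^{-n}$ (or of $\binom{2n}{n}\sqrt{n}\,4^{-n}$). The ratio $a_{n+1}/a_n=\frac{(2n+1)(2n+2)}{4(n+1)^2}\sqrt{\frac{n+2}{n+1}}=\frac{2n+1}{2n+2}\sqrt{\frac{n+2}{n+1}}$ is checked to be at most $1$ by squaring, which gives $(2n+1)^2(n+2)\le(2n+2)^2(n+1)$. Hence $a_n\le a_0=1$, which yields the upper bound at $m=0$. For the lower bound, the same ratio computation with $\sqrt n$ in place of $\sqrt{n+1}$ shows that $\binom{2n}{n}\sqrt n\,4^{-n}$ is increasing and therefore at least its value $1/2$ at $n=1$. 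This gives $\binom{2n}{n}\ge 4^n/(2\sqrt{n+1})\cdot\sqrt{(n+1)/n}\ge 4^{n-1}/\sqrt{n+1}$, and the case $n=0$ is trivial. This is comfortably above $4^{n-1}e^{-2}/\sqrt{n+1}$.

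For the ratio upper bound, I would write each factor as $\frac{n-j+1}{n+j}=1-\frac{2j-1}{n+j}\le\exp\!\bigl(-\frac{2j-1}{n+j}\bigr)$. Using $n+j\le 2n$ is too lossy, so instead I would bound $\sum_{j=1}^m\frac{2j-1}{n+j}$ from below by $\frac{m^2}{n+1}$. The idea is to pair terms or compare with an integral. A cleaner route is to prove by induction on $m$ that $\sum_{j\le m}\frac{2j-1}{n+j}\ge\frac{m^2}{n+m}$ and then improve the bound. Alternatively, one can use the bound $\frac{n-j+1}{n+j}\le\frac{n+1-j}{n+1+j}\cdot\frac{n+1+j}{n+j}$ and the inequality $\frac{1-x}{1+x}\le e^{-2x}$ with $x=j/(n+1)$. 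This produces $\exp(-m(m+1)/(n+1))$ times a correction factor $\prod_j\frac{n+1+j}{n+j}=\frac{n+1+m}{n+1}\le e^{m/(n+1)}$, giving exactly $e^{-m^2/(n+1)}$. This last route looks clean and is the one I would pursue.

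For the ratio lower bound, I would use $\frac{n-j+1}{n+j}\ge\frac{n+1-j}{n+1+j}=\frac{1-x}{1+x}$ with $x=j/(n+1)\in(0,1)$. The elementary inequality needed is $\log\frac{1+x}{1-x}\le c\,x$ for a suitable constant, but this fails as $x\to1$. This is the main obstacle: the endpoint $m$ close to $n$, where $\binom{2n}{2n}=1$ must be compared with $4^n e^{-(\log4) n^2/(n+1)}/\sqrt{n+1}\approx 4^{n/(n+1)}e^{0}/\sqrt{n}$, which is of order $1$. This explains the $\log 4$ and the $e^{-2}$ slack. My approach is to use convexity: the function $m\mapsto\log\binom{2n}{n+m}$ is concave in $m$, so on $[0,n]$ it lies above the chord between $m=0$ and $m=n$. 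Since $m^2/(n+1)\le m\cdot n/(n+1)$, it suffices to show $\log\binom{2n}{n}-\frac{m}{n}\bigl(\log\binom{2n}{n}-0\bigr)\ge (n-1)\log4-2-\tfrac12\log(n+1)-(\log4)\frac{mn}{n+1}$. Checking the endpoints $m=0$ (by the central lower bound) and $m=n$ (where the right side equals $-\log 4-2+\frac{\log4}{n+1}-\tfrac12\log(n+1)+\ldots\le0$) finishes the proof, since both sides are affine in $m$. The remaining small-$n$ cases can be verified by hand.
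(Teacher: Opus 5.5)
Your bounds on the central term and your upper bound are fine. The upper bound is essentially the paper's argument: the paper applies $\log\frac{1-x}{1+x}\le -2x$ with $x=(2j-1)/(2n+1)$ directly, while you use $x=j/(n+1)$ together with the telescoping factor $\frac{n+1+m}{n+1}\le e^{m/(n+1)}$. Both give $e^{-m^2/(n+1)}$.

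The gap is in the lower bound. Your reduction ``since $m^2/(n+1)\le mn/(n+1)$, it suffices to show the chord is at least $(n-1)\log4-2-\tfrac12\log(n+1)-(\log4)\frac{mn}{n+1}$'' runs the wrong way. The quadratic term enters with a minus sign, so replacing $m^2$ by the larger $mn$ makes the right-hand side smaller. The affine inequality you verify at the two endpoints therefore yields a strictly weaker statement than the lemma.

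The direction cannot be repaired either. The chord only gives $\log\binom{2n}{n+m}\ge(1-\frac mn)\log\binom{2n}{n}$. At $m=n/2$ this is about $\frac n2\log4$, whereas the required right-hand side is about $\frac34 n\log4$. For instance, at $n=100$, $m=50$ the chord value is $\approx 67.9$ while the target is $\approx 98.6$. Structurally, the target $-(\log4)m^2/(n+1)$ is itself concave in $m$. Comparing the concave function $\log\binom{2n}{n+m}$ against it therefore cannot be reduced to a chord plus two endpoint checks.

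What you need is a comparison with a parabola through the endpoint $m=n$, not with a chord. Set $x_j=(2j-1)/(2n+1)$ and $\phi(x)=\log\frac{1+x}{1-x}$. Then
$g(m):=\log\binom{2n}{n}-\log\binom{2n}{n+m}=\sum_{j=1}^m\phi(x_j)$.

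Since $\phi(x)/x$ is nondecreasing, and $\sum_{j\le m}x_j=\frac{m^2}{2n+1}$ while $x_{m+1}=\frac{2m+1}{2n+1}$, you get $(2m+1)g(m)\le m^2\phi(x_{m+1})$. This says exactly that $(m+1)^2g(m)\le m^2g(m+1)$, so $g(m)/m^2$ is nondecreasing. Hence
$g(m)\le\frac{m^2}{n^2}\log\binom{2n}{n}\le(\log4)\frac{m^2}{n}$.

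The discrepancy $(\log4)\bigl(\frac{m^2}{n}-\frac{m^2}{n+1}\bigr)\le\log4$ costs exactly the factor $4$ in $4^{n-1}$. Your bound $\binom{2n}{n}\ge 4^n/(2\sqrt{n+1})$ then finishes the proof without Stirling.

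The paper does the continuous version of this. Via Stirling's inequality it reduces to $I(t)\le(\log4)t^2$ for $I(t)=(1+t)\log(1+t)+(1-t)\log(1-t)$. That inequality follows from $I(t)/t^2$ being nondecreasing with $I(1)=\log4$, which is again a parabola pinned at the endpoint rather than a chord.
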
 

Next, we provide upper and lower bounds on the function $Z_N(x,y)$ from Definition~\ref{def:bicolored Motzkin path}.

\begin{lemma}\label{lem:bounds ZNxy}
For $N,x,y\in\mathbb Z_{\geq0}$, recall $Z_N(x,y)$ from Definition~\ref{def:bicolored Motzkin path}. Then for every $N\in\mathbb Z_{\geq0}$,
\begin{align*}
     Z_N(x,y)&\le
  4^{N+1}\frac{(x+1)(y+1)}{(N+1)^{3/2}}
 \exp\!\left(- \frac{(x-y)^2}{N+1}\right)\qquad\mbox{ for every }x,y\in\mathbb Z_{\geq0},\\
 Z_N(0,y)
 &\ge
 e^{-2}4^{N-1}  \, \frac{y+1}{(N+1)^{3/2}}
 \exp\!\left(-(\log 4)\frac{y^2}{N+1}\right)\qquad\mbox{ for every }y\in\llbracket0,N\rrbracket.
\end{align*}
\end{lemma}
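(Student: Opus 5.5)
We prove both bounds from the exact formula \eqref{eq:ZNxy} of Lemma~\ref{lem:count biMot}, $Z_N(x,y)=\binom{2N}{N+x-y}-\binom{2N}{N+x+y+2}$. The plan is to write the difference of binomials as a sum of consecutive differences, and then control each difference by the corresponding binomial times a ratio factor. By symmetry of $Z_N$ in $(x,y)$ we may assume $x\le y$, so that $m:=y-x\ge0$.

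\textbf{Upper bound.} Let $b_k=\binom{2N}{N+k}$. Then
\[
Z_N(x,y)=b_{m}-b_{m+2x+2}=\sum_{j=m}^{m+2x+1}(b_j-b_{j+1}).
\]
The ratio $b_{j+1}/b_j=(N-j)/(N+j+1)$ gives
\[
b_j-b_{j+1}=b_j\,\frac{2j+1}{N+j+1}\le b_j\,\frac{2j+1}{N+1}.
\]
Apply Lemma~\ref{lem:bounds on combinatorial numbers} to bound $b_j\le 4^N(N+1)^{-1/2}e^{-j^2/(N+1)}$. This yields
\[
Z_N(x,y)\le \frac{4^N}{(N+1)^{3/2}}\sum_{j=m}^{m+2x+1}(2j+1)e^{-j^2/(N+1)}.
\]
The sum has $2x+2$ terms, each at most $e^{-m^2/(N+1)}\max_{j\ge m}(2j+1)e^{-(j^2-m^2)/(N+1)}$. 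The main obstacle is to bound this so that the result is $\lesssim (x+1)(y+1)$ rather than involving $(x+y)$ or $\sqrt N$. We plan two cases.
\begin{itemize}
\item If $j\le 2y+1$ on the whole range, the term $2j+1\le 4(y+1)$ directly gives the total bound $(2x+2)\cdot4(y+1)$, and the constant $4^{N+1}$ absorbs the factor $8$ with room to spare. Here $j\le m+2x+1=y+x+1\le 2y+1$ always holds since $x\le y$.
\item So in fact only this case arises, with a bound of order $8(x+1)(y+1)4^N(N+1)^{-3/2}e^{-m^2/(N+1)}$.
\end{itemize}
To fit the exact constant $4^{N+1}$, we sharpen slightly: use $2j+1\le 2(x+y)+3\le 4(y+1)$ and the decreasing exponential factor $e^{-j^2/(N+1)}\le e^{-m^2/(N+1)}$. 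This gives $(2x+2)\cdot 4(y+1)=8(x+1)(y+1)$, i.e.\ a factor $2$ beyond $4^{N+1}$. To recover it, pair the terms: the average of $2j+1$ over the range is $m+2x+2=x+y+2\le 2(y+1)$, and since $e^{-j^2/(N+1)}$ is decreasing, a Chebyshev sum inequality gives
\[
\sum(2j+1)e^{-j^2/(N+1)}\le e^{-m^2/(N+1)}\sum(2j+1)=e^{-m^2/(N+1)}(2x+2)(x+y+2)\le 4(x+1)(y+1)e^{-m^2/(N+1)}.
\]
This completes the upper bound.

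\textbf{Lower bound.} Take $x=0$, so
\[
Z_N(0,y)=b_y-b_{y+2}=(b_y-b_{y+1})+(b_{y+1}-b_{y+2})\ge b_y-b_{y+1}=b_y\frac{2y+1}{N+y+1}\ge b_y\frac{y+1}{N+1}.
\]
The last inequality uses $(2y+1)(N+1)\ge (y+1)(N+y+1)$, which is equivalent to $yN\ge y^2$ and holds for $y\le N$. Apply the lower bound of Lemma~\ref{lem:bounds on combinatorial numbers} with $m=y\in\llbracket0,N\rrbracket$: $b_y\ge 4^{N-1}e^{-2}(N+1)^{-1/2}\exp(-(\log4)y^2/(N+1))$. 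Multiplying gives the claim. The edge case $y=N$ is fine, since then $b_{N+1}=b_{N+2}=0$.
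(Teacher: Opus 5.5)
Your proof is correct and is essentially the paper's argument. Telescoping $b_m-b_{m+2x+2}$ with $b_j-b_{j+1}\le b_j\frac{2j+1}{N+1}$ is the same estimate as the paper's $1-\prod_j\frac{N-j+1}{N+j}\le\sum_j\frac{2j-1}{N+j}\le\frac{4(x+1)(y+1)}{N+1}$; your version has the small advantage of handling the case $x+y+2>N$ automatically, where the paper splits cases. Your lower bound reaches the same intermediate inequality $Z_N(0,y)\ge\frac{y+1}{N+1}\binom{2N}{N-y}$.

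There is one arithmetic slip. The average of $2j+1$ over $j\in\llbracket m,m+2x+1\rrbracket$ is $2m+2x+2=2(y+1)$, not $x+y+2$, so $\sum_j(2j+1)=(m+2x+2)^2-m^2=4(x+1)(y+1)$ exactly. The final bound is therefore unaffected. The Chebyshev step is also unnecessary, since $e^{-j^2/(N+1)}\le e^{-m^2/(N+1)}$ holds termwise.
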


For the next statement, $Z_N(x,\mathrm{free})4^{-N}$ can be interpreted as the probability for a lazy simple random walk started from $x$ to stay non-negative.

\begin{lemma}\label{lem:bounds ZNxfree}
For $N,x\in\mathbb Z_{\geq0}$, recall $Z_N(x,\mathrm{free})$ from Definition~\ref{def:bicolored Motzkin path}. Then for every $N,x\in\mathbb Z_{\geq0}$,
\begin{align*}
    & Z_N(x,\mathrm{free})
 \leq
 2\,4^N
 \min\left\{1,\frac{x+1}{\sqrt{N+1}}\right\}
 \qquad\mbox{ and }\qquad  Z_N(0,\mathrm{free}) 
 \geq
 \frac{e^{-2}4^{N-1}}{\sqrt{N+1}}.
\end{align*} 
Moreover, we have that for each fixed $L\in(0,\infty)$, uniformly for $n\geq3$ and $x\leq L\log n$,
\begin{equation}\label{eq:converse-short-free-expansion}
\frac{Z_n(x,\mathrm{free})}{4^n}
=\frac1{\sqrt{\pi n}}
\left(2(x+1)-\frac1n
\left(\frac23(x+1)^3+\frac7{12}(x+1)\right)
+O_L\!\left(\frac{(x+1)^5}{n^2}\right)\right).
\end{equation}
\end{lemma}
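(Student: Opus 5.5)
The statement to prove is Lemma~\ref{lem:bounds ZNxfree}. The plan is to work with the probabilistic reading $4^{-N}Z_N(x,\mathrm{free})=\mathbb P_x(S_i\ge 0 \text{ for } i\in\llbracket 0,N\rrbracket)$ for the lazy walk, together with the Dyck path bijection from the proof of Lemma~\ref{lem:count biMot}. Under that bijection, $Z_N(x,\mathrm{free})$ counts simple $\pm1$ paths of length $2N$ that start at $2x+1$, end at an odd height, and never visit $-1$. All three claims then follow from a closed form obtained by reflection.

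\textbf{Closed form.} Summing \eqref{eq:ZNxy} over $y\ge 0$ telescopes:
\[
Z_N(x,\mathrm{free})=\sum_{y\ge0}\left[\binom{2N}{N+x-y}-\binom{2N}{N+x+y+2}\right]=\sum_{j=-x-1}^{x}\binom{2N}{N+j}.
\]
In words, this is the number of binomial outcomes landing in a window of $2x+2$ central values.

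\textbf{The three bounds.} First, trivially $Z_N(x,\mathrm{free})\le 4^N$. Second, each of the $2x+2$ terms is at most $4^N/\sqrt{N+1}$ by Lemma~\ref{lem:bounds on combinatorial numbers}. Together these give
\[
Z_N(x,\mathrm{free})\le 2\,4^N\min\{1,(x+1)/\sqrt{N+1}\}.
\]
For the lower bound, take $x=0$. The window is then $j\in\{-1,0\}$, so $Z_N(0,\mathrm{free})\ge\binom{2N}{N}$. The lower bound in Lemma~\ref{lem:bounds on combinatorial numbers} with $m=0$ gives exactly $\binom{2N}{N}\ge e^{-2}4^{N-1}/\sqrt{N+1}$.

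\textbf{Expansion \eqref{eq:converse-short-free-expansion}.} Write $k=x+1$. Using symmetry, $Z_n(x,\mathrm{free})=\binom{2n}{n}+2\sum_{j=1}^{x}\binom{2n}{n+j}+\binom{2n}{n+x+1}$. I will expand each ratio
\[
\frac{\binom{2n}{n+j}}{\binom{2n}{n}}=\prod_{i=1}^{j}\frac{n-i+1}{n+i}=\exp\left(-\frac{j^2}{n}+O\!\left(\frac{j^4}{n^3}+\frac{j^2}{n^2}\right)\right),
\]
which is valid uniformly for $j\le L\log n+1$. This gives $1-j^2/n+O(j^4/n^2)$. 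I will combine it with
\[
4^{-n}\binom{2n}{n}=\frac{1}{\sqrt{\pi n}}\left(1-\frac{1}{8n}+O(n^{-2})\right).
\]
Summing $j^2$ over the window, the total weight is $2k$ at order one. The $1/n$ coefficient has two sources. One is $-\frac{2k}{8}$ from the central binomial correction. The other is the sum of $j^2$ over the window, namely $2\sum_{j=1}^{x} j^2+k^2=\frac{2k^3}{3}+\frac{k}{3}$. These should combine to $\frac23k^3+\frac{7}{12}k$.

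\textbf{Main obstacle.} The hardest step is bookkeeping the $1/n$ coefficient exactly, including the $j^2/n^2$ corrections inside the product, which must be placed at the right order. The first check is the $k$-linear term. Expanding the product to second order gives $\log\frac{n-i+1}{n+i}=-\frac{2i-1}{n}+O(i^2/n^2)$, which yields exponent $-j^2/n$ with only $O(j^3/n^2)$ error. The coefficient of $k/n$ must then come out as $\frac{1}{3}+\frac{2}{8}=\frac{7}{12}$, and I would verify this numerically for small $x$. The error terms are $O(k^5/n^2)$ because the $j^4/n^2$ terms summed over the window give $k^5/n^2$, and $k\le L\log n+1$ keeps all exponentials controlled.
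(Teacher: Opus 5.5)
Your proposal is correct and matches the paper's proof step for step. It uses the same telescoped closed form $Z_N(x,\mathrm{free})=\sum_{j=-x-1}^{x}\binom{2N}{N+j}$, the same trivial and termwise bounds, and the same expansion $4^{-n}\binom{2n}{n+j}=\frac{1}{\sqrt{\pi n}}\bigl(1-\frac{j^2+1/8}{n}+O_L((1+|j|)^4/n^2)\bigr)$ summed over the window, with $\frac13+\frac14=\frac7{12}$. The one detail to make explicit is that the Taylor step requires $n$ large depending on $L$; the finitely many remaining pairs $(n,x)$ are absorbed by enlarging the implicit constant, as the paper does.
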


Intuitively, in the next statement, we compare the probability of a lazy simple random walk started from $x$ and $0$, respectively, to end at location $y$ while staying non-negative.  

\begin{lemma}\label{lem:comparison kernels}
       For every $N,x,y\in\mathbb Z_{\geq0}$ satisfying
  $(x+2)^2\leq  N+1 $, 
    \[
     \left|
 \frac{Z_N(x,y)}{x+1}-Z_N(0,y)
 \right|
 \leq 4^{N+3}\frac{(x+1)^2(x+y+2)}{(N+1)^{5/2}}
 \exp\left(-\frac{y^2}{12(N+1)}\right).
    \]
\end{lemma}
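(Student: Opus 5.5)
We use the reflection formula \eqref{eq:ZNxy} from Lemma~\ref{lem:count biMot}:
\[
Z_N(x,y)=\binom{2N}{N+x-y}-\binom{2N}{N+x+y+2}.
\]
The plan is to view $Z_N(x,y)/(x+1)$ as an average of discrete derivatives, compare each derivative with the one at $x=0$, and control the remainder by a second-order Taylor bound on binomial coefficients. Set $f(k)=\binom{2N}{N+k}$, which is even in $k$. Writing $j=y+1$,
\[
Z_N(x,y)=f(x-y)-f(x+y+2)=\sum_{i=0}^{x}\bigl(f(j+i-x-1)-f(j+i+1)\bigr),
\]
since this telescopes over $i$. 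Each summand has the form $g(i):=f(j-1-(x-i))-f(j+1+i)$, and for $x=0$ it reduces to $Z_N(0,y)=f(j-1)-f(j+1)$.

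It follows that
\[
\frac{Z_N(x,y)}{x+1}-Z_N(0,y)=\frac{1}{x+1}\sum_{i=0}^{x}\Bigl[\bigl(f(j-1-(x-i))-f(j-1)\bigr)-\bigl(f(j+1+i)-f(j+1)\bigr)\Bigr].
\]
Each bracket is a difference of two shifted first differences of $f$, by $-(x-i)$ at $j-1$ and by $+i$ at $j+1$. Expanding to second order around the point $j$, the first-order terms give $-(x-i)f'(j)-i f'(j)=-x f'(j)$ plus errors of size $O\bigl((x+2)^2\max|f''|\bigr)$ on the window $[j-x-1,\,j+x+1]$. The constant $-xf'(j)$ does not cancel under the average, so the first-order contributions must be organized more carefully. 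Pairing the term $i$ with the term $x-i$ uses the evenness of the shifts about the centre: $f(j-1-a)+f(j+1+a)$ with $a=x-i$ and $a=i$ combined symmetrically. After this pairing, every bracket becomes a genuine second difference $f(u-s)-2f(u)+f(u+s)$-type expression plus an odd part bounded by $|s|\cdot$(third difference)$\cdot$, where $u$ ranges over $j\pm O(x)$ and $s=O(x)$.

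For the bounds, we estimate discrete derivatives of $f$ directly via ratios. We have $f(k+1)/f(k)=(N-k)/(N+k+1)$, so $\Delta f(k)=-f(k)\,(2k+1)/(N+k+1)$. Iterating,
\[
|\Delta^2 f(k)|\lesssim f(k)\,\frac{k^2+N}{N^2},\qquad |\Delta^3 f(k)|\lesssim f(k)\,\frac{|k|(k^2+N)}{N^3}.
\]
We then combine these with the upper bound of Lemma~\ref{lem:bounds on combinatorial numbers}, $f(k)\le 4^N(N+1)^{-1/2}e^{-k^2/(N+1)}$. Since the window has $|k-j|\le x+2\le\sqrt{N+1}$, we get $e^{-k^2/(N+1)}\le C e^{-y^2/(12(N+1))}$; the weak constant $12$ leaves room to absorb the polynomial factors $k^2/N$ into the exponential.

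Summing these estimates yields an error of order
\[
4^N\,\frac{(x+1)^2(x+y+2)}{(N+1)^{5/2}}\,e^{-y^2/(12(N+1))},
\]
where the factor $(x+y+2)$ arises from $|k|\lesssim y+x+2$ in the odd-order term. Tracking the numerical constants, which come from Stirling-free ratio bounds, keeps the result within $4^{N+3}$.

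The main obstacle is organizing the telescoped sum so that only second-order and odd third-order differences survive, that is, verifying the exact cancellation of the first-order terms. If the pairing above fails to cancel cleanly, the fallback is the Fourier formula in Lemma~\ref{lem:count biMot}. One writes
\[
\frac{\sin((x+1)\theta)}{x+1}-\sin\theta=O\bigl((x+1)^2\theta^3\bigr)
\]
uniformly, and then bounds
\[
\int_0^\pi (1+\cos\theta)^N\theta^3\,|\sin((y+1)\theta)|\,d\theta
\]
against the factor $4^N N^{-5/2}(y+1)$. Extracting the Gaussian decay in $y$ from the Fourier side is harder, however, so I would try the combinatorial route first.
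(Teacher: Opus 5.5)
Your argument fails at the first step. The telescoping identity you use, which with $j=y+1$ reads
\[
Z_N(x,y)=\sum_{i=0}^{x}\bigl(f(y-x+i)-f(y+i+2)\bigr),
\]
is false. The sum does not telescope, and for $N=x=y=1$ it equals $f(0)+f(1)-f(3)-f(4)=3$, whereas $Z_1(1,1)=2$. The uncancelled first-order term $-xf'(j)$ you then meet is an artifact of this wrong identity. The quantity you wrote down is not $Z_N(x,y)/(x+1)-Z_N(0,y)$, and it really does contain a linear term $-x\bigl(f(y+1)-f(y)\bigr)$ at leading order, so no pairing of $i$ with $x-i$ can remove it.

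The missing idea is to telescope with step $2$. Set $F(r)=f(r)-f(r+2)$, so that $F(y)=Z_N(0,y)$. Then $Z_N(x,y)=\sum_{i=0}^{x}F(y-x+2i)$, so $Z_N(x,y)/(x+1)$ is the average of $F(y+s)$ over the shifts $s\in\{-x,-x+2,\dots,x\}$. These shifts have mean zero, so the linear term cancels exactly, and a discrete Taylor estimate gives
\[
\Bigl|\tfrac{Z_N(x,y)}{x+1}-Z_N(0,y)\Bigr|\le\tfrac{x(x+1)}{2}\max_{|r-y|\le x}|\Delta^2F(r)|.
\]

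Even with a correct decomposition, your error accounting does not reach the stated order. You bound second differences of $f(k)=\binom{2N}{N+k}$ by $f(k)(k^2+N)/N^2$ and multiply by $s^2\lesssim(x+1)^2$. This gives an error of order $(x+1)^2\,4^N N^{-3/2}$, which exceeds the claimed bound by a factor of order $N/(x+y+2)$, at least $\sqrt N$ when $y\lesssim\sqrt N$. What you need is $\Delta^2F$, a third difference of binomial coefficients, and for it there is an exact formula:
\[
\Delta^2F(r)=\frac{(r+2)\bigl(2(r+2)^2-3N-5\bigr)}{(N+1)(N+2)(2N+3)}\binom{2N+4}{N+r+4}.
\]
Combined with Lemma~\ref{lem:bounds on combinatorial numbers}, this gives $|\Delta^2F(r)|\le 4^{N+3}|r+2|(N+1)^{-5/2}e^{-(r+2)^2/(6(N+1))}$. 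The explicit factor $|r+2|\le x+y+2$ is what produces $(x+y+2)$ in the statement. The inequality $(r+2)^2\ge y^2/2-(x+2)^2$, together with $(x+2)^2\le N+1$, then yields $e^{-y^2/(12(N+1))}$.

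Your Fourier fallback does not close the gap either. Putting absolute values inside the integral gives $4^N(x+1)^2(y+1)N^{-5/2}$, but with no Gaussian factor in $y$. That factor is part of the statement and is used when summing over the intermediate height in Proposition~\ref{prop:marginal law comparison}.
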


Similarly, we have the following combinatorial bound on the probability of a lazy simple random walk conditioned to stay non-negative, and to end at given locations $|n|-1$ and $|n+j|-1$.

\begin{lemma}\label{lem:TechnicalCombiInput}
    Set for every $N\in\mathbb{Z}_{\geq0},\,k\in\mathbb Z$
\[
y_N(k):=\frac{k}{N+1}\binom{2N+2}{N+k+1} =\mathrm{sgn}(k)Z_N(0,|k|-1).
\] Then for each $N\in\mathbb{Z}_{\geq0}$, $j\in\mathbb Z$, and $n\in\llbracket1,\lfloor\varepsilon(N+1)\rfloor\rrbracket$ with $\varepsilon \in (0,\frac{1}{4}]$, we have that
\begin{equation}\label{eq:compa}
      \frac{|y_N(n+j)-y_N(n)|}{y_N(n)}
\leq
8e^4\,|j| e^{8\varepsilon |j|}\left(\frac1n+\frac{n}{N+1}\right) .
\end{equation}

\end{lemma}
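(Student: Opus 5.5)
We plan to prove Lemma~\ref{lem:TechnicalCombiInput} by direct manipulation of the explicit binomial formula. The identity $y_N(k)=\mathrm{sgn}(k)Z_N(0,|k|-1)$ should follow from \eqref{eq:ZNxy}. For $k\ge1$ we have
\[
Z_N(0,k-1)=\binom{2N}{N-k+1}-\binom{2N}{N+k+1}.
\]
Writing this difference via Pascal's rule gives $\binom{2N+2}{N+k+1}-\binom{2N+2}{N+k+2}$ up to regrouping. The standard ballot identity then yields $\frac{k}{N+1}\binom{2N+2}{N+k+1}$. The cases $k\le 0$ follow from the oddness of $k\binom{2N+2}{N+k+1}$ in $k$.

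For the estimate, we set $f(k)=\binom{2N+2}{N+1+k}$ and write
\[
\frac{y_N(n+j)}{y_N(n)}=\frac{n+j}{n}\cdot\frac{f(n+j)}{f(n)}.
\]
The difference from $1$ splits as
\[
\frac{j}{n}\cdot\frac{f(n+j)}{f(n)}+\left(\frac{f(n+j)}{f(n)}-1\right).
\]
The first term contributes $|j|/n$ times the ratio, so it remains to bound the ratio of binomials and its deviation from $1$. Using the telescoping product
\[
\frac{f(k+1)}{f(k)}=\frac{N+1-k}{N+2+k},
\]
we get for $j>0$ that $f(n+j)/f(n)=\prod_{i=0}^{j-1}\frac{N+1-n-i}{N+2+n+i}\le1$. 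In this case $1-f(n+j)/f(n)\le\sum_i \frac{2n+2i+1}{N+2+n+i}$, which is of order $j(n+j)/(N+1)$. The $j^2/(N+1)$ piece is absorbed into $|j|e^{8\varepsilon|j|}$ when $j$ is small relative to $N$. When $n+j$ exceeds $N+1$, $f$ vanishes, the relative error is at most about $1+j/n$, and this is trivially dominated by the right-hand side.

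For $j<0$ we reverse the product: $f(n+j)/f(n)=\prod\frac{N+2+k}{N+1-k}$ over $k$ from $n+j$ to $n-1$. Each factor is at most $1+\frac{2k+1}{N+1-k}$. Since $n\le\varepsilon(N+1)$ with $\varepsilon\le1/4$, we have $N+1-k\ge\frac34(N+1)$, so each factor is at most $\exp(8(|k|+1)/(3(N+1)))$. The product is then at most $\exp(C(n|j|+j^2)/(N+1))$, which we bound by $e^{4}e^{8\varepsilon|j|}$ after using $n\le\varepsilon(N+1)$ and treating large $|j|$ crudely. The elementary inequality $|e^x-1|\le xe^x$ then gives a deviation of at most $|j|\bigl(\frac{n}{N+1}+\frac{|j|}{N+1}\bigr)e^{4+8\varepsilon|j|}$.

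When $n+j\le0$, $y_N(n+j)$ changes sign. Here the bound $|y_N(n+j)|+y_N(n)$ suffices, since $|j|\ge n$ makes $|j|/n\ge1$.

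The main obstacle is bookkeeping the $j^2/(N+1)$ contributions so that they fit the form $|j|e^{8\varepsilon|j|}(1/n+n/(N+1))$. Our plan is to split on whether $|j|\le n$ or $|j|>n$. If $|j|\le n$, then $j^2/(N+1)\le|j|n/(N+1)$. If $|j|>n$, the crude bounds carry a factor $|j|/n\ge1$, and the exponential growth in $|j|$ is controlled by $e^{8\varepsilon|j|}$ via $|k|\le|j|+n$.
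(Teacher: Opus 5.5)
Your argument is correct in substance, but it is organized differently from the paper's. The paper never separates the linear factor from the binomial. It uses the exact one-step relative increment $\frac{|y_N(r+1)-y_N(r)|}{y_N(r)}=\frac{|N+1-2r(r+1)|}{r(N+r+2)}\le\frac1r+\frac{4r}{N+1}$, together with the ratio bounds $y_N(r)/y_N(n)\le e^{4\varepsilon(n-r)}$ for $r\le n$ and $y_N(r)/y_N(n)\le r/n$ for $r\ge n$. When $|j|<n/2$ it telescopes $|y_N(n+j)-y_N(n)|\le\sum_r|y_N(r+1)-y_N(r)|$, so that $1/r\le 2/n$ along the way. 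When $|j|\ge n/2$ it uses the crude bounds $2+j/n$, resp.\ $1+e^{4\varepsilon|j|}+|j|/n$.

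You instead factor $y_N(k)=\frac{k}{N+1}f(k)$ and use the exact identity $\frac{y_N(n+j)}{y_N(n)}-1=\frac{j}{n}R+(R-1)$ with $R=f(n+j)/f(n)$. This isolates the two error sources. The term $|j|/n$ comes only from the linear factor. The term $|j|(n+|j|)/(N+1)$ comes only from the Gaussian decay of the binomial, which you control via $1-\prod u_i\le\sum(1-u_i)$ for $j>0$ and $e^x-1\le xe^x$ for $j<0$. Your bookkeeping is more transparent, but you must linearize a product to bound $R-1$; the paper avoids this by summing first-order information about $y_N$ itself. For $j>0$, where $j^2/(N+1)$ enters linearly, you do not even need $j$ small: since $n\le\varepsilon(N+1)$, $\frac{j^2}{N+1}\le\frac{|j|}{n}\,\varepsilon|j|\le\frac{|j|}{8n}e^{8\varepsilon|j|}$.

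Two points need repair. First, your Pascal-rule intermediate for the identity is wrong: at $N=k=1$, $\binom{4}{3}-\binom{4}{4}=3\ne 2=Z_1(0,0)$. Factor out $\binom{2N+2}{N+k+1}$ directly instead: $\binom{2N}{N+k-1}-\binom{2N}{N+k+1}=\binom{2N+2}{N+k+1}\frac{(N+k+1)(N+k)-(N-k+1)(N-k)}{(2N+2)(2N+1)}=\frac{k}{N+1}\binom{2N+2}{N+k+1}$.

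Second, for $j<0$ you estimate every factor through $|k|$ and get $\exp\bigl(C(n|j|+j^2)/(N+1)\bigr)$. For large $|j|$ this cannot be brought below $e^4e^{8\varepsilon|j|}$: take $n=1$, $\varepsilon=1/(N+1)$, $|j|\approx N/2$. So your ``crude treatment'' of the regime $n+j<0$ has to use that the factors with $k<0$ lie in $[0,1)$, i.e.\ that $f$ is even and decreasing in $|k|$. This gives $f(n+j)/f(n)\le e^{8\varepsilon n/3}$ if $|n+j|\le n$, and $\le 1$ otherwise. Hence $|y_N(n+j)|/y_N(n)\le\frac{|j|}{n}e^{8\varepsilon|j|}$, which is exactly what your sign-change bound $1+|y_N(n+j)|/y_N(n)$ requires.
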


\section{Open TASEP in the maximal current phase: Proof of Theorem \ref{thm:main open TASEP}}\label{sec:TASEP}

In this section, we establish the approximation results for the stationary measure of the open TASEP from Theorem \ref{thm:main open TASEP}. As a main input, we require the following combinatorial estimates on bicolored Motzkin paths. 

\subsection{Comparisons of the marginal laws of bicolored Motzkin paths}
\begin{definition}
Fix $N\in\mathbb{Z}_{\geq0}$ and $m\in\llbracket0,N\rrbracket$.
For every $x,y\in\mathbb{Z}_{\geq0}$ with $|x-y|\leq N$, we write $\Xi_{N,m}^{x,y}$ to denote the law of the $m^{\text{th}}$ component of $(\gamma_i)_{i \in \llbracket 1,N \rrbracket}$  under the uniform measure on $\mathrm{BCMot}^{x,y}_N$. For every \(x\in\mathbb Z_{\geq0}\), we write \(\Xi_{N,m}^{x,\mathrm{free}}\) to denote the law of \(\gamma_m\) under the uniform measure on \(\mathrm{BCMot}^{x,\mathrm{free}}_N\). 
Equivalently, for every $z\in\mathbb Z_{\geq0}$,
\[
 \Xi_{N,m}^{x,y}(\{z\})
 =
 \frac{Z_m(x,z)Z_{N-m}(z,y)}{Z_N(x,y)}\qquad \mbox{ and }\qquad  \Xi_{N,m}^{x,\mathrm{free}}(\{z\})
 =
 \frac{Z_m(x,z)Z_{N-m}(z,\mathrm{free})}
 {Z_N(x,\mathrm{free})}.
\] 
\end{definition}

The following result is an upper bound on the total variation distance at time $m$ between two simple random walks started from $x$ and $0$, respectively, and with and without conditioning on the event to stay non-negative.

\begin{proposition}\label{prop:marginal law comparison}
Fix $N\in\mathbb{Z}_{\geq0}$ and $m\in\llbracket0,N\rrbracket$. Then 
\begin{align}
\label{eq:p1}   \TV{ \Xi_{N,m}^{x,y}  -\Xi_{N,m}^{0,y} }   \le
2^{26}\,\frac{(x+1)^4(y+1)^2}{m+1}\qquad&\mbox{ for every }x,y\in\mathbb Z_{\ge0}\mbox{ with }|x-y|\leq N,\,y\leq N,\\
\label{eq:p2}  \TV{\Xi_{N,m}^{x,\mathrm{free}}- \Xi_{N,m}^{0,\mathrm{free}}}
 \leq
 2^{26}\frac{(x+1)^2}{m+1}\qquad&\mbox{ for every }x\in\mathbb Z_{\geq0}.
\end{align} 
\end{proposition}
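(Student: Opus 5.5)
We write the total variation distance as an $\ell^1$ sum and reduce it to Lemma~\ref{lem:comparison kernels}. First, we may assume $(x+2)^2\le m+1$: otherwise the right-hand sides of \eqref{eq:p1} and \eqref{eq:p2} exceed $1$, since $2^{26}(x+1)^4/(m+1)\ge 2^{26}(x+1)^4/(x+2)^2\ge1$, and the bound is trivial. Now set $\widetilde Z_m(x,z):=Z_m(x,z)/(x+1)$. Then
\[
\Xi_{N,m}^{x,y}(\{z\})=\frac{\widetilde Z_m(x,z)Z_{N-m}(z,y)}{\sum_{w}\widetilde Z_m(x,w)Z_{N-m}(w,y)},
\]
and the same identity holds with $x=0$ (where $\widetilde Z_m(0,\cdot)=Z_m(0,\cdot)$). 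We then use the elementary inequality: for nonnegative weights $a_z,b_z$ with sums $A,B$,
\[
\sum_z\Bigl|\frac{a_z}{A}-\frac{b_z}{B}\Bigr|\le \frac{2\sum_z|a_z-b_z|}{B}.
\]
We apply it with $a_z=\widetilde Z_m(x,z)Z_{N-m}(z,y)$ and $b_z=Z_m(0,z)Z_{N-m}(z,y)$, so that $B=Z_N(0,y)$. The whole task then becomes bounding
\[
\sum_z \bigl|\widetilde Z_m(x,z)-Z_m(0,z)\bigr|\,Z_{N-m}(z,y)\Big/Z_N(0,y).
\]

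To bound the numerator, Lemma~\ref{lem:comparison kernels} with $N$ replaced by $m$ gives
\[
\bigl|\widetilde Z_m(x,z)-Z_m(0,z)\bigr|\le 4^{m+3}(x+1)^2(x+z+2)(m+1)^{-5/2}e^{-z^2/(12(m+1))}.
\]
Lemma~\ref{lem:bounds ZNxy} gives $Z_{N-m}(z,y)\le 4^{N-m+1}(z+1)(y+1)(N-m+1)^{-3/2}e^{-(z-y)^2/(N-m+1)}$. For the denominator, the lower bound in Lemma~\ref{lem:bounds ZNxy} (valid since $y\le N$) gives
\[
Z_N(0,y)\ge e^{-2}4^{N-1}(y+1)(N+1)^{-3/2}e^{-(\log4)y^2/(N+1)}.
\]
The powers of $4$ and the factor $(y+1)$ cancel, leaving
\[
(x+1)^2\frac{(N+1)^{3/2}}{(m+1)^{5/2}(N-m+1)^{3/2}}\,e^{(\log4)y^2/(N+1)}\sum_z (x+z+2)(z+1)e^{-z^2/(12(m+1))}e^{-(z-y)^2/(N-m+1)}.
\]

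\textbf{The main obstacle is controlling this $z$-sum uniformly in $m$ and $N-m$.} In the regime $m\le N/2$ we drop the last Gaussian factor. The sum is then $O((x+1)(m+1)^{3/2})$, and $(N+1)/(N-m+1)\le2$, giving $O((x+1)^3/(m+1))$ up to the prefactor $e^{(\log4)y^2/(N+1)}$. That prefactor is where the polynomial factor $(y+1)^2$ must come from, and it is not polynomial as stated. I would therefore instead restrict $z\le y+\sqrt{m}$-type splitting, or more simply use the crude bound $Z_N(0,y)\ge Z_N(0,y)$ only in the regime $y^2\le N+1$. For $y^2>N+1$ the target bound with $(y+1)^2$ is at least $(N+1)/(m+1)$, so we need a direct estimate. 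There I would bound $e^{-(z-y)^2/(N-m+1)}$ against $e^{(\log 4)y^2/(N+1)}$ by completing the square with the $e^{-z^2/(12(m+1))}$ term; I expect this bookkeeping to be the delicate part and to cost the extra factors $(x+1)^2(y+1)^2$. In the regime $m>N/2$, the factor $(N-m+1)^{-3/2}$ is absorbed by the sum over $z$ near $y$, which has width $\sqrt{N-m}$, and $(m+1)^{-5/2}(N+1)^{3/2}\lesssim(m+1)^{-1}$.

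For \eqref{eq:p2} the same scheme applies with $Z_{N-m}(z,\mathrm{free})$ in place of $Z_{N-m}(z,y)$. Here we use $Z_{N-m}(z,\mathrm{free})\le 2\cdot4^{N-m}\min\{1,(z+1)/\sqrt{N-m+1}\}$ and $Z_N(0,\mathrm{free})\ge e^{-2}4^{N-1}(N+1)^{-1/2}$ from Lemma~\ref{lem:bounds ZNxfree}. The free endpoint removes all $y$-dependence, so we get $\sum_z (x+z+2)(z+1)e^{-z^2/(12(m+1))}\lesssim(x+1)(m+1)^{3/2}$. Combined with $(m+1)^{-5/2}$, the factor $(N+1)^{1/2}(N-m+1)^{-1/2}$ (or its replacement by $(N+1)^{1/2}$ times the unrestricted bound when $m$ is close to $N$), this gives $O((x+1)^2/(m+1))$ after the usual case split $m\le N/2$ versus $m>N/2$. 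Tracking constants gives the stated $2^{26}$.
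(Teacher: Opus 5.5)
Your setup coincides with the paper's. You use the trivial reduction, the normalization inequality with $a_z=\frac{Z_m(x,z)}{x+1}Z_{N-m}(z,y)$ and $b_z=Z_m(0,z)Z_{N-m}(z,y)$, Lemma~\ref{lem:comparison kernels} at time $m$, and the bounds of Lemma~\ref{lem:bounds ZNxy} (resp.\ Lemma~\ref{lem:bounds ZNxfree}).

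The gap is at the step you flag yourself. The prefactor $e^{(\log 4)y^2/(N+1)}$ is not where $(y+1)^2$ comes from. Your proposed remedy for $y^2>N+1$ would also fail. The combined exponent satisfies $\min_{z}\bigl(\frac{z^2}{12(m+1)}+\frac{(z-y)^2}{N-m+1}\bigr)=\frac{y^2}{N+11m+13}$, so completing the square extracts at most a factor $e^{-y^2/(N+11m+13)}$. Since $\frac{1}{N+11m+13}\le\frac{1}{N+1}<\frac{\log 4}{N+1}$, the leftover factor $\exp\bigl((\log 4)\frac{y^2}{N+1}-\frac{y^2}{N+11m+13}\bigr)$ is unbounded. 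The Gaussian constants in the available upper bounds are too weak to cancel the $\log 4$ coming from the lower bound.

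What you are missing is that this regime needs no estimate at all. If $(y+1)^2>m+1$, in particular if $y^2>N+1$, the right side of \eqref{eq:p1} exceeds $2^{26}$ and the bound is trivial. Your own remark that the target is at least $(N+1)/(m+1)\ge 1$ already says this. The paper builds it into the opening reduction by assuming $m+1\ge 2^{26}(x+1)^4(y+1)^2$. That forces $y^2\le N$, hence $e^{(\log 4)y^2/(N+1)}\le 4$.

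With that reduction your argument closes. The factor $(y+1)^2$ actually comes from the polynomial weight once $z$ is pinned near $y$: $(x+z+2)(z+1)\le 2(x+1)(y+1)^2(1+|z-y|)^2$. So for $m>N/2$ the $z$-sum is $O\bigl((x+1)(y+1)^2(N-m+1)^{3/2}\bigr)$, and the total is $O\bigl((x+1)^3(y+1)^2/(m+1)\bigr)$. Your split $m\le N/2$ versus $m>N/2$ is a legitimate alternative to the paper's completion of the square. The paper's version bounds the two-Gaussian sum by a constant times $(x+1)^2(y+1)^2\bigl(\frac{(m+1)(N-m+1)}{N+1}\bigr)^{3/2}$ in one stroke, and uses a split like yours only for \eqref{eq:p2}.

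There is also a bookkeeping slip in \eqref{eq:p2}. Your bound $\sum_z(x+z+2)(z+1)e^{-z^2/(12(m+1))}\lesssim(x+1)(m+1)^{3/2}$, combined with the $(x+1)^2$ from Lemma~\ref{lem:comparison kernels}, yields $(x+1)^3/(m+1)$, not the claimed $(x+1)^2/(m+1)$. Use $x+2\le\sqrt{m+1}$ from your reduction to sharpen the sum to $O\bigl((m+1)^{3/2}\bigr)$, as the paper does.
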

\begin{proof}
We split the proof into four steps, which will only require the combinatorial identities discussed in Section~\ref{sec:Combinatorial} together with elementary estimates. The first two steps prove \eqref{eq:p1} and the last two steps prove \eqref{eq:p2}.\\

\noindent\textbf{Step 1. } Define the (deterministic) quantity
\[
 \mathcal S
 =
 \sum_{z=0}^{\infty}(z+x+2)(z+1)
 \exp\left(
 -\frac{z^2}{12(m+1)}-\frac{(z-y)^2}{N-m+1}
 \right).
\]
In this step, we conclude the proof of \eqref{eq:p1}, assuming that for all $x\geq1,\,m\leq N-1,\,
 m+1\geq 2^{26}(x+1)^4(y+1)^2$, we have 
\begin{equation}\label{eq:one-time-endpoint-Gaussian-final}
 \mathcal S
 \leq
 40\cdot12^{3/2}(x+1)^2(y+1)^2
 \left(\frac{(m+1)(N-m+1)}{N+1}\right)^{3/2} . 
\end{equation}
Indeed, the total variation distance is always bounded by $1$, and the left side of \eqref{eq:p1} is zero when $x=0$ or $m=N$. Hence to prove \eqref{eq:p1} we can assume $x\geq1$,
$m\leq N-1$, and
$m+1\geq 2^{26}(x+1)^4(y+1)^2$. Denote
\begin{align*}
    &u_z=\frac{Z_m(x,z)}{x+1}Z_{N-m}(z,y),\qquad
 v_z=Z_m(0,z)Z_{N-m}(z,y),\\
 & U=\sum_{z\geq0}u_z=\frac{Z_N(x,y)}{x+1}>0,\qquad
   V=\sum_{z\geq0}v_z=Z_N(0,y)>0,
\end{align*}
where we use  $\sum_{z\geq0}Z_m(a,z)Z_{N-m}(z,y)=Z_N(a,y)$ and Lemma~\ref{lem:count biMot}. Then 
\begin{equation}
    \begin{split}
  2\TV{\Xi_{N,m}^{x,y}-\Xi_{N,m}^{0,y}} &=
 \sum_{z\geq0}\left|\frac{u_z}{U}-\frac{v_z}{V}\right| \leq
 \sum_{z\geq0}\left|\frac{u_z}{U}-\frac{u_z}{V}\right|
 +\frac1V\sum_{z\geq0}|u_z-v_z|  \\
 &=\frac{|U-V|}{V}+\frac {\sum_{z\geq0}|u_z-v_z|}{V}
 \leq\frac{2\sum_{z\geq0}|u_z-v_z|}{V}.
 \label{eq:one-time-endpoint-normalization}
     \end{split}
\end{equation} 
Note that the present assumptions imply
 $(x+2)^2\leq m+1 $,
so the hypothesis of Lemma~\ref{lem:comparison kernels} holds  with $(N,x,y)=(m,x,z)$.
The upper bound in Lemma~\ref{lem:bounds ZNxy} and Lemma \ref{lem:comparison kernels} give
\begin{equation}\label{eq:one-time-endpoint-E-bound}
 \sum_{z\geq0}|u_z-v_z|
 \leq
 4^{N+4}
 \frac{(x+1)^2(y+1)}{(m+1)^{5/2}(N-m+1)^{3/2}}\,
 \mathcal S.
\end{equation} Note that $N\geq m+1 \geq y^2$, hence
 $\exp\!\left(-(\log4)\frac{y^2}{N+1}\right)\geq\frac14$. 
The lower bound in Lemma~\ref{lem:bounds ZNxy} gives
\begin{equation}\label{eq:one-time-endpoint-V-lower}
 V=Z_N(0,y)
 \geq
 \frac{e^{-2}}{16}\,
 4^N\frac{y+1}{(N+1)^{3/2}}.
\end{equation} Combining \eqref{eq:one-time-endpoint-Gaussian-final},
\eqref{eq:one-time-endpoint-normalization},  
\eqref{eq:one-time-endpoint-E-bound}, and
\eqref{eq:one-time-endpoint-V-lower},  
\[
 \TV{\Xi_{N,m}^{x,y}-\Xi_{N,m}^{0,y}}
 \leq
 4096\cdot40\cdot12^{3/2}e^2\,
 \frac{(x+1)^4(y+1)^2}{m+1}
 <
 2^{26}\frac{(x+1)^4(y+1)^2}{m+1},
\]
which concludes the proof of \eqref{eq:p1}.\\ 

\noindent\textbf{Step 2. } In this step, we prove the bound \eqref{eq:one-time-endpoint-Gaussian-final}. Set 
\[
 A=m+1,\qquad B=N-m+1,\qquad
 D=\frac{12AB}{B+12A},
 \qquad
 \mu=\frac{12Ay}{B+12A}.
\]
Then a simple computation shows that
\begin{equation}\label{eq:complete square}
 \frac{z^2}{12A}+\frac{(z-y)^2}{B}
 =
 \frac{(z-\mu)^2}{D}+\frac{y^2}{B+12A}\geq \frac{(z-\mu)^2}{D}.
\end{equation}
One can also observe that 
\begin{equation}\label{eq:qwer4}
 1\leq D\leq 12\frac{AB}{A+B-1},
 \qquad 0\leq\mu\leq y.
\end{equation}

We next claim that 
\begin{equation}\label{eq:one-time-endpoint-shifted-Gaussian-sum}
 \sum_{k\in\mathbb Z}
e^{-(k-\theta)^2/c}
 \leq 2+\sqrt{\pi c}\qquad\mbox{ for every }c>0,\,\theta\in\mathbb R.
\end{equation}
Indeed, the terms corresponding to $k=\lfloor\theta\rfloor,\lfloor\theta\rfloor+1$ are at most $1$,  
\[
 \sum_{k\leq \lfloor\theta\rfloor-1}e^{-(k-\theta)^2/c}
 \leq
 \int_{-\infty}^{\lfloor\theta\rfloor}e^{-(t-\theta)^2/c}\,dt,\qquad\mbox{ and }\qquad
  \sum_{k\geq \lfloor\theta\rfloor+2}e^{-(k-\theta)^2/c}
 \leq
 \int_{\lfloor\theta\rfloor+1}^{\infty}e^{-(t-\theta)^2/c}\,dt,
\]
which concludes \eqref{eq:one-time-endpoint-shifted-Gaussian-sum}.
For every \(t\in\mathbb R\), we have $(1+t^2)e^{-t^2/D}
 \leq2D e^{-t^2/(2D)}$ since \(D\geq1\).
By \eqref{eq:one-time-endpoint-shifted-Gaussian-sum} and \eqref{eq:qwer4},
\begin{equation}\label{eq:qwer5}
\begin{split}
 \sum_{z=0}^{\infty}\left(1+|z-\mu|^2\right)e^{-|z-\mu|^2/D} \leq
 2D\sum_{k\in\mathbb Z}
 e^{- (k-\mu)^2/(2D) } \leq
 2D\left(2+\sqrt{2\pi D}\right)
 \leq10D^{3/2}.
\end{split}
\end{equation}
Since $0\leq\mu\leq y$, one can check that $(z+x+2)(z+1)
 \leq4(x+1)^2(y+1)^2 (1+|z-\mu|^2 )$. Combining with \eqref{eq:complete square},  \eqref{eq:qwer4}, and \eqref{eq:qwer5}, we obtain
\[
\begin{split}
 \mathcal S
 &\leq
 4(x+1)^2(y+1)^2
 \sum_{z=0}^{\infty}(1+|z-\mu|^2)e^{-|z-\mu|^2/D} \leq40(x+1)^2(y+1)^2D^{3/2}\\
 &\leq
 40\cdot12^{3/2}(x+1)^2(y+1)^2
 \left(\frac{AB}{A+B-1}\right)^{3/2}.
\end{split}
\] 
We conclude the proof of \eqref{eq:one-time-endpoint-Gaussian-final}.\\

\noindent\textbf{Step 3. } Define the (deterministic) quantity
\[
\mathcal H=
 \sum_{z=0}^{\infty}
 (z+x+2)e^{-z^2/(12(m+1))}
 \min\left\{1,\frac{z+1}{\sqrt{N-m+1}}\right\}.
\]
In this step, we conclude the proof of \eqref{eq:p2}, assuming that
\begin{equation}\label{eq:free-endpoint-Gaussian-sum}
\mathcal H\leq160\frac{(m+1)^{3/2}}{\sqrt{N+1}}\qquad\mbox{ when }x\geq1\mbox{ and } m+1\geq2^{26}(x+1)^2.
\end{equation}
Indeed, the total variation distance is always bounded by $1$, and the left side of \eqref{eq:p2} is zero when $x=0$. Hence to prove \eqref{eq:p2} we can assume $x\geq1$ and $m+1\geq2^{26}(x+1)^2$. For this step, denote
\begin{align*}
    &  \mathsf u_z=\frac{Z_m(x,z)}{x+1}Z_{N-m}(z,\mathrm{free}),
 \qquad
\mathsf v_z=Z_m(0,z)Z_{N-m}(z,\mathrm{free}),\\
 & \mathsf U=\sum_{z\geq0}\mathsf u_z=\frac{Z_N(x,\mathrm{free})}{x+1}>0,
 \qquad
\mathsf V=\sum_{z\geq0}\mathsf v_z=Z_N(0,\mathrm{free})>0,
\end{align*}
where we use $\sum_{z\geq0}Z_m(a,z)Z_{N-m}(z,\mathrm{free})=Z_N(a,\mathrm{free})$.
By the same calculation as in \eqref{eq:one-time-endpoint-normalization},  
\begin{equation} \label{eq:qwer6}
 \TV{\Xi_{N,m}^{x,\mathrm{free}}
      -\Xi_{N,m}^{0,\mathrm{free}}}
 \leq
 \frac{1}{\mathsf V}\sum_{z\geq0}|\mathsf u_z-\mathsf v_z|.
\end{equation}
Note that the present assumptions imply $(x+2)^2\leq4(x+1)^2<m+1 $, so Lemma~\ref{lem:comparison kernels} applies with
$(N,x,y)=(m,x,z)$. By Lemma \ref{lem:bounds ZNxfree} and Lemma~\ref{lem:comparison kernels},
\begin{equation*} 
 \sum_{z\geq0}|\mathsf u_z-\mathsf v_z|
 \leq
 4^{m+3}\frac{(x+1)^2}{(m+1)^{5/2}}
 \sum_{z=0}^{\infty}
 (z+x+2)e^{-z^2/(12(m+1))}Z_{N-m}(z,\mathrm{free}) \leq
 2\,4^{N+3}\frac{(x+1)^2}{(m+1)^{5/2}}\,
 \mathcal H.
\end{equation*}
Combining this estimate with \eqref{eq:free-endpoint-Gaussian-sum}, \eqref{eq:qwer6}, and Lemma \ref{lem:bounds ZNxfree}, we obtain
\[
\begin{aligned}
 \TV{\Xi_{N,m}^{x,\mathrm{free}}
      -\Xi_{N,m}^{0,\mathrm{free}}}
 &\leq
 320\cdot4^4e^2\frac{(x+1)^2}{m+1} <
 2^{26}\frac{(x+1)^2}{m+1},
\end{aligned}
\]
which proves \eqref{eq:p2}.\\

\noindent\textbf{Step 4. } In this step, we prove the remaining claim  \eqref{eq:free-endpoint-Gaussian-sum}. Denote
\[
 S_j=\sum_{z=0}^{\infty}z^j e^{-z^2/(12(m+1))},\qquad 
  I_j=\int_0^\infty t^j e^{-t^2/(12(m+1))}\,dt,\qquad j\in\{0,1,2\}.
\]
Note that for each $z\geq1$, $t\in[z-1,z]$, and $j\in\{0,1,2\}$, we have
 $z^j e^{-z^2/(12(m+1))}
 \leq
 (t+1)^j e^{-t^2/(12(m+1))}$.
Hence $S_0\leq1+I_0$, $S_1\leq I_1+I_0$, and $S_2\leq I_2+2I_1+I_0$. 
We compute the integrals explicitly: $I_0=\sqrt{3\pi (m+1)}$, $I_1=6(m+1)$, and $I_2=\frac{\sqrt\pi}{4}(12(m+1))^{3/2}$. 
By the assumptions of \eqref{eq:free-endpoint-Gaussian-sum}, 
\begin{equation}\label{eq:free-endpoint-Gaussian-moments}
 S_0\leq5\sqrt{m+1},\qquad
 S_1\leq10(m+1),\qquad
 S_2\leq35(m+1)^{3/2}.
\end{equation}
The assumptions of \eqref{eq:free-endpoint-Gaussian-sum}  imply
 $x+2\leq 2^{-12}\sqrt{m+1}$.
To prove \eqref{eq:free-endpoint-Gaussian-sum}, we split into two cases.
Note that when \(N\leq2m\),  we get that
\[
 \mathcal H
 \leq
 \sum_{z=0}^{\infty}(z+x+2)e^{-z^2/(12(m+1))}
 =
 S_1+(x+2)S_0
 \leq\left(10+5\cdot2^{-12}\right)(m+1)
 \leq20(m+1) \leq
 160\frac{(m+1)^{3/2}}{\sqrt{N+1}}.
\] 
Similarly, when \(N>2m\), we have \(N+1\leq2(N-m+1)\); hence
\[
\begin{aligned}
 \mathcal H
 &\leq
 \frac{1}{\sqrt{N-m+1}}
 \sum_{z=0}^{\infty}
 (z+x+2)(z+1)e^{-z^2/(12(m+1))}\ =
 \frac{1}{\sqrt{N-m+1}}
 \left(S_2+(x+3)S_1+(x+2)S_0\right)\\
 &\leq
 \left(35+15\cdot2^{-12}\right)
 \frac{(m+1)^{3/2}}{\sqrt{N-m+1}}
 \leq
 60\frac{(m+1)^{3/2}}{\sqrt{N-m+1}} \leq
 160\frac{(m+1)^{3/2}}{\sqrt{N+1}}.
\end{aligned}
\] 
We conclude the proof of \eqref{eq:free-endpoint-Gaussian-sum}.
\end{proof}

\subsection{Proof of Theorem \ref{thm:main open TASEP}}
In this subsection, we prove Theorem \ref{thm:main open TASEP} on approximating the stationary measure of the open TASEP in the maximal current phase. For convenience, we state it again as Theorem~\ref{thm:main open TASEP again} below.

\begin{theorem}\label{thm:main open TASEP again}
    Assume $\a,\b\geq0$ and $\mathsf{c},\mathsf{d}\in(-1,0]$. For $N\in\mathbb N$, consider the stationary measure $\mu_N^{(0,\a,\b,\c,\d)}$ of open TASEP. There exists a constant $C<\infty$ depending only on $\a,\b,\c$ and $\d$, such that the following hold.
    \begin{enumerate}
        \item [(1)] In the maximal current phase $\a,\b\in[0,1)$, for every $N,m,k\in\mathbb N$ satisfying 
        $1\le m\le N-k+1\le N$,  
\begin{equation}\label{eq:oni1}
\TV{
\mu_N^{(0,\a,\b,\c,\d)}
\left(
(\tau_i)_{i\in\llbracket m,N-k+1\rrbracket}\in\cdot
\right)
-
\mathbb{P}_N^{\mathsf{exc}}
\left(
(\tau_i)_{i\in\llbracket m,N-k+1\rrbracket}\in\cdot
\right)}
\le
C/\min(m,k) .
\end{equation}
In two special cases in the maximal current phase, slightly stronger approximation results hold.  
Specifically, in the case when $\a\in[0,1)$ and $\b=\d=0$, for every $N\in\mathbb N$ and $m\in\llbracket1,N\rrbracket$, 
\begin{equation}\label{eq:oni2}
\TV{
\mu_N^{(0,\a,0,\c,0)}
\left(
(\tau_i)_{i\in\llbracket m,N \rrbracket}\in\cdot
\right)
-
\mathbb{P}_N^{\mathsf{exc}}
\left(
(\tau_i)_{i\in\llbracket m,N \rrbracket}\in\cdot
\right)}
\le
C/m ;
\end{equation}
 and in the case when  $\a=\c=0$ and $\b\in[0,1)$, for every $N\in\mathbb N$ and $k\in\llbracket1,N\rrbracket$, 
\begin{equation}\label{eq:oni3}
\TV{
\mu_N^{(0,0,\b,0,\d)}
\left(
(\tau_i)_{i\in\llbracket 1,N-k+1\rrbracket}\in\cdot
\right)
-
\mathbb{P}_N^{\mathsf{exc}}
\left(
(\tau_i)_{i\in\llbracket 1,N-k+1\rrbracket}\in\cdot
\right)}
\le
C/k .
\end{equation}
        \item [(2)] On the boundary of the maximal current phase $\a\in[0,1)$ and $\b=1$, for every $N\in\mathbb N$ and $m\in\llbracket1,N\rrbracket$, 
        \[
\TV{
\mu_N^{(0,\a,1,\c,\d)}
\left(
(\tau_i)_{i\in\llbracket m,N \rrbracket}\in\cdot
\right)
-
\mathbb P_N^{\mathsf{mea}}
\left(
(\tau_i)_{i\in\llbracket m,N \rrbracket}\in\cdot
\right)}
\le
C/m .
\]
        \item [(3)] On the boundary of the maximal current phase $\a=1$ and $\b\in[0,1)$, for every $N\in\mathbb N$ and $k\in\llbracket1,N\rrbracket$, 
        \[
\TV{
\mu_N^{(0,1,\b,\c,\d)}
\left(
(\tau_i)_{i\in\llbracket 1,N-k+1 \rrbracket}\in\cdot
\right)
-
\mathbb P_N^{\mathsf{revmea}}
\left(
(\tau_i)_{i\in\llbracket 1,N-k+1 \rrbracket}\in\cdot
\right)}
\le
C/k .
\]
    \end{enumerate} 
\end{theorem}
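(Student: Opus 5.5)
We will work throughout with the signed path measure $\Uplambda$ from Theorem~\ref{thm:TASEP rep}. Its pushforward under $\Phi$ is $\mu_N^{(0,\a,\b,\c,\d)}$, while by Lemma~\ref{lem:exc and mea projection} the reference measures $\mathbb P_N^{\exc}$ and $\mathbb P_N^{\mea}$ are pushforwards of uniform measures on $\mathrm{BCMot}^{0,0}_N$ and $\mathrm{BCMot}^{0,\mathrm{free}}_N$. The key structural observation is that, conditional on the endpoints $(\gamma_0,\gamma_N)=(x,y)$, $\Uplambda$ is the uniform measure on $\mathrm{BCMot}^{x,y}_N$. Further, conditional on the heights $\gamma_{m-1}$ and $\gamma_{N-k+1}$, the steps on $\llbracket m,N-k+1\rrbracket$, together with their colors, are uniform bridges whose law depends only on these two heights. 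Consequently the total variation distance of the projections onto $\llbracket m,N-k+1\rrbracket$ is bounded by the (signed) total variation distance of the joint laws of $(\gamma_{m-1},\gamma_{N-k+1})$. Write $\Uplambda=\sum_{x,y}w_xv_yZ_N(x,y)\,\mathrm{Unif}(\mathrm{BCMot}^{x,y}_N)/\mathcal Z_N$ with $w_x=\upchi_x(\a,\c)$ and $v_y=\upchi_y(\b,\d)$. Then
\[
\TV{\mu_N(\cdots)-\mathbb P^{\exc}_N(\cdots)}\le \frac{1}{|\mathcal Z_N|}\sum_{x,y}|w_xv_y|\,Z_N(x,y)\,\TV{\mathcal L^{x,y}-\mathcal L^{0,0}}+\Big|1-\frac{Z_N(0,0)\,\mathcal N}{\mathcal Z_N}\Big|\text{-type terms},
\]
where $\mathcal L^{x,y}$ is the law of $(\gamma_{m-1},\gamma_{N-k+1})$ under the uniform measure on $\mathrm{BCMot}^{x,y}_N$. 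The cleaner route is to compare each $\mathcal L^{x,y}$ with $\mathcal L^{0,0}$ and then use $\sum_{x,y} w_xv_yZ_N(x,y)=\mathcal Z_N$, so that the mixture of identical laws collapses and no normalization issue arises.

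To compare $\mathcal L^{x,y}$ with $\mathcal L^{0,0}$, we first replace the start $x$ by $0$, which only affects the first $m-1$ steps. Splitting the path at time $N-k+1$ and applying \eqref{eq:p1} of Proposition~\ref{prop:marginal law comparison} to the marginal at time $m-1$, we obtain, by the Markov property for uniform bridges, a distance of order $(x+1)^4(\cdot)^2/m$. The issue is that the intermediate height plays the role of $y$ there. So we instead apply a two-sided argument: reverse time and use \eqref{eq:p1} twice, first changing the start $x\to0$, which costs $O((x+1)^4(y+1)^2/m)$, and then changing the end $y\to0$, which costs $O((y+1)^4/k)$. We use Lemma~\ref{lem:count biMot} symmetry $Z_N(x,y)=Z_N(y,x)$ to justify the reversal. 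The resulting bound is $C(x+1)^4(y+1)^4/\min(m,k)$. Summing against $|w_xv_y|Z_N(x,y)/|\mathcal Z_N|$, we use $|w_x|\le(x+1)\max(\a,|\c|)^x$ with $\a,\b<1$ (geometric decay), the bound $Z_N(x,y)\le 4^{N+1}(x+1)(y+1)/(N+1)^{3/2}$ from Lemma~\ref{lem:bounds ZNxy}, and a lower bound $|\mathcal Z_N|\ge c\,4^N N^{-3/2}$. This gives \eqref{eq:oni1}. The special cases \eqref{eq:oni2} and \eqref{eq:oni3} follow identically, since $v_y=\mathbf 1_{y=0}$ (respectively $w_x=\mathbf 1_{x=0}$), so only one comparison is needed and the other boundary costs nothing.

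For part (2), $\b=1$, we compare instead with the uniform measure on $\mathrm{BCMot}^{0,\mathrm{free}}_N$. Conditional on $\gamma_0=x$, the $\Uplambda$-law of the path is a mixture over $y$ with weights $v_yZ_N(x,y)$, and $v_y=\upchi_y(1,\d)=(1-\d^{y+1})/(1-\d)$, which is not constant. So we must show that the tilt by $v_y$ at the endpoint only negligibly perturbs the law of the path on $\llbracket m,N\rrbracket$ relative to the free endpoint. Our plan is to write $v_y=(1-\d)^{-1}-\d^{y+1}(1-\d)^{-1}$. The first term gives exactly the free-endpoint bridge from $x$. We compare this with the free bridge from $0$ via \eqref{eq:p2}, at cost $(x+1)^2/m$; this is the only comparison needed, since the observed window reaches $N$ with the free endpoint on both sides. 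The second term is a geometrically weighted combination of bridges ending at small $y$. Its total mass is $O(Z_N(x,0))=O(4^NN^{-3/2})$, compared with $Z_N(x,\mathrm{free})\asymp4^NN^{-1/2}$ by Lemma~\ref{lem:bounds ZNxfree}, hence it is relatively $O(1/N)\le O(1/m)$. Part (3) then follows from part (2) by the particle–hole duality of Lemma~\ref{lem:particle hole}.

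The main obstacle is the lower bound on $|\mathcal Z_N|$ (and its analogue in part (2)), since $\Uplambda$ is signed ($\c,\d\le0$) and cancellations must be excluded. We will obtain it from the integral identity in Theorem~\ref{thm:TASEP rep} via a Laplace-type analysis near $\theta=0$. There the factor $(1+\cos\theta)^N\sin^2\theta$ localizes at scale $N^{-1/2}$, and the denominator is bounded and positive, equal to $(1-\a)^2(1-\b)^2(1-\c)^2(1-\d)^2$ at $\theta=0$. This gives $\mathcal Z_N\asymp 4^NN^{-3/2}$ in part (1) and $\mathcal Z_N\asymp 4^NN^{-1/2}$ when $\b=1$, because then the denominator acquires a factor $\theta^2$ that cancels against $\sin^2\theta$. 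A secondary technical point is verifying that the polynomial factors $(x+1)^4(y+1)^4$ are absorbed by the geometric decay of $w_x$ and $v_y$ uniformly in $N$. This requires $\a,\b<1$ strictly in part (1), and in part (2) only decay in $x$.
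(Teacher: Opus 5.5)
Your proposal is correct and follows essentially the paper's proof. The shared steps are: recentring the endpoint mixture $\mu_N=\mathcal Z_N^{-1}\sum_{x,y}\upchi_x(\a,\c)\upchi_y(\b,\d)Z_N(x,y)\Uptheta_N^{x,y}$ via $\sum_{x,y}\upchi_x\upchi_yZ_N(x,y)=\mathcal Z_N$; applying \eqref{eq:p1} twice (the second time after time reversal) to get $2^{26}\bigl((x+1)^4(y+1)^2/m+(y+1)^4/k\bigr)$; using geometric decay of the $\upchi$'s against Lemma~\ref{lem:bounds ZNxy}; bounding $\mathcal Z_N$ from below through the nonnegative integrand in Theorem~\ref{thm:TASEP rep}, so there is no sign cancellation to worry about; and, for $\b=1$, splitting $\upchi_y(1,\d)=(1-\d^{y+1})/(1-\d)$ together with \eqref{eq:p2}. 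The only detail you skip is that \eqref{eq:p1} requires $y\le N$: the paper treats $y>N$ separately using the trivial bound $1$ on the total variation distance, and in your formulation this is harmless because then $(y+1)^4/\min(m,k)\ge1$.
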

\begin{proof} For \(x,y\in\mathbb Z_{\geq0}\) with \(|x-y|\leq N\), let \(\Uptheta_N^{x,y}\) (resp. \(\Uptheta_N^{x,\mathrm{free}}\)) denote the pushforward under \(\Phi\) of the uniform probability measure on \(\mathrm{BCMot}_N^{x,y}\) (resp. \(\mathrm{BCMot}_N^{x,\mathrm{free}}\)). 
For \(|x-y|>N\), set
\(\Uptheta_N^{x,y}=\Uptheta_N^{0,0}\). This convention does not affect
any formula below because \(Z_N(x,y)=0\) if \(|x-y|>N\).
For
\(I\subseteq\llbracket1,N\rrbracket\), a subscript \(I\) denotes the $(\tau_i:i\in I)$ marginal of a measure on $(\tau_1,\dots,\tau_N)$. 
Recall $\upchi_j(u,v)=\sum_{\ell=0}^{j}u^{j-\ell}v^\ell$ for $u,v\in\mathbb R$ and $j\in\mathbb Z_{\geq0}$.
For all the cases in the theorem, $\a,\b\in[0,1]$ and $\a\b<1$, hence Theorem~\ref{thm:TASEP rep} on the matrix product ansatz for the open TASEP applies with
\begin{equation}\label{eq:TASEP-endpoint-mixture}
\mu_N^{(0,\a,\b,\c,\d)}
=
\frac1{\mathcal Z_N}
\sum_{x,y\geq0}
\upchi_x(\a,\c)\upchi_y(\b,\d)Z_N(x,y)\Uptheta_N^{x,y},
\qquad
\mathcal Z_N
=
\sum_{x,y\geq0}\upchi_x(\a,\c)\upchi_y(\b,\d)Z_N(x,y)>0,
\end{equation}
where the series converges in total variation distance. We split the rest of the proof into two steps. \\

\noindent\textbf{Step 1. } In this step, we prove Statement (1). To prove \eqref{eq:oni1}, we assume $\a,\b\in[0,1)$,  $1\le m\le N-k+1\le N$,   and denote $I=\llbracket m,N-k+1\rrbracket$. Using \eqref{eq:TASEP-endpoint-mixture} and taking the $I$-marginal,
\begin{equation}\label{eq:TASEP-centered-endpoint-mixture}
\begin{split}
&\mu_N^{(0,\a,\b,\c,\d)}
\left((\tau_i)_{i\in I}\in\cdot\right)
-
\Uptheta_{N,I}^{0,0} =
\frac1{\mathcal Z_N}
\sum_{x,y\geq0}
\upchi_x(\a,\c)   \upchi_y(\b,\d)Z_N(x,y)
\left(\Uptheta_{N,I}^{x,y}-\Uptheta_{N,I}^{0,0}\right).
\end{split}
\end{equation}

We now derive a lower bound for $\mathcal Z_N$.
In view of $1-2r\cos\theta+r^2\leq (1+|r|)^2$ for each $r,\theta\in\mathbb R$, the integral formulas for $\mathcal Z_N$ and \(Z_N(0,0)\) from  Theorem~\ref{thm:TASEP rep} and Lemma~\ref{lem:count biMot} together give  
\[
\mathcal Z_N
\geq
\frac{(1-\a\c)(1-\b\d)}
{(1+\a)^2(1+|\c|)^2(1+\b)^2(1+|\d|)^2}
Z_N(0,0).
\]
By the lower bound in Lemma~\ref{lem:bounds ZNxy}, there exists $C_1\in(0,\infty)$ depending only on $\a,\b,\c,\d$ such that 
\begin{equation}\label{eq:TASEP-normalization-lower}
\mathcal Z_N
\geq
C_1\frac{4^N}{(N+1)^{3/2}}.
\end{equation}

We next claim that, for each $x,y\in\mathbb Z_{\geq0}$ with \(|x-y|\leq N\) and \(y\leq N\),
\begin{equation}\label{eq:TASEP-two-sided-bridge-comparison}
\TV{\Uptheta_{N,I}^{x,y}-\Uptheta_{N,I}^{0,0}}
\leq
2^{26}\left(
\frac{(x+1)^4(y+1)^2}{m}
+
\frac{(y+1)^4}{k}
\right).
\end{equation}
Indeed, under the uniform measure on \(\mathrm{BCMot}_N^{x,y}\), conditional on \(\gamma_{m-1}\), the law of $\gamma_i$, $i\in\llbracket m-1,N\rrbracket$ is uniform
on \(\mathrm{BCMot}_{N-m+1}^{\gamma_{m-1},y}\). 
By Proposition~\ref{prop:marginal law comparison},
\begin{equation*} 
\TV{\Uptheta_{N,I}^{x,y}-\Uptheta_{N,I}^{0,y}}
\leq
\TV{\Xi_{N,m-1}^{x,y}-\Xi_{N,m-1}^{0,y}}
\leq
2^{26}\frac{(x+1)^4(y+1)^2}{m}.
\end{equation*}
By the same reason, 
\[
\TV{\Uptheta_{N,I}^{0,y}-\Uptheta_{N,I}^{0,0}}
\leq
2^{26}\frac{(y+1)^4}{k}.
\] 
We conclude the proof of \eqref{eq:TASEP-two-sided-bridge-comparison} from the triangle inequality.

We next bound the numerator of \eqref{eq:TASEP-centered-endpoint-mixture}. 
Denote $\rho_L=\max(\a,-\c)<1$ and $\rho_R=\max (\b,-\d)<1$. We have 
\begin{equation}\label{eq:TASEP-endpoint-geometric-bound}
\left|\upchi_x(\a,\c)\right|\leq(x+1)\rho_L^x,
\qquad
\left|\upchi_y(\b,\d)\right|\leq(y+1)\rho_R^y.
\end{equation} 
For \(y\leq N\), use \eqref{eq:TASEP-two-sided-bridge-comparison}, \eqref{eq:TASEP-endpoint-geometric-bound}, and the upper bound in
Lemma~\ref{lem:bounds ZNxy}, we have
\begin{equation}\label{eq:TASEP-central-endpoint-bound}
\begin{split}
    &\sum_{ x,y\geq0,\,y\leq N}
\left|\upchi_x(\a,\c)\upchi_y(\b,\d)\right|Z_N(x,y)
\TV{\Uptheta_{N,I}^{x,y}-\Uptheta_{N,I}^{0,0}}\\
&\leq \frac{2^{26}4^{N+1}}{(N+1)^{3/2}}
\left[
\frac1m
\left(\sum_{x\geq0}(x+1)^6\rho_L^x\right)
\left(\sum_{y\geq0}(y+1)^4\rho_R^y\right) 
+
\frac1k
\left(\sum_{x\geq0}(x+1)^2\rho_L^x\right)
\left(\sum_{y\geq0}(y+1)^6\rho_R^y\right)
\right].
\end{split}\end{equation}
For \(y>N\), using
\(\TV{\Uptheta_{N,I}^{x,y}-\Uptheta_{N,I}^{0,0}}\leq1\) and the upper bound in Lemma~\ref{lem:bounds ZNxy},
\begin{equation}\begin{split}\label{eq:TASEP-tail-endpoint-bound}
    &\sum_{ x,y\geq0,\,y>N}
\left|\upchi_x(\a,\c)\upchi_y(\b,\d)\right|Z_N(x,y)
\TV{\Uptheta_{N,I}^{x,y}-\Uptheta_{N,I}^{0,0}}\\
&\leq \frac{4^{N+1}}{(N+1)^{3/2}}
\left(\sum_{x\geq0}(x+1)^2\rho_L^x\right)
\left(\sum_{y>N}(y+1)^2\rho_R^y\right)\leq C_2\frac{4^N}{(N+1)^{5/2}},
\end{split}\end{equation}
where $C_2\in(0,\infty)$ depends only on $\a,\b,\c,\d$. The last step above uses
\[
\sum_{y>N}(y+1)^2\rho_R^y
\leq
2\rho_R^{N+1}
\left(
\frac{(N+2)^2}{1-\rho_R}
+
\sum_{j\geq0}j^2\rho_R^j
\right)\qquad\mbox{ and }\qquad\sup_{N\geq1}(N+1)(N+2)^2\rho_R^{N+1}<\infty.
\]
Combining \eqref{eq:TASEP-central-endpoint-bound} and
\eqref{eq:TASEP-tail-endpoint-bound} and dividing by
\eqref{eq:TASEP-normalization-lower} gives
\[
\TV{
\mu_N^{(0,\a,\b,\c,\d)}
\left((\tau_i)_{i\in I}\in\cdot\right)
-
\Uptheta_{N,I}^{0,0}}
\leq
C_3\left(
\frac1m+\frac1k+\frac1{N+1}
\right)\leq \frac{3C_3}{\min(m,k)},
\]
where $C_3\in(0,\infty)$ depends only on $\a,\b,\c,\d$. 
By Lemma~\ref{lem:exc and mea projection},  \(\Uptheta_N^{0,0}=\mathbb P_N^{\exc}\), and we conclude the proof of \eqref{eq:oni1}. 

We next prove the estimate \eqref{eq:oni2} in Statement (1). Assume now that \(\b=\d=0\), and denote \(I=\llbracket m,N\rrbracket\). 
Note that $\upchi_y(\b,\d)=\upchi_y(0,0)={\bf1}_{\{y=0\}}$. When $x\leq N$, conditioning on $\gamma_{m-1}$ and using Proposition \ref{prop:marginal law comparison},  
\[
\TV{\Uptheta_{N,I}^{x,0}-\Uptheta_{N,I}^{0,0}}
\leq
2^{26}\frac{(x+1)^4}{m}.
\]
Combining 
\eqref{eq:TASEP-endpoint-mixture} and
\eqref{eq:TASEP-endpoint-geometric-bound}, the upper bound in
Lemma~\ref{lem:bounds ZNxy}, and
\eqref{eq:TASEP-normalization-lower},
\[
\TV{
\mu_N^{(0,\a,0,\c,0)}
\left((\tau_i)_{i\in\llbracket m,N\rrbracket}\in\cdot\right)
-
\Uptheta_{N,\llbracket m,N\rrbracket}^{0,0}}
\leq
\frac{2^{28}}{C_1m}
\sum_{x\geq0}(x+1)^6\rho_L^x
\leq
\frac{C_4}{m},
\]  where $C_4\in(0,\infty)$ depends only on $\a,\b,\c,\d$. We conclude the  estimate \eqref{eq:oni2} in Statement (1). The   estimate \eqref{eq:oni3} in Statement (1) then follows from \eqref{eq:oni2} through the particle-hole duality as in Lemma \ref{lem:particle hole}.\\

\noindent\textbf{Step 2. } In this step, we prove the rest of the statements. Note that we only need to prove Statement (2), and then Statement (3) follows from Statement (2) through the particle-hole duality as in Lemma \ref{lem:particle hole}.
Assume \(\a\in[0,1)\) and \(\b=1\), and let $I=\llbracket m,N\rrbracket$. 
Note that $\upchi_y(\b,\d)=\upchi_y(1,\d)=\frac1{1-\d}-\frac{\d^{y+1}}{1-\d}$. Using \eqref{eq:TASEP-endpoint-mixture},
\begin{align}
&\mu_N^{(0,\a,1,\c,\d)}
\left((\tau_i)_{i\in I}\in\cdot\right)-\Uptheta_{N,I}^{0,\mathrm{free}}
\notag\\
& =
\frac1{\mathcal Z_N}
\left[
\frac1{1-\d}\sum_{x\geq0}
\upchi_x(\a,\c)Z_N(x,\mathrm{free})
\left(\Uptheta_{N,I}^{x,\mathrm{free}}-\Uptheta_{N,I}^{0,\mathrm{free}}\right) 
-
\sum_{x,y\geq0}
\frac{\d^{y+1}}{1-\d}\upchi_x(\a,\c)  Z_N(x,y)
\left(\Uptheta_{N,I}^{x,y}-\Uptheta_{N,I}^{0,\mathrm{free}}\right)
\right].
\label{eq:TASEP-meander-decomposition}
\end{align}
These two regrouped series are absolutely convergent. Indeed, denote $\rho=\max (\a,-\c)<1$, then
\begin{align*}
  \sum_{x\geq0}\left|\upchi_x(\a,\c)\right|Z_N(x,\mathrm{free})
&\leq
2\,4^N\sum_{x\geq0}(x+1)\rho^x<\infty,\\
  \sum_{x,y\geq0}  \left|\upchi_x(\a,\c)\frac{\d^{y+1}}{1-\d}\right|Z_N(x,y)
&\leq
\frac{4^N}{1-\d}
\left(\sum_{x\geq0}(x+1)\rho^x\right)
\left(\sum_{y\geq0}|\d|^{y+1}\right)
<\infty,
\end{align*}
where we use $|\upchi_x(\a,\c)|\leq(x+1)\rho^x$ and \(Z_N(x,y)\leq4^N\). 

We now obtain a lower bound for \(\mathcal Z_N\). By Theorem~\ref{thm:TASEP rep} with $\b=1$,
\begin{equation}\label{eq:TASEP-meander-normalization-integral}
\mathcal Z_N
=
\frac{2(1-\a\c)(1-\d)}{\pi}
\int_0^\pi
\frac{(2+2\cos\theta)^N\cos^2(\theta/2)}
{\left(1-2\a\cos\theta+\a^2\right)\left(1-2\c\cos\theta+\c^2\right) \left(1-2\d\cos\theta+\d^2\right) }
\,d\theta.
\end{equation}
When \(\theta\in[0,(N+1)^{-1/2}]\), we have
 $(2+2\cos\theta)^N\cos^2(\theta/2)
=
4^N\cos^{2N+2}(\theta/2)
\geq
e^{-1/2}4^N$.
Indeed, \(\log\cos u\geq-u^2\) for each \(u\in[0,1/2]\), since \(\tan u\leq2u\) on this interval. Combining with $1-2r\cos\theta+r^2\leq (1+|r|)^2$, 
restricting the integral in
\eqref{eq:TASEP-meander-normalization-integral} to \(\theta\in[0,(N+1)^{-1/2}]\) yields
\begin{equation}\label{eq:TASEP-meander-normalization-lower}
\mathcal Z_N
\geq
\frac{2e^{-1/2}(1-\a\c)(1-\d)}
{\pi(1+\a)^2(1+|\c|)^2(1+|\d|)^2}\frac{4^N}{\sqrt{N+1}}.
\end{equation}

We next upper bound the numerator of \eqref{eq:TASEP-meander-decomposition}.
Under the uniform measure on \(\mathrm{BCMot}_N^{x,\mathrm{free}}\), conditional on \(\gamma_{m-1}\), the law of $\gamma_i$, $i\in\llbracket m-1,N\rrbracket$ is uniform on \(\mathrm{BCMot}_{N-m+1}^{\gamma_{m-1},\mathrm{free}}\). By Proposition \ref{prop:marginal law comparison},
\begin{equation}\label{eq:TASEP-free-suffix-comparison}
\TV{
\Uptheta_{N,I}^{x,\mathrm{free}}
-
\Uptheta_{N,I}^{0,\mathrm{free}}
}
\leq
2^{26}\frac{(x+1)^2}{m}.
\end{equation} Combining with Lemma~\ref{lem:bounds ZNxfree}, 
\begin{equation}\label{eq:TASEP-meander-main-error}
    \frac1{1-\d}\sum_{x\geq0}
\left|\upchi_x(\a,\c)\right|Z_N(x,\mathrm{free})
\TV{\Uptheta_{N,I}^{x,\mathrm{free}}-\Uptheta_{N,I}^{0,\mathrm{free}}}\leq\frac{2^{27} \,4^N}{(1-\d)m\sqrt{N+1}}
\sum_{x\geq0}(x+1)^4\rho^x
\leq
C_5\frac{4^N}{m\sqrt{N+1}},
\end{equation}
where $C_5\in(0,\infty)$ depends only on $\a,\c,\d$.
For the other summand, using \(\TV{\Uptheta_{N,I}^{x,y}-\Uptheta_{N,I}^{0,\mathrm{free}}}\leq1\),  $|\upchi_x(\a,\c)|\leq(x+1)\rho^x$, 
and the upper bound in Lemma~\ref{lem:bounds ZNxy},  
\begin{equation}\label{eq:TASEP-meander-endpoint-error}
\begin{split} 
& 
\sum_{x,y\geq0}
\frac{|\d|^{y+1}}{1-\d}
\left|\upchi_x(\a,\c)\right|Z_N(x,y)
\TV{\Uptheta_{N,I}^{x,y}
-\Uptheta_{N,I}^{0,\mathrm{free}}}\\
&\leq
\frac{4^{N+1}}{(1-\d)(N+1)^{3/2}}
\left(\sum_{x\geq0}(x+1)^2\rho^x\right)
\left(\sum_{y\geq0}(y+1)|\d|^{y+1}\right)
\leq
C_6\frac{4^N}{(N+1)^{3/2}}.
\end{split}
\end{equation}
where $C_6\in(0,\infty)$ depends only on $\a,\c,\d$.

Combining
\eqref{eq:TASEP-meander-decomposition},
\eqref{eq:TASEP-meander-main-error},
\eqref{eq:TASEP-meander-endpoint-error}, and
\eqref{eq:TASEP-meander-normalization-lower}, and using \(m\leq N\),
gives
\[
\TV{
\mu_N^{(0,\a,1,\c,\d)}
\left((\tau_i)_{i\in I}\in\cdot\right)
-
\Uptheta_{N,I}^{0,\mathrm{free}}}
\leq
C_7\left(\frac1m+\frac1{N+1}\right)
\leq
\frac{2C_7}{m},
\]
where $C_7\in(0,\infty)$ depends only on $\a,\c,\d$.
By  Lemma~\ref{lem:exc and mea projection},
\(\Uptheta_N^{0,\mathrm{free}}=\mathbb P_N^{\mea}\). We conclude the proof of Statement~(2). As mentioned, Statement (3) then follows from the particle-hole duality in Lemma \ref{lem:particle hole}.
\end{proof}

\section{Open ASEP in the maximal current phase: Proof of Theorem \ref{thm:main open ASEP}}\label{sec:ASEP}

In this section, our goal is to transfer the approximation results for the open TASEP in Theorem~\ref{thm:main open TASEP} to the open ASEP by controlling the weights from Theorem~\ref{thm:Motzkin representation}.

\subsection{A comparison estimate for the Motzkin path weights in the bulk} 

We assume in the following that \(q\in[0,1)\) and \(\c,\d\in(-1,0]\). For each
\(N,x,y\in\mathbb Z_{\geq0}\), let
\begin{equation}\label{eq:def of RNxy}
R_N^{(q,\c,\d)}(x,y)
=
\sum_{\bg\in\mathrm{BCMot}^{x,y}_N}
\prod_{i=1}^{N}\upomega_i(\bg),
\end{equation}
where we recall from Definition~\ref{def:measure Motzkin} that
\[
\upomega_i(\bg)=
\begin{cases}
1-q^{\gamma_i},
&\gamma_i-\gamma_{i-1}=1,\\
1-\c\d q^{\gamma_i},
&\gamma_i-\gamma_{i-1}=-1,\\
1+\d q^{\gamma_i},
&\gamma_i-\gamma_{i-1}=0\text{ is colored black},\\
1+\c q^{\gamma_i},
&\gamma_i-\gamma_{i-1}=0\text{ is colored white}.
\end{cases}
\]
In the following, we study a reweighting of the partition function $Z_N(\cdot,x)$ and the total weight functions $R_N^{(q,\c,\d)}(\cdot,x)$ and $R_N^{(q,\c,\d)}(z,\cdot)$ with respect to a given function $\varsigma$. 
\begin{definition}\label{def:ReweightingFunctions}
    Let \(C_0\in(0,\infty)\), \(\varrho\in[0,1)\), and let \(\varsigma:\mathbb Z_{\geq0}\to[0,\infty)\) be a function satisfying $\varsigma(0)>0$ and $\varsigma(j)\leq C_0\varrho^j$ for every $j\in\mathbb Z_{\geq0}$. For each \(x\in\mathbb Z_{\geq0}\), define
\[
G_N^\varsigma(x)
=\sum_{z\geq0}\varsigma(z)Z_N(z,x),
\qquad
F_N^\varsigma(x)
=\sum_{z\geq0}\varsigma(z)R_N^{(q,\c,\d)}(z,x),
\qquad H_N^\varsigma(x)
=\sum_{z\geq0}\varsigma(z)R_N^{(q,\c,\d)}(x,z).
\]
\end{definition}

Our goal in this section is to establish the following approximation result for the functions in Definition~\ref{def:ReweightingFunctions}.  
Recall the $q$-Pochhammer symbol from \eqref{def:Pochhammer}.

\begin{proposition}\label{prop:comparison bulk weights}
Assume \(q\in[0,1)\) and \(\c,\d\in(-1,0]\). 
Let \(C_0\in(0,\infty)\), \(\varrho\in[0,1)\), and let \(\varsigma:\mathbb Z_{\geq0}\to[0,\infty)\) be a function satisfying $\varsigma(0)>0$ and $\varsigma(j)\leq C_0\varrho^j$ for every $j\in\mathbb Z_{\geq0}$. Consider the functions $G_N^\varsigma, F_N^\varsigma $ and $ H_N^\varsigma$ from Definition~\ref{def:ReweightingFunctions} with respect to $\varsigma$.  Define
\begin{align*}
    &\mathfrak p_j
=[t^j]\frac{(\c t,\d t;q)_\infty}{(t,t;q)_\infty}\qquad\mbox{ for every  }j\in\mathbb Z_{\geq0},\\
&  \Upgamma_\varsigma
=
\frac{(q;q)_\infty^2}{(\c,\d;q)_\infty}
\frac{\sum_{j\geq0}\mathfrak p_j\varsigma(j) }
{\sum_{j\geq0}(j+1)\varsigma(j)},
\qquad\mbox{ and }\qquad
\Upupsilon_\varsigma
=
\frac{(q;q)_\infty(\c\d;q)_\infty}
{(\c,\d;q)_\infty}
\frac{\sum_{j\geq0}
\mathfrak p_j  (q;q)_j (\c\d;q)_j^{-1}\varsigma(j) }
{\sum_{j\geq0}(j+1)\varsigma(j)},
\end{align*} 
where we write  \([t^j]\) to denote the coefficient of \(t^j\) in the Taylor expansion at \(t=0\).
Then the two constants $\Upgamma_\varsigma$ and $\Upupsilon_\varsigma$ are well-defined and belong to \((0,1]\). Moreover, there exists a constant \(C\in(0,\infty)\), depending only on \(q,\c,\d,C_0,\varrho,\varsigma(0)\), such that, for every  \(N,x\in\mathbb Z_{\geq0}\) for which \(G_N^\varsigma(x)>0\),
\[ 
\left|
\frac{F_N^\varsigma(x)}{G_N^\varsigma(x)}
-\Upgamma_\varsigma
\right|
 \leq
C\left(\frac1{x+1}+\frac{x+1}{N+1}\right)  \qquad\mbox{ and }\qquad
\left|
\frac{H_N^\varsigma(x)}{G_N^\varsigma(x)}
-\Upupsilon_\varsigma
\right|
 \leq
C\left(\frac1{x+1}+\frac{x+1}{N+1}\right). 
\]
\end{proposition}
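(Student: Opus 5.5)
The plan is to treat $R_N^{(q,\c,\d)}$ as the kernel of a transfer operator and diagonalize it, which is the Fourier-type decomposition mentioned in the overview. The weights $\upomega_i$ are those of the Uchiyama--Sasamoto--Wadati tridiagonal matrix $\mathbf D+\mathbf E$ (after the gauge used in Theorem~\ref{thm:Motzkin representation}). Its generalized eigenvectors are the continuous $q$-Hermite/Al-Salam--Chihara type polynomials $p_x(\cos\theta)$ with parameters $(\c,\d)$, orthogonal with respect to a weight $w(\theta)$ proportional to $(e^{2\mathtt i\theta},e^{-2\mathtt i\theta};q)_\infty/(\c e^{\mathtt i\theta},\c e^{-\mathtt i\theta},\d e^{\mathtt i\theta},\d e^{-\mathtt i\theta};q)_\infty$. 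In analogy with Lemma~\ref{lem:count biMot}, I would first establish
\[
R_N^{(q,\c,\d)}(x,y)=\frac{1}{h_y}\int_0^\pi (2+2\cos\theta)^N p_x(\cos\theta)p_y(\cos\theta)w(\theta)\,d\theta ,
\]
with explicit norms $h_y$. This comes from checking that the three-term recurrence of $p_x$ matches the weighted step rule. The factor $(q;q)_y/(\c\d;q)_y$ appearing in $\Upupsilon_\varsigma$ is exactly the asymmetry between the two index orders, which explains the different constants for $F$ and $H$.

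Next I would compute $\sum_z \varsigma(z)p_z(\cos\theta)$. The generating function $\sum_j \mathfrak p_j t^j=(\c t,\d t;q)_\infty/(t,t;q)_\infty$ is the generating function of the polynomials evaluated at $\theta=0$, where $e^{\pm\mathtt i\theta}=1$. Since all spectral weight of $(2+2\cos\theta)^N$ concentrates near $\theta=0$ at scale $N^{-1/2}$, the leading behaviour should be governed by $p_z(1)=\mathfrak p_z$-type values together with the linear vanishing of $w(\theta)$. In the $q=0$ unweighted case this reduces to $\sin((z+1)\theta)\approx (z+1)\theta$, which produces the denominator $\sum_j (j+1)\varsigma(j)$. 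The same mechanism produces the factors $(q;q)_\infty^2/(\c,\d;q)_\infty$.

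The main step is then to expand both $F_N^\varsigma(x)$ and $G_N^\varsigma(x)$ as integrals near $\theta=0$:
\[
\int_0^\pi (2+2\cos\theta)^N \Sigma(\theta)\,\Psi_x(\theta)\,d\theta ,
\]
where $\Sigma$ is the $\varsigma$-transform (smooth, since $\varsigma$ decays geometrically) and $\Psi_x$ is the $x$-eigenfunction times the weight. For $x\ge1$ the eigenfunction behaves like $\sin((x+1)\theta)$ times a smooth amplitude $A(\theta)$ plus corrections. The corrections are geometrically decaying in $x$ away from the front, coming from the factor $q^{\gamma}$ in the weights. A Taylor expansion in $\theta$ around $0$ then gives a ratio equal to $\Upgamma_\varsigma(1+O(\theta^2 (x+1)^2)+O(1/(x+1)))$. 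Integrating against the Gaussian $(2+2\cos\theta)^N$ gives the $\theta^2(x+1)^2$ term the size $(x+1)/(N+1)$ after normalization, using Lemma~\ref{lem:bounds ZNxy}-type lower bounds for $G_N^\varsigma(x)$.

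Alternatively, and more robustly, I would give a probabilistic version. Write $\prod\upomega_i=1$ plus a product that differs from $1$ only on visits to low heights. Decompose each path at its last visit to a level $L\approx x/2$. Lemma~\ref{lem:comparison kernels} and Lemma~\ref{lem:TechnicalCombiInput} then control how the endpoint $x$ interacts with the low region, yielding exactly the $1/(x+1)+(x+1)/(N+1)$ error. The identification of the constant relies on a harmonic-function/Doob-transform identity: $\lim_x R(z,x)/Z(0,x)$ equals a $q$-harmonic function whose value is the $\mathfrak p$-expression.

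Positivity of $\Upgamma_\varsigma$ and $\Upupsilon_\varsigma$ follows from the limiting ratio of positive quantities. The bound $\le 1$ follows because $\upomega_i\le1$ on average, which would need a separate comparison argument.

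I expect the main obstacle to be the uniform error control in $x$ and $N$ simultaneously. In particular, the subleading $1/(x+1)$ term requires the eigenfunction asymptotics $p_x(\cos\theta)\approx$ a combination of $e^{\pm\mathtt i(x+1)\theta}$ with amplitudes given by the $q$-Pochhammer generating function. This needs to hold uniformly for $\theta$ up to order $N^{-1/2}\log N$, with a remainder of size $q^x$. I would try first to derive it from the Cauchy contour representation of $p_x$ with poles at $t=e^{\pm\mathtt i\theta}$, which gives residues plus a geometrically small remainder.
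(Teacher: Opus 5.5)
\textbf{There is a genuine gap: your plan controls the ratios only for $x\lesssim\sqrt N$, and the key step for larger $x$ is missing.}

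Your first steps do agree with the paper. The spectral formula for $R_N^{(q,\c,\d)}$ via the $(\c,\d)$ Al-Salam--Chihara polynomials, with the factor $(q;q)_y/(\c\d;q)_y$ separating $F$ from $H$, is Lemma~\ref{lem:orth}. Splitting the eigenfunction by moving the Cauchy contour past $t=e^{\pm\mathtt i\theta}$ (residues plus an $O(r^{-x})$ remainder) is Lemma~\ref{lem:identify Fourier}.

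\emph{Why the quantitative step fails.} The bound is \emph{relative} and is nontrivial throughout $1\ll x\ll N$. The trivial fact that $F_N^\varsigma/G_N^\varsigma$ and $H_N^\varsigma/G_N^\varsigma$ lie in $(0,1]$ only disposes of bounded $x$ and of $x+1>\varepsilon(N+1)$.

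For $\sqrt N\ll x\le\varepsilon N$ the target error $(x+1)/(N+1)$ is still $o(1)$. But $G_N^\varsigma(x)$ is smaller than the natural scale $4^N(x+1)N^{-3/2}$ by a factor $e^{-cx^2/N}$. Meanwhile the $\theta$-integrands are of full size and oscillate through $\sin((x+1)\theta)$ and $\cos((x+1)\theta)$ on the bulk of the Gaussian.

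Taylor-expanding the amplitude in real $\theta$, with truncation at $|\theta|\le N^{-1/2}\log N$, gives only absolute errors of order $4^NN^{-3/2}\mathrm{poly}(x)$. The discarded tail alone is of order $4^Ne^{-c\log^2N}$. After dividing by $G_N^\varsigma(x)$, these errors blow up by $e^{cx^2/N}$.

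Comparing integrands pointwise produces $\theta\cot((x+1)\theta)$. This is meaningful only when $(x+1)\theta\ll1$ on the bulk of the Gaussian, i.e.\ $x\ll\sqrt N$. (Incidentally, the relative error it produces is $O((x+1)\theta^2)$, which integrates to $(x+1)/N$. The $O(\theta^2(x+1)^2)$ you wrote would give $(x+1)^2/N$, which exceeds the claimed bound.)

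Your probabilistic alternative is too schematic to fill the gap. Lemma~\ref{lem:comparison kernels} requires $(x+2)^2\le N+1$, so it has the same restriction. The Doob-transform identification of the constant is only asserted.

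\emph{The missing idea.} Never estimate the oscillatory integral in real $\theta$; reduce it exactly to the kernel $Z_N(0,\cdot)$, which carries the correct Gaussian factor. The paper proceeds as follows.
\begin{itemize}
\item It subtracts $\mathfrak k_i\upvarphi_0$, so that $\psi_i=\upvarphi_i-\mathfrak k_i\upvarphi_0$ vanishes at $\theta=0$.
\item It notes that $\psi_i(\theta)=\Psi_i(e^{\mathtt i\theta})$ with $\Psi_i$ holomorphic on an annulus $r_2^{-1}\le|w|\le r_2$. Hence the Fourier coefficients $a_{i,k}$ are $O(r_2^{-|k|})$ and sum to zero.
\item It writes exactly $I_{N,n}(\psi_i)=\sum_k a_{i,k}\bigl(y_N(n+k)-y_N(n)\bigr)$, with $n=x+1$ and $y_N(n)=Z_N(0,n-1)\le G_N^\varsigma(x)/\varsigma(0)$.
\item Lemma~\ref{lem:TechnicalCombiInput} gives $|y_N(n+k)-y_N(n)|\le 8e^4|k|e^{8\varepsilon|k|}(1/n+n/(N+1))\,y_N(n)$. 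This is a \emph{relative} bound, valid for all $n\le\varepsilon(N+1)$.
\item The factor $e^{8\varepsilon|k|}$ is absorbed by $r_2^{-|k|}$ once $\varepsilon\le\log(r_2)/16$ (Lemma~\ref{lem:Fourier}).
\item The remainder $O(r^{-x})Z_N(0,0)$ is negligible relative to $G_N^\varsigma(x)$ only because $x^2/(N+1)\le\varepsilon x$ with $\varepsilon\log4\le\tfrac12\log r$.
\end{itemize}
Shifting the $\theta$-contour to the imaginary saddle $\mathrm{Im}\,\theta\asymp x/N$, inside the strip of analyticity, would be an equivalent route. Some such analytic step is indispensable.

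\emph{Two smaller points.}
\begin{itemize}
\item The step weights lie in $(0,1]$ pointwise, not just on average. With $Z_N(x,z)=Z_N(z,x)$ this gives $F_N^\varsigma\le G_N^\varsigma$ and $H_N^\varsigma\le G_N^\varsigma$. Letting $N=m^2$, $x=m\to\infty$ then yields $\Upgamma_\varsigma,\Upupsilon_\varsigma\le1$ with no separate comparison argument.
\item Strict positivity should be read off from $\mathfrak p_j\ge0$, $\mathfrak p_0=1$, $\varsigma(0)>0$. A limit of positive ratios only gives nonnegativity.
\end{itemize}
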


Intuitively, the functions $G_N^\varsigma, F_N^\varsigma $ and $ H_N^\varsigma$ act as expected weights with respect to a measure given by $\varsigma$, where we consider for $G_N^\varsigma$ unweighted bicolored Motzkin paths with a starting point according to $\varsigma$ and fixed ending point, for $F_N^\varsigma$ weighted bicolored Motzkin paths with a fixed ending point, and for $H_N^\varsigma$ weighted bicolored Motzkin paths with a fixed starting point. We show that the ratios between the different functions converge to constants as $N \rightarrow \infty$, and we quantify this approximation. Let us mention that, though determined explicit, the exact forms of these constants are not important for our arguments.

As a main input for the proof of Proposition \ref{prop:comparison bulk weights}, we will use the following Fourier type decomposition of the total weight function $R_N^{(q,\c,\d)}$, related to Askey--Wilson orthogonal polynomials; see \cite{askey1985some}.

\begin{lemma}\label{lem:orth}
    Assume \(q\in[0,1)\) and \(\c,\d\in(-1,0]\). For \(\theta\in[0,\pi]\), define
\begin{align*}
     \phi_j(\theta)
&=
[t^j]\frac{(\c t,\d t;q)_\infty}
{(te^{\mathtt i\theta},te^{-\mathtt i\theta};q)_\infty} \qquad\mbox{ for }j\in\mathbb Z_{\geq0},\\
 d\nu(\theta)
&=
\frac{(q,\c\d;q)_\infty}{2\pi}
\frac{(e^{2\mathtt i\theta},e^{-2\mathtt i\theta};q)_\infty}
{(\c e^{\mathtt i\theta},\c e^{-\mathtt i\theta},
\d e^{\mathtt i\theta},\d e^{-\mathtt i\theta};q)_\infty}
\,d\theta .
\end{align*} 
Then for every $N,x,y\in\mathbb Z_{\geq0}$,
\[
R_N^{(q,\c,\d)}(x,y)
=
\frac{(q;q)_y}{ (\c\d;q)_y}
\int_0^\pi(2+2\cos\theta)^N
\phi_x(\theta)\phi_y(\theta)\,d\nu(\theta).
\]
In particular, when $N=0$, we have $R_0^{(q,\c,\d)}(x,y)={\bf1}_{\{x=y\}}$, and hence for every $x,y\in\mathbb Z_{\geq0}$,
\[
\int_0^\pi\phi_x(\theta)\phi_y(\theta)\,d\nu(\theta)
=
\frac{ (\c\d;q)_x}{ (q;q)_x }\mathbf 1_{\{x=y\}}.
\]
\end{lemma}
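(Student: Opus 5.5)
The plan is a transfer-matrix argument. Let $\mathbf T$ be the infinite tridiagonal matrix with $\mathbf T_{h,h+1}=1-q^{h+1}$, $\mathbf T_{h+1,h}=1-\c\d q^{h}$ and $\mathbf T_{h,h}=2+(\c+\d)q^h$. Here the diagonal entry sums the black and white horizontal weights $1+\d q^h$ and $1+\c q^h$. Summing $\prod_i\upomega_i(\bg)$ over paths then gives $R_N^{(q,\c,\d)}(x,y)=(\mathbf T^N)_{x,y}$. The whole lemma reduces to a spectral representation of $\mathbf T$. Concretely, I will show that the functions $\phi_j(\theta)$ satisfy the eigenvector relation
\[
\sum_{y}\mathbf T_{x,y}\,\phi_y(\theta)\,\frac{(q;q)_y}{(\c\d;q)_y}\;\propto\;(2+2\cos\theta)\,\phi_x(\theta)\,\frac{(q;q)_x}{(\c\d;q)_x}\quad\text{(suitably normalized)}.
\]
I will also show that they are orthogonal with respect to $d\nu$ with squared norm $(\c\d;q)_x/(q;q)_x$. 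Expanding $\mathbf T^N$ in this eigenbasis then yields the formula directly.

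First, I identify $\phi_j$. The generating function $(\c t,\d t;q)_\infty/(te^{\mathtt i\theta},te^{-\mathtt i\theta};q)_\infty$ is the standard generating function for the continuous dual $q$-Hahn / Al-Salam--Chihara polynomials with parameters $\c,\d$. These are the Askey--Wilson polynomials with two parameters set to zero, cf.\ \cite{askey1985some}. So $\phi_j(\theta)=p_j(\cos\theta)/(q;q)_j$, where $p_j$ are the Al-Salam--Chihara polynomials. They satisfy the three-term recurrence
\[
2\cos\theta\, p_j=p_{j+1}+(\c+\d)q^j p_j+(1-q^j)(1-\c\d q^{j-1})p_{j-1}.
\]
Rather than quote this, I will verify it directly. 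Multiplying the generating function by $(1-te^{\mathtt i\theta})(1-te^{-\mathtt i\theta})=1-2t\cos\theta+t^2$ equals a $q$-shifted version of itself, and comparing coefficients of $t^j$ gives the recurrence. Converting to $\phi_j$ and adding $2\phi_j$ to both sides, the recurrence becomes
\[
(2+2\cos\theta)\phi_x=(1-q^{x+1})\phi_{x+1}+(2+(\c+\d)q^x)\phi_x+(1-\c\d q^{x-1})\phi_{x-1}.
\]
This is exactly $\mathbf T\phi=(2+2\cos\theta)\phi$. A careful index check is needed here on whether the factor $1-q^{\gamma_i}$ on up-steps matches $1-q^{x+1}$, and $1-\c\d q^{\gamma_i}$ on down-steps matches $1-\c\d q^{x-1}$. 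The weights $\upomega_i$ are evaluated at the endpoint $\gamma_i$, and this endpoint convention is precisely what makes them match. Note that the weights are evaluated at $\gamma_i$, so a down-step from $x$ to $x-1$ carries $1-\c\d q^{x-1}$.

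Second, the orthogonality relation $\int\phi_x\phi_y\,d\nu=\mathbf 1_{x=y}(\c\d;q)_x/(q;q)_x$ is the Askey--Wilson orthogonality at $a=\c$, $b=\d$, $c=d=0$. Its total mass $\int d\nu=1$ is the Askey--Wilson integral with normalization $(q,\c\d;q)_\infty$. Using $\|p_j\|^2=(q,\c\d;q)_j$ and dividing by $(q;q)_j^2$ gives the claimed norm. This needs $|\c|,|\d|<1$, so $d\nu$ is a positive absolutely continuous measure with no discrete masses.

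Third, I assemble the pieces. Let $w_y=(q;q)_y/(\c\d;q)_y$. Orthonormality and completeness of polynomials on $[-1,1]$ give
\[
\int_0^\pi \lambda(\theta)^N\phi_x\phi_y\,w_y\,d\nu=\big(\mathbf T^N\big)_{x,y},\qquad \lambda(\theta)=2+2\cos\theta.
\]
To prove this, I induct on $N$ using the eigenvector relation, applied in the variable $x$. For $N=0$ the identity is just orthogonality. For the inductive step, write $\lambda^{N+1}\phi_x=\sum_z\mathbf T_{x,z}\lambda^N\phi_z$ and use $(\mathbf T^{N+1})_{x,y}=\sum_z\mathbf T_{x,z}(\mathbf T^N)_{z,y}$. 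No completeness argument is needed, since all sums are finite. The main obstacle is getting the recurrence's index and normalization conventions exactly right, so that $\mathbf T$ acts on the left index without a similarity transform. If this turns out to require $\mathbf T$ transposed, I will instead apply the recurrence in $y$ and absorb the asymmetry via the weight $w_y$, using $w_{y+1}(1-\c\d q^{y})=w_y(1-q^{y+1})$.
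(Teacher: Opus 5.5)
Your proposal is correct and follows the paper's proof essentially step for step. The shared steps are: the recurrence from $(1-2t\cos\theta+t^2)\mathcal F(t)=(1-\c t)(1-\d t)\mathcal F(qt)$; orthogonality from the Askey--Wilson integral; the first-step decomposition $R_N=\mathbf T^N$ with the endpoint-evaluated weights matching $1-q^{x+1}$ and $1-\c\d q^{x-1}$; and induction on $N$ with $\mathbf T$ acting on the left index. The only difference is in the norms: the paper derives them by integrating a product of two generating functions and applying the $q$-binomial theorem, whereas you quote the Al-Salam--Chihara norms.

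One correction: your opening display with the weights $w_y$ is misstated. By reversibility $w_x\mathbf T_{x,y}=w_y\mathbf T_{y,x}$, the vector $(w_y\phi_y)_y$ is a left eigenvector, not a right one. This does not affect the argument, because the relation you actually use, $\mathbf T\phi=(2+2\cos\theta)\phi$, is the right one.
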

\begin{proof}
We start by showing that $\phi_j(\theta)$, $j\in\mathbb Z_{\geq0}$ satisfies the orthogonality
\begin{equation}\label{eq:ort of phi in proof}
    \int_0^\pi\phi_j(\theta)\phi_k(\theta)\,d\nu(\theta)
=
\frac{ (\c\d;q)_j}{ (q;q)_j }\mathbf 1_{\{j=k\}}\qquad\mbox{ for }j,k\in\mathbb Z_{\geq0},
\end{equation}
and the recursion
\begin{equation}\label{eq:recurrencephi}
    \begin{split}
       (2+2\cos\theta)\phi_0(\theta) & =(2+\c+\d)\phi_0(\theta)+(1-q)\phi_1(\theta),\\
       (2+2\cos\theta)\phi_j(\theta) & = (1-q^{j+1})\phi_{j+1}(\theta)
+\left(2+(\c+\d)q^j\right)\phi_j(\theta)+\left(1-\c\d q^{j-1}\right)\phi_{j-1}(\theta)\qquad\mbox{ for } j\in\mathbb N.
    \end{split}
\end{equation}
For  $\theta\in[0,\pi]$ and $|z|<1$, let
\begin{equation}\label{eq:form F}
\mathcal F(z,\theta)
=\frac{(\c z,\d z;q)_\infty}
{(ze^{\mathtt i\theta},ze^{-\mathtt i\theta};q)_\infty}   ,
\end{equation}
then, by the definition of $\phi_j$, one can expand
\begin{equation}\label{eq:power expa F}
    \mathcal F(z,\theta)= \sum_{j=0}^\infty\phi_j(\theta)z^j .
\end{equation}
From \eqref{eq:form F}, we can observe that
\[
(1-\c z)(1-\d z)\mathcal F(qz,\theta)
=(1-2z\cos\theta+z^2)\mathcal F(z,\theta).
\] 
Plugging \eqref{eq:power expa F} into the above equation and comparing coefficients of $z^j$ gives the recurrence \eqref{eq:recurrencephi}. For the orthogonality, by the Askey--Wilson integral \cite{askey1985some}; see also \cite[Section~6.1]{gasper2011basic}, for each $|z|,|w|<1$,
\[
\int_0^\pi
\frac{(e^{2\mathtt i\theta},e^{-2\mathtt i\theta};q)_\infty}
{(\c e^{\mathtt i\theta},\c e^{-\mathtt i\theta},
\d e^{\mathtt i\theta},\d e^{-\mathtt i\theta},
z e^{\mathtt i\theta},z e^{-\mathtt i\theta},
w e^{\mathtt i\theta},w e^{-\mathtt i\theta};q)_\infty}
\,d\theta=\frac{2\pi(\c\d zw;q)_\infty}
{(q,\c\d,\c z,\c w,\d z,\d w,zw;q)_\infty}.
\]
Hence, by \eqref{eq:form F},
\[
\int_0^\pi
\mathcal F(z,\theta)\mathcal F(w,\theta)\,d\nu(\theta)=\frac{(\c\d zw;q)_\infty}{(zw;q)_\infty}=\sum_{n=0}^\infty
\frac{(\c\d;q)_n}{(q;q)_n}(zw)^n,
\]
where the second equality is the \(q\)-binomial theorem \cite[(1.3.2)]{gasper2011basic}. Plugging \eqref{eq:power expa F} into the above equation and comparing coefficients of $z^jw^k$ gives the orthogonality \eqref{eq:ort of phi in proof}. 

Now for each $j,k\in\mathbb Z_{\geq0}$, define
\[
T(j,k)=
\begin{cases}
1-q^{j+1}  \qquad&\mbox{ if }k=j+1,\\
2+(\c+\d)q^j  \qquad&\mbox{ if }k=j,\\
1-\c\d q^{j-1}   \qquad&\mbox{ if }k=j-1,\,j\geq1,\\
0  \qquad&\text{ otherwise}.
\end{cases}
\]
The recurrence \eqref{eq:recurrencephi} gives 
\begin{equation}\label{eq:recurrencephi in K}
\sum_{k\geq0}T(j,k)\phi_k(\theta)
=(2+2\cos\theta)\phi_j(\theta)\qquad\mbox{ for }j\in\mathbb Z_{\geq0}.
\end{equation}
Using the definition \eqref{eq:def of RNxy} of $R_N^{(q,\c,\d)}(x,y)$ and expanding over the first step of the path,
\begin{equation}\label{eq:orth-path-recursion}
R_0^{(q,\c,\d)}(x,y)=\mathbf 1_{\{x=y\}},
\qquad
R_{N+1}^{(q,\c,\d)}(x,y)
=\sum_{k\geq0}T(x,k)R_N^{(q,\c,\d)}(k,y)\qquad\mbox{ for }N,x,y\in\mathbb Z_{\geq0}.
\end{equation}
We now prove the formula 
\begin{equation}\label{eq:formula of R in proof}
R_N^{(q,\c,\d)}(x,y)
=
\frac{(q;q)_y}{ (\c\d;q)_y}
\int_0^\pi(2+2\cos\theta)^N
\phi_x(\theta)\phi_y(\theta)\,d\nu(\theta)\qquad\mbox{ for }x,y\in\mathbb Z_{\geq0}
\end{equation}
by induction over $N$. When $N=0$, \eqref{eq:formula of R in proof} is equivalent to \eqref{eq:ort of phi in proof}. Assume that it holds for $N\in\mathbb Z_{\geq0}$, then, for $N+1$, \eqref{eq:formula of R in proof} follows from the induction hypothesis by combining \eqref{eq:recurrencephi in K} and \eqref{eq:orth-path-recursion}. We conclude the proof of  \eqref{eq:formula of R in proof} for all $N\in\mathbb Z_{\geq0}$ by induction.  
\end{proof}

Next, we provide a Fourier type lower bound on the quantity $Z_N(0,n-1)$.

\begin{lemma}\label{lem:Fourier}
For each \(\upupsilon>0\), there exists a constant \(C\in(0,\infty)\), depending only on \(\upupsilon\), with the following property. 
Let \(M\in[0,\infty)\) and let \((\varphi_j)_{j\in\mathbb Z}\subset\mathbb R\) satisfy
\[
|\varphi_j|\leq M e^{-\upupsilon|j|}
\quad\text{for every }j\in\mathbb Z,
\qquad
\sum_{j\in\mathbb Z}\varphi_j=0.
\]
Define
\[
\varphi(\theta)
=
\sum_{j\in\mathbb Z}\varphi_j e^{\mathtt i\cdot j\theta}
\qquad \mbox{ for }\theta\in\mathbb R.
\]
Then  \(\varphi \colon \mathbb{R} \rightarrow \mathbb{C}\) is a bounded continuous function. Moreover, denote
\[
I_{N,n}(\varphi)
=
\frac2\pi\int_0^\pi
(2+2\cos\theta)^N\sin\theta\,
\operatorname{Im}\!\left(
e^{\mathtt i\cdot n\theta}\varphi(\theta)
\right)\,d\theta \qquad\mbox{ for }N,n\in\mathbb Z_{\geq0},
\]
then 
\[
|I_{N,n}(\varphi)|
\leq
CMZ_N(0,n-1)
\left(\frac1n+\frac n{N+1}\right)\qquad\mbox{ for every }N\in\mathbb Z_{\geq0},\,n\in\llbracket1,\lfloor\min ( 1/4, \upupsilon/16 )(N+1)\rfloor\rrbracket.
\]
\end{lemma}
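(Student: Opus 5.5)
Since the series defining $\varphi$ converges absolutely and uniformly (because $|\varphi_j|\le Me^{-\upupsilon|j|}$), $\varphi$ is bounded and continuous. This settles the first claim, so the plan concentrates on the estimate for $I_{N,n}(\varphi)$. The idea is to expand $\varphi$ termwise and write $I_{N,n}$ as a combination of the quantities $y_N(\cdot)$ from Lemma~\ref{lem:TechnicalCombiInput}.

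\textbf{Step 1: expand termwise.} Interchanging sum and integral gives
\[
I_{N,n}(\varphi)=\sum_{j\in\mathbb Z}\varphi_j\,\frac2\pi\int_0^\pi(2+2\cos\theta)^N\sin\theta\,\sin((n+j)\theta)\,d\theta .
\]
We use $(2+2\cos\theta)^N=2^N(1+\cos\theta)^N$ and the integral formula in Lemma~\ref{lem:count biMot} with $x=0$, $y=n+j-1$. For $n+j\ge1$ the $j$-th integral equals $Z_N(0,n+j-1)=y_N(n+j)$. For $n+j\le0$, since $\sin$ is odd, it equals $-Z_N(0,|n+j|-1)=y_N(n+j)$, with the term vanishing when $n+j=0$. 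This is exactly the identity $y_N(k)=\mathrm{sgn}(k)Z_N(0,|k|-1)$ recorded in Lemma~\ref{lem:TechnicalCombiInput}. Hence
\[
I_{N,n}(\varphi)=\sum_j\varphi_j\,y_N(n+j)=\sum_j\varphi_j\bigl(y_N(n+j)-y_N(n)\bigr),
\]
where the second equality uses $\sum_j\varphi_j=0$.

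\textbf{Step 2: apply Lemma~\ref{lem:TechnicalCombiInput}.} Set $\varepsilon=\min(1/4,\upupsilon/16)$, so the range $n\le\lfloor\varepsilon(N+1)\rfloor$ is the one allowed there. Lemma~\ref{lem:TechnicalCombiInput} then gives
\[
|I_{N,n}|\le y_N(n)\,8e^4\Bigl(\frac1n+\frac n{N+1}\Bigr)\sum_j Me^{-\upupsilon|j|}|j|e^{8\varepsilon|j|}.
\]
Because $8\varepsilon\le\upupsilon/2$, the sum is at most $M\sum_j|j|e^{-\upupsilon|j|/2}=:MC'(\upupsilon)<\infty$. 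Since $y_N(n)=Z_N(0,n-1)$, this yields the claimed bound with $C=8e^4C'$.

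\textbf{Main obstacle.} The only delicate point is bookkeeping: checking the sign convention for $n+j\le0$, so that the reflection identity matches $y_N$ including negative arguments. The interchange of sum and integral is justified by absolute summability together with boundedness of the integrands. The choice $\varepsilon\le\upupsilon/16$ is precisely what makes the factor $e^{8\varepsilon|j|}$ from Lemma~\ref{lem:TechnicalCombiInput} dominated by the decay $e^{-\upupsilon|j|}$.
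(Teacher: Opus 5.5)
Your proposal is correct and follows the paper's proof essentially step for step. Like the paper, you identify $I_{N,n}(\varphi)=\sum_j\varphi_j\,y_N(n+j)$ through the sine-integral formula of Lemma~\ref{lem:count biMot}, with the sign convention $y_N(k)=\mathrm{sgn}(k)Z_N(0,|k|-1)$. You then subtract $y_N(n)\sum_j\varphi_j=0$ and sum the bound of Lemma~\ref{lem:TechnicalCombiInput} against $Me^{-\upupsilon|j|}$, using $8\varepsilon\le\upupsilon/2$.
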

\begin{proof} Throughout the proof, we denote $\varepsilon=\min ( 1/4, \upupsilon/16 )$. 
    The boundedness and continuity of \(\varphi\) follows from $|\varphi_j|\leq M e^{-\upupsilon|j|}$ using the Weierstrass test. 
Let
\[
y_N(k)=\frac{k}{N+1}\binom{2N+2}{N+k+1}\qquad\mbox{ for }N\in\mathbb{Z}_{\geq0},\,k\in\mathbb Z.
\] 
and recall from Lemma~\ref{lem:TechnicalCombiInput} that   for each $N\in\mathbb{Z}_{\geq0}$, $j\in\mathbb Z$, and $n\in\llbracket1,\lfloor\varepsilon(N+1)\rfloor\rrbracket$,
\begin{equation}\label{eq:compaApplied}
      \frac{|y_N(n+j)-y_N(n)|}{y_N(n)}
\leq
8e^4\,|j| e^{8\varepsilon |j|}\left(\frac1n+\frac{n}{N+1}\right) .
\end{equation}
This implies that 
\begin{equation}\label{eq:sagasetr}
\sum_{j\in\mathbb Z}
e^{-\upupsilon|j|}
|y_N(n+j)-y_N(n)|  \leq
Cy_N(n)\left(\frac1n+\frac{n}{N+1}\right)\qquad\mbox{ where }C= \frac{32 e^{4-\upupsilon/2}}{(1-e^{-\upupsilon/2})^2}.
\end{equation}
Note that $y_N(k)=\mathrm{sgn}(k)Z_N(0,|k|-1)$ for every  $k\in\mathbb Z\setminus\{0\}$. By the integral formula in Lemma~\ref{lem:count biMot},
\[
y_N(k)=
\frac2\pi\int_0^\pi
(2+2\cos\theta)^N\sin\theta\sin(k\theta)\,d\theta
\qquad\mbox{for }N\in\mathbb Z_{\geq0},\,k\in\mathbb Z.
\]
Using the definition of $I_{N,n}(\varphi)$ and $\sum_j\varphi_j=0$,
\begin{equation*} 
I_{N,n}(\varphi)
=\sum_{j\in\mathbb Z}\varphi_j y_N(n+j)
=
\sum_{j\in\mathbb Z}\varphi_j
\left(y_N(n+j)-y_N(n)\right),
\end{equation*}
where the swap between integral and infinite summation signs is justified by the bound on \(\varphi_j\). 
Combining the above display with \eqref{eq:sagasetr}  and the bound on \(\varphi_j\),  we obtain
\begin{align*}
|I_{N,n}(\varphi)|
&\leq
CMZ_N(0,n-1)
\left(\frac1n+\frac n{N+1}\right),
\end{align*} 
and we conclude the proof of the lemma.
\end{proof}

Using the function $I_{N,n}$ from Lemma~\ref{lem:Fourier}, we can provide a common formulation of the functions $G_N^\varsigma, F_N^\varsigma $ and $ H_N^\varsigma$ from Definition~\ref{def:ReweightingFunctions}.

\begin{lemma}\label{lem:identify Fourier}
Assume that \(q\in[0,1)\) and \(\c,\d\in(-1,0]\).  As in Proposition \ref{prop:comparison bulk weights}, let  \(C_0\in(0,\infty)\), \(\varrho\in[0,1)\), and \(\varsigma:\mathbb Z_{\geq0}\to[0,\infty)\) satisfy $\varsigma(0)>0$ and $\varsigma(j)\leq C_0\varrho^j$ for all $j\in\mathbb Z_{\geq0}$, and recall the functions $G_N^\varsigma, F_N^\varsigma $ and $ H_N^\varsigma$ from Definition~\ref{def:ReweightingFunctions}. 
Fix an arbitrary $r>1$ satisfying \(qr<1\). For \(j\in\mathbb Z_{\geq0}\) and \(\theta\in[0,\pi]\), define
\[
\begin{aligned}
\phi_j(\theta)
&=[t^j]\frac{(\c t,\d t;q)_\infty}
{(te^{\mathtt i\theta},te^{-\mathtt i\theta};q)_\infty},\\
\uplambda_j(\theta)
&=\frac{1}{2\pi\mathtt i}\oint_{|z|=r}
\frac{(\c z,\d z;q)_\infty}
{(ze^{\mathtt i\theta},ze^{-\mathtt i\theta};q)_\infty}
\frac{dz}{z^{j+1}},\\
h(\theta)
&=\frac{(\c\d;q)_\infty(qe^{2\mathtt i\theta};q)_\infty}
{(\c e^{\mathtt i\theta},\d e^{\mathtt i\theta};q)_\infty},\\
d\nu(\theta)
&=
\frac{(q,\c\d;q)_\infty}{2\pi}
\frac{(e^{2\mathtt i\theta},e^{-2\mathtt i\theta};q)_\infty}
{(\c e^{\mathtt i\theta},\c e^{-\mathtt i\theta},
\d e^{\mathtt i\theta},\d e^{-\mathtt i\theta};q)_\infty}
\,d\theta .
\end{aligned}
\]  
For \(\theta\in[0,\pi]\), set
\[
\begin{aligned}
S_0(\theta)
=\sum_{j\geq0}\varsigma(j)
\frac{\sin((j+1)\theta)}{\sin\theta},\qquad
S_1(\theta)
=\sum_{j\geq0}\varsigma(j)\phi_j(\theta),\qquad
S_2(\theta)
=\sum_{j\geq0}
\frac{(q;q)_j}{(\c\d;q)_j}\varsigma(j)\phi_j(\theta),
\end{aligned}
\]
where the quotient in \(S_0\) is understood by continuous extension in the cases $\theta/\pi\in\mathbb Z$.  For \(\theta\in[0,\pi]\), let
\[
\upvarphi_0(\theta)=S_0(\theta),
\qquad
\upvarphi_1(\theta)=h(\theta)S_1(\theta),
\qquad
\upvarphi_2(\theta)=h(\theta)S_2(\theta).
\]
Then the functions defined in the above two displays are continuous, and  for every \(N,x\in\mathbb Z_{\geq0}\),
\[
\begin{aligned}
G_N^\varsigma(x)
&=I_{N,x+1}(\upvarphi_0),\\
F_N^\varsigma(x)
&=\frac{(q;q)_x}{(\c\d;q)_x}
\left(
I_{N,x+1}(\upvarphi_1)
+\int_0^\pi(2+2\cos\theta)^N
S_1(\theta)\uplambda_x(\theta)\,d\nu(\theta)
\right),\\
H_N^\varsigma(x)
&=I_{N,x+1}(\upvarphi_2)
+\int_0^\pi(2+2\cos\theta)^N
S_2(\theta)\uplambda_x(\theta)\,d\nu(\theta),
\end{aligned}
\]
where, as in Lemma \ref{lem:Fourier}, we denote 
\[
I_{N,n}(\varphi)
=
\frac{2}{\pi}\int_0^\pi
(2+2\cos\theta)^N\sin\theta\,
\operatorname{Im}\!\left(e^{\mathtt i n\theta}\varphi(\theta)\right)
\,d\theta .
\]
\end{lemma}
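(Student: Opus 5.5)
The plan is to start from the spectral representations already available and rewrite each integrand in the form $\sin\theta\,\operatorname{Im}(e^{\mathtt i n\theta}\varphi(\theta))$, plus a remainder.

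For $G_N^\varsigma$, use the integral formula for $Z_N(z,x)$ in Lemma~\ref{lem:count biMot} and sum over $z$ with weights $\varsigma(z)$. The exchange of sum and integral is justified by $\varsigma(z)\le C_0\varrho^z$ and $|\sin((z+1)\theta)|\le 1$. This gives
\[
G_N^\varsigma(x)=\frac{2^{N+1}}{\pi}\int_0^\pi(1+\cos\theta)^N S_0(\theta)\sin\theta\,\sin((x+1)\theta)\,d\theta .
\]
Since $S_0$ is real, $\sin((x+1)\theta)S_0=\operatorname{Im}(e^{\mathtt i(x+1)\theta}\upvarphi_0)$, and $2^{N+1}(1+\cos\theta)^N=2(2+2\cos\theta)^N$, so we get $I_{N,x+1}(\upvarphi_0)$. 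Continuity of $S_0$ follows from $|\sin((j+1)\theta)/\sin\theta|\le j+1$ and uniform convergence.

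For $F_N^\varsigma$ and $H_N^\varsigma$, apply Lemma~\ref{lem:orth}. This gives
\[
F_N^\varsigma(x)=\frac{(q;q)_x}{(\c\d;q)_x}\int_0^\pi(2+2\cos\theta)^N S_1(\theta)\phi_x(\theta)\,d\nu(\theta),
\]
and similarly $H_N^\varsigma(x)=\int(2+2\cos\theta)^N S_2\phi_x\,d\nu$, with the factor $(q;q)_z/(\c\d;q)_z$ absorbed into $S_2$. The interchanges need a uniform bound $|\phi_j(\theta)|\le C r^{-j}$, which comes from Cauchy's estimate on $|z|=r$. Here one must pick $r$ with $r\varrho<1$, and note that $S_1,S_2$ are defined independently of $r$.

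The key step is to decompose $\phi_x$. Shrink the contour for $\uplambda_x$ from $|z|=r$ to a small circle. The integrand has simple poles at $z=e^{\pm\mathtt i\theta}$ (for $\theta\ne 0,\pi$), and these are the only poles in $|z|\le r$ because $qr<1$. The residue theorem then gives
\[
\phi_x(\theta)=\uplambda_x(\theta)+\operatorname{Res}_{z=e^{\mathtt i\theta}}+\operatorname{Res}_{z=e^{-\mathtt i\theta}} .
\]
Computing directly, the residue at $z=e^{\mathtt i\theta}$ of $\mathcal F(z,\theta)z^{-x-1}$ is
\[
-e^{-\mathtt i x\theta}\,\frac{(\c e^{\mathtt i\theta},\d e^{\mathtt i\theta};q)_\infty}{(q,e^{-2\mathtt i\theta}q^{0}\cdot e^{0};q)_\infty\cdots}\,,
\]
which after simplification takes the form $\frac{(\c e^{\mathtt i\theta},\d e^{\mathtt i\theta};q)_\infty}{(q;q)_\infty(e^{-2\mathtt i\theta};q)_\infty}e^{-\mathtt i x\theta}$ up to sign. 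The residue at $e^{-\mathtt i\theta}$ is its complex conjugate, so the pole part equals $2\operatorname{Re}$ of this quantity.

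Next, multiply by the density of $\nu$ and cancel $(e^{-2\mathtt i\theta};q)_\infty$ and $(\c e^{\mathtt i\theta},\d e^{\mathtt i\theta};q)_\infty$. Writing $(e^{2\mathtt i\theta};q)_\infty=(1-e^{2\mathtt i\theta})(qe^{2\mathtt i\theta};q)_\infty$ and using $1-e^{2\mathtt i\theta}=-2\mathtt i e^{\mathtt i\theta}\sin\theta$, the pole contribution times $d\nu$ becomes
\[
\frac{2}{\pi}\sin\theta\,\operatorname{Im}\bigl(e^{\mathtt i(x+1)\theta}h(\theta)\bigr)\,d\theta .
\]
Since $S_1$ is real (the $\phi_j$ are real, being coefficients of a function that is symmetric under $\theta\mapsto-\theta$), this yields the term $I_{N,x+1}(\upvarphi_1)$, and the $\uplambda_x$ part gives the remaining integral. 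The same computation with $S_2$ handles $H_N^\varsigma$.

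The main obstacle is tracking the constants and signs in the residue computation so that the result matches $h$ exactly. The endpoints $\theta\in\{0,\pi\}$, where the two poles merge into a double pole, form a measure-zero set and can be ignored; continuity of $\uplambda_x$ and $h$ in $\theta$ follows from uniform convergence of the products on compact sets.
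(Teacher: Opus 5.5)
Your route is the paper's: $G_N^\varsigma$ comes from the sine integral for $Z_N$, $F_N^\varsigma$ and $H_N^\varsigma$ come from Lemma~\ref{lem:orth}, and you then split $\phi_x=(\phi_x-\uplambda_x)+\uplambda_x$. The only real content is the identity
\[
\frac{d\nu}{d\theta}(\theta)\bigl(\phi_x(\theta)-\uplambda_x(\theta)\bigr)=\frac{2}{\pi}\sin\theta\,\operatorname{Im}\bigl(e^{\mathtt i(x+1)\theta}h(\theta)\bigr).
\]
Your derivation of this identity does not go through as written. You leave it ``up to sign'', but the sign decides between $+I_{N,x+1}(\upvarphi_1)$ and $-I_{N,x+1}(\upvarphi_1)$, so the identity is asserted rather than proved. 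There are two concrete errors.

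\textbf{Error 1 (sign).} The circle $|z|=r$ encloses $e^{\pm\mathtt i\theta}$ and the small circle does not. Hence $\uplambda_x=\phi_x+\operatorname{Res}_{z=e^{\mathtt i\theta}}+\operatorname{Res}_{z=e^{-\mathtt i\theta}}$, which is the opposite sign to what you wrote.

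\textbf{Error 2 (residue).} At $z=e^{\mathtt i\theta}$ the nonvanishing denominator factors are $(ze^{\mathtt i\theta};q)_\infty=(e^{2\mathtt i\theta};q)_\infty$ and $(qze^{-\mathtt i\theta};q)_\infty=(q;q)_\infty$. Therefore
\[
\operatorname{Res}_{z=e^{\mathtt i\theta}}=-e^{-\mathtt i x\theta}\frac{(\c e^{\mathtt i\theta},\d e^{\mathtt i\theta};q)_\infty}{(q;q)_\infty(e^{2\mathtt i\theta};q)_\infty},
\]
not the expression with $(e^{-2\mathtt i\theta};q)_\infty$. Your formula pairs the numerator of one residue with the denominator of the other. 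If you cancel it against $d\nu$ as you describe, you are left with $(qe^{2\mathtt i\theta};q)_\infty/(\c e^{-\mathtt i\theta},\d e^{-\mathtt i\theta};q)_\infty$, which is neither $h$ nor $\overline{h}$.

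\textbf{Correct bookkeeping.} One has $\phi_x-\uplambda_x=2\operatorname{Re}\bigl(e^{-\mathtt i x\theta}(\c e^{\mathtt i\theta},\d e^{\mathtt i\theta};q)_\infty/((q;q)_\infty(e^{2\mathtt i\theta};q)_\infty)\bigr)$. Multiplying this term by the density cancels $(q;q)_\infty$, $(e^{2\mathtt i\theta};q)_\infty$ and $(\c e^{\mathtt i\theta},\d e^{\mathtt i\theta};q)_\infty$. Using $1-e^{-2\mathtt i\theta}=2\mathtt i e^{-\mathtt i\theta}\sin\theta$, what remains is
\[
\frac{(\c\d;q)_\infty}{2\pi}e^{-\mathtt i x\theta}\frac{(e^{-2\mathtt i\theta};q)_\infty}{(\c e^{-\mathtt i\theta},\d e^{-\mathtt i\theta};q)_\infty}=\frac{\mathtt i}{\pi}\sin\theta\,\overline{w},\qquad w=e^{\mathtt i(x+1)\theta}h(\theta).
\]
Adding the conjugate term gives $\frac{\mathtt i}{\pi}\sin\theta\,(\overline{w}-w)=\frac{2}{\pi}\sin\theta\operatorname{Im}w$, with the correct sign.

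\textbf{Smaller slip (bound on $\phi_j$).} Cauchy's estimate cannot be taken on the lemma's circle $|z|=r>1$, because $e^{\pm\mathtt i\theta}$ lie inside it. Indeed, for $q=\c=\d=0$ one has $\phi_j(0)=j+1$, so no bound $Cr^{-j}$ with $r>1$ holds. You must use a radius $r_0\in(\varrho,1)$, which gives $\sup_\theta|\phi_j(\theta)|\le Cr_0^{-j}$. The relevant condition is $r_0>\varrho$, not $r\varrho<1$.

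With these corrections your argument coincides with the paper's proof.
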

\begin{proof}
    We first show that the series $S_0,S_1,S_2$ converge absolutely and uniformly. Let $r_0\in(\varrho,1)$, then
\begin{equation}\label{eq:Cauchy phij}
r_0^{j}\sup_{\theta\in[0,\pi]}|\phi_j(\theta)|
\leq    \sup_{\theta\in [0,\pi]}\sup_{|z|=r_0}\left|\frac{(\c z,\d z;q)_\infty}
{(ze^{\mathtt i\theta},ze^{-\mathtt i\theta};q)_\infty}\right|
\qquad\text{for every }j\in\mathbb Z_{\geq0},
\end{equation}
which follows from Cauchy's coefficient estimate.
Note that the right side above is finite. Observe
\[
\left|\frac{\sin((j+1)\theta)}{\sin\theta}\right|
\leq j+1,\qquad0<\frac{(q;q)_j}{(\c\d;q)_j}
\leq\frac{1}{(\c\d;q)_\infty}, \qquad j\in\mathbb Z_{\geq0},
\]
It follows from the Weierstrass test that $S_0,S_1,S_2$ converge absolutely and uniformly in \(\theta\).
Therefore, $S_i(\theta)$ and $\upvarphi_i(\theta)$ for $i\in\{0,1,2\}$ are well-defined continuous functions. Observe that \(S_0,S_1,S_2\) are real-valued.

We next show that
\begin{equation}\label{eq:identify-Fourier-density}
    \frac{(q,\c\d;q)_\infty}{2\pi}
\frac{(e^{2\mathtt i\theta},e^{-2\mathtt i\theta};q)_\infty}
{(\c e^{\mathtt i\theta},\c e^{-\mathtt i\theta},
\d e^{\mathtt i\theta},\d e^{-\mathtt i\theta};q)_\infty}\left(\phi_j(\theta)-\uplambda_j(\theta)\right)=\frac{2}{\pi}\sin\theta\,
\operatorname{Im}\!\left(
e^{\mathtt i(j+1)\theta}h(\theta)
\right).
\end{equation}
Indeed, by continuity we only need to prove it for \(\theta\in(0,\pi)\). For  \(r_1\in(0,1)\), by Cauchy's coefficient formula,  
\[
\phi_j(\theta)
=
\frac{1}{2\pi\mathtt i}\oint_{|z|=r_1}
\frac{(\c z,\d z;q)_\infty}
{(ze^{\mathtt i\theta},ze^{-\mathtt i\theta};q)_\infty}\frac{dz}{z^{j+1}}.
\]
Comparing with the definition of $\uplambda_j(\theta)$, using the residue theorem and computing the residues at $z=e^{\pm\mathtt i\theta}$, we see that
\begin{equation*}
\begin{aligned}
\phi_j(\theta)-\uplambda_j(\theta) 
=
\frac{
e^{-\mathtt i j\theta}
(\c e^{\mathtt i\theta},\d e^{\mathtt i\theta};q)_\infty
}{
(q;q)_\infty(e^{2\mathtt i\theta};q)_\infty
} +
\frac{
e^{\mathtt i j\theta}
(\c e^{-\mathtt i\theta},\d e^{-\mathtt i\theta};q)_\infty
}{
(q;q)_\infty(e^{-2\mathtt i\theta};q)_\infty
},
\end{aligned} 
\end{equation*}
which is equivalent to \eqref{eq:identify-Fourier-density}.

  By the integral formula in Lemma~\ref{lem:count biMot}, uniform convergence of the series \(S_0\), and   that \(S_0\) is real-valued,
\[
\begin{aligned}
G_N^\varsigma(x)
&=
\frac{2}{\pi}\int_0^\pi
(2+2\cos\theta)^N
\sin((x+1)\theta)
\sum_{j\geq0}\varsigma(j)\sin((j+1)\theta)\,d\theta\\
&=
\frac{2}{\pi}\int_0^\pi
(2+2\cos\theta)^N\sin\theta\,
\operatorname{Im}\!\left(
e^{\mathtt i(x+1)\theta}S_0(\theta)
\right)\,d\theta =I_{N,x+1}(\upvarphi_0).
\end{aligned}
\]
For the remaining two identities, by Lemma~\ref{lem:orth} and uniform convergence of \(S_1,S_2\),
 \begin{equation}
\begin{aligned}
F_N^\varsigma(x)
&=
\frac{(q;q)_x}{(\c\d;q)_x}
\int_0^\pi
(2+2\cos\theta)^N S_1(\theta)\phi_x(\theta)\,d\nu(\theta),\\
H_N^\varsigma(x)
&=
\int_0^\pi
(2+2\cos\theta)^N S_2(\theta)\phi_x(\theta)\,d\nu(\theta).
\end{aligned}
\label{eq:identify-Fourier-spectral}
\end{equation}
Using \eqref{eq:identify-Fourier-density} with the index in that identity set equal to \(x\), and using that \(S_1,S_2\) are
real-valued,
 \begin{equation}
\begin{aligned}
&\int_0^\pi(2+2\cos\theta)^N
S_j(\theta)\left(\phi_x(\theta)-\uplambda_x(\theta)\right)
\,d\nu(\theta)\\
& =
\frac{2}{\pi}\int_0^\pi(2+2\cos\theta)^N\sin\theta\,
\operatorname{Im}\!\left(
e^{\mathtt i(x+1)\theta}h(\theta)S_j(\theta)
\right)\,d\theta =I_{N,x+1}(hS_j)\qquad\mbox{ for } j=1,2.
\end{aligned}
\label{eq:identify-Fourier-splitting}
\end{equation}
Combining with \eqref{eq:identify-Fourier-spectral}, in view of \(hS_1=\upvarphi_1\) and \(hS_2=\upvarphi_2\), we conclude the formulas for
\(F_N^\varsigma(x)\) and \(H_N^\varsigma(x)\).
\end{proof}

We have now all tools to show the approximation results on the functions from Definition~\ref{def:ReweightingFunctions}.

\begin{proof}[Proof of Proposition~\ref{prop:comparison bulk weights}]
Throughout this proof,  we use $C_i$, $i\in\mathbb N$ to denote constants in $(0,\infty)$ that depend only on $q,\c,\d,C_0,\varrho$ and $\varsigma(0)$.
Denote
\begin{equation}\label{eq:PochApplied}
    \upvarpi_j=\frac{(q;q)_j}{(\c\d;q)_j}\qquad\mbox{ for } j\in\mathbb  Z_{\geq0}\cup\{\infty\}.
\end{equation}
We recall from the statement of the proposition that
\begin{align*}
    &\mathfrak p_j
=[t^j]\frac{(\c t,\d t;q)_\infty}{(t,t;q)_\infty}\qquad\mbox{ for every  }j\in\mathbb Z_{\geq0},\\
&  \Upgamma_\varsigma
=
\frac{(q;q)_\infty^2}{(\c,\d;q)_\infty}
\frac{\sum_{j\geq0}\mathfrak p_j\varsigma(j) }
{\sum_{j\geq0}(j+1)\varsigma(j)},
\qquad\mbox{ and }\qquad
\Upupsilon_\varsigma
=
\frac{(q;q)_\infty(\c\d;q)_\infty}
{(\c,\d;q)_\infty}
\frac{\sum_{j\geq0}
\mathfrak p_j  \upvarpi_j\varsigma(j) }
{\sum_{j\geq0}(j+1)\varsigma(j)}.
\end{align*} 
We first show that $\Upgamma_\varsigma$ and $\Upupsilon_\varsigma $ are well-defined and finite. Indeed, as in \eqref{eq:Cauchy phij}, by Cauchy's estimate,
\begin{equation}\label{eq:estimate mathfrak pj}
    |\mathfrak p_j|\leq C_1r_1^{-j}\qquad \mbox{ for all }j\in\mathbb Z_{\geq0},\qquad \mbox{ where }r_1=(1+\varrho)/2.
\end{equation} 
In view of 
\begin{equation}\label{eq:estimate varsigma}
    \varsigma(j)\leq C_0\varrho^j,\qquad \upvarpi_j \leq 1/(\c\d;q)_{\infty}\qquad\mbox{ for all }j\in\mathbb Z_{\geq0},
\end{equation}  we conclude that the infinite sums in $\Upgamma_\varsigma$ and $\Upupsilon_\varsigma$ converge absolutely.
Since  \(\mathfrak p_j\geq0\), \(\upvarpi_j>0\), \(\mathfrak p_0=1\), and $\varsigma(0)>0$,  we know that $\Upgamma_\varsigma$ and $\Upupsilon_\varsigma$ are strictly positive.
 By Lemma~\ref{lem:identify Fourier} with \(n=x+1\) and \(r=2/(1+q)\), 
\begin{equation}\label{eq:comparison-bulk-GFH}
    G_N^\varsigma(x)
=I_{N,n}(\upvarphi_0),\qquad 
F_N^\varsigma(x)
=\upvarpi_x
\left(I_{N,n}(\upvarphi_1)+\mathcal E_{1,N,x}\right),\qquad
H_N^\varsigma(x)
=I_{N,n}(\upvarphi_2)+\mathcal E_{2,N,x},
\end{equation}
where 
\[
\mathcal E_{i,N,x}
=
\int_0^\pi(2+2\cos\theta)^N
S_i(\theta)\uplambda_x(\theta)\,d\nu(\theta)\qquad
\mbox{ for }  i\in\{1,2\},
\]
and  for all $\theta\in[0,\pi]$ and $j\in\mathbb Z_{\geq0}$, we have 
\begin{align*}
 \phi_j(\theta)
&=[t^j]\frac{(\c t,\d t;q)_\infty}
{(te^{\mathtt i\theta},te^{-\mathtt i\theta};q)_\infty}, \qquad\qquad\qquad\qquad
\uplambda_j(\theta)
=\frac{1}{2\pi\mathtt i}\oint_{|z|=\frac{2}{1+q}}
\frac{(\c z,\d z;q)_\infty}
{(ze^{\mathtt i\theta},ze^{-\mathtt i\theta};q)_\infty}
\frac{dz}{z^{j+1}}, 
 \\
h(\theta)
&=\frac{(\c\d;q)_\infty(qe^{2\mathtt i\theta};q)_\infty}
{(\c e^{\mathtt i\theta},\d e^{\mathtt i\theta};q)_\infty},\qquad\qquad\qquad\qquad
d\nu(\theta)
=
\frac{(q,\c\d;q)_\infty}{2\pi}
\frac{(e^{2\mathtt i\theta},e^{-2\mathtt i\theta};q)_\infty}
{(\c e^{\mathtt i\theta},\c e^{-\mathtt i\theta},
\d e^{\mathtt i\theta},\d e^{-\mathtt i\theta};q)_\infty}
\,d\theta ,\\
 S_0(\theta)
&=\sum_{j\geq0}\varsigma(j)
\frac{\sin((j+1)\theta)}{\sin\theta},\qquad\qquad
S_1(\theta)
=\sum_{j\geq0}\varsigma(j)\phi_j(\theta),\qquad\qquad
S_2(\theta)
=\sum_{j\geq0}
\upvarpi_j\varsigma(j)\phi_j(\theta),\\
\upvarphi_0(\theta)&=S_0(\theta),
\qquad\qquad
\upvarphi_1(\theta)=h(\theta)S_1(\theta),
\qquad\qquad
\upvarphi_2(\theta)=h(\theta)S_2(\theta).
\end{align*}
Using \eqref{eq:comparison-bulk-GFH}, we obtain
\begin{equation}\label{eq:FGH}
    \begin{split}
        F_N^\varsigma(x)-\Upgamma_\varsigma G_N^\varsigma(x)
&=
\upvarpi_x I_{N,x+1}(\psi_1)
+\upvarpi_x\mathcal E_{1,N,x}
+\mathfrak k_1(\upvarpi_x-\upvarpi_\infty)
G_N^\varsigma(x),\\
H_N^\varsigma(x)-\Upupsilon_\varsigma G_N^\varsigma(x)
&=
I_{N,x+1}(\psi_2)+\mathcal E_{2,N,x},
    \end{split}
\end{equation} 
where we denote 
\[ 
\mathfrak k_1=h(0)\frac{\sum_{j\geq0}\mathfrak p_j\varsigma(j)}{\sum_{j\geq0}(j+1)\varsigma(j)},
\qquad
\mathfrak k_2=h(0)\frac{\sum_{j\geq0}\mathfrak p_j\upvarpi_j\varsigma(j)}{\sum_{j\geq0}(j+1)\varsigma(j)},
\qquad
\psi_1=\upvarphi_1-\mathfrak k_1\upvarphi_0,
\qquad
\psi_2=\upvarphi_2-\mathfrak k_2\upvarphi_0.
\] 
Using \eqref{eq:estimate mathfrak pj}, \eqref{eq:estimate varsigma}, and \(\sum_{j\geq0}(j+1)\varsigma(j)\geq\varsigma(0)\),
\begin{equation}\label{eq:bulk-uniform-constants}
\Upgamma_\varsigma+\Upupsilon_\varsigma
+|\mathfrak k_1|+|\mathfrak k_2|
\leq C_2.
\end{equation}  We split the rest of the proof into two steps.\\

\noindent\textbf{Step 1. } 
Let $r_2= 2/(1+\max (\varrho,|\c|,|\d|))$ and \(r_3= (\varrho+r_2^{-1})/2\). 
In this step, we apply Lemma~\ref{lem:Fourier} with \(\upupsilon=\log r_2\).
For
\(r_2^{-1}\leq |w|\leq r_2\), define
\begin{align*}
    &\upkappa_j(w)
=
[t^j]\frac{(\c t,\d t;q)_\infty}{(tw,t/w;q)_\infty},
\qquad
U_j(w)=\sum_{\ell=0}^j w^{j-2\ell},\qquad
\mathscr h(w)=
\frac{(\c\d;q)_\infty(qw^2;q)_\infty}
{(\c w,\d w;q)_\infty},\\
&\mathscr S_0(w)=\sum_{j\geq0}\varsigma(j)U_j(w), \qquad
\mathscr S_1(w)=\sum_{j\geq0}\varsigma(j)\upkappa_j(w), \qquad
\mathscr S_2(w)=\sum_{j\geq0}\upvarpi_j\varsigma(j) 
\upkappa_j(w) . 
\end{align*}
Define also 
\[
\Psi_i(w)=\mathscr h(w)\mathscr S_i(w)
-\mathfrak k_i\mathscr S_0(w)=\sum_{k\in\mathbb Z}a_{i,k}w^k ,\qquad i\in\{1,2\}.
\]
Note that $\Psi_i(e^{\mathtt i\theta})=\psi_i(\theta) $ for $\theta\in\mathbb R$. It suffices to show that, for  $i\in\{1,2\}$,
\begin{equation}\label{eq:bulk-Fourier-decay}
|a_{i,k}|\leq C_3 r_2^{-|k|}
\qquad\text{ for } \ k\in\mathbb Z,\qquad\qquad\sum_{k\in\mathbb Z}a_{i,k} =0.
\end{equation}  For \(|t|=r_3\), \(r_2^{-1}\leq|w|\leq r_2\), both
\(|tw|\) and \(|t/w|\) are at most \(r_2r_3<1\). Cauchy's coefficient estimate gives
\[
\sup_{r_2^{-1}\leq|w|\leq r_2}
|\upkappa_j(w)|
\leq C_4 r_3^{-j}\qquad\mbox{ for }j\in\mathbb Z_{\geq0}.
\] Note also  that  $\sup_{r_2^{-1}\leq|w|\leq r_2}|U_j(w)|
\leq(j+1)r_2^j$ for all $j\in\mathbb Z_{\geq0}$.
Hence $\Psi_i(w)$, $i\in\{1,2\}$ are holomorphic in a neighborhood of \(r_2^{-1}\leq|w|\leq r_2\). 
Cauchy's estimates on \(|w|=r_2\) and \(|w|=r_2^{-1}\), together with \eqref{eq:bulk-uniform-constants}, give the estimates in \eqref{eq:bulk-Fourier-decay}. Furthermore, note that $\upkappa_j(1)=\mathfrak p_j$, $U_j(1)=j+1$, and $\mathscr h(1)=h(0)$.  The definitions of \(\mathfrak k_1,\mathfrak k_2\)  imply $\sum_{k\in\mathbb Z}a_{i,k}=\Psi_i(1) =0$. Since $\Psi_i(\overline w)=\overline{\Psi_i(w)}$, we have $a_{i,k}\in\mathbb R$ for all $i\in\{1,2\}$ and $k\in\mathbb Z$.
We conclude the proof of \eqref{eq:bulk-Fourier-decay}. Therefore, \(\psi_1\) and \(\psi_2\) satisfy the assumptions of Lemma~\ref{lem:Fourier} . \\

\noindent\textbf{Step 2. } In this step, we conclude the proof of the proposition. We choose $r_4=2/(1+q)$ and then 
 $\varepsilon
=
\min(
 1/4, \log(r_2)/16,
 \log r_4/(2\log4))>0$.
 Note that by the definition of $G_N^\varsigma$, we get
\begin{equation}\label{eq:bulk-G-lower}
G_N^\varsigma(x)\geq\varsigma(0)Z_N(0,x),\qquad \varsigma(0)>0.
\end{equation}
We first prove the proposition for \(n=x+1\leq\varepsilon(N+1)\). In this case, combining Lemma~\ref{lem:Fourier}, \eqref{eq:bulk-Fourier-decay}, and \eqref{eq:bulk-G-lower},
\begin{equation}\label{eq:bulk-I-over-G}
|I_{N,n}(\psi_1)|+|I_{N,n}(\psi_2)|
\leq
C_5 G_N^\varsigma(x)
\left(\frac1{x+1}+\frac{x+1}{N+1}\right).
\end{equation}

We next estimate the remainder terms. We claim that 
\begin{equation}\label{eq:bulk-error-over-G}
|\mathcal E_{i,N,x}|
\leq
\frac{C_6}{x+1}G_N^\varsigma(x)
\qquad\mbox{ for }     x+1\leq\varepsilon(N+1)\mbox{ and }i\in\{1,2\}.
\end{equation}
Indeed,   for \(|z|=r_4\) and \(\theta\in[0,\pi]\),  
 $\left|(ze^{\pm\mathtt i\theta};q)_\infty\right|
\geq
(r_4-1)(r_4q;q)_\infty
>0$  since \(r_4>1\), \(r_4q<1\).
The numerator in the definition of \(\uplambda_x\) is also uniformly bounded on \(|z|=r_4\). 
As in Lemma \ref{lem:identify Fourier},  \(S_i\) uniformly converge, hence
\[
\sup_{\theta\in[0,\pi]}|\uplambda_x(\theta)|
\leq C_7 r_4^{-x},\qquad \sup_{\theta\in[0,\pi]}|S_i(\theta)|\leq C_7,
\qquad i\in\{1,2\} .
\]
Note also that
\[
0\leq\frac{d\nu}{d\theta}(\theta)
=
\frac{2(q,\c\d;q)_\infty}{\pi}\sin^2\theta
\frac{|(qe^{2\mathtt i\theta};q)_\infty|^2}
{|(\c e^{\mathtt i\theta},\d e^{\mathtt i\theta};q)_\infty|^2}\leq C_8\sin^2\theta.
\] By Lemma~\ref{lem:count biMot},
\begin{equation}\label{eq:bulk-error-Z00}
\begin{aligned}
|\mathcal E_{i,N,x}|
&\leq
C_9 r_4^{-x}
\int_0^\pi(2+2\cos\theta)^N\sin^2\theta\,d\theta \leq C_{10} r_4^{-x}Z_N(0,0)\qquad\mbox{ for } i\in\{1,2\}.
\end{aligned}
\end{equation}
When $x+1\leq\varepsilon(N+1)$ we have \(x\leq N\). By Lemma~\ref{lem:bounds ZNxy},
\begin{equation}\label{eq:rnbtygsen}
      Z_N(0,0)
\leq
\frac{4^{N+1}}{(N+1)^{3/2}},\qquad\qquad
Z_N(0,x)\geq e^{-2}4^{N-1}
\frac{x+1}{(N+1)^{3/2}}
e^{-(\log4) x^2/(N+1) } .
\end{equation}
Note $ x^2/(N+1)\leq\varepsilon x$ and $\varepsilon\log4\leq \log(r_4)/2$. Combining \eqref{eq:bulk-G-lower}, \eqref{eq:bulk-error-Z00}, \eqref{eq:rnbtygsen}, we conclude the proof of \eqref{eq:bulk-error-over-G}.

We finally estimate \(\upvarpi_x-\upvarpi_\infty\) from \eqref{eq:PochApplied} in order to bound \eqref{eq:FGH}. We claim that for some $C_{11}>0$
\begin{equation}\label{eq:bulk-varpi-tail}
|\upvarpi_x-\upvarpi_\infty|
\leq\frac{C_{11}}{x+1}.
\end{equation}
Indeed, using the fact that for \(u\in[0,1)\), $0\leq1-(u;q)_\infty\leq  u/(1-q)$, and that \((\c\d q^x;q)_\infty\geq(\c\d;q)_\infty>0\),
\[
\left|
\frac{\upvarpi_\infty}{\upvarpi_x}-1
\right|
=
\left|\frac{(q^{x+1};q)_\infty-(\c\d q^x;q)_\infty}{(\c\d q^x;q)_\infty}\right|
\leq
\frac{|1-(q^{x+1};q)_\infty|+|1-(\c\d q^x;q)_\infty|}{(\c\d q^x;q)_\infty}
\leq C_{12}q^x.
\]
Combining with $\upvarpi_x \leq 1/(\c\d;q)_{\infty}$, we conclude the proof of \eqref{eq:bulk-varpi-tail}.

Combining \eqref{eq:FGH}, \eqref{eq:bulk-I-over-G}, \eqref{eq:bulk-error-over-G}, \eqref{eq:bulk-varpi-tail}, and using the boundedness of
\(\upvarpi_x\) and \(\mathfrak k_1\), when $x+1\leq\varepsilon(N+1)$,
\begin{equation}\label{eq:bulk-comparison-small-x}
    \max\left(|F_N^\varsigma(x)-\Upgamma_\varsigma G_N^\varsigma(x)|, \, |H_N^\varsigma(x)-\Upupsilon_\varsigma G_N^\varsigma(x)|\right)\leq C_{13} G_N^\varsigma(x)
\left(\frac1{x+1}+\frac{x+1}{N+1}\right).
\end{equation}

It remains to consider the case \(x+1>\varepsilon(N+1)\). Since every step weight in \eqref{eq:def of RNxy} belongs to \((0,1]\), we have
 $0\leq R_N^{(q,\c,\d)}(x,y)\leq Z_N(x,y)$,
and the first inequality is strict if \(|x-y|\leq N\). Note also that \(Z_N(x,y)=Z_N(y,x)\). Hence, whenever \(G_N^\varsigma(x)>0\), we have
\[
0<F_N^\varsigma(x)\leq G_N^\varsigma(x),
\qquad
0<H_N^\varsigma(x)
\leq
\sum_{z\geq0}\varsigma(z)Z_N(x,z)
=
G_N^\varsigma(x).
\]
Thus both ratios belong to \((0,1]\). In view of \eqref{eq:bulk-uniform-constants}, the two comparison bounds in the proposition hold also when \(x+1>\varepsilon(N+1)\), after enlarging the constant \(C_{13}\) by a factor depending only on \(\varepsilon^{-1}\).

Recall that we have shown $\Upgamma_\varsigma,\Upupsilon_\varsigma>0$. It remains to show that 
$\Upgamma_\varsigma,\Upupsilon_\varsigma\leq1$. For \(m\in\mathbb N\), take \(N=m^2\) and \(x=m\), then $G_{m^2}^\varsigma(m)
\geq
\varsigma(0)Z_{m^2}(0,m)>0$.  
The two comparison estimates we just proved imply
\[
\left|
\frac{F_{m^2}^\varsigma(m)}{G_{m^2}^\varsigma(m)}
-\Upgamma_\varsigma
\right|
+
\left|
\frac{H_{m^2}^\varsigma(m)}{G_{m^2}^\varsigma(m)}
-\Upupsilon_\varsigma
\right|
\leq
C\left(
\frac1{m+1}+\frac{m+1}{m^2+1}
\right)
\to0.
\]
Since we have shown that both ratios on the left side belong to \((0,1]\), taking \(m\to\infty\) gives  $\Upgamma_\varsigma,\Upupsilon_\varsigma\leq1$.
\end{proof}

\subsection{An estimate under the endpoint measures}

So far, we studied the partition function $R_N^{(q,\c,\d)}$ for bicolored Motzkin paths without including a reweighting of the endpoints. We will now include a suitable endpoint measure $\varsigma$ and show the following proposition. 

\begin{proposition}\label{prop:endpt wt compare}
Assume \(q,\a\in[0,1)\), $\b\in[0,1]$, and \(\c,\d\in(-1,0]\). Recall from Definition \ref{def:measure Motzkin} that $\uptheta(h)= (\c\d;q)_{h}/(q;q)_{h}$ for $h\in\mathbb Z_{\geq0}$. For  \(N\in\mathbb Z_{\geq0}\), let 
\[
\mathcal Z_N^{\a,\b;\uptheta}
        =
        \sum_{\bg\in\mathrm{BCMot}_N}
        \a^{\gamma_0}\b^{\gamma_N}\uptheta(\gamma_N),
\]
then $\mathcal Z_N^{\a,\b;\uptheta}\in(0,\infty)$.  Define a probability measure on \(\mathrm{BCMot}_N\) by
\[
        \mathrm{P}_N^{\a,\b;\uptheta}(\bg)
        = \a^{\gamma_0}\b^{\gamma_N}\uptheta(\gamma_N)/\mathcal Z_N^{\a,\b;\uptheta} .\]
Denote its expectation by \(\mathrm{E}_N^{\a,\b;\uptheta}\).  Then there exists a constant \(C\in(0,\infty)\), depending only on \(\a,\b,\c,\d,q\), such that, for every \(N\in\mathbb Z_{\geq2}\) and \(m\in\llbracket1,N-1\rrbracket\),
\[
\begin{aligned}
\mathrm{E}_N^{\a,\b;\uptheta}
 \left[
 \frac1{\gamma_m+1}
 +\gamma_m\left(
 \frac1m+\frac{{\bf1}_{\{\b<1\}}}{N-m}
 \right)
 \right]
 &\leq
 C\left(
 \frac1{\sqrt m}
 +\frac{{\bf1}_{\{\b<1\}}}{\sqrt{N-m}}
 \right),\\
\mathrm{E}_N^{\a,\b;\uptheta}
 \left[q^{\gamma_m}\right]
 &\leq \frac{C}{(m+1)^{3/2}}\left({\bf1}_{\{\b<1\}}\left(\frac{N+1}{N-m+1}\right)^{3/2}+{\bf1}_{\{\b=1\}}\left(\frac{N+1}{N-m+1}\right)^{1/2}\right) ,  \\
 \mathrm{E}_N^{\a,1;\uptheta}
\left[q^{\gamma_N}\right]&\leq\frac{C}{N+1}.
\end{aligned}
\]
\end{proposition}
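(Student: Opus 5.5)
The plan is to reduce everything to explicit sums of the unweighted counts $Z_N(x,y)$ and then apply the combinatorial bounds of Section~\ref{sec:Combinatorial}. Under $\mathrm{P}_N^{\a,\b;\uptheta}$ the marginal law of $\gamma_m$ is
\[
\mathrm{P}_N^{\a,\b;\uptheta}(\gamma_m=z)=\frac{1}{\mathcal Z_N^{\a,\b;\uptheta}}\Big(\sum_{x\ge0}\a^x Z_m(x,z)\Big)\Big(\sum_{y\ge0}\b^y\uptheta(y) Z_{N-m}(z,y)\Big).
\]
I will first record that $\uptheta(y)=(\c\d;q)_y/(q;q)_y$ is bounded above and below by positive constants depending on $q,\c,\d$, uniformly in $y$, because $\c\d\in[0,1)$. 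Hence, up to constants, $\uptheta$ can be dropped. Finiteness and positivity of $\mathcal Z_N^{\a,\b;\uptheta}$ follow from $Z_N(x,y)\le 4^N$, $Z_N(x,y)=0$ for $|x-y|>N$, and $\a<1$: for $\b=1$ the $y$-sum is finite for each fixed $x$, and the $x$-sum converges geometrically.

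Next I bound the normalization from below by keeping only $x=0$. In the case $\b<1$ I also keep only $y=0$, so Lemma~\ref{lem:bounds ZNxy} gives $\mathcal Z_N\gtrsim 4^N(N+1)^{-3/2}$. In the case $\b=1$ the sum over $y$ is $Z_N(0,\mathrm{free})$, so Lemma~\ref{lem:bounds ZNxfree} gives $\mathcal Z_N\gtrsim 4^N(N+1)^{-1/2}$.

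For the numerator I bound the left factor using Lemma~\ref{lem:bounds ZNxy} and the geometric decay of $\a^x$:
\[
\sum_x\a^xZ_m(x,z)\lesssim 4^m\frac{z+1}{(m+1)^{3/2}}\,e^{-c z^2/(m+1)}.
\]
The factor $e^{-(x-z)^2/(m+1)}$ costs at most a constant after absorbing $x$ into $\a^x$, via $(x-z)^2\ge z^2/2-x^2$. The right factor depends on the case.
\begin{itemize}
\item If $\b<1$, the same argument gives $\lesssim 4^{N-m}(z+1)(N-m+1)^{-3/2}e^{-cz^2/(N-m+1)}$.
\item If $\b=1$, I use $\sum_y Z_{N-m}(z,y)=Z_{N-m}(z,\mathrm{free})\lesssim 4^{N-m}\min\{1,(z+1)/\sqrt{N-m+1}\}$.
\end{itemize}
With these, each expectation becomes a Gaussian moment sum over $z$ with weight $(z+1)^2$ (or $(z+1)\min\{\cdot\}$), which I estimate by comparison with integrals as in Step~4 of Proposition~\ref{prop:marginal law comparison}. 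Set $D=(m+1)(N-m+1)/(N+2)$.
\begin{itemize}
\item For $q^{\gamma_m}$, the sum $\sum_z q^z(z+1)^2$ is $O(1)$. With $\b<1$ this gives $C(N+1)^{3/2}/((m+1)(N-m+1))^{3/2}$, which is the claimed bound.
\item For $q^{\gamma_m}$ with $\b=1$, I use $\min\le (z+1)/\sqrt{N-m+1}$ to get $C(N+1)^{1/2}(m+1)^{-3/2}(N-m+1)^{-1/2}$, again as claimed.
\item For $\mathrm{E}[1/(\gamma_m+1)]$ with $\b<1$, the sum $\sum_z(z+1)e^{-cz^2/D}\asymp D$ gives a total of order $D\,(N+1)^{3/2}/((m+1)(N-m+1))^{3/2}\asymp D^{-1/2}$. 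Since $D^{-1/2}\lesssim m^{-1/2}+(N-m)^{-1/2}$, this is the claimed bound.
\item For $\mathrm{E}[\gamma_m]$ with $\b<1$, the sum is $\asymp D^2$, so $\mathrm{E}[\gamma_m]\lesssim D^{1/2}$. Multiplying by $1/m+1/(N-m)\asymp 1/D$ gives $D^{-1/2}$ again.
\item For $\mathrm{E}[1/(\gamma_m+1)]$ and $\mathrm{E}[\gamma_m]/m$ with $\b=1$, I split according to $N-m\le m$ or $N-m>m$, in the same way as Step~4 of Proposition~\ref{prop:marginal law comparison}. In both cases the result is $O(m^{-1/2})$.
\end{itemize}

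For the last bound, take $m=N$ with $\b=1$. The marginal of $\gamma_N$ is $\propto\sum_x\a^xZ_N(x,y)\uptheta(y)$, so
\[
\mathrm{E}[q^{\gamma_N}]\lesssim \frac{(N+1)^{-3/2}\sum_y q^y(y+1)}{(N+1)^{-1/2}}\lesssim\frac{1}{N+1}.
\]

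The main obstacle is the $\b=1$ moment bounds. There the $\min\{1,(z+1)/\sqrt{N-m+1}\}$ factor has to be handled with care, and the absence of the $1/(N-m)$ term must come out of the case split when $N-m$ is small. That comparison step is where I would put the effort.
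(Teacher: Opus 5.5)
Your route is the paper's: concatenation at $\gamma_m$, bounding $\uptheta$ above and below by constants, normalization lower bounds from $Z_N(0,0)$ (for $\b<1$) and $Z_N(0,\mathrm{free})$ (for $\b=1$), the split by the relative size of $N-m$ and $m$ when $\b=1$, and the terminal estimate. Your $q^{\gamma_m}$ and $q^{\gamma_N}$ bounds are correct, because they only need $\sum_x\a^xZ_m(x,z)\le 4^{m+1}(z+1)(m+1)^{-3/2}\sum_x(x+1)\a^x$ with the exponential discarded.

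The gap is in the Gaussian decay you use for the $1/(\gamma_m+1)$ and $\gamma_m$ moments. The pointwise bound
\[
\sum_{x\ge0}\a^xZ_m(x,z)\lesssim 4^m\frac{z+1}{(m+1)^{3/2}}\,e^{-cz^2/(m+1)},
\]
with a constant independent of $m$, is false whenever $\a>0$. For $m=1$ the left side is at least $\a^{z-1}Z_1(z-1,z)=\a^{z-1}$, while the right side is of order $z e^{-cz^2/2}$. When $z\gg\sqrt m$, the mass comes from starting heights $x$ near $z$, and their weight $\a^x$ gives only exponential, not Gaussian, decay in $z$. Your justification also fails: $(x-z)^2\ge z^2/2-x^2$ leaves a factor $e^{x^2/(m+1)}$, and $\sum_x(x+1)\a^xe^{x^2/(m+1)}=\infty$ for every $m$. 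The same defect affects your right factor for $\b<1$ when $N-m$ is small. Without genuine Gaussian decay the $z$-sums in those two moment bounds are not controlled, so those steps are not established as written.

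The repair is short. The paper keeps the shifted Gaussians $\mathfrak X_{m+1,\a}(z)=\sum_x(x+1)\a^xe^{-(x-z)^2/(m+1)}$ and $\mathfrak X_{N-m+1,\b}(z)$, exchanges the order of summation, and completes the square in $z$ for each fixed pair of endpoint heights $(\ell,\ell')$. The resulting center $\xi$ is a convex combination of $\ell$ and $\ell'$, so $\xi+1\le(\ell+1)(\ell'+1)$. The shift therefore costs only polynomial factors, which $\a^\ell\b^{\ell'}$ absorbs (Lemma~\ref{lem:discrete Gaussian}).

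Alternatively, splitting the sum into $x\le z/2$ and $x>z/2$ gives
\[
\mathfrak X_{s,\a}(z)\le(1-\a)^{-2}e^{-z^2/(4s)}+C_\a(z+1)\a^{z/2}.
\]
The second term decays exponentially in $z$ at a rate independent of $N$ and $m$. For $\b<1$ its contribution is at most $CD^{-3/2}\le CD^{-1/2}$, using that the prefactor $(N+1)^{3/2}((m+1)(N-m+1))^{-3/2}$ is at most $D^{-3/2}$ and that $D=(m+1)(N-m+1)/(N+2)\ge1$. For $\b=1$ its contribution is at most $C\sqrt{N+1}\,(m+1)^{-3/2}(N-m+1)^{-1/2}\le C/(m+1)$, using $N+1\le(m+1)(N-m+1)$. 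With either fix, your order computations go through: $D^{-1/2}$ for $\b<1$, and $O(m^{-1/2})$ after the split for $\b=1$.
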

In order to show Proposition \ref{prop:endpt wt compare}, we require the following discrete Gaussian estimates. We defer the proof to Appendix~\ref{app:Gauss}. 

\begin{lemma}\label{lem:discrete Gaussian}
For each \(s\geq1\), \(u\in[0,1)\), and \(z\in\mathbb Z_{\geq0}\), define
\begin{equation}\label{def:Gaussian}
    \mathfrak{X}_{s,u}(z)
=
\sum_{x\geq0}(x+1)u^xe^{-(x-z)^2/s}.
\end{equation}
Then there exist finite constants $C_u$ depending on $u\in[0,1)$; $C_{u,k}$ depending on $u\in[0,1)$, $k\in\mathbb Z_{\geq0}$; and $C_{u,v,k}$ depending on $u,v\in[0,1)$, $k\in\mathbb Z_{\geq0}$, such that, for each fixed $k\in\mathbb Z_{\geq0}$, \(s\geq1\), $x,y\geq2$, \(u,v\in[0,1)\),
\begin{align}
 \label{eq:endpoint-one-gaussian-moment}  \sum_{z\geq0}(z+1)^k\mathfrak{X}_{s,u}(z)
&\leq
C_{u,k} s^{(k+1)/2},\\
\label{eq:endpoint-two-gaussian-moment} \sum_{z\geq0}(z+1)^k
\mathfrak{X}_{x,u}(z)\mathfrak{X}_{y,v}(z)
&\leq
C_{u,v,k}\left(\frac{xy}{x+y}\right)^{(k+1)/2},\\
\label{eq:endpoint-free-inverse-moment} \sum_{z\geq0}\mathfrak X_{x,u}(z)\min\left(1,\frac{z+1}{\sqrt y}\right)
&\leq
C_u\frac{x}{\sqrt{x+y}},\\
\label{eq:endpoint-free-first-moment}\sum_{z\geq0}z(z+1)\mathfrak X_{x,u}(z)\min\left(1,\frac{z+1}{\sqrt y}\right)
&\leq
C_u\frac{x^2}{\sqrt{x+y}}.
\end{align} 
\end{lemma}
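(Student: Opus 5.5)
The plan is to compare each discrete sum with the corresponding Gaussian integral, after first reducing the weight $(x+1)u^x$ to a few near-diagonal terms. The starting point is a pointwise bound on $\mathfrak X_{s,u}$. For $s\ge1$ we have
\[
(x-z)^2\ge \tfrac12 z^2-x^2,
\qquad\text{so}\qquad
e^{-(x-z)^2/s}\le e^{x^2/s}e^{-z^2/(2s)}.
\]
This is useless for large $x$ unless we balance it against $u^x$. I would therefore split $\mathfrak X_{s,u}(z)$ into the ranges $x\le z/2$ and $x>z/2$.
\begin{itemize}
\item For $x\le z/2$ we have $(x-z)^2\ge z^2/4$, so this part is at most $C_u e^{-z^2/(4s)}$.
\item For $x>z/2$ we have $u^x\le u^{z/2}$, and $\sum_x (x+1)u^{x/2}<\infty$, so this part is at most $C_u u^{z/2}$.
\end{itemize}
Together these give
\[
\mathfrak X_{s,u}(z)\le C_u\bigl(e^{-z^2/(4s)}+u^{z/2}\bigr).
\]
Both pieces are summable against polynomials in $z$. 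The exponential piece behaves like a Gaussian of width $\sqrt s$. The geometric piece contributes only $O(1)$, which is harmless because $s\ge1$ and $x,y\ge2$.

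With this bound, \eqref{eq:endpoint-one-gaussian-moment} follows by comparing sums with integrals, as in Step~4 of Proposition~\ref{prop:marginal law comparison}:
\[
\sum_{z\ge0}(z+1)^k e^{-z^2/(4s)}\le C_k s^{(k+1)/2}.
\]
For \eqref{eq:endpoint-two-gaussian-moment} I multiply the two pointwise bounds. The Gaussian--Gaussian product is $e^{-z^2(1/(4x)+1/(4y))}$, a Gaussian of variance proportional to $xy/(x+y)$. Its $k$-th moment is therefore $\lesssim (xy/(x+y))^{(k+1)/2}$. Every cross term contains a factor $u^{z/2}$ or $v^{z/2}$, so it is $O(1)$. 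Since $x,y\ge2$ we have $xy/(x+y)\ge1$, so these $O(1)$ terms are absorbed into the main term.

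The last two estimates need more care, because the factor $\min(1,(z+1)/\sqrt y)$ determines the size of the answer. I treat two cases.
\begin{itemize}
\item If $y\le x$, I bound the minimum by $1$. Then \eqref{eq:endpoint-one-gaussian-moment} with $k=0$ gives $C_u\sqrt x$, and $\sqrt x\le \sqrt2\,x/\sqrt{x+y}$ in this case. Likewise, $k=2$ gives $C_u x^{3/2}\le C x^2/\sqrt{x+y}$ for \eqref{eq:endpoint-free-first-moment}.
\item If $y>x$, I bound the minimum by $(z+1)/\sqrt y$. For \eqref{eq:endpoint-free-inverse-moment} this gives $C_u x/\sqrt y\le \sqrt2\,C_u x/\sqrt{x+y}$, using $k=1$. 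For \eqref{eq:endpoint-free-first-moment} it leaves a cubic moment, which is $\lesssim x^2/\sqrt y$.
\end{itemize}
In both cases $\sqrt{x+y}$ is comparable to $\sqrt{\max(x,y)}$, which yields the stated bounds.

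The only step needing real care is the pointwise decomposition of $\mathfrak X_{s,u}$. Once that is established, every estimate reduces to moments of a single Gaussian plus exponentially decaying geometric remainders. For those remainders one has to check that the $O(1)$ contributions are dominated, using $s\ge1$ and $x,y\ge2$.
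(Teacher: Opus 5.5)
Your proof is correct, but for the first two estimates it takes a different route from the paper. Your treatment of the last two estimates is exactly the paper's Step~4: the case split $y\le x$ versus $y>x$, using the first estimate with $k=0,2$ and $k=1,3$.

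The paper never bounds $\mathfrak X_{s,u}$ pointwise. It swaps the sums and applies a shifted discrete Gaussian moment bound, $\sum_{\ell\ge0}(\ell+1)^j e^{-(\ell-\sigma)^2/t}\le A_j(\sigma+1)^j t^{(j+1)/2}$ for $t\ge1$ and $\sigma\ge0$, with $\sigma$ equal to the summation index of the geometric weight. For the product estimate it completes the square, $\frac{(z-\ell)^2}{x}+\frac{(z-m)^2}{y}=\frac{(z-\xi_{\ell,m})^2}{h}+\frac{(\ell-m)^2}{x+y}$ with $h=xy/(x+y)$. It then absorbs $(\xi_{\ell,m}+1)^k\le(\ell+1)^k(m+1)^k$ into the geometric weights.

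Your domination $\mathfrak X_{s,u}(z)\le C_u\bigl(e^{-z^2/(4s)}+u^{z/4}\bigr)$ separates the geometric weight from the Gaussian once and for all. The product estimate then reduces to a single centered Gaussian of variance proportional to $h$, plus $O(1)$ cross terms, with no completing of squares. The price is a wider Gaussian and the need to absorb the $O(1)$ remainders using $s\ge1$ and $h\ge1$; the latter holds because $(x-1)(y-1)\ge1$ for $x,y\ge2$. The paper's route instead gives explicit constants $A_kM_{k+1}(u)M_{k+1}(v)$ and avoids the pointwise case split.

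One small correction. For $x>z/2$ you cannot bound all of $u^x$ by $u^{z/2}$ and still sum the factor $(x+1)$ over $x$. Split instead as $u^x=u^{x/2}\cdot u^{x/2}\le u^{z/4}\,u^{x/2}$. This gives the remainder $(1-\sqrt u)^{-2}u^{z/4}$, not $C_u u^{z/2}$. Any geometric decay in $z$ suffices for what follows, so nothing downstream changes.
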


\begin{proof}[Proof of Proposition \ref{prop:endpt wt compare}]
Throughout this proof,  we use $C_i$, $i\in\mathbb N$ to denote constants in $(0,\infty)$ that depend only on \(\a,\b,\c,\d,q\).
Note that 
\begin{equation}\label{eq:boundsthetanmi}
0< (\c\d;q)_\infty
\leq \uptheta(h)
\leq   (q;q)_\infty^{-1}<\infty 
\qquad\mbox{ for } h\in\mathbb Z_{\geq0}. 
\end{equation} For every
\(v\in[0,1]\),
\[
\mathcal Z_N^{\a,v;\uptheta}
\leq
 (q;q)_\infty^{-1} 4^N\sum_{x\geq0}\a^x
=4^N (q;q)_\infty^{-1} (1-\a)^{-1}
<\infty.
\]
We also have \(\mathcal Z_N^{\a,v;\uptheta}>0\) since the path that remains at height zero and with all black steps has endpoint weight one. We have shown that the probability measure $\mathrm{P}_N^{\a,\b;\uptheta}$ is well-defined. 
In the remainder of the proof, we prove the claimed estimates, and we will always assume  \(N\in\mathbb Z_{\geq2}\) and \(m\in\llbracket1,N-1\rrbracket\). Denote 
\[
\mathrm A (z)=\sum_{x\geq0}\a^xZ_m(x,z),
\qquad
\mathrm B (z)=\sum_{y\geq0}\b^y\uptheta(y)Z_{N-m}(z,y),\qquad z\in\mathbb Z_{\geq0}.
\]
Concatenation at $\gamma_m$ gives, for every function \(F:\mathbb Z_{\geq0}\to[0,\infty)\), 
\begin{equation}\label{eq:endpoint-measure-cut-factorization}
\mathcal Z_N^{\a,\b;\uptheta}
\mathrm{E}_N^{\a,\b;\uptheta}[F(\gamma_m)]
=
\sum_{z\geq0}F(z)\mathrm A (z)\mathrm B (z).
\end{equation}
We split the rest of the proof into two steps.\\

\noindent\textbf{Step 1. } In this step, we prove the estimates for $\b<1$. The upper bound in
Lemma~\ref{lem:bounds ZNxy} gives, for $z\in\mathbb Z_{\geq0}$,
\begin{equation}\label{eq:endpoint-prefix-bound}
   \mathrm A (z)\leq
\frac{4^{m+1}(z+1)}{(m+1)^{3/2}}
\mathfrak X_{m+1,\a}(z),\qquad
\mathrm B (z)\leq
\frac{(q;q)_\infty^{-1} 4^{N-m+1}(z+1)}{(N-m+1)^{3/2}}
\mathfrak X_{N-m+1,\b}(z), 
\end{equation}
where we recall \eqref{def:Gaussian}.
By the lower bound in Lemma~\ref{lem:bounds ZNxy},  
$\mathcal Z_N^{\a,\b;\uptheta}
\geq Z_N(0,0)
\geq
e^{-2} 4^{N-1}/(N+1)^{3/2}$. Combining this with \eqref{eq:endpoint-prefix-bound}, for every function \(F:\mathbb Z_{\geq0}\to[0,\infty)\), we have
\begin{equation}\label{eq:endpoint-pinned-master-bound}
\mathrm{E}_N^{\a,\b;\uptheta}[F(\gamma_m)]
\leq
C_1\frac{(N+1)^{3/2}}{(m+1)^{3/2}(N-m+1)^{3/2}}
\sum_{z\geq0}
F(z)(z+1)^2
\mathfrak X_{m+1,\a}(z)\mathfrak X_{N-m+1,\b}(z).
\end{equation}
Using the estimate \eqref{eq:endpoint-two-gaussian-moment} from Lemma~\ref{lem:discrete Gaussian} with $k\in\{1,3\}$, 
\begin{align*}
   \sum_{z\geq0}(z+1)
\mathfrak X_{m+1,\a}(z)\mathfrak X_{N-m+1,\b}(z) &\leq C_2\frac{(m+1)(N-m+1)}{N+2},\\
\sum_{z\geq0}(z+1)^3
\mathfrak X_{m+1,\a}(z)\mathfrak X_{N-m+1,\b}(z)&\leq C_2\frac{(m+1)^2(N-m+1)^2}{(N+2)^2}.
\end{align*} 
Taking \(F(z)=(z+1)^{-1}\) in
\eqref{eq:endpoint-pinned-master-bound},  
\[
\mathrm{E}_N^{\a,\b;\uptheta}
\left[\frac1{\gamma_m+1}\right]
\leq
C_3\frac{(N+1)^{3/2} }{(N+2)(m+1)^{1/2}(N-m+1)^{1/2}} 
\leq
C_4\left(\frac1{\sqrt m}+\frac1{\sqrt{N-m}}\right).
\]
Similarly,  
\[
\begin{aligned} 
\mathrm{E}_N^{\a,\b;\uptheta}
\left[\gamma_m\left(\frac1m+\frac1{N-m}\right)\right]
 &\leq
C_5\frac{(N+1)^{3/2}(m+1)^{1/2}(N-m+1)^{1/2}}{(N+2)^2 }
\left(\frac1m+\frac{1}{N-m}\right) \\& \leq
C_6\left(\frac1{\sqrt m}+\frac1{\sqrt{N-m}}\right).
\end{aligned}
\]
We conclude the proof of the first asserted estimate in Proposition~\ref{prop:endpt wt compare} for \(\b<1\).

We next prove the second asserted estimate when $\b<1$. Note that
\begin{equation}\label{eq:X cross}
    \mathfrak X_{\sigma,u}(z)
\leq
\sum_{x\geq0}(x+1)u^x
= (1-u)^{-2}\qquad\mbox{ for each }u\in[0,1), \, \sigma\geq1, \, z\geq0.
\end{equation} Combining with the fact that $\sum_{z\geq0}(z+1)^2q^z \leq C_7$ for some $C_7>0$,  
\[
\sum_{z\geq0}q^z(z+1)^2
\mathfrak X_{m+1,\a}(z)\mathfrak X_{N-m+1,\b}(z)
\leq C_8.
\]
We take \(F(z)=q^z\) in \eqref{eq:endpoint-pinned-master-bound} and obtain
\[
\mathrm{E}_N^{\a,\b;\uptheta}[q^{\gamma_m}]
\leq 
\frac{C_9(N+1)^{3/2}}{(m+1)^{3/2}(N-m+1)^{3/2}} .
\]
We conclude the proof of the second estimate for \(\b<1\).\\

\noindent\textbf{Step 2. } In this step, we prove the estimates for $\b=1$. By Lemma~\ref{lem:bounds ZNxfree} and \eqref{eq:boundsthetanmi},  
\begin{align}
    \label{eq:endpoint-suffix-free-bound}\mathrm B (z)
&\leq
(q;q)_{\infty}^{-1} Z_{N-m}(z,\mathrm{free})
\leq
2(q;q)_{\infty}^{-1}   4^{N-m}
\min\left(1,\frac{z+1}{\sqrt{N-m+1}}\right),\\
\label{eq:endpoint-free-normalizer-lower}\mathcal Z_N^{\a,1;\uptheta}
&\geq
(\c\d;q)_\infty\sum_{y\geq0}Z_N(0,y)
=
(\c\d;q)_\infty Z_N(0,\mathrm{free})
\geq
(\c\d;q)_\infty e^{-2}\frac{4^{N-1}}{\sqrt{N+1}}.
\end{align} 
Combining \eqref{eq:endpoint-measure-cut-factorization},
\eqref{eq:endpoint-prefix-bound},
\eqref{eq:endpoint-suffix-free-bound}, and
\eqref{eq:endpoint-free-normalizer-lower}, we obtain, for every \(F:\mathbb Z_{\geq0}\to[0,\infty)\),
\begin{equation}\label{eq:endpoint-free-master-bound}
\mathrm{E}_N^{\a,1;\uptheta}[F(\gamma_m)]
\leq
C_{10}\frac{\sqrt{N+1}}{(m+1)^{3/2}}
\sum_{z\geq0}
F(z)(z+1)\mathfrak X_{m+1,\a}(z)
\min\left(1,\frac{z+1}{\sqrt{N-m+1}}\right).
\end{equation} 
Using the estimates \eqref{eq:endpoint-free-inverse-moment} and \eqref{eq:endpoint-free-first-moment} from
Lemma~\ref{lem:discrete Gaussian} with \(x=m+1\), \(y=N-m+1\), and \(u=\a\), and taking the function $F(z)=\frac{1}{z+1}+\frac{z}{m}$
\[
\begin{aligned}
\mathrm{E}_N^{\a,1;\uptheta}
\left[\frac1{\gamma_m+1}+\frac{\gamma_m}{m}\right]
\leq
C_{11}\frac{\sqrt{N+1}}{(m+1)^{3/2}\sqrt{N+2}}
\left(m+1+\frac{(m+1)^2}{m}\right) 
\leq
\frac{C_{12}}{\sqrt m}.
\end{aligned}
\]
We conclude the proof of the first asserted estimate for \(\b=1\).
We also have 
\[
\begin{aligned}
&\sum_{z\geq0}q^z(z+1)\mathfrak X_{m+1,\a}(z)
\min\left(1,\frac{z+1}{\sqrt{N-m+1}}\right) \leq
\frac1{\sqrt{N-m+1}(1-\a)^2}
\sum_{z\geq0}q^z(z+1)^2
\leq
\frac{C_{13}}{\sqrt{N-m+1}}.
\end{aligned}
\]
Taking \(F(z)=q^z\) in \eqref{eq:endpoint-free-master-bound} gives
\[
\mathrm{E}_N^{\a,1;\uptheta}[q^{\gamma_m}]
\leq 
\frac{C_{14}}{(m+1)^{3/2}}
\left(\frac{N+1}{N-m+1}\right)^{1/2}.
\]

It remains to prove the terminal height estimate when \(\b=1\). Note that 
$\sum_{y\geq0}(y+1)q^y
\leq
\sum_{y\geq0}\mathfrak X_{1,q}(y)
\leq C_{15}$. By the upper bound in Lemma~\ref{lem:bounds ZNxy}, \eqref{eq:X cross}, and
\eqref{eq:boundsthetanmi},
\[
\begin{aligned}
\mathcal Z_N^{\a,1;\uptheta}
\mathrm{E}_N^{\a,1;\uptheta}[q^{\gamma_N}] =
\sum_{x,y\geq0}
\a^xq^y\uptheta(y)Z_N(x,y) \leq
(q;q)^{-1}_{\infty}\frac{4^{N+1}}{(N+1)^{3/2}}
\sum_{y\geq0}q^y(y+1)\mathfrak X_{N+1,\a}(y)
\leq
C_{16}\frac{4^N}{(N+1)^{3/2}},
\end{aligned}
\]
 Dividing by
\eqref{eq:endpoint-free-normalizer-lower} gives
$\mathrm{E}_N^{\a,1;\uptheta}[q^{\gamma_N}]
\leq  C/(N+1)$,
as desired.
\end{proof}

\subsection{Proof of Theorem \ref{thm:main open ASEP}} In this subsection, we prove Theorem \ref{thm:main open ASEP}. For convenience, we restate Theorem \ref{thm:main open ASEP} as Theorem \ref{thm:main open ASEP again} below.
\begin{theorem}\label{thm:main open ASEP again}
    Assume $q\in[0,1)$, $\a,\b\geq0$ and $\mathsf{c},\mathsf{d}\in(-1,0]$. For $N\in\mathbb N$, consider the stationary measure $\mu_N^{(q,\a,\b,\c,\d)}$ of open ASEP. There exists $C<\infty$ depending only on $\a,\b,\c,\d$ and $q$, such that the following hold.
    \begin{enumerate}
        \item [(1)] In the maximal current phase $\a,\b\in[0,1)$, for every $N,m,k\in\mathbb N$ satisfying 
        $1\le m\le N-k+1\le N$,  
\[
\TV{
\mu_N^{(q,\a,\b,\c,\d)}
\left(
(\tau_i)_{i\in\llbracket m,N-k+1\rrbracket}\in\cdot
\right)
-
\mathbb{P}_N^{\mathsf{exc}}
\left(
(\tau_i)_{i\in\llbracket m,N-k+1\rrbracket}\in\cdot
\right)}
\le
C/\min(m,k)^{1/2} .
\]
        \item [(2)] On the boundary of the maximal current phase $\a\in[0,1)$ and $\b=1$, for every $N\in\mathbb N$ and $m\in\llbracket1,N\rrbracket$, 
        \[
\TV{
\mu_N^{(q,\a,1,\c,\d)}
\left(
(\tau_i)_{i\in\llbracket m,N \rrbracket}\in\cdot
\right)
-
\mathbb P_N^{\mathsf{mea}}
\left(
(\tau_i)_{i\in\llbracket m,N \rrbracket}\in\cdot
\right)}
\le
C/m^{1/2} .
\]
        \item [(3)] On the boundary of the maximal current phase $\a=1$ and $\b\in[0,1)$, for every $N\in\mathbb N$ and $k\in\llbracket1,N\rrbracket$, 
        \[
\TV{
\mu_N^{(q,1,\b,\c,\d)}
\left(
(\tau_i)_{i\in\llbracket 1,N-k+1 \rrbracket}\in\cdot
\right)
-
\mathbb P_N^{\mathsf{revmea}}
\left(
(\tau_i)_{i\in\llbracket 1,N-k+1 \rrbracket}\in\cdot
\right)}
\le
C/k^{1/2} .
\]
    \end{enumerate} 
\end{theorem}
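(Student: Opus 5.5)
Statement (3) follows from Statement (2) by the particle--hole duality of Lemma~\ref{lem:particle hole}, since $(\a,\b,\c,\d)\mapsto(\b,\a,\d,\c)$ sends $\a=1$ to $\b=1$ and $\mathbb P_N^{\mea}$ to $\mathbb P_N^{\revmea}$. So we treat Statements (1) and (2). The plan is to compare the weighted path measure $\mathbb{P}_N^{(q,\a,\b,\c,\d)}$ of Theorem~\ref{thm:Motzkin representation} with the endpoint-only measure $\mathrm P_N^{\a,\b;\uptheta}$ of Proposition~\ref{prop:endpt wt compare}. We then compare the latter with the TASEP-type endpoint mixtures already handled in the proof of Theorem~\ref{thm:main open TASEP again}.

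\textbf{Step 1: cut the path at two interior times.} Take $I=\llbracket m,N-k+1\rrbracket$, and set $m'=\lfloor m/2\rfloor$ and $k'=\lfloor k/2\rfloor$. Cut $\bg$ at times $m'$ and $N-k'$, with heights $z=\gamma_{m'}$ and $w=\gamma_{N-k'}$. On the middle segment $\llbracket m',N-k'\rrbracket$, the bulk weights $\prod\upomega_i$ differ from $1$ by at most $C\sum_i q^{\gamma_i}$. Conditional on $(z,w)$, the middle segment under $\mathbb P_N^{(q,\cdots)}$ is the reweighted bridge with density proportional to $\prod\upomega_i$ relative to the uniform measure on $\mathrm{BCMot}^{z,w}$. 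Its total variation distance to the uniform bridge is therefore at most $C\,\mathbb E[\sum_{i\in\text{middle}} q^{\gamma_i}]$, computed under the endpoint measure up to a bounded factor. Proposition~\ref{prop:endpt wt compare} gives $\mathrm E[q^{\gamma_i}]\le C(i+1)^{-3/2}(\ldots)$, and summing over $i\ge m'$ yields $O(m^{-1/2}+k^{-1/2})$.

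\textbf{Step 2: boundary pieces.} The two boundary pieces $\llbracket 0,m'\rrbracket$ and $\llbracket N-k',N\rrbracket$ carry the full weights. Summing these pieces over the free endpoint gives exactly the functions $F^\varsigma_{m'}(z)$ and $H^\varsigma_{k'}(w)$ of Definition~\ref{def:ReweightingFunctions}. Here $\varsigma(j)=\a^j$ on the left, and $\varsigma(j)=\b^j\uptheta(j)$ on the right, which is $\le C_0\varrho^j$ when $\b<1$. Proposition~\ref{prop:comparison bulk weights} replaces them by $\Upgamma_\varsigma G^\varsigma_{m'}(z)$ and $\Upupsilon_\varsigma G^\varsigma_{k'}(w)$. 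The relative error is $C(1/(z+1)+(z+1)/m)$, and analogously for $w$. The constants $\Upgamma,\Upupsilon$ cancel upon normalization. The expected relative error is controlled by the first estimate of Proposition~\ref{prop:endpt wt compare}, $\mathrm E[1/(\gamma_m+1)+\gamma_m/m]\le C/\sqrt m$, and this is the source of the $1/2$ exponent.

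\textbf{Step 3: reduce to TASEP.} After these replacements, the measure on paths, restricted to its law on $\llbracket m',N-k'\rrbracket$, equals that of a uniform bicolored Motzkin path with geometric endpoint weights $\a^{\gamma_0}$ and $\b^{\gamma_N}\uptheta(\gamma_N)$. We then apply Proposition~\ref{prop:marginal law comparison} exactly as in Step 1 of the proof of Theorem~\ref{thm:main open TASEP again}. Conditioning on $\gamma_{m-1}$ and $\gamma_{N-k+1}$ compares the $I$-marginal with $\Uptheta^{0,0}_{N,I}=\mathbb P^{\exc}_N$, with error $O(1/m+1/k)$, using the geometric decay of $\varsigma$ and a lower bound on the normalization. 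For $\b=1$, the right weight $\uptheta(y)$ is not summable. In this case we do not cut on the right. Instead we use $\uptheta(y)=\uptheta(\infty)+O(q^y)$, so the leading term gives the free endpoint, hence $\Uptheta^{0,\mathrm{free}}=\mathbb P^{\mea}_N$ as in Step 2 of that proof. The correction is controlled by the third estimate $\mathrm E[q^{\gamma_N}]\le C/(N+1)$, and the bulk weights near the right end are controlled by the $\b=1$ case of the second estimate.

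\textbf{Main obstacle.} The hardest part is Step 1/2: justifying that the ratio of the reweighted partition functions to the endpoint-measure ones is $1+O(\text{error})$ \emph{in expectation under $\mathrm P^{\a,\b;\uptheta}_N$}, rather than pointwise. The estimates are available under the endpoint measure, not under the ASEP measure. I would handle this by writing both measures as mixtures over $(z,w)$ and bounding $\sum_{z,w}|\text{weight}^{\mathrm{ASEP}}(z,w)-c\cdot\text{weight}^{\theta}(z,w)|$ directly in terms of $\mathrm E^{\a,\b;\uptheta}_N[\cdot]$. The remaining cases $m'$ or $k'$ small are trivial, since the total variation distance is at most $1$.
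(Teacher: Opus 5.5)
Your proposal is correct and follows essentially the paper's route. You compare $\mathbb P_N^{(q,\a,\b,\c,\d)}$ with $\mathrm P_N^{\a,\b;\uptheta}$ via Propositions~\ref{prop:comparison bulk weights} and~\ref{prop:endpt wt compare}, then compare $\mathrm P_N^{\a,\b;\uptheta}$ with $\mathbb P_N^{\exc}$ (or with $\mathbb P_N^{\mea}$ via $\uptheta(y)=\uptheta_\infty+O(q^y)$ when $\b=1$) as in the TASEP proof, and get (3) by duality. The only difference is organizational: the paper cuts at $m-1$ and $N-k+1$, and resolves your ``main obstacle'' in one step by applying Lemma~\ref{lem:TV RN} with reference measure $\mathrm P_N^{\a,\b;\uptheta}$ and $f=D_{m,N-k+1}=\mathrm E_N^{\a,\b;\uptheta}[W_{1,N}\mid\mathcal G_{m,N-k+1}]$. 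This handles the middle-segment weights and the boundary ratios $F/G$, $H/G$ together, and removes the need for a separate lower bound on $\mathrm E_N^{\a,\b;\uptheta}[W_{1,N}]$, which your sequential Step~1 would otherwise require.
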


For the proof, we will use the following useful upper bound on the total variation distance.

\begin{lemma}\label{lem:TV RN}
    Let $\upmu$ be a probability measure on a measurable space $(X,\mathcal F)$. Let $f:X\to[0,\infty]$ be a measurable function satisfying 
    $\int_Xfd\upmu\in(0,\infty)$. Define a new probability measure $\upnu$ by 
    \[
\upnu (A)
=
\frac{\int_A f\,d\upmu}{\int_X f\,d\upmu}\qquad\mbox{ for each }A\in\mathcal F.
\] Then, for each \(h>0\), we have
\[
\TV{\upnu-\upmu}
\leq
\frac{2}{h}
\int_X|f-h|\,d\upmu.
\]
\end{lemma}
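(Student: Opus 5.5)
The plan is to bound $\int|f-h|\,d\upmu$ via the triangle inequality after writing $\TV{\upnu-\upmu}$ as an integral of $|f/Z-1|$, where $Z=\int_X f\,d\upmu\in(0,\infty)$. Since $\upnu$ has density $f/Z$ with respect to $\upmu$, the definition of total variation (the measure-theoretic version of \eqref{def:TVdistance}, namely $\sup_A|\upnu(A)-\upmu(A)|=\frac12\int|d\upnu/d\upmu-1|\,d\upmu$) gives
\[
\TV{\upnu-\upmu}=\frac12\int_X\left|\frac fZ-1\right|d\upmu=\frac1{2Z}\int_X|f-Z|\,d\upmu .
\]
Note that $f$ is $\upmu$-a.e. finite, since $Z<\infty$.

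Next, for arbitrary $h>0$ I would insert $h$. By the triangle inequality, $|f-Z|\le|f-h|+|h-Z|$. Moreover, $|h-Z|=\left|\int_X(h-f)\,d\upmu\right|\le\int_X|f-h|\,d\upmu=:\Delta$. Hence
\[
\TV{\upnu-\upmu}\le\frac{\Delta+\Delta}{2Z}=\frac{\Delta}{Z}.
\]

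It remains to replace $Z$ by $h$ in the denominator, and this is the only step needing care. I split into two cases. If $\Delta\ge h/2$, the claim is trivial, because $\TV{\cdot}\le1\le 2\Delta/h$. Otherwise $\Delta<h/2$, and then $Z\ge h-|h-Z|\ge h-\Delta> h/2$, so $\Delta/Z\le 2\Delta/h$. In both cases $\TV{\upnu-\upmu}\le\frac2h\int_X|f-h|\,d\upmu$, which is the asserted bound.
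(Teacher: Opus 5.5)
Your proposal is correct and follows essentially the same route as the paper: you write $\TV{\upnu-\upmu}=\frac{1}{2Z}\int_X|f-Z|\,d\upmu$, insert $h$ via the triangle inequality with $|h-Z|\le\int_X|f-h|\,d\upmu$, and split on whether $\int_X|f-h|\,d\upmu\ge h/2$, using $Z\ge h/2$ in the other case. The only difference is that you bound by $\Delta/Z$ before the case split, whereas the paper does the case split first; the arguments are otherwise identical.
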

\begin{proof}
    If $\int|f-h|\,d\upmu\geq h/2$, the proof follows from that the total variation is bounded by $1$. Otherwise,
\[
\int f\,d\upmu
\geq
h-\int|f-h|\,d\upmu
\geq
\frac{h}{2},
\]
and
\[
\begin{aligned}
&\TV{\upnu-\upmu}
 =
\frac1{2\int f\,d\upmu}
\int
\left|
f-\int f\,d\upmu
\right|d\upmu\\
&\leq
\frac1{2\int f\,d\upmu}
\left(
\int|f-h|\,d\upmu
+
\left|\int f\,d\upmu-h\right|
\right) \leq
\frac{\int|f-h|\,d\upmu}{\int f\,d\upmu}
\leq
\frac2{h}
\int|f-h|\,d\upmu.
\end{aligned}\] This concludes the proof.
\end{proof}

\begin{proof}[Proof of Theorem \ref{thm:main open ASEP again}]
Throughout this proof, we write $C_i$, $i\in\mathbb N$ to denote constants in $(0,\infty)$ depending only on $\a,\b,\c,\d$ and $q$.
As in the proof of Theorem \ref{thm:main open TASEP again}, for \(x,y\in\mathbb Z_{\geq0}\) with \(|x-y|\leq N\), we denote by \(\Uptheta_N^{x,y}\) (resp. \(\Uptheta_N^{x,\mathrm{free}}\)) the pushforward under \(\Phi\) of the uniform probability measure on \(\mathrm{BCMot}_N^{x,y}\) (resp. \(\mathrm{BCMot}_N^{x,\mathrm{free}}\)). 
For \(|x-y|>N\), set \(\Uptheta_N^{x,y}=\Uptheta_N^{0,0}\).  This convention does not affect any formula below because \(Z_N(x,y)=0\) if \(|x-y|>N\).
For \(I\subseteq\llbracket1,N\rrbracket\), a subscript \(I\) denotes the $(\tau_i:i\in I)$ marginal of a measure on $(\tau_1,\dots,\tau_N)$. 
We now recall the bulk weights from Definition \ref{def:measure Motzkin}: for each $\bg\in\mathrm{BCMot}_N$, $i\in\llbracket1,N\rrbracket$,  
    \[
    \upomega_i(\bg)=\begin{cases}
         1-q^{\gamma_i}&\qquad\mbox{if } \gamma_i-\gamma_{i-1}=1,\\
         1-\c\d q^{\gamma_i}&\qquad\mbox{if } \gamma_i-\gamma_{i-1}=-1,\\
         1+\d q^{\gamma_i} &\qquad\mbox{if } \gamma_i-\gamma_{i-1}=0\mbox{ is colored black},\\
         1+\c q^{\gamma_i} &\qquad\mbox{if } \gamma_i-\gamma_{i-1}=0\mbox{ is colored white}.
    \end{cases}
    \]
In this proof, we denote 
\[
W_{s,t}(\bg)=\prod_{i\in\llbracket s,t\rrbracket} \upomega_i(\bg)\qquad\mbox{ for each }1\leq s\leq t\leq N.
\] 
Recall the boundary weight $\uptheta(h)=(\c\d;q)_h/(q;q)_h$ for  $h\in\mathbb Z_{\geq0}$.
By Theorem \ref{thm:Motzkin representation}, 
 \[Z_N^{(q,\a,\b,\c,\d)}=\sum_{\bg\in\mathrm{BCMot}_N}\a^{\gamma_0}\b^{\gamma_N} \uptheta(\gamma_N) W_{1,N}(\bg)\in(0,\infty),\]
and hence  
\begin{equation}\label{eq:path tilt measure}
    \mathbb{P}_N^{(q,\a,\b,\c,\d)}(\bg)=\a^{\gamma_0}\b^{\gamma_N} \uptheta(\gamma_N) W_{1,N}(\bg)/Z_N^{(q,\a,\b,\c,\d)},\qquad\bg\in\mathrm{BCMot}_N,
\end{equation}
is a well-defined probability measure. Since $\a\b<1$  in all the cases of this theorem, by Theorem \ref{thm:Motzkin representation}, 
\begin{equation}\label{eq:pushf stat}
\Phi_\#\mathbb{P}_N^{(q,\a,\b,\c,\d)}
=
\mu_N^{(q,\a,\b,\c,\d)}.
\end{equation}
We next recall the probability measure $\mathrm{P}_N^{\a,\b;\uptheta}$.
By Proposition \ref{prop:endpt wt compare}, when $\a\in[0,1)$ and $\b\in[0,1]$,
\[
\mathcal Z_N^{\a,\b;\uptheta}
        =
        \sum_{\bg\in\mathrm{BCMot}_N}
        \a^{\gamma_0}\b^{\gamma_N}\uptheta(\gamma_N)\in(0,\infty),
\] and hence
\begin{equation}\label{eq:endpoint tilt measure}
        \mathrm{P}_N^{\a,\b;\uptheta}(\bg)
        = \a^{\gamma_0}\b^{\gamma_N}\uptheta(\gamma_N)/\mathcal Z_N^{\a,\b;\uptheta},\qquad\bg\in \mathrm{BCMot}_N,\end{equation}
is a well-defined probability measure.  Denote its expectation by \(\mathrm{E}_N^{\a,\b;\uptheta}\).
Combining \eqref{eq:path tilt measure} and   \eqref{eq:endpoint tilt measure},
\begin{equation}\label{eq:full-density-ASEP-main}
\frac{d \mathbb{P}_N^{(q,\a,\b,\c,\d)}}{d\mathrm{P}_N^{\a,\b;\uptheta}}
=
\frac{W_{1,N}}
{\mathrm{E}_N^{\a,\b;\uptheta}[W_{1,N}]}.
\end{equation}

We will use the following estimates for the bulk weights
\begin{equation}\label{eq:estimate bulk weights}
    0\leq1-W_{s,t}(\bg)
\leq
\sum_{i\in\llbracket s,t\rrbracket}   q^{\gamma_i}\qquad\mbox{ for each } \bg\in \mathrm{BCMot}_N,\,1\leq s\leq t\leq N.
\end{equation}
Indeed, this follows from the fact that for every local step, $0\leq1-\upomega_i\leq q^{\gamma_i}$.

We will also use the following estimates for the boundary weight $\uptheta$. Set $\uptheta_\infty
= (\c\d;q)_\infty/(q;q)_\infty$. We have
\begin{equation}\label{eq:boundsthetanmi again}
    0< (\c\d;q)_\infty
\leq \uptheta(h)
\leq   (q;q)_\infty^{-1}<\infty ,\qquad \left|
\uptheta(h)-\uptheta_\infty
\right|
\leq
C_1q^h,
\qquad\mbox{ for } h\in\mathbb Z_{\geq0}. 
\end{equation}
Indeed, the first estimate is  \eqref{eq:boundsthetanmi}. The second one follows from $\uptheta(h)=\uptheta_\infty (q^{h+1};q)_\infty/(\c\d q^h;q)_\infty$, in view of 
\[
0\leq1-(\c\d q^h;q)_\infty
\leq \c\d q^h (1-q)^{-1},
\quad
0\leq1-(q^{h+1};q)_\infty
\leq q^{h+1} (1-q)^{-1},\quad(\c\d q^h;q)_\infty
\geq
(\c\d;q)_\infty>0.
\]

For clarity, we split the rest of the proof into two steps.\\

\noindent\textbf{Step 1. } In this step, we prove Statement (1). Throughout this step, we assume  $\a,\b\in[0,1)$ and $N,m,k\in\mathbb N$ satisfying  $1\le m\le N-k+1\le N$, and we set $I=\llbracket m,N-k+1\rrbracket$. If $\min(m,k)=1$, the estimate follows from the fact that the total variation distance is at most $1$. We assume $m,k\geq2$ for the rest of this step.

We first prove the claim that
\begin{equation}\label{eq:endpoint-excursion-comparison-ASEP-main}
\TV{
\left(\Phi_\#\mathrm{P}_N^{\a,\b;\uptheta}\right)_I
-
\Uptheta_{N,I}^{0,0}}
\leq
C_2\left(\frac1m+\frac1k\right).
\end{equation}
Indeed, by the definition \eqref{eq:endpoint tilt measure} of the measure $\mathrm{P}_N^{\a,\b;\uptheta}$,   
\begin{equation}\label{eq:ndrsty}
\left(\Phi_\#\mathrm{P}_N^{\a,\b;\uptheta}\right)_I
-
\Uptheta_{N,I}^{0,0}
=
\frac1{\mathcal Z_N^{\a,\b;\uptheta}}
\sum_{x,y\geq0}
\a^x\b^y\uptheta(y)Z_N(x,y)
\left(
\Uptheta_{N,I}^{x,y}
-
\Uptheta_{N,I}^{0,0}
\right).
\end{equation}
As in \eqref{eq:TASEP-two-sided-bridge-comparison} in the proof of Theorem \ref{thm:main open TASEP again}, for each $x,y\in\mathbb Z_{\geq0}$ with \(|x-y|\leq N\) and \(y\leq N\),
\begin{equation}\label{eq:uniform-bridge-comparison-ASEP-main}
\TV{\Uptheta_{N,I}^{x,y}-\Uptheta_{N,I}^{0,0}}
\leq
2^{26}\left(
\frac{(x+1)^4(y+1)^2}{m}
+
\frac{(y+1)^4}{k}
\right).
\end{equation}
By Lemma \ref{lem:bounds ZNxy}, 
\begin{equation}\label{eq:endpoint-excursion-normalizer-ASEP-main}
\mathcal Z_N^{\a,\b;\uptheta}
\geq
Z_N(0,0)
\geq   C_3\frac{4^N}{(N+1)^{3/2}}.
\end{equation}
Using \eqref{eq:boundsthetanmi again}, \eqref{eq:uniform-bridge-comparison-ASEP-main}, and Lemma \ref{lem:bounds ZNxy},
\begin{equation*}
    \begin{split}
        &\sum_{x=0}^{\infty}\sum_{y=0}^N
\a^x\b^y\uptheta(y)Z_N(x,y)\TV{
\Uptheta_{N,I}^{x,y}
-
\Uptheta_{N,I}^{0,0} }\\
&\leq \frac{C_44^N}{(N+1)^{3/2}}
\left[
\frac1m
\left(\sum_{x\geq0}(x+1)^5\a^x\right)
\left(\sum_{y\geq0}(y+1)^3\b^y\right)
+
\frac1k
\left(\sum_{x\geq0}(x+1)\a^x\right)
\left(\sum_{y\geq0}(y+1)^5\b^y\right)
\right].
    \end{split}
\end{equation*}
By Lemma \ref{lem:bounds ZNxy}, the fact that the  total variation distance is bounded by $1$, and a generous upper bound for the second inequality using $\a, \b < 1$, 
\begin{equation*}
    \begin{split}
        &\sum_{x=0}^{\infty}\sum_{y=N+1}^{\infty}
\a^x\b^y\uptheta(y)Z_N(x,y)\TV{
\Uptheta_{N,I}^{x,y}
-
\Uptheta_{N,I}^{0,0} }\\
&\leq\frac{C_54^N}{(N+1)^{3/2}}
\left(\sum_{x\geq0}(x+1)\a^x\right)
\left(\sum_{y>N}(y+1)\b^y\right)
\leq
\frac{C_64^N}{(N+1)^{5/2}}.
    \end{split}
\end{equation*} 
Adding the last two displayed equations and dividing by \eqref{eq:endpoint-excursion-normalizer-ASEP-main}, using \eqref{eq:ndrsty}, we conclude the proof of \eqref{eq:endpoint-excursion-comparison-ASEP-main}.

Next, we let
 $$\mathcal G_{m,N-k+1}
=
\sigma\! (
\gamma_i:\ i\in\llbracket m-1,N-k+1\rrbracket;\
\mathbf 1_{\{\gamma_i=\gamma_{i-1}\ {\rm and\ the\ }i{\rm th\ step\ is\ black}\}}:
\ i\in I ).$$
 Moreover, we define
\[
\varsigma_L(h)=\a^h,
\qquad
\varsigma_R(h)=\b^h\uptheta(h),
\qquad h\in\mathbb Z_{\geq0}.
\]
We claim that,
\(\mathrm{P}_N^{\a,\b;\uptheta}\)-almost surely
\begin{equation}\label{eq:conditional-density-ASEP-main}
    D_{m,N-k+1}
=
\mathrm{E}_N^{\a,\b;\uptheta}
\left[
W_{1,N}\mid\mathcal G_{m,N-k+1}
\right]
=
\frac{
F_{m-1}^{\varsigma_L}(\gamma_{m-1})
}{
G_{m-1}^{\varsigma_L}(\gamma_{m-1})
}
W_{m,N-k+1}
\frac{
H_{k-1}^{\varsigma_R}(\gamma_{N-k+1})
}{
G_{k-1}^{\varsigma_R}(\gamma_{N-k+1})
}.
\end{equation}
Indeed, for each $u,v\in\mathbb Z_{\geq0}$,
\begin{align*}
   & G_{m-1}^{\varsigma_L}(u)
=
\sum_{x\geq0}\a^xZ_{m-1}(x,u),\qquad
F_{m-1}^{\varsigma_L}(u)=\sum_{x\geq0}\a^xR^{(q,\c,\d)}_{m-1}(x,u),\\
&G_{k-1}^{\varsigma_R}(v)=\sum_{y\geq0}\b^y\uptheta(y)Z_{k-1}(v,y),\qquad H_{k-1}^{\varsigma_R}(v)=\sum_{y\geq0}\b^y\uptheta(y)R^{(q,\c,\d)}_{k-1}(v,y).
\end{align*} Factoring the weighted sums independently over the left and
right path pieces gives us \eqref{eq:conditional-density-ASEP-main}.

By Proposition \ref{prop:comparison bulk weights}, there exist constants $\Upgamma_{\varsigma_L}, \Upupsilon_{\varsigma_R}\in(0,1]$,  such that
\begin{align}
&\left|
D_{m,N-k+1}
-
\Upgamma_{\varsigma_L}\Upupsilon_{\varsigma_R}
\right| \leq
\left|
\frac{
F_{m-1}^{\varsigma_L}(\gamma_{m-1})
}{
G_{m-1}^{\varsigma_L}(\gamma_{m-1})
}
-
\Upgamma_{\varsigma_L}
\right|
+
\left|
\frac{
H_{k-1}^{\varsigma_R}(\gamma_{N-k+1})
}{
G_{k-1}^{\varsigma_R}(\gamma_{N-k+1})
}
-
\Upupsilon_{\varsigma_R}
\right|
+
1-W_{m,N-k+1}.
\label{eq:three-errors-ASEP-main}
\end{align}

Using the first estimate of Proposition \ref{prop:endpt wt compare} at times $m-1,N-k+1\in\llbracket1,N-1\rrbracket$ and the
$q^{\gamma_i}$ estimate for every $i\in\llbracket m,N-k+1\rrbracket$,  we obtain
\begin{equation}\label{eq:in proof apply endpt wt compare}
    \begin{split}
        \mathrm{E}_N^{\a,\b;\uptheta}
\left[
\frac1{\gamma_{m-1}+1}
+
\frac{\gamma_{m-1}+1}{m}
\right]
&\leq
C_7\left(
\frac1{\sqrt m}
+
\frac1{\sqrt k}
\right), \\
\mathrm{E}_N^{\a,\b;\uptheta}
\left[
\frac1{\gamma_{N-k+1}+1}
+
\frac{\gamma_{N-k+1}+1}{k}
\right]
&\leq
C_8\left(
\frac1{\sqrt m}
+
\frac1{\sqrt k}
\right),\\
\mathrm{E}_N^{\a,\b;\uptheta}
\left[
\sum_{i=m}^{N-k+1} q^{\gamma_i}
\right]
&\leq
C_9\sum_{i=m}^{N-k+1}
\frac{(N+1)^{3/2}}
{(i+1)^{3/2}(N-i+1)^{3/2}}\leq
C_{10}\left(
\frac1{\sqrt m}
+
\frac1{\sqrt k}
\right).
    \end{split}
\end{equation}
Here, in the last inequality, we split  at \(N/2\); on the left half, the summand is at most \(C_9(i+1)^{-3/2}\), whose tail from \(m\) is \(O(m^{-1/2})\); on the right half, it is at most \(C_9(N-i+1)^{-3/2}\), whose tail down to \(k\) is \(O(k^{-1/2})\).

Combining Proposition \ref{prop:comparison bulk weights} with \eqref{eq:estimate bulk weights}, \eqref{eq:three-errors-ASEP-main}, and \eqref{eq:in proof apply endpt wt compare}, we obtain
\[
\mathrm{E}_N^{\a,\b;\uptheta}
\left|
D_{m,N-k+1}
-
\Upgamma_{\varsigma_L}\Upupsilon_{\varsigma_R}
\right|
\leq
C_{11}\left(
\frac1{\sqrt m}
+
\frac1{\sqrt k}
\right).
\]

Taking conditional expectations in \eqref{eq:full-density-ASEP-main}, we know that the restriction of \(\mathbb P_N^{(q,\a,\b,\c,\d)}\) to \(\mathcal G_{m,N-k+1}\) has Radon--Nikodym derivative
 $D_{m,N-k+1}/\mathrm{E}_N^{\a,\b;\uptheta}[D_{m,N-k+1}]$
with respect to the restriction of \(\mathrm{P}_N^{\a,\b;\uptheta}\), where
 $\mathrm{E}_N^{\a,\b;\uptheta}[D_{m,N-k+1}]
=
\mathrm{E}_N^{\a,\b;\uptheta}[W_{1,N}]
\in(0,\infty)$.
Note   the map
 $\Phi_I(\bg)
= (\Phi(\bg)_i )_{i\in I}$
is \(\mathcal G_{m,N-k+1}\)-measurable. Lemma \ref{lem:TV RN} with \(f=D_{m,N-k+1}\), \(h=\Upgamma_{\varsigma_L}\Upupsilon_{\varsigma_R}>0\), followed by contraction of the total variation distance under \(\Phi_I\), gives
\begin{equation*} 
\TV{
\left(\Phi_\#\mathbb{P}_N^{(q,\a,\b,\c,\d)}\right)_I
-
\left(\Phi_\#\mathrm{P}_N^{\a,\b;\uptheta}\right)_I}
\leq
C_{12}\left(
\frac1{\sqrt m}
+
\frac1{\sqrt k}
\right).
\end{equation*}
Combining this with
\eqref{eq:endpoint-excursion-comparison-ASEP-main}, \eqref{eq:pushf stat}, and Lemma
\ref{lem:exc and mea projection},  we conclude the proof of Statement (1). \\

\noindent\textbf{Step 2. } In this step, we prove the rest of the statements. Note that we only need to prove Statement (2), and then Statement (3) follows from Statement (2) through the particle-hole duality as in Lemma \ref{lem:particle hole}. Assume \(\a\in[0,1)\) and \(\b=1\), fix
\(m\in\llbracket1,N\rrbracket\), and set \(I=\llbracket m,N\rrbracket\).
If $m=1$, the   estimate follows from the fact that the total variation is at most $1$. We assume $m\geq2$ for the rest of this step.
We first claim that
\begin{equation}\label{eq:endpoint-meander-comparison-ASEP-main}
\TV{
\left(\Phi_\#\mathrm{P}_N^{\a,1;\uptheta}\right)_I
-
\Uptheta_{N,I}^{0,\mathrm{free}}
}
\leq \frac{C_{13}}{m}.
\end{equation}
Indeed, using \eqref{eq:endpoint tilt measure}, we obtain
 \begin{equation}\label{eq:endpoint-meander-decomposition-ASEP-main}
    \begin{split}
&
\left(\Phi_\#\mathrm{P}_N^{\a,1;\uptheta}\right)_I
-
\Uptheta_{N,I}^{0,\mathrm{free}} \\  
&=
\frac{1}{\mathcal Z_N^{\a,1;\uptheta}}
\left[
\uptheta_\infty
\sum_{x\geq0}\a^x Z_N(x,\mathrm{free})
\left(
\Uptheta_{N,I}^{x,\mathrm{free}}
-
\Uptheta_{N,I}^{0,\mathrm{free}}
\right) 
+
\sum_{x,y\geq0}
\a^x\left(\uptheta(y)-\uptheta_\infty\right)Z_N(x,y)
\left(
\Uptheta_{N,I}^{x,y}
-
\Uptheta_{N,I}^{0,\mathrm{free}}
\right)
\right].
\end{split}
\end{equation}
In view of \eqref{eq:boundsthetanmi again}, Lemmas~\ref{lem:bounds ZNxy} and \ref{lem:bounds ZNxfree}, and \(\a,q\in[0,1)\), both series are absolutely convergent in total variation.
By \eqref{eq:boundsthetanmi again} and Lemma~\ref{lem:bounds ZNxfree},
\begin{equation}\label{eq:endpoint-meander-normalizer-ASEP-main}
\mathcal Z_N^{\a,1;\uptheta}
\geq
(\c\d;q)_\infty Z_N(0,\mathrm{free})
\geq
C_{14}\frac{4^N}{\sqrt{N+1}}.
\end{equation}
As in \eqref{eq:TASEP-free-suffix-comparison} in the proof of Theorem \ref{thm:main open TASEP again},  Proposition \ref{prop:marginal law comparison} gives
\begin{equation*} 
\TV{
\Uptheta_{N,I}^{x,\mathrm{free}}
-
\Uptheta_{N,I}^{0,\mathrm{free}}
}
\leq
2^{26}\frac{(x+1)^2}{m},
\qquad x\in\mathbb Z_{\geq0}.
\end{equation*}
Combining this with the upper bound in Lemma~\ref{lem:bounds ZNxfree},
\[
\begin{aligned}
&\uptheta_\infty
\sum_{x\geq0}\a^x Z_N(x,\mathrm{free})
\TV{
\Uptheta_{N,I}^{x,\mathrm{free}}
-
\Uptheta_{N,I}^{0,\mathrm{free}}
} \leq
C_{15}\frac{4^N}{m\sqrt{N+1}}
\sum_{x\geq0}(x+1)^3\a^x
\leq
C_{16}\frac{4^N}{m\sqrt{N+1}}.
\end{aligned}
\]
On the other hand, using
\(\TV{\Uptheta_{N,I}^{x,y}-\Uptheta_{N,I}^{0,\mathrm{free}}}\leq1\),
the second estimate in \eqref{eq:boundsthetanmi again}, and
Lemma~\ref{lem:bounds ZNxy},
\[
\begin{aligned}
&
\sum_{x,y\geq0}
\a^x\left|\uptheta(y)-\uptheta_\infty\right|Z_N(x,y)
\TV{
\Uptheta_{N,I}^{x,y}
-
\Uptheta_{N,I}^{0,\mathrm{free}}
}\\
& \leq
C_{17}\frac{4^N}{(N+1)^{3/2}}
\left(\sum_{x\geq0}(x+1)\a^x\right)
\left(\sum_{y\geq0}(y+1)q^y\right)
\leq
C_{18}\frac{4^N}{(N+1)^{3/2}}.
\end{aligned}
\]
Dividing the last two bounds by
\eqref{eq:endpoint-meander-normalizer-ASEP-main}, in view of
\eqref{eq:endpoint-meander-decomposition-ASEP-main}, we conclude the proof of
\eqref{eq:endpoint-meander-comparison-ASEP-main}.

Let \(\mathcal G_{m,N}\) be the sigma field generated by
\(\gamma_i\), \(i\in\llbracket m-1,N\rrbracket\), and by the colors of
the horizontal steps in \(\llbracket m,N\rrbracket\). Set
\(\varsigma_L(h)=\a^h\). Concatenation at \(\gamma_{m-1}\) gives,
\(\mathrm{P}_N^{\a,1;\uptheta}\)-almost surely,
\begin{equation}\label{eq:conditional-density-meander-ASEP-main}
\begin{aligned}
D_{m,N}
=
\mathrm{E}_N^{\a,1;\uptheta}
\left[W_{1,N}\mid\mathcal G_{m,N}\right]=
\frac{
F_{m-1}^{\varsigma_L}(\gamma_{m-1})
}{
G_{m-1}^{\varsigma_L}(\gamma_{m-1})
}
W_{m,N}.
\end{aligned}
\end{equation} Since all bulk weights, the ratio
\(F_{m-1}^{\varsigma_L}/G_{m-1}^{\varsigma_L}\), and
\(\Upgamma_{\varsigma_L}\) belong to \((0,1]\), by Proposition
\ref{prop:comparison bulk weights} and
\eqref{eq:estimate bulk weights},
\begin{equation}\label{eq:tnyhed}
\left|D_{m,N}-\Upgamma_{\varsigma_L}\right|
\leq
C_{19}\left(
\frac{1}{\gamma_{m-1}+1}
+
\frac{\gamma_{m-1}+1}{m}
\right)
+
\sum_{i=m}^Nq^{\gamma_i}.
\end{equation}

Using the first estimate of Proposition \ref{prop:endpt wt compare} at time $m-1$, its $q^{\gamma_i}$ estimate for every $i\in\llbracket m,N-1\rrbracket$, and its $\gamma_N$ estimate, we obtain
\begin{equation}\label{eq:apply prop compare bdry weight in proof}
    \begin{split}
        \mathrm{E}_N^{\a,1;\uptheta}
\left[
\frac{1}{\gamma_{m-1}+1}
+
\frac{\gamma_{m-1}+1}{m}
\right]
&\leq
\frac{C_{20}}{\sqrt m},\\
\mathrm{E}_N^{\a,1;\uptheta}
\left[\sum_{i=m}^Nq^{\gamma_i}\right]
&\leq
C_{21}
\sum_{i=m}^{N-1}
\frac{1}{(i+1)^{3/2}}
\left(\frac{N+1}{N-i+1}\right)^{1/2}
+
\frac{C_{21}}{N+1} \leq
\frac{C_{22}}{\sqrt m}.
    \end{split}
\end{equation} 
Here, in the last inequality, we split the sum at \(N/2\); on the left half the summand is at most \(C(i+1)^{-3/2}\); and the right sum is at most
$C/(N+1)\cdot\sum_{r=1}^{N+1}r^{-1/2}
\leq
 C/\sqrt{N+1}
\leq C/\sqrt m$.
Combining \eqref{eq:tnyhed} with \eqref{eq:apply prop compare bdry weight in proof}, 
\[
\mathrm{E}_N^{\a,1;\uptheta}
\left|D_{m,N}-\Upgamma_{\varsigma_L}\right|
\leq
\frac{C_{23}}{\sqrt m}.
\]

Taking conditional expectations in \eqref{eq:full-density-ASEP-main}, we know that the restriction of \(\mathbb P_N^{(q,\a,1,\c,\d)}\) to \(\mathcal G_{m,N}\) has
Radon--Nikodym derivative
 $ D_{m,N}/\mathrm{E}_N^{\a,1;\uptheta}[D_{m,N}] $
with respect to the restriction of
\(\mathrm{P}_N^{\a,1;\uptheta}\), where
 $\mathrm{E}_N^{\a,1;\uptheta}[D_{m,N}]
=
\mathrm{E}_N^{\a,1;\uptheta}[W_{1,N}]
\in(0,\infty)$.
Note also that
$\Phi_I(\bg) = (\Phi(\bg)_i )_{i\in I}$
is \(\mathcal G_{m,N}\)-measurable. Using Lemma~\ref{lem:TV RN} 
with \(f=D_{m,N}\) and \(h=\Upgamma_{\varsigma_L}>0\), followed by
contraction of total variation under \(\Phi_I\), we obtain
\[
\TV{
\left(\Phi_\#\mathbb P_N^{(q,\a,1,\c,\d)}\right)_I
-
\left(\Phi_\#\mathrm{P}_N^{\a,1;\uptheta}\right)_I
}
\leq
\frac{C_{24}}{\sqrt m}.
\] 
Combining this with
\eqref{eq:endpoint-meander-comparison-ASEP-main}, \eqref{eq:pushf stat}, and Lemma
\ref{lem:exc and mea projection},  we conclude the proof of Statement (2).  As already mentioned, Statement (3) then follows from the particle-hole duality in Lemma \ref{lem:particle hole}.
\end{proof}

\section{Open ASEP in the high and low density phases: Proof of Theorem \ref{thm:main open ASEP HLD}}\label{sec:HighLow}

In this section, we show the approximation of the stationary measure of the open ASEP in the high density phase and the lower density phase from Theorem \ref{thm:main open ASEP HLD}, which we state below again for clarity.

\begin{theorem}\label{thm:main open ASEP HLD again}     Assume $q\in[0,1)$, $\a,\b\geq0$ and $\mathsf{c},\mathsf{d}\in(-1,0]$. For $N\in\mathbb N$, consider the stationary measure $\mu_N^{(q,\a,\b,\c,\d)}$ of the open ASEP. There exist $C,c\in(0,\infty)$ depending only on $\a,\b,\c,\d,q$, such that the following hold.
    \begin{enumerate}
        \item [(1)] In the high density phase of the fan region, i.e., $\b>1$ and $\a\b<1$,  for every $N\in\mathbb N$ and $m\in\llbracket1,N\rrbracket$, 
        \[
           \TV{ \mu_N^{(q,\a,\b,\c,\d)}
    \left( (\tau_i)_{i\in\llbracket m,N\rrbracket}\in\cdot\,\right)
    -\ber_{\llbracket m,N\rrbracket}\left( \b/(1+\b)\right)}
    \le C e^{-cm}.
        \]
                \item [(2)] In the low density phase of the fan region, i.e., $\a>1$ and $\a\b<1$,  for every $N\in\mathbb N$ and $k\in\llbracket1,N\rrbracket$, 
        \[
           \TV{ \mu_N^{(q,\a,\b,\c,\d)}
    \left( (\tau_i)_{i\in\llbracket 1,N-k+1\rrbracket}\in\cdot\,\right)
    -\ber_{\llbracket 1,N-k+1\rrbracket}\left( 1/(1+\a)\right)}
    \le C e^{-ck}.
        \]\end{enumerate}
Moreover, in Statement~\textup{(1)} the constant \(c\) may be chosen to depend only on \(q,\b\), and in Statement~\textup{(2)} it may be chosen to depend only on \(q,\a\).
\end{theorem}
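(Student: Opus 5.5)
The plan is to work with the bicolored Motzkin representation of Theorem~\ref{thm:Motzkin representation}. Since $\a\b<1$, the pushforward of $\mathbb P_N^{(q,\a,\b,\c,\d)}$ under $\Phi$ is $\mu_N^{(q,\a,\b,\c,\d)}$. It suffices to prove Statement~(1): Statement~(2) follows from Lemma~\ref{lem:particle hole}, which swaps $(\a,\c)\leftrightarrow(\b,\d)$ and maps $\ber(\b/(1+\b))$ to $\ber(1/(1+\b))$ under $\tau_i\mapsto1-\tau_{N+1-i}$. So assume $\b>1$, which forces $\a<1/\b<1$.

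\textbf{Reference measure.} The key structural observation is exact. Consider bicolored paths on $\mathbb Z$, with no positivity constraint, started at height $z$, with terminal weight $\b^{\gamma_N}$ and no bulk weights. After normalization the steps are i.i.d. with
\[
\text{up}:\ \tfrac{\b^2}{(1+\b)^2},\qquad \text{black flat},\ \text{white flat}:\ \tfrac{\b}{(1+\b)^2}\ \text{each},\qquad \text{down}:\ \tfrac{1}{(1+\b)^2}.
\]
Via the Dyck encoding in the proof of Lemma~\ref{lem:count biMot}, these are pairs of independent $\pm1$ steps, each up with probability $\b/(1+\b)$. Hence $\tau_i$, which is the first Dyck step, is exactly i.i.d.\ $\ber(\b/(1+\b))$. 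Denote this walk measure from $z$ by $\mathbf Q^z$. It has drift $(\b-1)/(\b+1)>0$, and the Bernoulli law of $(\tau_i)_{i\ge m}$ does not depend on $z$.

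\textbf{Conditioning and comparison.} Let $\mathcal G$ be generated by $\gamma_{m-1}$ together with the steps and colors on $\llbracket m,N\rrbracket$. Conditionally on $\gamma_{m-1}=z$, the law of the remainder under $\mathbb P_N^{(q,\a,\b,\c,\d)}$ is $\mathbf Q^z$ reweighted by
\[
f=\mathbf 1_{\{\gamma_i\ge0\ \forall i\ge m-1\}}\,\frac{\uptheta(\gamma_N)}{\uptheta_\infty}\,W_{m,N}.
\]
The left part, including the factor $\a^{\gamma_0}$, enters only through the marginal of $z$. I will apply Lemma~\ref{lem:TV RN} with $h=1$. This needs $\mathbf Q^z|f-1|\le Ce^{-cz}$, which has three sources:
\begin{enumerate}
\item the probability that the drifted walk from $z$ ever reaches $-1$, at most $(1/\b)^{2(z+1)}$ by the usual gambler's-ruin bound in the Dyck picture;
\item by \eqref{eq:boundsthetanmi again}, $|\uptheta(\gamma_N)/\uptheta_\infty-1|\le Cq^{\gamma_N}$, and $\mathbf Q^z[q^{\gamma_N}]$ is exponentially small in $z$;
\item by \eqref{eq:estimate bulk weights}, $1-W_{m,N}\le\sum_{i\ge m}q^{\gamma_i}$. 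Under $\mathbf Q^z$, $\mathbf Q^z[q^{\gamma_i}]=q^{z}\lambda^{\,i-m+1}$ with $\lambda=\mathbf Q[q^{\text{step}}]<1$. Summing the geometric series gives $Cq^{z}$ uniformly in $N$.
\end{enumerate}
All rates here depend only on $q$ and $\b$, as the statement requires.

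\textbf{Main obstacle.} It then remains to show $\mathbb P_N^{(q,\a,\b,\c,\d)}(\gamma_{m-1}\le \varepsilon m)\le Ce^{-cm}$ uniformly in $N$, with $\varepsilon$ depending only on $q,\b$. I expect this to be the main step. My first attempt is as follows.
\begin{itemize}
\item Upper bound: since bulk weights are at most $1$, the mass of $\{\gamma_{m-1}=z\}$ is at most
\[
\sum_x \a^x Z_{m-1}(x,z)\,\sum_y \b^y(q;q)_\infty^{-1}Z_{N-m+1}(z,y).
\]
\item Lower bound on $Z_N^{(q,\a,\b,\c,\d)}$: use paths starting at $0$ that rise, together with the fact that along paths staying high the bulk weights are bounded below by $\prod_{h\ge1}(1-q^h)$-type constants.
\item Ratio: the right factor is $\approx\big((1+\b)^2/\b\big)^{N-m+1}\b^{z}$, and the left factor is $\lesssim 4^{m}$ with the $\a^x$ sum controlled by $\a<1$. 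Comparing against $\big((1+\b)^2/\b\big)^{N}$ from the lower bound, the ratio is exponentially small for $z\le\varepsilon m$, because $(1+\b)^2/\b>4$ strictly when $\b>1$.
\end{itemize}
Combining the three contributions above with this concentration of $\gamma_{m-1}$ gives the bound $Ce^{-cm}$ of \eqref{eq:HDbound1}.
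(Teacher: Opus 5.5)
Your argument is correct in outline and takes a genuinely different route from the paper. One estimate, in your items (2) and (3), needs repair.

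\textbf{Comparison of routes.} The paper never conditions on an intermediate height. It uses a single product reference measure $\mathbb Q=\mathbb Q_N^{(\a,\b)}$ on unconstrained bicolored paths. Under $\mathbb Q$, $\gamma_0$ is geometric with parameter $\a\b$ and the steps are i.i.d.\ with exactly your step law. This absorbs both $\a^{\gamma_0}$ and $\b^{\gamma_N}$, so $\mathbb P_N^{(q,\a,\b,\c,\d)}$ has density $\mathsf Y/\mathbb E_{\mathbb Q}[\mathsf Y]$ with $\mathsf Y=\uptheta(\gamma_N)\prod_i\upomega_i\mathbf 1_{\mathrm{BCMot}_N}$. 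The paper then replaces $\mathsf Y$ by a truncation $\mathsf L_{m-1}$ depending only on $\gamma_0$ and the first $m-1$ steps, with $\mathbb E_{\mathbb Q}|\mathsf Y-\mathsf L_{m-1}|\le C\upeta^{m}$. Tilting by $\mathsf L_{m-1}$ leaves the marginal on $\llbracket m,N\rrbracket$ exactly Bernoulli, and the only other input is $\inf_N\mathbb E_{\mathbb Q}[\mathsf Y]>0$.

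The linear height growth that you must prove under $\mathbb P_N$ as your ``main obstacle'' comes for free there. Under $\mathbb Q$ the drift acts from time $0$, so $\mathbb E_{\mathbb Q}[\upvarsigma^{\gamma_i}]\le\upeta^i$. Your decomposition at time $m-1$ needs a separate a priori bound on $\gamma_{m-1}$ under $\mathbb P_N$. Your entropy comparison of $4^{m}$ against $((1+\b)^2/\b)^{m}$, using $4\b/(1+\b)^2<1$, does supply it. The normalization lower bound $Z_N^{(q,\a,\b,\c,\d)}\ge c\,((1+\b)^2/\b)^N$ that this uses is exactly $\inf_N\mathbb E_{\mathbb Q}[\mathsf Y]>0$. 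Carry it out as the paper does: literally rising paths carry weight only of order $\b^N$, which is too small. You need an event of probability bounded below uniformly in $N$ on which the drifted walk from $0$ stays above a linear barrier after an initial run of up-steps. In return, your route isolates a conditional error $Ce^{-cz}$ given $\gamma_{m-1}=z$, which the paper's global $L^1$ bound does not.

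\textbf{The estimate to repair.} In items (2) and (3), $\lambda=\mathbf Q[q^{\mathrm{step}}]=(\b^2q+2\b+q^{-1})/(1+\b)^2$ is below $1$ only when $q>\b^{-2}$, and it is undefined at $q=0$. Also, $\mathbf Q^z[q^{\gamma_i}]$ is computed over the unconstrained walk, where negative heights inflate $q^{\gamma_i}$. The fix is the paper's. These terms matter only on the event that the path stays nonnegative, since $f=0$ off that event and your item (1) handles it. On that event, bound $q^{\gamma_i}\le\upvarsigma^{\gamma_i}$ with $\upvarsigma=\max(q,\b^{-1})$. Then $\mathbf Q[\upvarsigma^{\mathrm{step}}]<1$; for example, it equals $4\b/(1+\b)^2$ at $\upvarsigma=\b^{-1}$. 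This gives $\mathbf Q^z|f-1|\le(1/\b)^{2z+2}+C\upvarsigma^{z}$ uniformly in $N$. These rates depend only on $q,\b$, so the claim about $c$ still holds.
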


\begin{definition}\label{def:generalized Motzkin}
    Fix $N\in\mathbb{Z}_{\geq0}$. A bicolored path of length $N$ is a sequence $\bg=(\gamma_0,\dots,\gamma_N)$ such that $\gamma_i\in\mathbb{Z}$ for $i\in\llbracket0,N\rrbracket$ with increments $\gamma_i-\gamma_{i-1}\in\{-1,0,1\}$ for $i\in\llbracket1,N\rrbracket$. Each horizontal step of the path, i.e., $\gamma_i-\gamma_{i-1}=0$, is colored either black or white; the $\pm1$ steps are uncolored. Denote by $\mathrm{BCPath}_N$ the space of bicolored paths of length $N$. Then the space of bicolored Motzkin paths $\mathrm{BCMot}_N$ is the subset of $\mathrm{BCPath}_N$ containing all the paths that stay nonnegative. 

    Assume $\a,\b\geq0$, $\b>1$ and $\a\b<1$. We define a probability measure $\mathbb{Q}=\mathbb{Q}_N^{(\a,\b)}$ on $\mathrm{BCPath}_N$ as follows. We first sample $\gamma_0$ according to the geometric distribution $\mathbb{Q}(\gamma_0=x)=(1-\a\b)(\a\b)^x$ for each $x\in\mathbb Z_{\geq0}$. We then let the steps $\gamma_i-\gamma_{i-1}$ for each $i\in\llbracket1,N\rrbracket$ be i.i.d. and also independent from $\gamma_0$, with distribution 
\begin{align*}
    &\mathbb{Q}(\gamma_i-\gamma_{i-1}=1)=\b^2(1+\b)^{-2},\qquad\qquad\mathbb{Q}(\gamma_i-\gamma_{i-1}=-1)= (1+\b)^{-2},\\
    & \mathbb{Q}(\gamma_i-\gamma_{i-1}=0\mbox{ is colored black})=\mathbb{Q}(\gamma_i-\gamma_{i-1}=0\mbox{ is colored white})=\b(1+\b)^{-2}.
\end{align*} 
\end{definition}
We record the following properties under the measure $\mathbb{Q}$.

\begin{lemma}\label{lem:realization HD phase}
Fix $N\in\mathbb Z_{\geq0}$. Assume $q\in[0,1)$, $\a,\b\geq0$, and $\c,\d\in(-1,0]$ satisfy $\b>1$ and $\a\b<1$. Let $\mathbb Q=\mathbb Q_N^{(\a,\b)}$ be the probability measure from Definition~\ref{def:generalized Motzkin}, and let $\upomega_i$ and $\uptheta$ be the weights from Definition~\ref{def:measure Motzkin}. Define $\mathsf Y:\mathrm{BCPath}_N\to[0,\infty)$ by 
\[
\mathsf Y(\bg)=\uptheta(\gamma_N)\prod_{i=1}^N\upomega_i(\bg)\qquad\mbox{ for }\bg\in\mathrm{BCMot}_N
\]
and $\mathsf Y(\bg)=0$ for $\bg\notin\mathrm{BCMot}_N$. Then  we have
\begin{align*}
\mathbb E_{\mathbb Q}[\mathsf Y]
&=(1-\a\b)\b^N(1+\b)^{-2N}
  Z_N^{(q,\a,\b,\c,\d)},\\
 \mathbb Q(\bg)\mathsf Y(\bg)/\mathbb E_{\mathbb Q}[\mathsf Y] 
&=\mathbb P_N^{(q,\a,\b,\c,\d)}(\bg) 
\qquad \mbox{ for each }\bg\in\mathrm{BCMot}_N,
\end{align*}
where   $Z_N^{(q,\a,\b,\c,\d)}$ and $\mathbb P_N^{(q,\a,\b,\c,\d)}$ are as in Definition~\ref{def:measure Motzkin}.
\end{lemma}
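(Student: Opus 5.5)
The plan is a direct computation of $\mathbb Q(\bg)$ for a fixed path, followed by summation against $\mathsf Y$. Let $\bg\in\mathrm{BCPath}_N$ have $u$ up steps, $d$ down steps and $h$ horizontal steps (of either color), so that $u+d+h=N$ and $u-d=\gamma_N-\gamma_0$. By independence of $\gamma_0$ and the increments, Definition~\ref{def:generalized Motzkin} gives
\[
\mathbb Q(\bg)=(1-\a\b)(\a\b)^{\gamma_0}\,\b^{2u}\,\b^{h}\,(1+\b)^{-2N},
\]
because every step probability carries the common factor $(1+\b)^{-2}$. Writing $2u+h=u+d+h+(u-d)=N+\gamma_N-\gamma_0$, this becomes
\[
\mathbb Q(\bg)=(1-\a\b)\,\b^{N}(1+\b)^{-2N}\,\a^{\gamma_0}\b^{\gamma_N}.
\]
The key observation is that the factor $\b^{-\gamma_0}$ produced by the step weights exactly cancels the $\b^{\gamma_0}$ in the geometric law of $\gamma_0$. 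This cancellation is the only real content of the lemma; everything else is bookkeeping.

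Next, since $\mathsf Y$ vanishes off $\mathrm{BCMot}_N$, I would sum over bicolored Motzkin paths only:
\[
\mathbb E_{\mathbb Q}[\mathsf Y]=(1-\a\b)\b^N(1+\b)^{-2N}\sum_{\bg\in\mathrm{BCMot}_N}\a^{\gamma_0}\b^{\gamma_N}\uptheta(\gamma_N)\prod_{i=1}^N\upomega_i(\bg).
\]
By Definition~\ref{def:measure Motzkin}, the sum on the right is precisely $Z_N^{(q,\a,\b,\c,\d)}$, which gives the first identity. The summand is nonnegative because $\upomega_i\in(0,1]$ and $\uptheta>0$, so Tonelli justifies the interchange, and the sum is finite by Theorem~\ref{thm:Motzkin representation} since $\a\b<1$. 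One could also check finiteness directly: the weights are at most one, $\uptheta$ is bounded, and $\gamma_N\le\gamma_0+N$, so the sum is dominated by a constant times $\sum_x(\a\b)^x\b^N4^N$.

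Finally, I would divide the pointwise formula $\mathbb Q(\bg)\mathsf Y(\bg)=(1-\a\b)\b^N(1+\b)^{-2N}\mathrm{wt}_N^{(q,\a,\b,\c,\d)}(\bg)$ by $\mathbb E_{\mathbb Q}[\mathsf Y]$. The common prefactor cancels, which yields $\mathrm{wt}_N/Z_N=\mathbb P_N^{(q,\a,\b,\c,\d)}(\bg)$ as claimed.

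I expect no genuine obstacle. The only care needed is to check that the exponent $2u+h$ is rewritten correctly as $N+\gamma_N-\gamma_0$, and to note that $\mathbb E_{\mathbb Q}[\mathsf Y]>0$ because $Z_N>0$.
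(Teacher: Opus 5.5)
Your proposal is correct and is essentially the paper's proof: compute $\mathbb Q(\bg)=(1-\a\b)\b^N(1+\b)^{-2N}\a^{\gamma_0}\b^{\gamma_N}$ via $2u+h=N+\gamma_N-\gamma_0$, multiply by $\mathsf Y$ to obtain the prefactor times $\mathrm{wt}_N^{(q,\a,\b,\c,\d)}(\bg)$, then sum and normalize. Your remarks on nonnegativity, finiteness, and $\mathbb E_{\mathbb Q}[\mathsf Y]>0$ are details the paper leaves implicit.
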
 
\begin{proof}
We compute that $\mathbb Q(\bg)=(1-\a\b)\b^N(1+\b)^{-2N}\a^{\gamma_0}\b^{\gamma_N}$ for each $\bg\in\mathrm{BCMot}_N$, and therefore
$\mathbb Q(\bg)\mathsf Y(\bg)
=(1-\a\b)\b^N(1+\b)^{-2N}
\mathrm{wt}_N^{(q,\a,\b,\c,\d)}(\bg)$ for each $\bg\in\mathrm{BCMot}_N$. The conclusions follow by \eqref{def:FullWeightMeasure}.
\end{proof}

\begin{lemma}\label{lem:normalization HD phase}
Assume the same notation as in Lemma \ref{lem:realization HD phase}. Then $\inf_{N\in\mathbb N}\mathbb{E}_{\mathbb Q}[\mathsf Y]>0$.
\end{lemma}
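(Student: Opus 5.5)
We need $\inf_N \mathbb E_{\mathbb Q}[\mathsf Y]>0$. The plan is to bound $\mathsf Y$ from below on an event of $\mathbb Q$-probability bounded away from zero. Under $\mathbb Q$ the increments are i.i.d. with mean
\[
\frac{\b^2-1}{(1+\b)^2}=\frac{\b-1}{\b+1}>0,
\]
since $\b>1$. The walk therefore drifts to $+\infty$ linearly, and the weights $\upomega_i$ are close to one once $\gamma_i$ is large.

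First, lower bounds on the individual weights. On $\mathrm{BCMot}_N$ we have $\uptheta(\gamma_N)\ge(\c\d;q)_\infty>0$ by \eqref{eq:boundsthetanmi}. Each step weight satisfies $\upomega_i\ge 1-q^{\gamma_i}$ in every case, because $\c,\d\in(-1,0]$ gives $1+\d q^{\gamma_i}\ge 1-q^{\gamma_i}$, $1+\c q^{\gamma_i}\ge1-q^{\gamma_i}$, and $1-\c\d q^{\gamma_i}\ge 1-q^{\gamma_i}$. The up-step weight $1-q^{\gamma_i}$ with $\gamma_i\ge1$ is at least $1-q>0$. A horizontal or down step at height $\gamma_i=0$ has weight $1+\d$, $1+\c$, or $1-\c\d$, each at least $\min(1+\c,1+\d)>0$. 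Hence all weights are positive, and for paths with $\gamma_i\ge1$ we have $\upomega_i\ge 1-q^{\gamma_i}$. It follows that on the event
\[
E=\{\gamma_0=0,\ \gamma_1=1,\ \gamma_i\ge 1+\lfloor \delta (i-1)\rfloor\ \text{ for all } i\in\llbracket1,N\rrbracket\}
\]
for a small fixed $\delta\in(0,(\b-1)/(\b+1))$, we obtain
\[
\mathsf Y\ge (\c\d;q)_\infty\prod_{i\ge1}\bigl(1-q^{1+\lfloor\delta(i-1)\rfloor}\bigr)=:c_\ast>0,
\]
since $\sum_i q^{\delta i}<\infty$ and every factor is positive.

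Next, the probability estimate. We have
\[
\mathbb Q(E)\ge (1-\a\b)\,\frac{\b^2}{(1+\b)^2}\;\mathbb P\bigl(S_j\ge \lfloor\delta j\rfloor\ \forall j\ge0\bigr),
\]
where $S$ is the random walk with the $\mathbb Q$ step law started from $0$, using the Markov property after the first up step. Because $\delta$ is smaller than the drift, the walk $S_j-\delta j$ has positive drift. It therefore stays above $-1$ forever with positive probability, for instance by the strong law together with the standard fact that a positive-drift walk has positive probability of never returning below its start. To obtain the needed margin I would instead prescribe the first $K$ steps all up, with $K$ chosen so that from height about $K$ the walk avoids dipping below the line $\delta j$ with probability at least $1/2$. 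This uses a Chernoff or exponential-martingale bound: $\mathbb P(\inf_j (S_j-\delta j)\le -K)\le e^{-\lambda K}$ for some $\lambda>0$ depending on $\b,\delta$. The resulting lower bound on $\mathbb Q(E)$ is uniform in $N$, since restricting the infinite event to the first $N$ steps only enlarges the probability. The only mild obstacle is this uniform-in-$N$ ballot-type estimate, and the exponential martingale $e^{-\lambda(S_j-\delta j)}$ with $\lambda$ small handles it directly.

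Combining the two steps gives $\mathbb E_{\mathbb Q}[\mathsf Y]\ge c_\ast\,\mathbb Q(E)\ge c>0$ uniformly in $N$. The weights along the first $K$ forced up steps are bounded below by $(1-q)^K$ and should be included in $c_\ast$.
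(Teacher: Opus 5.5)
Your proposal is correct and follows essentially the same route as the paper. You force the first $K$ steps up, use an exponential bound to keep the drifting walk above a line of slope below $(\b-1)/(\b+1)$ with probability at least $1/2$ uniformly in $N$, and on that event bound $\mathsf Y$ below by $(\c\d;q)_\infty$ times a convergent product of factors $1-q^{\gamma_i}$. The differences are cosmetic: you pin $\gamma_0=0$ and use a maximal inequality for the supermartingale $e^{-\lambda(S_j-\delta j)}$, where the paper allows $\gamma_0\geq0$ and uses a union bound summing $f(s)^j$.
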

\begin{proof}
    We compute   $\mathbb E_{\mathbb Q}[\gamma_1-\gamma_0]= (\b-1)/(\b+1)$. Throughout this proof, we denote $\upsilon=(\b-1)/(2(\b+1))$. Observe that the function 
    $f(t)=\mathbb E_{\mathbb Q} [e^{-t(\gamma_1-\gamma_0-\upsilon)} ]$ is differentiable at zero, with $f(0)=1$ and $f'(0)=-\upsilon<0$, hence there exists $s>0$ satisfying $f(s)<1$. Choose a large enough integer $K\in\mathbb N$ satisfying $e^{-s(1-\upsilon)K}f(s)/(1-f(s))\leq 1/2$. We now consider the event $E_N$ given by, if $0\leq N\leq K$,
 \[
E_N= \left\{
\gamma_0\geq0,\quad\ 
\gamma_i-\gamma_{i-1}=1\text{ for }i\in\llbracket1,N\rrbracket
\right\};
 \]
and if $N>K$,
\[
E_N= \left\{
\gamma_0\geq0,\quad\ 
\gamma_i-\gamma_{i-1}=1\text{ for }i\in\llbracket1,K\rrbracket,\quad\,\sum_{\ell=K+1}^{K+j}(\gamma_\ell-\gamma_{\ell-1}-\upsilon)
\geq-(1-\upsilon)K 
\text{ for }j\in\llbracket1,N-K\rrbracket
\right\}.
\]
We first claim that
\begin{equation}\label{eq:inf of EN}
    \inf_{N\in\mathbb Z_{\geq0}}
\mathbb Q_N(E_N)
\geq   \b^{2K}/\left(2(1+\b)^{2K}\right)         .
\end{equation}
Indeed, when $0\leq N\leq K$, we have 
$\mathbb Q_N(E_N)
= ( \b^2/(1+\b)^2 )^N
\geq   \b^{2K}/\left(2(1+\b)^{2K}\right)    $. When $N>K$, by a union bound and the exponential Markov inequality,
\begin{equation*}
    \begin{split}
        &\mathbb Q_N\left(
\min_{j\in\llbracket1, N-K\rrbracket}
\sum_{\ell=K+1}^{K+j}(\gamma_\ell-\gamma_{\ell-1}-\upsilon)
<-(1-\upsilon)K
\right)\leq \sum_{j=1}^{N-K}\mathbb Q_N\left(
\sum_{\ell=K+1}^{K+j}(\gamma_\ell-\gamma_{\ell-1}-\upsilon)
<-(1-\upsilon)K
\right) \\
&\leq \sum_{j=1}^{N-K} e^{-s(1-\upsilon)K}
\mathbb E_{\mathbb Q_N}\left[
\exp\left(
-s\sum_{\ell=K+1}^{K+j}(\gamma_\ell-\gamma_{\ell-1}-\upsilon)
\right)\right] \leq \sum_{j=1}^{N-K}e^{-s(1-\upsilon)K}f(s)^j \leq
e^{-s(1-\upsilon)K}\frac{f(s)}{1-f(s)}
\leq\frac12. 
    \end{split}
\end{equation*} 
Since the first $K$ increments are independent of the remaining increments, we conclude the proof of \eqref{eq:inf of EN}.

We next claim that, setting $\mathcal{R}^{\upsilon}=\prod_{i=K+1}^{\infty}(1-q^{\upsilon i})\in(0,1 ] $, we have
\begin{equation}\label{eq:lower bound YN}
\mathsf Y_N
\geq
(\c\d;q)_\infty(q;q)_K\mathcal{R}^{\upsilon}
\qquad\text{ on }E_N.
\end{equation}
Indeed, on $E_N$, we have $\gamma_i=\gamma_0+i\geq i$ for $i\in\llbracket1,\min (K,N)\rrbracket$. If $N>K$, then the definition of $E_N$ gives  $\gamma_i\geq \upsilon i$ for $i\in\llbracket K, N\rrbracket$.  Therefore
for each $\bg\in E_N$, $\prod_{i=1}^N\upomega_i(\bg) \geq\prod_{i=1}^N(1-q^{\gamma_i})\geq(q;q)_K\mathcal{R}^{\upsilon}$. 
Moreover, as in \eqref{eq:boundsthetanmi again}, $0< (\c\d;q)_\infty
\leq \uptheta(h)
\leq   (q;q)_\infty^{-1}<\infty $ for each  $h\in\mathbb Z_{\geq0}$.
Combining these two estimates proves \eqref{eq:lower bound YN}. Combining \eqref{eq:inf of EN} and \eqref{eq:lower bound YN}, for every $N\in\mathbb Z_{\geq0}$,
\[
\mathbb E_{\mathbb Q_N}[\mathsf Y_N]
\geq
\b^{2K}/\left(2(1+\b)^{2K}\right)\cdot
(\c\d;q)_\infty(q;q)_K
\prod_{i=K+1}^{\infty}(1-q^{\upsilon i})
>0.
\]
We conclude the proof of the lemma.
\end{proof}

Next, we compare $\mathsf Y (\bg)$ to the respective weights under a truncation of the path $\bg$.
 
\begin{lemma}\label{lem:compare RN}
Assume the same notation as in Lemma \ref{lem:realization HD phase}. For \(m\in\llbracket0,N\rrbracket\) and \(\bg=(\gamma_0,\ldots,\gamma_N)\in\mathrm{BCPath}_N\), let
$\bg^{(m)}=(\gamma_0,\ldots,\gamma_m)$ be the truncated path, with the colors of its horizontal steps retained. Let
\[\mathsf L_m(\bg)= \frac{(\c\d;q)_\infty}{(q;q)_\infty}\cdot
\prod_{i=1}^{m}\upomega_i\left(\bg^{(m)}\right)\] when $\bg^{(m)}\in\mathrm{BCMot}_m$, and $\mathsf L_m(\bg)=0$ when $\bg^{(m)}\notin\mathrm{BCMot}_m$.
Then there exist some \(\upeta\in(0,1)\), depending only on \(q,\b\), and some  \(C\in(0,\infty)\), depending only on \(q,\a,\b,\c,\d\), such that
\[
\mathbb E_{\mathbb Q_N^{(\a,\b)}}
\left[\left|\mathsf Y-\mathsf L_m\right|\right]
\leq C\upeta^m\qquad\mbox{ for each }N\in\mathbb Z_{\geq0},\,m\in\llbracket0,N\rrbracket.
\]
\end{lemma}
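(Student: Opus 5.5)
Under $\mathbb Q=\mathbb Q_N^{(\a,\b)}$ the path is a random walk with positive drift $(\b-1)/(\b+1)$ started from a geometric height. Away from height zero, every bulk weight $\upomega_i$ and the boundary weight $\uptheta(\gamma_N)$ are exponentially close to their limits, which is the source of the $\upeta^m$ bound. I write
\[
\mathsf Y-\mathsf L_m=\mathbf 1_{\{\bg^{(m)}\in\mathrm{BCMot}_m\}}\,\prod_{i=1}^m\upomega_i\Big(\mathbf 1_{\{\bg\in\mathrm{BCMot}_N\}}\uptheta(\gamma_N)\prod_{i=m+1}^N\upomega_i-\uptheta_\infty\Big),
\]
with $\uptheta_\infty=(\c\d;q)_\infty/(q;q)_\infty$. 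Since $\prod_{i\le m}\upomega_i\le1$, it suffices to bound the expectation of the absolute value of the bracket on the event that $\bg^{(m)}$ stays nonnegative.

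I split on the event $A_m=\{\gamma_i\ge \upsilon i/2-\,\cdot\,\text{ for all } i\ge m/2\}$. More concretely, I use $A_m=\{\gamma_i\ge \upsilon' i \text{ for all } i\in\llbracket \lceil m/2\rceil,N\rrbracket\}$, where $\upsilon'=\upsilon/2$ and $\upsilon=(\b-1)/(2(\b+1))$ as in Lemma~\ref{lem:normalization HD phase}.

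On $A_m$ the path stays in $\mathrm{BCMot}_N$ after time $m$, since $\gamma_i\ge\upsilon' i>0$ there. Using $0\le 1-\upomega_i\le q^{\gamma_i}$ and the estimate $|\uptheta(h)-\uptheta_\infty|\le C_1q^h$ from \eqref{eq:boundsthetanmi again}, the bracket is bounded by $C\big(\sum_{i>m}q^{\upsilon' i}+q^{\upsilon' N}\big)\le C' q^{\upsilon' m}$, because $\uptheta$ is bounded.

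Off $A_m$ the bracket is bounded by $(q;q)_\infty^{-1}+\uptheta_\infty$, so it suffices to show $\mathbb Q(A_m^c)\le C\upeta^m$ with $\upeta$ depending only on $q,\b$. Here I handle the start of the path crudely: ignore $\gamma_0\ge 0$, which only helps, and reduce to the walk $\gamma_i-\gamma_0$. A union bound and an exponential Markov inequality, exactly as in the proof of Lemma~\ref{lem:normalization HD phase}, give
\[
\mathbb Q\big(\gamma_i-\gamma_0<\upsilon' i\big)\le \mathbb E_{\mathbb Q}\big[e^{-s(\gamma_1-\gamma_0-\upsilon')}\big]^i=f_{\upsilon'}(s)^i
\]
for some $s>0$ with $f_{\upsilon'}(s)<1$. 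Such an $s$ exists because the mean increment $2\upsilon$ exceeds $\upsilon'$. Summing over $i\ge m/2$ gives $C f_{\upsilon'}(s)^{m/2}$.

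Both $s$ and $f_{\upsilon'}(s)$ depend only on $\b$, so I take $\upeta=\max\big(q^{\upsilon'},f_{\upsilon'}(s)^{1/2}\big)<1$, which depends only on $q,\b$. The constant $C$ collects $(q;q)_\infty^{-1}$, $(\c\d;q)_\infty$, $C_1$ and the geometric sums, and depends on $q,\a,\b,\c,\d$. The case $m$ small, meaning $m$ below a fixed threshold, is absorbed into $C$ using the trivial bound $\mathbb E|\mathsf Y-\mathsf L_m|\le 2(q;q)_\infty^{-1}$.

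The main obstacle is making sure the bound does not depend on $\gamma_0$ or on $\a$ in $\upeta$. I handle this by comparing with $\gamma_i-\gamma_0$, since $\gamma_0\ge0$ only raises the path, and by noting that the increments are independent of $\gamma_0$ with law depending only on $\b$.
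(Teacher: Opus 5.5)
Your proof is correct, but it argues differently from the paper. You work with a good event of linear growth, $\{\gamma_i\ge\upsilon' i \text{ for } i\ge\lceil m/2\rceil\}$. On this event, intersected with $\{\bg^{(m)}\in\mathrm{BCMot}_m\}$, the path automatically lies in $\mathrm{BCMot}_N$, and the bracket is bounded deterministically by $Cq^{\upsilon' m}$. The complement of the good event is then controlled by the union bound and Chernoff estimate borrowed from Lemma~\ref{lem:normalization HD phase}.

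The paper uses no good event. It bounds $|\mathsf Y-\mathsf L_m|$ by three terms:
\begin{itemize}
\item $\uptheta_\infty$ times the indicator that the path stays nonnegative up to time $m$ but not up to time $N$;
\item $|\uptheta(\gamma_N)-\uptheta_\infty|$ on the event of staying nonnegative up to time $N$;
\item $\uptheta_\infty(1-\prod_{i>m}\upomega_i)$ on that same event.
\end{itemize}
All three are then handled by the single exponential moment $\mathbb E_{\mathbb Q}[\upvarsigma^{\gamma_i}]\le\upeta^i$ with $\upvarsigma=\max(q,\b^{-1})$. This one estimate bounds $\mathbb Q(\gamma_i<0)$ by Markov's inequality, and it bounds the weight defects through $q^{\gamma_i}\le\upvarsigma^{\gamma_i}$ on $\{\gamma_i\ge0\}$.

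The paper's route is shorter and yields the explicit rate $\upeta=(\b^2\upvarsigma+2\b+\upvarsigma^{-1})(1+\b)^{-2}$. Your rate is not explicit, since it depends on the choice of $s$. In exchange, your argument gives a pathwise statement: outside an event of $\mathbb Q$-probability $O(\upeta^m)$, $|\mathsf Y-\mathsf L_m|$ is deterministically $O(q^{\upsilon' m})$.

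Two minor remarks. The threshold $\lceil m/2\rceil$ is unnecessary: requiring linear growth only for $i\ge m+1$ suffices and gives $f_{\upsilon'}(s)^{m+1}$. The separate treatment of small $m$ is also not needed, since your bound already holds for every $m\in\llbracket0,N\rrbracket$.
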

\begin{proof}
    Denote $\uptheta_\infty= (\c\d;q)_\infty/(q;q)_\infty$. As in \eqref{eq:boundsthetanmi again}, 
\begin{equation} \label{eq: theta bounds}
    0< (\c\d;q)_\infty
\leq \uptheta(h)
\leq   (q;q)_\infty^{-1}<\infty ,\qquad \left|
\uptheta(h)-\uptheta_\infty
\right|
\leq
C_1q^h,
\qquad\mbox{ for } h\in\mathbb Z_{\geq0},
\end{equation}
where \(C_1\in(0,\infty)\) depends only on \(q,\a,\b,\c,\d\). For \(j\in\llbracket0,N\rrbracket\), let
\[
A_j
=
\left\{
\gamma_i\geq0
\text{ for every }i\in\llbracket0,j\rrbracket
\right\}.
\]
We claim that 
\begin{equation}\label{eq:compare-RN-three-errors}
    |\mathsf Y-\mathsf L_m|
\leq
\uptheta_\infty
{\bf1}_{A_m\setminus A_N}
+
{\bf1}_{A_N}
\left|\uptheta(\gamma_N)-\uptheta_\infty\right|  +
\uptheta_\infty{\bf1}_{A_N}
\left(
1-\prod_{i=m+1}^{N}\upomega_i(\bg)
\right).
\end{equation}
Indeed, on \(A_m^{\complement}\), both \(\mathsf Y\) and \(\mathsf L_m\) vanish; on \(A_m\setminus A_N\), one has \(\mathsf Y=0\) and
\(0\leq\mathsf L_m\leq\uptheta_\infty\); on \(A_N\), we have  \(\bg\in\mathrm{BCMot}_N\), \(\bg^{(m)}\in\mathrm{BCMot}_m\), 
$\upomega_i (\bg^{(m)} )=\upomega_i(\bg)$ for $i\in\llbracket1,m\rrbracket$, and therefore
\[
\mathsf Y
=
\left(\prod_{i=1}^{m}\upomega_i(\bg)\right)
\uptheta(\gamma_N)
\prod_{i=m+1}^{N}\upomega_i(\bg),
\qquad
\mathsf L_m
=
\left(\prod_{i=1}^{m}\upomega_i(\bg)\right)
\uptheta_\infty;
\]
combining these three cases gives us \eqref{eq:compare-RN-three-errors}. 

Let $\upvarsigma=\max(q,\b^{-1})$ and $\upeta= (\b^2\upvarsigma+2\b+\upvarsigma^{-1})(1+\b)^{-2}\in(0,1)$. For \(i\in\llbracket1,N\rrbracket\),  
$\mathbb E_{\mathbb Q}[\upvarsigma^{\gamma_i-\gamma_{i-1}}]=\upeta$. Hence, 
\begin{equation}\label{eq:compare-RN-height-moment}
    \mathbb E_{\mathbb Q}[\upvarsigma^{\gamma_i}] =
\mathbb E_{\mathbb Q}[\upvarsigma^{\gamma_0}]
\prod_{j=1}^{i}
\mathbb E_{\mathbb Q}[\upvarsigma^{\gamma_j-\gamma_{j-1}}]=\frac{1-\a\b}{1-\a\b\upvarsigma}\,\upeta^i\leq
\upeta^i\qquad\mbox{ for }i\in\llbracket0,N\rrbracket.
\end{equation} 

We next estimate the expecations of the terms on the right side of \eqref{eq:compare-RN-three-errors}. First,
\begin{equation}\label{eq:compare-RN-crossing}
\mathbb Q(A_m\setminus A_N)\leq \sum_{i=m+1}^{N}\mathbb Q(\gamma_i<0)
\leq
\sum_{i=m+1}^{N}
\mathbb E_{\mathbb Q}[\upvarsigma^{\gamma_i}]
\leq
\sum_{i=m+1}^{\infty}\upeta^i
=
\frac{\upeta^{m+1}}{1-\upeta}.
\end{equation}
Second, on \(A_N\) one has \(\gamma_N\geq0\), and hence \(q^{\gamma_N}\leq\upvarsigma^{\gamma_N}\). Combining \eqref{eq: theta bounds} and  \eqref{eq:compare-RN-height-moment} give 
\begin{equation}\label{eq:compare-RN-terminal-error}
\mathbb E_{\mathbb Q}
\left[
{\bf1}_{A_N}
\left|\uptheta(\gamma_N)-\uptheta_\infty\right|
\right]
\leq
C_1\upeta^N
\leq
C_1\upeta^m.
\end{equation}
Finally, on \(A_N\),
 $1-\prod_{i=m+1}^{N}\upomega_i(\bg)
\leq
\sum_{i=m+1}^{N}q^{\gamma_i}
\leq
\sum_{i=m+1}^{N}\upvarsigma^{\gamma_i}$, hence \eqref{eq:compare-RN-height-moment}  gives
\begin{equation}\label{eq:compare-RN-bulk-error}
\mathbb E_{\mathbb Q}
\left[
{\bf1}_{A_N}
\left(
1-\prod_{i=m+1}^{N}\upomega_i(\bg)
\right)
\right]
\leq
\sum_{i=m+1}^{\infty}\upeta^i
=
\frac{\upeta^{m+1}}{1-\upeta}.
\end{equation} 

Taking expectations in \eqref{eq:compare-RN-three-errors} and applying
\eqref{eq: theta bounds},
\eqref{eq:compare-RN-crossing},
\eqref{eq:compare-RN-terminal-error}, and
\eqref{eq:compare-RN-bulk-error}, we obtain
\[
\mathbb E_{\mathbb Q_N^{(\a,\b)}}
\left[
|\mathsf Y-\mathsf L_m|
\right]
\leq
\left(
C_1+
\frac{2\upeta}
{(q;q)_\infty(1-\upeta)}
\right)\upeta^m.
\] 
Note that $\upeta\in(0,1)$ depends only on $q$ and $\b$, we conclude the proof.
\end{proof}

\begin{proof}[Proof of Theorem \ref{thm:main open ASEP HLD again}]
It suffices to prove the approximation in the high density phase $\b>1$ and $\a\b<1$, as the result for the low density phase follows from the  particle-hole duality in Lemma~\ref{lem:particle hole}. For the rest of the proof, we assume $\b>1$, $\a\b<1$, $N\geq m \geq1$, and set $I=\llbracket m,N\rrbracket$. 
    We use the notation from Lemma~\ref{lem:compare RN}. By Lemma~\ref{lem:normalization HD phase}, and by Lemma~\ref{lem:compare RN} with $m-1$ instead of $m$,  respectively,
    \begin{equation}\label{eq:inproofHD1}
        \mathbb{E}_{\mathbb Q_N^{(\a,\b)}}[\mathsf Y]\geq C_1,\qquad\qquad\mathbb E_{\mathbb Q_N^{(\a,\b)}}
\left[\left|\mathsf Y-\mathsf L_{m-1}\right|\right]
\leq C_2\upeta^{m-1},
    \end{equation}  
where \(\upeta\in(0,1)\) depends only on \(q,\b\), and $C_1,C_2\in(0,\infty)$ depend only on $q,\a,\b,\c,\d$. 
We next observe that $\mathbb E_{\mathbb Q}[\mathsf L_{m-1}]\in(0,\infty)$, since $\mathsf L_{m-1}\in[0,(\c\d;q)_\infty/(q;q)_\infty]$, and on the positive-\(\mathbb Q\)-probability event $\left\{\gamma_i-\gamma_{i-1}=1\text{ for all }i\in\llbracket1,m-1\rrbracket\right\}$ we have $\mathsf L_{m-1}>0$. Let $\mathrm Q_{N,m}$ denote the probability measure with density $\mathsf L_{m-1}/\mathbb E_{\mathbb Q}[\mathsf L_{m-1}]$ with respect to $\mathbb Q$.
Note that the map $\Phi$ from Definition~\ref{def:bimot project to particles}, originally defined on $\mathrm{BCMot}_N$, can be extended by the   same rule to $\mathrm{BCPath}_N$. We also regard $\mathbb P_N^{(q,\a,\b,\c,\d)}$ as a probability measure on $\mathrm{BCPath}_N$ by assigning mass zero outside $\mathrm{BCMot}_N$.
We have
\[
\left(\Phi_{\#}\mathrm Q_{N,m}\right)_I
=
\left(\Phi_{\#}\mathbb Q\right)_I
=
\ber_I\left( \b/(1+\b)\right),
\]
since the density $\mathsf L_{m-1}/\mathbb E_{\mathbb Q}[\mathsf L_{m-1}]$ is measurable with respect to $\gamma_0$ and the colored increments $\gamma_i-\gamma_{i-1}$, $i\in\llbracket1,m-1\rrbracket$. 
Combining this with Theorem \ref{thm:Motzkin representation}, Lemma \ref{lem:realization HD phase}, and \eqref{eq:inproofHD1},
\[
\begin{aligned}
\Big\lVert
\mu_N^{(q,\a,\b,\c,\d)}
\left((\tau_i)_{i\in I}\in\cdot\right)
&-
\ber_I\left( \b/(1+\b\right) \Big\rVert_{\text{TV}}
=
\TV{
\left(\Phi_{\#}\mathbb P_N^{(q,\a,\b,\c,\d)}\right)_I
-
\left(\Phi_{\#}\mathrm Q_{N,m}\right)_I}
\\
&\leq
\TV{
\mathbb P_N^{(q,\a,\b,\c,\d)}
-
\mathrm Q_{N,m}} =\frac12\int
\left|
\frac{\mathsf Y}{\mathbb E[\mathsf Y]}
-
\frac{\mathsf L_{m-1}}{\mathbb E[\mathsf L_{m-1}]}
\right|\,d\mathbb Q \\
&\leq
\frac{\mathbb E[|\mathsf Y-\mathsf L_{m-1}|]}{\mathbb E[\mathsf Y]}
+
\frac{|\mathbb E[\mathsf Y]-\mathbb E[\mathsf L_{m-1}]|}{\mathbb E[\mathsf Y]}  \leq
\frac{2\mathbb E[|\mathsf Y-\mathsf L_{m-1}|]}{\mathbb E[\mathsf Y]}\leq 2C_2\upeta^{m-1}/C_1,
\end{aligned}
\] 
and we conclude the proof of the theorem in the high density phase. As mentioned, the theorem in the low density phase follows from the particle-hole duality as in Lemma \ref{lem:particle hole}.
\end{proof}

\section{Converse result for open TASEP in the maximal current phase: Proof of Theorem \ref{thm:converse TASEP} }\label{sec:Converse}

We now show that the upper bounds on the total variation distance in Theorem~\ref{thm:main open TASEP} are sharp. This is the content of Theorem \ref{thm:converse TASEP}, which we state below again for clarity.

\begin{theorem}\label{thm:converse TASEP again} 
For each $x,y\in\mathbb R\setminus\{1\}$, we denote $\eta(x,y)=x(1-x)^{-2}+y(1-y)^{-2}$. Assume $\a,\b\in[0,1)$ and $\c,\d\in(-1,0]$ satisfying  $\eta(\a,\c)\eta(\b,\d)\neq0$. Then there exists a constant $C>0$ depending only on $\a,\b,\c,\d$, such that for every $m,k\in\mathbb N$ there exists an integer $N_0\geq m+k-1$ such that for each $N\geq N_0$,
            \[
\TV{
\mu_{N}^{(0,\a,\b,\c,\d)}
\left(
(\tau_i)_{i\in\llbracket m,N-k+1\rrbracket}\in\cdot
\right)
-
\mathbb{P}_{N}^{\mathsf{exc}}
\left(
(\tau_i)_{i\in\llbracket m,N-k+1\rrbracket}\in\cdot
\right)}
\ge
C/\min(m,k) .
\] 
\end{theorem}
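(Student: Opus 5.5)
Since total variation dominates the discrepancy of any single-site marginal, it suffices to find one site $i\in\llbracket m,N-k+1\rrbracket$ with
\[
\bigl|\mu_N^{(0,\a,\b,\c,\d)}(\tau_i=1)-\mathbb P_N^{\exc}(\tau_i=1)\bigr|\ge C/\min(m,k)
\]
for all large $N$. By the particle-hole duality of Lemma~\ref{lem:particle hole}, together with the symmetry of $\mathbb P_N^{\exc}$ under $\tau_i\mapsto 1-\tau_{N+1-i}$, it is enough to treat the case $m\le k$ and take $i=m$. The plan has two parts. First, send $N\to\infty$ with $m$ fixed and identify the limiting occupation probability of site $m$ under both measures. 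Second, expand both limits in $1/m$ and show that their difference is $\eta(\a,\c)\,\kappa/m+O(m^{-2})$, where $\kappa$ is a universal nonzero constant. The hypothesis $\eta(\b,\d)\neq0$ is then only needed in the dual case $k<m$.

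For the limit, use the mixture representation \eqref{eq:TASEP-endpoint-mixture}. The event $\{\tau_m=1\}$ is determined by the step at time $m$, and the right endpoint $y$ is geometrically tilted. So as $N\to\infty$ the right endpoint together with the mixture weights should decouple. The result should be a semi-infinite path measure: start at $x$ with signed weight $\upchi_x(\a,\c)$, and run the Doob $h$-transform of the lazy walk killed at $-1$, with $h(z)=z+1$. This uses the asymptotics $Z_N(z,y)\approx (z+1)(y+1)4^N c/N^{3/2}$, which is exactly what Lemma~\ref{lem:bounds ZNxy} and Lemma~\ref{lem:comparison kernels} give. Normalizing $\upchi_x(\a,\c)(x+1)$ leads to the Askey--Wilson-type half-line measure mentioned in the overview.

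Concretely, I would show that
\[
\mu_N(\tau_m=1)\to \frac{\sum_x \upchi_x(\a,\c)\,\mathbb E_x[\ldots]}{\sum_x\upchi_x(\a,\c)(x+1)},
\]
where under the $h$-transform
\[
\mathbb P(\tau_m=1)=\tfrac14\,\mathbb E_x\!\left[\frac{\gamma_{m-1}+2}{\gamma_{m-1}+1}\right]+\tfrac14.
\]
The same formula with $x=0$ gives the limit for $\mathbb P_N^{\exc}$. Dominated convergence is justified by the geometric decay $|\upchi_x|\le(x+1)\rho^x$.

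For the expansion, write $\mathbb E_x[(\gamma_{m-1}+1)^{-1}]$ for the $h$-transformed walk, i.e.\ $Z_{m-1}(x,\mathrm{free})\big/\bigl((x+1)4^{m-1}\bigr)$. This ratio is available with explicit second-order asymptotics from \eqref{eq:converse-short-free-expansion} in Lemma~\ref{lem:bounds ZNxfree}, uniformly for $x\le L\log m$; larger $x$ are negligible by the geometric weights. From
\[
\frac{Z_n(x,\mathrm{free})}{(x+1)4^n}=\frac{1}{\sqrt{\pi n}}\left(2-\frac{2(x+1)^2/3+7/12}{n}+O\bigl((x+1)^4n^{-2}\bigr)\right),
\]
the $x$-dependent correction at order $n^{-3/2}$ is $-\tfrac{2}{3}\bigl((x+1)^2-1\bigr)/(\sqrt\pi\, n^{3/2})$. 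One caveat: this expansion alone gives an error of order $m^{-3/2}$, not $m^{-1}$. So the plan is to use the exact identity that gives the occupation probability as a ratio of the form $Z$-counts at lengths $m-1$ and $m$. In the limit, the occupation probability becomes a ratio of free partition functions $Z_{m}(x,\mathrm{free})$ summed against $\upchi_x(\a,\c)(x+1)$. This is because the $h$-transformed height has $1/\sqrt n$ tails, and the leading $\sqrt n$ factors cancel in the ratio. The correction then comes out of order $1/m$, with coefficient proportional to $\sum_x\upchi_x(x+1)\bigl((x+1)^2-1\bigr)\big/\sum_x\upchi_x(x+1)$.

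Computing these generating sums gives
\[
\sum_x\upchi_x(\a,\c)(x+1)=\frac{1-\a\c}{(1-\a)^2(1-\c)^2},
\]
and the second-moment combination should reduce to a nonzero multiple of $\eta(\a,\c)$. This identification is the main obstacle, namely getting the correct exact $1/m$ coefficient (rather than a $m^{-3/2}$ one) and verifying that it is proportional to $\eta(\a,\c)$. I would first do the generating function computation symbolically in the case $\c=0$, where the coefficient should equal $2\a/(1-\a)^2$, and then extend by the symmetry $\upchi_x(\a,\c)=\upchi_x(\c,\a)$. Finally, choosing $N_0$ large enough makes the finite-$N$ error smaller than half of $C/m$.
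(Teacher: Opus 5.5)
Your reduction to a single site cannot work: the step you flag as the ``main obstacle'' is false, and the order $m^{-3/2}$ that your expansion produced is the true order. In the $N\to\infty$ limit there is no $m$-dependent normalization for $\sqrt m$ factors to cancel against. The half-line measure is a signed mixture of $h$-transformed walks with weights $\zeta_x\propto(x+1)\upchi_x(\a,\c)$, and the normalizing constant $\sum_x(x+1)\upchi_x(\a,\c)$ does not depend on $m$, because $\mathbf C|H\rangle=4|H\rangle$. Use your own formula $\mathbb P(\tau_m=1)=\frac12+\frac14\mathbb E[(\gamma_{m-1}+1)^{-1}]$ together with $\mathbb E_x[(\gamma_n+1)^{-1}]=Z_n(x,\mathrm{free})/((x+1)4^n)$. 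The limiting discrepancy at site $m$ is then the linear functional
\[
\frac14\sum_{x\geq0}\bigl(\zeta_x-\mathbf 1_{\{x=0\}}\bigr)\frac{Z_{m-1}(x,\mathrm{free})}{(x+1)4^{m-1}} .
\]
Since $\sum_x\zeta_x=1$, the leading term of \eqref{eq:converse-short-free-expansion} cancels. Your coefficient $\sum_x\zeta_x((x+1)^2-1)=6\eta(\a,\c)$ then appears only at order $m^{-3/2}$, giving exactly $-\eta(\a,\c)/(\sqrt\pi\,m^{3/2})+o(m^{-3/2})$; this is the paper's Lemma~\ref{lem:asym of dn}.

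The same order persists for any window of bounded length $L$. Once the height at time $m-1$ exceeds $L$, the conditional law of the window depends on that height $y$ only through a $1/(y+1)$ correction. Heuristically, the two height laws at time $m-1$ differ by $O(1/m)$ in total variation (Lemma~\ref{lem:diff of nu}). A single occupation variable sees the height only through $\frac{1}{4(\gamma_{m-1}+1)}=O(m^{-1/2})$, which costs an extra factor $m^{-1/2}$. So no single-site or bounded-window marginal can give $C/\min(m,k)$.

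The missing idea is to test on a block whose length is comparable to $m$. The paper takes $X_m=\sum_{i=m}^{2m-1}(\tau_i-\frac12)$. Summing the $i^{-3/2}$ density differences over this block gives a mean shift of order $m^{-1/2}$, while $X_m$ fluctuates on scale $\sqrt m$. The paper then integrates the bounded function $\sin(\lambda X_m/\sqrt m)$ against the signed difference $\Delta_{m,m}$ of the half-line marginals. The linear part contributes a term of order $\lambda/m$. The quadratic remainder is at most $\frac{\lambda^2}{2m}\int X_m^2\,d|\Delta_{m,m}|\leq C\lambda^2/m$. This uses the mixture representation of Lemma~\ref{lem:mathsf P}, where the second moment is at most $L$ under each $\mathsf P_{L,x}$, together with the $O(1/m)$ bound of Lemma~\ref{lem:diff of nu}. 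Choosing $\lambda$ small gives total variation at least $c/m$ for the half-line measures, and Lemma~\ref{lem:limit to half space} transfers this to finite $N$. Your particle--hole reduction to the case $\min(m,k)=m$, with $\eta(\b,\d)\neq0$ used only on the dual side, is fine and matches what the paper does.
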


As a key ingredient, we use the  matrix product ansatz representation for open TASEP as in the proof of Theorem \ref{thm:TASEP rep}, where
\begin{equation}\label{eq:DEWV TASEP rep again}
    \begin{split}
            &     \mathbf{D}= \begin{bmatrix}
  1 & 1 & 0 & 0 &\dots\\
  0 & 1 & 1 &0 &\dots\\
  0& 0 & 1 &  1 \\
   \vdots & & \ddots&\ddots \\
\end{bmatrix},\qquad
   \mathbf{E}= \begin{bmatrix}
  1  & 0 & 0 & \dots& \\
  1  & 1  & 0&\dots \\
  0& 1  & 1  &  &\\
   \vdots & & \ddots & \ddots\\
\end{bmatrix}, \\
& \langle W | =[\upchi_0(\a,\c),\upchi_1(\a,\c),\upchi_2(\a,\c),\dots], \quad\quad\quad
|V\rangle= [\upchi_0(\b,\d),\upchi_1(\b,\d),\upchi_2(\b,\d),\dots]^T,
    \end{split}
\end{equation} 
and we recall that $\upchi_j(u,v)=\sum_{\ell=0}^{j}u^{j-\ell}v^\ell$ for every $u,v\in\mathbb R$ and $j\in\mathbb Z_{\geq0}$.
Denote $\mathbf C=\mathbf D+\mathbf E$.
Then the stationary measure of open TASEP is given by 
\begin{equation}\label{eq:MPA again}
\mu_N^{(0,\a,\b,\c,\d)}(\tau_1,\dots,\tau_N)=\frac{1}{\left\langle W\big{|} 
\mathbf{C}^N
\big{|}V\right\rangle} \left\langle W\Big{|}
\prod_{j=1}^N
\left(
(1-\tau_j)\mathbf E
+
\tau_j\mathbf D
\right)
\Big{|}V\right\rangle \qquad \mbox{for }\tau_1,\dots,\tau_N\in\{0,1\}. \end{equation}

The following lemma relates the stationary measure $\mu_N^{(0,\a,\b,\c,\d)}$ to a suitable probability measure $\mu_{\mathrm{HL}}$ on the half-line $\{0,1\}^{\mathbb N}$, which can be interpreted as the limiting measure of $\mu_N^{(0,\a,\b,\c,\d)}$ when taking $N\rightarrow \infty$.

\begin{lemma}\label{lem:limit to half space}
   Assume $\a,\b\in[0,1)$ and $\c,\d\in(-1,0]$. Suppose $\mathbf D$, $\mathbf E$, $\langle W|$, $|V\rangle$ are given by \eqref{eq:DEWV TASEP rep again}, and let
$|H\rangle= [1,2,3,\dots]^T$.
Then there exists a unique probability measure on $\{0,1\}^{\mathbb N}$, denoted by $\mu_{\mathrm{HL}}^{\a,\c}$, such that  
\[
\mu_{\mathrm{HL}}^{\a,\c}((\tau_1,\dots,\tau_n)=(\upepsilon_1,\dots,\upepsilon_n))=4^{-n}\frac{(1-\a)^2(1-\c)^2}{1-\a\c }\Big\langle W\Big{|}
\prod_{j=1}^n
\left(
(1-\upepsilon_j)\mathbf E
+
\upepsilon_j\mathbf D
\right)
\Big{|}H\Big{\rangle} ,
\]
for each $n\in\mathbb N$ and $\upepsilon_1,\dots,\upepsilon_n\in\{0,1\}$.
Moreover, for each $n\in\mathbb N$,
\[
\lim_{N\to\infty}\TV{ \mu_N^{(0,\a,\b,\c,\d)}\left((\tau_1,\dots,\tau_n)\in\,\cdot\,\right) - \mu_{\mathrm{HL}}^{\a,\c}\left((\tau_1,\dots,\tau_n)\in\,\cdot\,\right) }=0.
\]
\end{lemma}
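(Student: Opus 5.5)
We need to show that the prescribed cylinder weights form a consistent probability family and that the finite-$N$ marginals converge to it. The plan is to work with the path representation: by the identity in the proof of Theorem~\ref{thm:TASEP rep}, $\langle W|\prod_j((1-\upepsilon_j)\mathbf E+\upepsilon_j\mathbf D)|u\rangle$ equals a sum over bicolored Motzkin paths $\bg$ of length $n$ with $\Phi(\bg)=\upepsilon$ of $\upchi_{\gamma_0}(\a,\c)u_{\gamma_n}$. With $u=|H\rangle$ this weight is $\upchi_{\gamma_0}(\a,\c)(\gamma_n+1)$. The argument then proceeds in three steps.

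\textbf{Step 1 (consistency).} Since $\mathbf C=\mathbf D+\mathbf E$ has $(\mathbf C)_{x,x}=2$ and $(\mathbf C)_{x,x\pm1}=1$ for the admissible indices, we compute $(\mathbf C|H\rangle)_x=(x)+2(x+1)+(x+2)=4(x+1)$ for $x\ge1$ and $2\cdot1+2=4$ for $x=0$. Hence $\mathbf C|H\rangle=4|H\rangle$, i.e. $|H\rangle$ is harmonic. Summing over $\upepsilon_{n+1}$ therefore reproduces the $n$-site weight, which gives Kolmogorov consistency.

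\textbf{Step 2 (normalization and positivity).} The total mass reduces to $\langle W|H\rangle=\sum_x(x+1)\upchi_x(\a,\c)$, which converges since $|\upchi_x|\le(x+1)\max(\a,|\c|)^x$. The generating function $\sum_x\upchi_x t^x=1/((1-\a t)(1-\c t))$ gives $\sum_x(x+1)\upchi_x=\frac{d}{dt}[t/((1-\a t)(1-\c t))]_{t=1}=(1-\a\c)/((1-\a)^2(1-\c)^2)$, which matches the prefactor. Nonnegativity of the cylinder weights I will obtain from the limit in Step 3, since each weight is a limit of probabilities. Kolmogorov's extension theorem then gives existence and uniqueness of $\mu_{\mathrm{HL}}^{\a,\c}$.

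\textbf{Step 3 (convergence, the main point).} By \eqref{eq:MPA again}, $\mu_N(\upepsilon)=\langle W|M_\upepsilon\mathbf C^{N-n}|V\rangle/\langle W|\mathbf C^N|V\rangle$, where $(\mathbf C^{N-n}|V\rangle)_z=\sum_y Z_{N-n}(z,y)\upchi_y(\b,\d)$. Writing $G_L(z)=\sum_y Z_L(z,y)\upchi_y(\b,\d)$, I will show that $G_L(z)/G_L(0)\to z+1$ as $L\to\infty$, with a bound $|G_L(z)|\le C(z+1)G_L(0)$ that holds uniformly in $L$.

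For the limit I use the integral formula of Lemma~\ref{lem:count biMot} summed against $\upchi_y$. Equivalently, I apply Lemma~\ref{lem:comparison kernels}: for fixed $z$, $Z_L(z,y)/(z+1)-Z_L(0,y)$ is $O(L^{-5/2}(y+1)4^L)$. Summing against $|\upchi_y|\le(y+1)\rho_R^y$ makes this error $O(4^LL^{-5/2})$. Meanwhile $G_L(0)\asymp4^LL^{-3/2}$, because $G_L(0)\ge c\,Z_L(0,0)$, which follows from the integral form as in \eqref{eq:TASEP-normalization-lower}. The uniform bound follows from Lemma~\ref{lem:bounds ZNxy}.

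Since $\langle W|M_\upepsilon$ has entries bounded by $C(z+1)\rho_L^{z}$ for finite $n$, dominated convergence gives $\langle W|M_\upepsilon\mathbf C^{N-n}|V\rangle/G_{N-n}(0)\to\langle W|M_\upepsilon|H\rangle$. Applying the same argument to the denominator gives $\langle W|\mathbf C^N|V\rangle/G_{N-n}(0)\to4^n\langle W|H\rangle$, using $\mathbf C^n|H\rangle=4^n|H\rangle$. This yields pointwise convergence of the finitely many cylinder probabilities, hence convergence in total variation.

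The main obstacle is that $\upchi_y(\b,\d)$ may be negative. The positivity of $G_L(0)$ and its order $4^LL^{-3/2}$ must therefore come from the integral identity: with $\theta$-density $\sin^2\theta\prod(1-2r\cos\theta+r^2)^{-1}>0$, the integral is dominated near $\theta=0$. A termwise lower bound is not available because of the signs.
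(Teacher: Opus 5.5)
Your argument is correct. Its skeleton matches the paper's:
\begin{itemize}
\item marginalize the matrix product ansatz to $\langle W|M_\upepsilon\mathbf C^{N-n}|V\rangle/\langle W|\mathbf C^{N}|V\rangle$;
\item show that $\mathbf C^{L}|V\rangle$ becomes asymptotically proportional to the harmonic vector $|H\rangle$, with a domination of order $(z+1)$;
\item pass to the limit by dominated convergence in numerator and denominator;
\item conclude with $\mathbf C|H\rangle=4|H\rangle$ and $\langle W|H\rangle=(1-\a\c)/((1-\a)^2(1-\c)^2)$.
\end{itemize}

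You differ from the paper in how you obtain the key asymptotic. The paper sums the sine series against $\upchi_y(\b,\d)$ to get the density $\sin\theta\,\uprho_{\b,\d}(\theta)$ and rescales $\theta=t/\sqrt R$. Dominated convergence then gives the exact limit $R^{3/2}4^{-R}(\mathbf C^R|V\rangle)_x\to\frac{4}{\sqrt\pi}(x+1)\uprho_{\b,\d}(0)$, together with the uniform bound $C(\b,\d)(x+1)$.

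You instead normalize by $G_L(0)$ and get only the ratio limit $G_L(z)/G_L(0)\to z+1$ from Lemma~\ref{lem:comparison kernels}. You therefore need only the order $4^LL^{-3/2}$ of $G_L(0)$. The lower bound comes from positivity of that same integral density. The upper bound, and your domination, come from Lemma~\ref{lem:bounds ZNxy}.

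What each route buys:
\begin{itemize}
\item The constant $\frac{4}{\sqrt\pi}\uprho_{\b,\d}(0)$ cancels in the ratio anyway. Your route therefore reuses discrete estimates already proved in Section~\ref{sec:Combinatorial} and avoids the Laplace-type computation. It even gives an $O(1/L)$ rate for fixed $z$.
\item The paper's route is self-contained from the integral formula in Lemma~\ref{lem:count biMot} and identifies the explicit constant.
\end{itemize}

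You check consistency directly via $\mathbf C|H\rangle=4|H\rangle$ and take nonnegativity from the limit. This is equivalent to the paper reading these properties off the limit identity.
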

\begin{proof}
    We first note that it suffices to show that, for each $n\in\mathbb N$ and $\upepsilon_1,\dots,\upepsilon_n\in\{0,1\}$,
    \begin{equation}\label{eq:converse-short-halfline-limit}
\lim_{N\to\infty}
\mu_N^{(0,\a,\b,\c,\d)}
\left((\tau_1,\dots,\tau_n)=(\upepsilon_1,\dots,\upepsilon_n)\right)
=4^{-n}\frac{(1-\a)^2(1-\c)^2}{1-\a\c }\Big\langle W\Big{|}
\prod_{j=1}^n
\left(
(1-\upepsilon_j)\mathbf E
+
\upepsilon_j\mathbf D
\right)
\Big{|}H\Big{\rangle}.
\end{equation}
Indeed, assume that \eqref{eq:converse-short-halfline-limit} holds and denote its right side by \(p_n(\upepsilon_1,\dots,\upepsilon_n)\). Then \(p_n\) is a probability measure and satisfies
\[
\sum_{\upepsilon_{n+1}\in\{0,1\}}
p_{n+1}(\upepsilon_1,\dots,\upepsilon_n,\upepsilon_{n+1})
=
p_n(\upepsilon_1,\dots,\upepsilon_n)\qquad\mbox{ for each }n\in\mathbb N,\, \upepsilon_1,\dots,\upepsilon_n \in\{0,1\} .
\] 
Kolmogorov's extension theorem gives the probability measure $\mu_{\mathrm{HL}}^{\a,\c}$ on \(\{0,1\}^{\mathbb N}\) with desired properties.

We next show that
\begin{equation}\label{eq:converse-short-edge}
\lim_{R\to\infty}\frac{R^{3/2}}{4^R}\left(\mathbf C^R|V\rangle\right)_x
=
\frac4{\sqrt\pi}(x+1)
\frac{1-\b\d}{(1-\b)^2(1-\d)^2}\qquad\mbox{ for each }x\in\mathbb Z_{\geq0}.
\end{equation}
Indeed, note $(\mathbf C^R)_{x,y}=Z_R(x,y)$ for $x,y\in\mathbb Z_{\geq0}$, and each $R \in \mathbb{N}$. Note also  
$\sum_{y\geq0}\upchi_y(\b,\d)z^y
= (1-\b z)^{-1}(1-\d z)^{-1} $. Hence, one can check that
\[
\begin{aligned}
\sum_{y\geq0}\upchi_y(\b,\d)\sin((y+1)\theta)
&=
\operatorname{Im}
\frac{e^{\mathtt{i}\theta}}
{(1-\b e^{\mathtt{i}\theta})(1-\d e^{\mathtt{i}\theta})} =
\frac{(1-\b\d)\sin\theta}
{(1-2\b\cos\theta+\b^2)
 (1-2\d\cos\theta+\d^2)},
\end{aligned}
\]
where the sum absolutely converges since $|\upchi_y(\b,\d)|\leq(y+1)\max(\b,|\d|)^y$.
Using Lemma~\ref{lem:count biMot}, 
\begin{align*}
\left(\mathbf C^R|V\rangle\right)_x
&=\frac2\pi\int_0^\pi(2+2\cos\theta)^R
\sin((x+1)\theta)\sin\theta\,
\uprho_{\b,\d}(\theta)\,d\theta,\\
\uprho_{\b,\d}(\theta)
&=\frac{1-\b\d}
{(1-2\b\cos\theta+\b^2)(1-2\d\cos\theta+\d^2)}.
\end{align*}
Since \(|\b|,|\d|<1\), the function \(\uprho_{\b,\d}\) is bounded and continuous on \([0,\pi]\). Taking \(\theta=t/\sqrt R\),
\begin{equation}\label{eq:bnyrtn}
\frac{R^{3/2}}{4^R}\left(\mathbf C^R|V\rangle\right)_x
=
\frac2\pi\int_0^\infty
{\bf1}_{\{t\leq\pi\sqrt R\}}\,
R\cos^{2R}\!\left(\frac{t}{2\sqrt R}\right)
\sin\!\left(\frac{(x+1)t}{\sqrt R}\right)
\sin\!\left(\frac{t}{\sqrt R}\right)
\uprho_{\b,\d}\!\left(\frac{t}{\sqrt R}\right)\,dt.
\end{equation}
Using $\cos u\leq e^{-u^2/2}$ for $u\in[0,\pi/2]$ and $|\sin s|\leq|s|$, the absolute value of the integrand is bounded by $C(\b,\d)(x+1)t^2e^{-t^2/4}$, where $C(\b,\d)$ depends only on $\b$ and $\d$. 
In particular, we have
\begin{equation}\label{eq:converse-short-edge-uniform}
\sup_{R\geq1}\frac{R^{3/2}}{4^R}\left|\left(\mathbf C^R|V\rangle\right)_x\right|
\leq C(\b,\d)(x+1),
\qquad x\in\mathbb Z_{\geq0}.
\end{equation}
For each fixed \(t\geq0\), the integrand on the right side of \eqref{eq:bnyrtn} converges to $(x+1)t^2e^{-t^2/4}\uprho_{\b,\d}(0)$.
Since $\int_0^\infty t^2e^{-t^2/4}\,dt=2\sqrt\pi$,
$\uprho_{\b,\d}(0)
= (1-\b\d)(1-\b)^{-2}(1-\d)^{-2}$, we conclude \eqref{eq:converse-short-edge} by dominated convergence. 

We return to the proof of \eqref{eq:converse-short-halfline-limit}. Let $N=n+R$ in \eqref{eq:MPA again} and sum over \(\tau_{n+1},\dots,\tau_{n+R}\),
\[
\mu_{n+R}^{(0,\a,\b,\c,\d)}
\left((\tau_1,\dots,\tau_n)
=(\upepsilon_1,\dots,\upepsilon_n)\right)
=
\frac{ \langle W|\mathbf A\mathbf C^R|V\rangle}{\langle W|\mathbf C^{n+R}|V\rangle},
\]
where we denote $\mathbf A=\left((1-\upepsilon_1)\mathbf E+\upepsilon_1\mathbf D\right)\dots\left((1-\upepsilon_n)\mathbf E+\upepsilon_n\mathbf D\right)$.
Using \eqref{eq:converse-short-edge}, \eqref{eq:converse-short-edge-uniform}, and the dominated convergence theorem (in view of $|\upchi_y(\a,\c)|\leq(y+1)\max(\a,|\c|)^y$),  we obtain
\[
\begin{aligned}
\lim_{R\to\infty}\frac{R^{3/2}}{4^R}
\langle W|\mathbf A\mathbf C^R|V\rangle
 =
\upkappa_{\b,\d}\langle W|\mathbf A|H\rangle,\qquad
\lim_{R\to\infty}\frac{R^{3/2}}{4^R}
\langle W|\mathbf C^{n+R}|V\rangle
 =
\upkappa_{\b,\d}\langle W|\mathbf C^n|H\rangle,
\end{aligned}
\]
where $\upkappa_{\b,\d}= 4/\sqrt\pi\cdot (1-\b\d)(1-\b)^{-2}(1-\d)^{-2}$.
Combining with the facts that $\mathbf C|H\rangle=4|H\rangle$ and 
\begin{equation}\label{eq:sum upchi}
\langle W|H\rangle=\sum_{x\geq0}(x+1)\upchi_x(\a,\c) = 
\left.
\frac{d}{dz}
\left(
\frac{z}{(1-\a z)(1-\c z)}
\right)
\right|_{z=1} =
\frac{1-\a\c}{(1-\a)^2(1-\c)^2},
\end{equation}
we conclude the proof of \eqref{eq:converse-short-halfline-limit}.
\end{proof}

Next, we use the above notation to define a signed measure $\upnu^{\a,\c}_n $ on $\mathbb Z_{\geq 0}$, which we will allow us compare the measure $\mu_{\mathrm{HL}}^{\a,\c}$ for different choices of $\a$ and $\c$.

\begin{lemma}\label{lem:diff of nu}
 Assume $\a\in[0,1)$ and  $\c\in(-1,0]$. Let $\mathbf D$, $\mathbf E$, $\langle W|$ be as in \eqref{eq:DEWV TASEP rep again}, and $\mathbf C=\mathbf D+\mathbf E$. Define 
 \[ 
\upnu^{\a,\c}_n (x)
:=4^{-n}\frac{(1-\a)^2(1-\c)^2}{1-\a\c }(x+1)\left(\langle W |\mathbf C^n\right)_x \qquad \mbox{ for each }n,x\in\mathbb Z_{\geq0}.
\] 
Then for each $n\in\mathbb Z_{\geq0}$, $\upnu^{\a,\c}_n$ is a finite signed measure of total mass $1$. Moreover,
\[
\sup_{n\in\mathbb Z_{\geq0}}(n+1)\sum_{x=0}^{\infty}\left|\upnu_n^{\a,\c}(x)-\upnu_n^{0,0}(x)\right| <\infty.
\]
\end{lemma}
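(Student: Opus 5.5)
Throughout, $\upchi_x:=\upchi_x(\a,\c)$ and $\upchi_x^0:=\upchi_x(0,0)={\bf1}_{\{x=0\}}$. Since $(\mathbf C^n)_{z,x}=Z_n(z,x)$, we have $(\langle W|\mathbf C^n)_x=\sum_{z\ge0}\upchi_x\cdots$; more precisely
\[
\upnu_n^{\a,\c}(x)=\kappa_{\a,\c}\,4^{-n}(x+1)\sum_{z\geq0}\upchi_z Z_n(z,x),\qquad \kappa_{\a,\c}=\frac{(1-\a)^2(1-\c)^2}{1-\a\c}.
\]

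\emph{Finiteness and total mass.} Because $|\upchi_z|\le(z+1)\rho^z$ with $\rho=\max(\a,-\c)<1$, and $Z_n(z,x)=0$ unless $|z-x|\le n$, the sum is absolutely convergent. Moreover $\sum_x|\upnu_n^{\a,\c}(x)|<\infty$, using Lemma~\ref{lem:bounds ZNxy}. For the total mass, use $\mathbf C|H\rangle=4|H\rangle$:
\[
\sum_x\upnu_n^{\a,\c}(x)=\kappa_{\a,\c}4^{-n}\langle W|\mathbf C^n|H\rangle=\kappa_{\a,\c}\langle W|H\rangle=1
\]
by \eqref{eq:sum upchi}. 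The interchange is justified by absolute convergence.

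\emph{Comparison.} Note $\kappa_{0,0}=1$ and $\upnu_n^{0,0}(x)=4^{-n}(x+1)Z_n(0,x)$. Write
\[
\upnu_n^{\a,\c}(x)-\upnu_n^{0,0}(x)=4^{-n}(x+1)\Big[\kappa_{\a,\c}\sum_z\upchi_z(z+1)\Big(\tfrac{Z_n(z,x)}{z+1}-Z_n(0,x)\Big)+\Big(\kappa_{\a,\c}\sum_z(z+1)\upchi_z-1\Big)Z_n(0,x)\Big].
\]
The second bracket vanishes identically by \eqref{eq:sum upchi}. This is the key cancellation: it removes the order-one term and leaves only the kernel difference.

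For the first term, apply Lemma~\ref{lem:comparison kernels} whenever $(z+2)^2\le n+1$. This gives the bound
\[
4^{-n}(x+1)|\ldots|\le 4^{3}\frac{(x+1)(z+1)^2(z+x+2)}{(n+1)^{5/2}}e^{-x^2/(12(n+1))}.
\]
Summing over $x$, the Gaussian sum $\sum_x(x+1)(x+z+2)e^{-x^2/(12(n+1))}$ is $O((z+1)(n+1)^{3/2})$, by the same computation as Step~4 of Proposition~\ref{prop:marginal law comparison}. Hence the total contribution is $O((z+1)^3/(n+1))$. Multiplying by $|\upchi_z|(z+1)\le(z+1)^2\rho^z$ and summing over $z$ gives $O(1/(n+1))$.

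For the remaining indices $(z+2)^2>n+1$, bound each piece crudely. Lemma~\ref{lem:bounds ZNxy} gives
\[
\sum_x(x+1)Z_n(z,x)\le 4^n\cdot C(z+1)(n+1)^{1/2}\,(\text{poly in }z),
\]
and the analogous bound holds for $(z+1)Z_n(0,x)$. Since the weight is $\rho^z$ and $z\ge\sqrt{n+1}-2$, these terms contribute $O(\mathrm{poly}(n)\rho^{\sqrt n})=O(1/(n+1))$. Small $n$ is trivially bounded, since each quantity is finite for fixed $n$ and the constants are uniform.

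The main obstacle is bookkeeping: the Gaussian moment sums must indeed yield exactly the factor $(n+1)^{-1}$, which requires the $(n+1)^{5/2}$ denominator of Lemma~\ref{lem:comparison kernels} to beat the $(n+1)^{3/2}$ coming from $\sum_x(x+1)(x+z+2)e^{-x^2/12(n+1)}$. A naive count suggests this works.
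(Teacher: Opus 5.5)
Your proposal is correct and is essentially the paper's argument: your regrouping is exactly the paper's mixture representation $\upnu_n^{\a,\c}=\sum_{y}\upzeta_y U_{n,y}$, with $U_{n,y}(x)=4^{-n}\frac{x+1}{y+1}Z_n(y,x)$ and $\sum_y\upzeta_y=1$, followed by Lemma~\ref{lem:comparison kernels} in the range $(y+2)^2\le n+1$. The only cosmetic difference is in the tail $(y+2)^2>n+1$: there the paper uses the bound $2$ on $\sum_x|U_{n,y}(x)-U_{n,0}(x)|$, which follows from the exact identity $\sum_x(x+1)Z_n(y,x)=4^n(y+1)$, together with $\mathbf 1_{\{(y+2)^2>n+1\}}\le (y+2)^2/(n+1)$, whereas you use the superpolynomial decay of $\rho^{\sqrt n}$.
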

\begin{proof}
We define, for $n,x,y\in\mathbb Z_{\geq0}$,
\[
\upzeta_y=\frac{(1-\a)^2(1-\c)^2}{1-\a\c}(y+1)\upchi_y(\a,\c),\qquad  U_{n,y}(x)
=
4^{-n}\frac{x+1}{y+1}Z_n(y,x).
\]
We claim that $\upzeta_y$ is a finite signed measure of total mass $1$, and that $U_{n,y}(x)$ is a probability measure. Indeed, since  $|\upchi_y(\a,\c)|\leq(y+1)\max(\a,|\c|)^y$ we have $\sum_{y\geq0}|\upzeta_y|<\infty$. The fact that
$\sum_{y\geq0}\upzeta_y=1$ follows from \eqref{eq:sum upchi}.
The fact that $\sum_{x\geq0}U_{n,y}(x)=1$ follows from $\sum_{x\geq0}(x+1)Z_n(y,x)=(\mathbf C^n|H\rangle)_y=4^n(y+1)$ where $|H\rangle=[1,2,3,\ldots]^T$, which is due to $(\mathbf C^n)_{y,x}=Z_n(y,x)$ and $\mathbf C|H\rangle=4|H\rangle$. By definition,  for $n,x\in \mathbb Z_{\geq0}$,
    \begin{equation}\label{eq:decomp of upnu}
        \upnu_n^{\a,\c}(x)
=
\sum_{y\geq0}\upzeta_yU_{n,y}(x),
    \end{equation}
which converges absolutely in total variation because
$\sum_{y\geq0}|\upzeta_y|
\sum_{x\geq0}U_{n,y}(x)
=
\sum_{y\geq0}|\upzeta_y|<\infty$. Hence we know that $\upnu^{\a,\c}_n$ is a finite signed measure of total mass  $\sum_{y\geq0}\upzeta_y=1$. We have
\begin{equation}\label{eq:diff-nu-mixture-bound}
\sum_{x\geq0}
\left|\upnu_n^{\a,\c}(x)-\upnu_n^{0,0}(x)\right|
\leq
\sum_{y\geq0}|\upzeta_y|
\sum_{x\geq0}|U_{n,y}(x)-U_{n,0}(x)|.
\end{equation}
We claim that there exists an absolute constant $C_1\in(0,\infty)$ such that for each $n,y\in\mathbb Z_{\geq0}$,
\begin{equation}\label{eq:diff-nu-small-height}
\sum_{x\geq0}|U_{n,y}(x)-U_{n,0}(x)|
\leq
C_1\frac{(y+1)^2}{n+1}
\qquad\text{ if }(y+2)^2\leq n+1.
\end{equation}
Indeed, if $(y+2)^2\leq n+1$, by Lemma~\ref{lem:comparison kernels} with $(N,x,y)=(n,y,x)$,
\begin{equation*}
    \begin{split}
        \sum_{x\geq0}|U_{n,y}(x)-U_{n,0}(x)|
&\leq
\frac{64(y+1)^2}{(n+1)^{5/2}}
\sum_{x\geq0}(x+1)(x+y+2)
e^{- x^2/(12(n+1)) } .
    \end{split}
\end{equation*}
When  $(y+2)^2\leq n+1$, we also have $(x+1)(x+y+2) \leq2(x^2+n+1)$. Moreover, 
\[
\begin{aligned}
\sum_{x\geq0}(x^2+n+1)e^{- x^2/(12(n+1)) }
&\leq
n+1+\int_0^\infty
\left((t+1)^2+n+1\right)e^{- t^2/(12(n+1)) }\,dt \leq C_2 (n+1)^{3/2},
\end{aligned}
\]
for an absolute constant $C_2\in(0,\infty)$. We conclude the proof of \eqref{eq:diff-nu-small-height}. 
For the range $(y+2)^2 > n+1$, we use the bound $\sum_{x\geq0}|U_{n,y}(x)-U_{n,0}(x)|\leq 2$. Combining with \eqref{eq:diff-nu-small-height}, in view of \eqref{eq:diff-nu-mixture-bound},
\[
\begin{aligned}
\sum_{x\geq0}
\left|\upnu_n^{\a,\c}(x)-\upnu_n^{0,0}(x)\right|
&\leq
\frac{C_1}{n+1}
\sum_{(y+2)^2\leq n+1}|\upzeta_y|(y+1)^2
+
2\sum_{(y+2)^2>n+1}|\upzeta_y| \leq
\frac{C_1+2}{n+1}
\sum_{y\geq0}|\upzeta_y|(y+2)^2.
\end{aligned}
\]
Note that the sum on the right side is finite since $|\upchi_y(\a,\c)|\leq(y+1)\max(\a,|\c|)^y$. We conclude the proof.
\end{proof}

Next, we require an effective bound on the expected density under the measure $\mu_{\mathrm{HL}}^{\a,\c}$ for different choices of $\a$ and $\c$. 

\begin{lemma}\label{lem:asym of dn}
      Assume $\a \in[0,1)$ and $\c \in(-1,0]$. Consider the probability measure $\mu_{\mathrm{HL}}^{\a,\c}$ on $\{0,1\}^{\mathbb N}$ as in Lemma \ref{lem:limit to half space}, and denote its expectation by $\mathbb E_{\mathrm{HL}}^{\a,\c}$. Then 
      \[
      \lim_{n\to\infty}n^{3/2}\left(\mathbb E_{\mathrm{HL}}^{\a,\c}[\tau_n]-\mathbb E_{\mathrm{HL}}^{0,0}[\tau_n]\right)=-\frac{\eta(\a,\c)}{\sqrt\pi},
      \]
      where $\eta(u,v)=u(1-u)^{-2}+v(1-v)^{-2}$ for each $u,v\neq1$.
\end{lemma}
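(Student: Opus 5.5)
The plan is to compute $\mathbb E_{\mathrm{HL}}^{\a,\c}[\tau_n]$ exactly and reduce everything to the signed measures $\upnu_{n-1}^{\a,\c}$ of Lemma~\ref{lem:diff of nu}. By Lemma~\ref{lem:limit to half space}, summing over $\tau_1,\dots,\tau_{n-1}$ gives
\[
\mathbb E_{\mathrm{HL}}^{\a,\c}[\tau_n]=4^{-n}\frac{(1-\a)^2(1-\c)^2}{1-\a\c}\langle W|\mathbf C^{n-1}\mathbf D|H\rangle .
\]
Since $(\mathbf D|H\rangle)_x=(x+1)+(x+2)=2(x+1)+1$, this becomes
\[
\mathbb E_{\mathrm{HL}}^{\a,\c}[\tau_n]=\frac14\sum_{x\ge0}\upnu_{n-1}^{\a,\c}(x)\Big(2+\frac1{x+1}\Big)=\frac12+\frac14\sum_{x\ge0}\frac{\upnu_{n-1}^{\a,\c}(x)}{x+1},
\]
using total mass one. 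Hence the difference to be computed is
\[
\frac14\sum_{x}\frac{\upnu_{n-1}^{\a,\c}(x)-\upnu_{n-1}^{0,0}(x)}{x+1}.
\]

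Next I would use the decomposition \eqref{eq:decomp of upnu}, $\upnu_{n-1}^{\a,\c}=\sum_y\upzeta_yU_{n-1,y}$, where $\upnu^{0,0}_{n-1}=U_{n-1,0}$ and $\sum_y\upzeta_y=1$. This reduces the difference to
\[
\frac14\sum_y\upzeta_y\big(g_{n-1}(y)-g_{n-1}(0)\big),\qquad g_n(y)=\sum_x\frac{U_{n,y}(x)}{x+1}=\frac{4^{-n}Z_n(y,\mathrm{free})}{y+1}.
\]
Now the expansion \eqref{eq:converse-short-free-expansion} of Lemma~\ref{lem:bounds ZNxfree} applies directly. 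It gives
\[
g_n(y)=\frac1{\sqrt{\pi n}}\Big(2-\frac1n\Big(\frac23(y+1)^2+\frac7{12}\Big)+O\big((y+1)^4/n^2\big)\Big),
\]
uniformly for $y\le L\log n$. The constant terms cancel, so
\[
g_n(y)-g_n(0)=-\frac{2}{3\sqrt\pi\,n^{3/2}}\big((y+1)^2-1\big)+O\big((y+1)^4n^{-5/2}\big).
\]
For $y>L\log n$ I would use the crude bound $|g_n(y)|\le 2$ (or the bound of Lemma~\ref{lem:bounds ZNxfree}) together with the geometric decay $|\upzeta_y|\le C(y+1)^2\max(\a,|\c|)^y$. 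Choosing $L$ large makes this tail $o(n^{-3/2})$. Dominated convergence in $y$ then yields
\[
n^{3/2}\big(\mathbb E^{\a,\c}_{\mathrm{HL}}[\tau_n]-\mathbb E^{0,0}_{\mathrm{HL}}[\tau_n]\big)\to-\frac{1}{6\sqrt\pi}\sum_y\upzeta_y\big((y+1)^2-1\big).
\]
Replacing $n-1$ by $n$ costs nothing at this order.

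It remains to evaluate
\[
\sum_y\upzeta_y\big((y+1)^2-1\big)=\frac{(1-\a)^2(1-\c)^2}{1-\a\c}\sum_y(y+1)^3\upchi_y(\a,\c)-1,
\]
using $\sum_y\upzeta_y=1$. With the generating function $f(z)=z/((1-\a z)(1-\c z))=\sum_y\upchi_y z^{y+1}$, this sum equals $(z\partial_z)^3f$ at $z=1$. The claim to be verified is that the resulting quantity equals $6\eta(\a,\c)$. Sanity checks support it: at $\c=0$ the sum is $\sum_k k^3\a^{k-1}(1-\a)^2$, and one checks $\sum_k k^3\a^{k-1}(1-\a)^2-1=6\a/(1-\a)^2$, since $\sum k^3\a^{k-1}=(1+4\a+\a^2)/(1-\a)^4$. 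The general case should follow by partial fractions, writing $f=\frac{1}{\a-\c}\big(\frac{1}{1-\a z}-\frac1{1-\c z}\big)$ and using the same moment identity for each geometric piece. I expect this algebraic identity, and the uniform control of the error in the expansion over $y$, to be the only real obstacles; the expansion itself is supplied by Lemma~\ref{lem:bounds ZNxfree}.
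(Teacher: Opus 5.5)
Your proposal is correct and is essentially the paper's argument. The paper writes the difference as $4^{-n-1}\sum_{x}Q_xZ_n(x,\mathrm{free})$, using $\mathbf D|H\rangle=2|H\rangle+|\mathbf 1\rangle$ and $\langle Q|H\rangle=0$, and then inserts the expansion from Lemma~\ref{lem:bounds ZNxfree} together with the moment identities $\sum_x Q_x(x+1)=0$ and $\sum_x Q_x(x+1)^3=6\eta(\a,\c)$. This is exactly your computation after dividing by $y+1$: your $\sum_y\upzeta_y=1$ and $\sum_y\upzeta_y\big((y+1)^2-1\big)=6\eta(\a,\c)$ are the same two identities. The algebraic identity you flag as remaining does hold, for instance via your partial-fraction route; the paper simply asserts it by evaluating $(z\frac{d}{dz}+1)^3$ of the generating function at $z=1$.
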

\begin{proof} 
Let $\mathbf D$ and $\mathbf E$ be as in \eqref{eq:DEWV TASEP rep again}, and $\mathbf C=\mathbf D+\mathbf E$. Let $\langle Q|=[Q_0,Q_1,\dots]$, where 
\[
Q_j=\upchi_j(\a,\c)\frac{(1-\a)^2(1-\c)^2}{1-\a\c }-{\bf1}_{\{j=0\}}\qquad\mbox{ for } j\in\mathbb Z_{\geq0}.
\]
Let $|\mathbf 1\rangle=[1,1,1,\ldots]^T$ and $|H\rangle=[1,2,3,\dots]^T$. We have the following simple properties:
\[
(\mathbf C^n)_{x,y}=Z_n(x,y),\qquad
\mathbf D|H\rangle=2|H\rangle+|\mathbf 1\rangle,
\qquad
\mathbf C|H\rangle=4|H\rangle,\qquad
\langle Q|H\rangle=0 . 
\]
Here, the last equality follows from \eqref{eq:sum upchi}. Using Lemma~\ref{lem:limit to half space} at $n+1$ and summing over $\tau_1,\dots,\tau_n$,
\begin{equation}\label{eq:expan taun}
    \mathbb E_{\mathrm{HL}}^{\a,\c}[\tau_{n+1}]
-\mathbb E_{\mathrm{HL}}^{0,0}[\tau_{n+1}]
=4^{-n-1}\langle Q|\mathbf C^n\mathbf D|H\rangle 
=4^{-n-1}\langle Q|\mathbf C^n|\mathbf 1\rangle 
=4^{-n-1}\sum_{x=0}^{\infty}Q_xZ_n(x,\mathrm{free}), 
\end{equation}
where we recall $Z_n(x,\mathrm{free})$ from Definition~\ref{def:bicolored Motzkin path}.
We  claim that 
\begin{equation}\label{eq:converse-short-moments}
\sum_{x\geq0}Q_x(x+1)=0,
\qquad
\sum_{x\geq0}Q_x(x+1)^3=6\eta(\a,\c).
\end{equation}
Indeed, the first identity is equivalent to $\langle Q|H\rangle=0$. Let 
\[
F(z)=\sum_{x\geq0}\upchi_x(\a,\c)z^x=((1-\a z)(1-\c z))^{-1},
\]
which is absolutely convergent in a neighborhood of $|z|\leq 1$. The second identity in \eqref{eq:converse-short-moments} then follows from 
\[
\frac{(1-\a)^2(1-\c)^2}{1-\a\c}
\left.
\left(z\frac{d}{dz}+1\right)^3F(z)
\right|_{z=1}
=
1+6\left(
\frac{\a}{(1-\a)^2}
+\frac{\c}{(1-\c)^2}
\right).
\] 
Recall from Lemma~\ref{lem:bounds ZNxfree} that  for each fixed $L\in(0,\infty)$, uniformly for $n\geq3$ and $x\leq L\log n$,
\begin{equation}\label{eq:converse-short-free-expansion RECALL}
\frac{Z_n(x,\mathrm{free})}{4^n}
=\frac1{\sqrt{\pi n}}
\left(2(x+1)-\frac1n
\left(\frac23(x+1)^3+\frac7{12}(x+1)\right)
+O_L\!\left(\frac{(x+1)^5}{n^2}\right)\right).
\end{equation} 
We estimate the right side of \eqref{eq:expan taun}.  
In view of $|\upchi_x(\a,\c) |\leq(x+1)\max (\a,-\c)^x$, we can choose a large constant $L$ such that
$\sum_{x>L\log n}(x+1)^3|Q_x|=o(n^{-2})$.
Since $Z_n(x,\mathrm{free})\leq4^n$, the contribution to \eqref{eq:expan taun} from $x>L\log n$ is $o(n^{-2})$.
We estimate the contribution from $x\leq L\log n$ using \eqref{eq:converse-short-free-expansion RECALL}.
Note that, for $k=1$ as well as $k=3$,
\[
\left|
\sum_{x\leq L\log n}Q_x(x+1)^k
-\sum_{x\geq0}Q_x(x+1)^k
\right|
\leq
\sum_{x>L\log n}|Q_x|(x+1)^3
=o(n^{-2}).
\]
Also, the  
contribution of the remainder in
\eqref{eq:converse-short-free-expansion} is at most
\[
\frac{C_L}{n^{5/2}}
\sum_{x\leq L\log n}|Q_x|(x+1)^5
\leq
\frac{C_L}{n^{5/2}}
\sum_{x\geq0}|Q_x|(x+1)^5
=
O_{\a,\c,L}(n^{-5/2}),
\]
where  $\sum_{x\geq0}|Q_x|(x+1)^5<\infty$ follows from
$|\upchi_x(\a,\c)|\leq(x+1)\max(\a,-\c)^x$. We conclude that
\[
\begin{aligned}
&\mathbb E_{\mathrm{HL}}^{\a,\c}[\tau_{n+1}]
-\mathbb E_{\mathrm{HL}}^{0,0}[\tau_{n+1}]\\
& =
\frac{1}{4\sqrt{\pi n}}
\left(
2\sum_{x\geq0}Q_x(x+1)
-\frac1n\left(
\frac23\sum_{x\geq0}Q_x(x+1)^3
+\frac7{12}\sum_{x\geq0}Q_x(x+1)
\right)
\right)
+o(n^{-3/2}).
\end{aligned}
\]
In view of \eqref{eq:converse-short-moments}, we conclude the proof of the lemma.
\end{proof}

Next, we express the law of $\mu_{\mathrm{HL}}^{\a,\c}-\mu_{\mathrm{HL}}^{0,0}$ as a suitable mixture of probability measures, which are explicitly determined by the matrix product ansatz.  

\begin{lemma}\label{lem:mathsf P} 
    Let $\mathbf D$ and $\mathbf E$ be as in \eqref{eq:DEWV TASEP rep again}, $\mathbf C=\mathbf D+\mathbf E$, and    $|H\rangle=[1,2,3,\ldots]^T$. 
For \(L\in\mathbb N\), \(x\in\mathbb Z_{\geq0}\),  define
\begin{equation*} 
\mathsf P_{L,x}(\boldsymbol{\upepsilon})
=
\frac{4^{-L}}{x+1}
\left(
\left((1-\upepsilon_1)\mathbf E+\upepsilon_1\mathbf D\right)\cdots
\left((1-\upepsilon_L)\mathbf E+\upepsilon_L\mathbf D\right)|H\rangle
\right)_x
\in\{0,1\}^L 
\end{equation*} for all $\boldsymbol{\upepsilon}=(\upepsilon_1,\ldots,\upepsilon_L)$. 
Then $\mathsf P_{L,x}$ is a probability measure on $\{0,1\}^L$ for every choice of $x$. Furthermore, for each \(\a\in[0,1)\), \(\c\in(-1,0]\) and $n,L\in\mathbb N$,
\begin{equation}\label{eq:decomp mu HL}
\mu_{\mathrm{HL}}^{\a,\c}
 \left((\tau_i)_{i\in\llbracket n,n+L-1\rrbracket}
 \in\,\cdot\,\right)
-
\mu_{\mathrm{HL}}^{0,0}
 \left((\tau_i)_{i\in\llbracket n,n+L-1\rrbracket}
 \in\,\cdot\,\right) 
=
\sum_{x\geq0}\left(\upnu_{n-1}^{\a,\c}(x)-\upnu_{n-1}^{0,0}(x)\right)\mathsf P_{L,x},
\end{equation}
where $\upnu$ is as in Lemma \ref{lem:diff of nu}. 
Moreover, we have that
\begin{equation}\label{eq:second momemt bound} 
\int
\left(\upepsilon_1+\dots+\upepsilon_L-L/2 
\right)^2
\,d\mathsf P_{L,x}(\boldsymbol\upepsilon)\leq L
\end{equation}
for each $L\in\mathbb N$, $x\in\mathbb Z_{\geq0}$.
\end{lemma}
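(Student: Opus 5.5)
The proof rests on three facts about the explicit matrices \eqref{eq:DEWV TASEP rep again}. First, $\mathbf D$ and $\mathbf E$ have nonnegative entries. Second, $\mathbf C|H\rangle=4|H\rangle$. Third, writing $\mathbf M(\upepsilon)=(1-\upepsilon)\mathbf E+\upepsilon\mathbf D$, the product $\mathbf M(\upepsilon_1)\cdots\mathbf M(\upepsilon_L)$ has $(x,y)$ entry equal to the number of bicolored Motzkin paths from $x$ to $y$ whose projection under $\Phi$ is $\boldsymbol\upepsilon$. This is the identity already used in the proof of Theorem~\ref{thm:TASEP rep}.

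\textbf{$\mathsf P_{L,x}$ is a probability measure.} Nonnegativity is clear from the first fact. Summing over $\boldsymbol\upepsilon$ replaces the product by $\mathbf C^L$, so the total mass is $4^{-L}(x+1)^{-1}(\mathbf C^L|H\rangle)_x=1$.

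\textbf{The decomposition \eqref{eq:decomp mu HL}.} Apply the defining formula of $\mu_{\mathrm{HL}}^{\a,\c}$ from Lemma~\ref{lem:limit to half space} with $n+L-1$ sites, and sum over $\tau_1,\dots,\tau_{n-1}$. The marginal on $\llbracket n,n+L-1\rrbracket$ is then
\[
4^{-(n-1)-L}\frac{(1-\a)^2(1-\c)^2}{1-\a\c}\sum_{x\geq0}\left(\langle W|\mathbf C^{n-1}\right)_x\left(\mathbf M(\upepsilon_1)\cdots\mathbf M(\upepsilon_L)|H\rangle\right)_x .
\]
Multiplying and dividing by $x+1$ turns this into $\sum_x\upnu_{n-1}^{\a,\c}(x)\mathsf P_{L,x}(\boldsymbol\upepsilon)$. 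The interchange of sums is justified by absolute convergence, using the bound $|\upchi_y|\le(y+1)\max(\a,|\c|)^y$ and $Z_{n-1}(y,x)\le 4^{n-1}$. Subtracting the same identity at $(\a,\c)=(0,0)$ gives \eqref{eq:decomp mu HL}.

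\textbf{The second moment bound \eqref{eq:second momemt bound}.} This is the main step. Write $\mathsf P_{L,x}$ as a Doob $h$-transform of path measures. Its $(x,y)$ weight is $4^{-L}Z_L(x,y)(y+1)/(x+1)$, which is a probability kernel because $\sum_y Z_L(x,y)(y+1)=4^L(x+1)$. So $\mathsf P_{L,x}$ is the law of $\Phi(\bg)$, where $\bg$ is a lazy walk started at $x$, conditioned to stay nonnegative, and reweighted by $(\gamma_L+1)/(x+1)$. This is the $h$-transform with harmonic function $h(z)=z+1$.

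Under this measure $\bg$ is a Markov chain. From height $z$, its step probabilities are $\frac{z+2}{4(z+1)}$ up, $\frac{z}{4(z+1)}$ down, and $\frac14$ for each horizontal color. Since $\upepsilon_i-\tfrac12=\tfrac12(\gamma_i-\gamma_{i-1})+\tfrac12\xi_i$, where $\xi_i=\pm1$ on horizontal steps and $0$ otherwise, we get
\[
\sum_i\upepsilon_i-\tfrac L2=\tfrac12(\gamma_L-\gamma_0)+\tfrac12\sum_i\xi_i .
\]
The $\xi$ part has second moment equal to $\mathbb E[\#\text{horizontal steps}]\le L$, since the $\xi_i$ are conditionally independent fair signs. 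This leaves $\mathbb E[(\gamma_L-x)^2]$. Using $(a+b)^2\le 2a^2+2b^2$, the total is at most $\tfrac12\mathbb E(\gamma_L-x)^2+\tfrac L2$, so it suffices to show $\mathbb E(\gamma_L-x)^2\le L$.

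\textbf{Bounding $\mathbb E(\gamma_L-x)^2$.} The drift from height $z$ is $\frac{1}{2(z+1)}$, and the conditional variance of a step is at most $\frac12$. Hence
\[
\mathbb E[(\gamma_{i}-x)^2-(\gamma_{i-1}-x)^2\mid\gamma_{i-1}=z]=\mathbb E[(\Delta)^2]+2(z-x)\tfrac{1}{2(z+1)}\le \tfrac12+\tfrac{z-x}{z+1}\le\tfrac32 .
\]
This only yields $\tfrac32 L$. To close the constant I would instead compute the total second moment exactly. A cleaner alternative is to apply $\mathbf D$ and $\mathbf E$ to $|H\rangle$ and to $|H^{(2)}\rangle=[(x+1)^2]$ or $[(x+1)^3]$, which yields an exact formula for $\mathbb E[\gamma_L(\gamma_L+2)]$ under the transform. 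Independently of that, I would check the cross terms between $\gamma$ and $\xi$. The worry is that the drift contributes roughly $\mathbb E(\gamma_L-x)^2\approx \tfrac L2+O(\sqrt L)$, so the constant $1$ is where I expect the difficulty. If the sharp constant fails, any bound $C_0L$ suffices for the later application, and I would fall back on that.
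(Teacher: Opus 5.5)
Your proofs that $\mathsf P_{L,x}$ is a probability measure and of the decomposition \eqref{eq:decomp mu HL} are correct and coincide with the paper's. For \eqref{eq:second momemt bound} you also set up the same $h$-transformed chain and the same splitting $\sum_i\upepsilon_i-\frac L2=\frac12(\gamma_L-x)+\frac12\sum_i\xi_i$ that the paper uses. The gap is in closing the constant: as written, the proposal does not prove the inequality with constant $1$.

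The loss occurs where you bound $\mathbb E[(\sum_i\xi_i)^2]=\mathbb E[\#\{\text{horizontal steps}\}]$ by $L$. Your own transition probabilities show that from every height $z$, including $z=0$, the step is horizontal with probability exactly $\frac14+\frac14=\frac12$. So this expectation equals $L/2$. Combined with your (correct) drift bound $\mathbb E(\gamma_L-x)^2\le\frac32L$ and $(a+b)^2\le2a^2+2b^2$, this gives $\frac12\cdot\frac32L+\frac12\cdot\frac L2=L$. That is exactly how the paper concludes; it uses the increments $\tau_j-\frac12-\frac12(\gamma_j-\gamma_{j-1})=\xi_j/2$, each with conditional second moment $\frac18$.

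The reduction you aimed for instead, $\mathbb E(\gamma_L-x)^2\le L$, is false. For $x=0$ your own computation gives $\mathbb E[\gamma_i^2-\gamma_{i-1}^2\mid\gamma_{i-1}=z]=\frac12+\frac{z}{z+1}=\frac32-\frac1{z+1}$. The transformed walk escapes like a Bessel-3 process, so $\mathbb E[1/(\gamma_i+1)]=O(i^{-1/2})$. Hence $\mathbb E\gamma_L^2=\frac32L-O(\sqrt L)$, which exceeds $L$ for large $L$. Your heuristic $\frac L2+O(\sqrt L)$ misses the drift term $2z\cdot\frac1{2(z+1)}\approx1$ per step once the walk is away from $0$. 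Falling back to a bound $C_0L$ would prove a weaker statement than the lemma.

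No cross-term analysis is needed, but if you want it: the transformed weight depends only on $\gamma_L$. So given the height sequence the colors are i.i.d.\ fair, and $\mathbb E[(\gamma_L-x)\sum_i\xi_i]=0$. The exact identity then yields the sharper bound $\frac14\cdot\frac32L+\frac14\cdot\frac L2=\frac L2$.
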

\begin{proof}
  Note that  the entries of $\mathsf P_{L,x}$ are nonnegative. Using \(\mathbf C|H\rangle=4|H\rangle\), we have
\[
\begin{aligned}
\sum_{\boldsymbol{\upepsilon}\in\{0,1\}^L}
\mathsf P_{L,x}(\boldsymbol{\upepsilon})
&=
\frac{4^{-L}}{x+1}
\sum_{\boldsymbol{\upepsilon}\in\{0,1\}^L}
\left(\left((1-\upepsilon_1)\mathbf E+\upepsilon_1\mathbf D\right)\cdots
\left((1-\upepsilon_L)\mathbf E+\upepsilon_L\mathbf D\right)|H\rangle
\right)_x =
\frac{4^{-L}}{x+1}(\mathbf C^L|H\rangle)_x
=1.
\end{aligned}
\]
Hence $\mathsf P_{L,x}$ is a probability measure.
Summing the first displayed formula in Lemma~\ref{lem:limit to half space} over \(\upepsilon_1,\ldots,\upepsilon_{n-1}\),  
\[
\begin{aligned}
\mu_{\mathrm{HL}}^{\a,\c}
 \left((\tau_i)_{i\in\llbracket n,n+L-1\rrbracket}=\boldsymbol{\upepsilon}\right)
 &=
\frac{(1-\a)^2(1-\c)^2}{ 4^{(n-1)+L}(1-\a\c)}
\left\langle W\right|
\mathbf C^{n-1}
\left((1-\upepsilon_1)\mathbf E+\upepsilon_1\mathbf D\right)\cdots
\left((1-\upepsilon_L)\mathbf E+\upepsilon_L\mathbf D\right)
\left|H\right\rangle\\ &
 =
\sum_{x\geq0}
\upnu_{n-1}^{\a,\c}(x)
\mathsf P_{L,x}(\boldsymbol{\upepsilon}),\qquad\qquad\mbox{ for each }\boldsymbol{\upepsilon}\in\{0,1\}^L.
\end{aligned}
\]
Subtracting the corresponding identity at \((\a,\c)=(0,0)\) concludes the proof of \eqref{eq:decomp mu HL}.
We mention that the absolute convergence of these series follows from $\sum_{x\geq0}
 ( |\upnu_{n-1}^{\a,\c}(x) |
+ |\upnu_{n-1}^{0,0}(x) | )<\infty$ in view of Lemma~\ref{lem:diff of nu}.

We next prove \eqref{eq:second momemt bound}. We define a probability measure $\mathtt P_{L,x}$ on \(\mathrm{BCMot}_L^{x,\mathrm{free}}\). Let 
\[
\mathtt P_{L,x}(\bg)
=
4^{-L}\frac{\gamma_L+1}{x+1},\qquad\bg\in\mathrm{BCMot}_L^{x,\mathrm{free}}.
\]
This is a probability measure since $\sum_{\bg\in
\mathrm{BCMot}_L^{x,\mathrm{free}}}
(\gamma_L+1)
=
(\mathbf C^L|H\rangle)_x
=
4^L(x+1)$.  We claim that $\Phi_{\#}\mathtt P_{L,x}=\mathsf P_{L,x}$. Indeed,
for each $\boldsymbol{\upepsilon}\in\{0,1\}^L$, 
\[
\left(
\left((1-\upepsilon_1)\mathbf E+\upepsilon_1\mathbf D\right)\cdots
\left((1-\upepsilon_L)\mathbf E+\upepsilon_L\mathbf D\right)|H\rangle
\right)_x
=
\sum_{\bg\in\mathrm{BCMot}_L^{x,\mathrm{free}},\,
\Phi(\bg)=\boldsymbol{\upepsilon}}
(\gamma_L+1).
\] 
We next claim that  $(\gamma_j)_{j=0}^L$ is a Markov chain, and that for each $y\in\mathbb Z_{\geq0}$ and $j\in\llbracket1,L\rrbracket$ with $|y-x|\leq j-1$,
\begin{equation}\label{eq:transition mathttP}
    \begin{split}
        \mathtt P_{L,x}\left(\gamma_j-\gamma_{j-1}=1\,|\,\gamma_{j-1}=y\right)&=\frac{y+2}{4(y+1)},\\
        \mathtt P_{L,x}\left(\gamma_j-\gamma_{j-1}=0\mbox{ is colored black}\,|\,\gamma_{j-1}=y\right)&=\frac{1}{4},\\
        \mathtt P_{L,x}\left(\gamma_j-\gamma_{j-1}=0\mbox{ is colored white}\,|\,\gamma_{j-1}=y\right)&=\frac{1}{4},\\
        \mathtt P_{L,x}\left(\gamma_j-\gamma_{j-1}=-1\,|\,\gamma_{j-1}=y\right)&=\frac{y}{4(y+1)}, 
    \end{split}
\end{equation} i.e. $(\gamma_j)_{j=0}^L$ is a lazy simple random walk conditioned to stay non-negative.
Indeed, for each fixed admissible path prefix ending at $\gamma_{j-1}=y$, and a fixed $j$th step (including its color when horizontal) ending at $\gamma_j=z$, the ratio of the total $\mathtt P_{L,x}$-weight of the continuations of the extended prefix to the total weight of the continuations of the original prefix is
$ (\mathbf C^{L-j}|H\rangle)_z/(\mathbf C^{L-j+1}|H\rangle)_y
=
(z+1)/(4(y+1))$.

Under $\mathtt P_{L,x}$, write
$(\tau_1,\ldots,\tau_L)=\Phi(\bg)$. Consider the natural filtration given by $\mathcal F_0=\sigma(\gamma_0)$ and $\mathcal F_j
=
\sigma(\gamma_0,\ldots,\gamma_j,\tau_1,\ldots,\tau_j)$ for $j\in\llbracket1,L\rrbracket$. Define
$\upvartheta_j=\tau_j-1/2-(\gamma_j-\gamma_{j-1})/2$.
By \eqref{eq:transition mathttP},  
\[
\mathbb E[\upvartheta_j\mid\mathcal F_{j-1}]=0,
\qquad
\mathbb E[\upvartheta_j^2\mid\mathcal F_{j-1}]= \frac{1}{8}.
\]
Therefore,
\begin{equation}\label{eq:comparison-HL-zeta-moment}
\mathbb E\left[
\left(\upvartheta_1+\dots+\upvartheta_L\right)^2
\right]
=
 \mathbb E[\upvartheta_1^2]+\dots+\mathbb E[\upvartheta_L^2]
=  \frac{L}{8}.
\end{equation}
Using the transition probabilities \eqref{eq:transition mathttP}, we also have  
\[
\mathbb E[
\gamma_j-\gamma_{j-1}\mid\mathcal F_{j-1}]
=
\frac1{2(\gamma_{j-1}+1)}
\geq0,\qquad
\mathbb E[
\gamma_j^2-\gamma_{j-1}^2\mid\mathcal F_{j-1}]
=
\frac{3\gamma_{j-1}+1}{2(\gamma_{j-1}+1)}
\leq\frac32.
\] 
Summing these up, we obtain
$\mathbb E[\gamma_L]\geq x$ 
and
$\mathbb E[\gamma_L^2]\leq x^2+ 3L/2$.
Since \(x\geq0\), we have
\begin{equation}\label{eq:comparison-HL-height-moment} 
\mathbb E[(\gamma_L-x)^2]
 =
\mathbb E[\gamma_L^2]-2x\mathbb E[\gamma_L]+x^2 \leq3L/2. 
\end{equation}
Note that 
$\sum_{j=1}^L\left(\tau_j- 1/2\right)
= (\gamma_L-x)/2+\sum_{j=1}^L\upvartheta_j$. 
Combining this with \eqref{eq:comparison-HL-zeta-moment} and
\eqref{eq:comparison-HL-height-moment}, in view of  the fact that \((u+v)^2\leq2u^2+2v^2\), we conclude the proof of \eqref{eq:second momemt bound}.
\end{proof}

Combining the above insights, we have the following lower bound on the total variation distance between the measures $\mu_{\mathrm{HL}}^{\a,\c}$ for different choices of $\a$ and $\c$.

\begin{proposition}\label{prop:comparison HL measures}
Assume \(\a\in[0,1)\) and \(\c\in(-1,0]\) satisfy \(\eta(\a,\c)=\a(1-\a)^{-2}+\c(1-\c)^{-2}\neq0\). Then there exists a finite closed interval $I_{\a,\c}\subset(0,\infty)$ depending only on $\a$ and $\c$, such that 
\begin{equation}\label{eq:statement prop TV}
 \sup_{\ell\in\mathbb N:\,\ell\geq n}n\,
\TV{
\mu_{\mathrm{HL}}^{\a,\c}
 \left((\tau_i)_{i\in\llbracket n,\ell\rrbracket}\in\,\cdot\,\right)
-
\mu_{\mathrm{HL}}^{0,0}
 \left((\tau_i)_{i\in\llbracket n,\ell\rrbracket}\in\,\cdot\,\right)}\in I_{\a,\c}\qquad\mbox{ for all }n\in\mathbb N.
\end{equation}
Moreover, for every \(\b\in[0,1)\) and \(\d\in(-1,0]\),
\[
\inf_{n\in\mathbb N}
\sup_{\ell\in\mathbb N:\,\ell\geq n}
\liminf_{N\to\infty}
n\,
\TV{
\mu_N^{(0,\a,\b,\c,\d)}
 \left((\tau_i)_{i\in\llbracket n,\ell\rrbracket}\in\,\cdot\,\right)
-
\mathbb P_N^{\mathsf{exc}}
 \left((\tau_i)_{i\in\llbracket n,\ell\rrbracket}\in\,\cdot\,\right)}
>0.
\]
\end{proposition}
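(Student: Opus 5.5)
Upper bound. Use the mixture identity \eqref{eq:decomp mu HL} from Lemma~\ref{lem:mathsf P}. Each $\mathsf P_{L,x}$ is a probability measure, so for every $\ell\geq n$ the total variation distance is at most
\[
\frac12\sum_{x\geq0}\left|\upnu_{n-1}^{\a,\c}(x)-\upnu_{n-1}^{0,0}(x)\right|.
\]
Lemma~\ref{lem:diff of nu} bounds this by $C/n$, uniformly in $\ell$. This gives the right endpoint of $I_{\a,\c}$.

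Lower bound. Here I would test against a single occupation variable rather than the whole block. Take $\ell=n$ and the test function $\tau_n$; the total variation distance on $\llbracket n,\ell\rrbracket$ is at least $|\mathbb E_{\mathrm{HL}}^{\a,\c}[\tau_n]-\mathbb E_{\mathrm{HL}}^{0,0}[\tau_n]|$. By Lemma~\ref{lem:asym of dn} this difference is only of order $n^{-3/2}$, which is too small. So I would instead use the sum over a block of length $L\asymp n$. By \eqref{eq:expan taun} and Lemma~\ref{lem:asym of dn}, the sum of the density differences over $i\in\llbracket n,n+L-1\rrbracket$ is comparable to
\[
\frac{\eta(\a,\c)}{\sqrt\pi}\sum_{i=n}^{n+L-1}i^{-3/2}\asymp \frac{|\eta(\a,\c)|}{\sqrt n}\qquad\text{for }L=n,
\]
valid for $n\ge n_0$.

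To convert this into a total variation bound, write $\Delta=\mu^{\a,\c}-\mu^{0,0}$ restricted to the block and let $f(\boldsymbol\upepsilon)=\sum_j\upepsilon_j-L/2$. Then $\int f\,d\Delta$ is the mean difference above. Using the mixture \eqref{eq:decomp mu HL} and Cauchy--Schwarz,
\[
\Big|\int f\,d\Delta\Big|\le\sum_x|\upnu_{n-1}^{\a,\c}(x)-\upnu_{n-1}^{0,0}(x)|\int|f|\,d\mathsf P_{L,x}\le \sqrt L\,\sum_x|\upnu_{n-1}^{\a,\c}(x)-\upnu_{n-1}^{0,0}(x)|,
\]
by the second moment bound \eqref{eq:second momemt bound}. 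This only bounds the $\upnu$-distance from below, not the TV distance of $\Delta$ itself. So I would truncate instead: split $f=f\mathbf 1_{|f|\le K\sqrt L}+f\mathbf 1_{|f|>K\sqrt L}$. The bounded part contributes at most $2K\sqrt L\,\TV{\Delta}$. The tail part is at most
\[
\sum_x|\upnu_{n-1}^{\a,\c}(x)-\upnu_{n-1}^{0,0}(x)|\int |f|\mathbf 1_{|f|>K\sqrt L}\,d\mathsf P_{L,x}\le \frac{C}{n}\cdot\frac{L}{K\sqrt L},
\]
using the second moment bound again. With $L=n$ this tail is $\le C/(K\sqrt n)$. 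Choosing $K$ large (depending on $\eta(\a,\c)$) makes it at most half of $|\eta|/(2\sqrt{\pi n})$. Hence $2K\sqrt n\,\TV{\Delta}\gtrsim |\eta|/\sqrt n$, i.e.\ $n\,\TV{\Delta}\ge c>0$ for $n\ge n_0$.

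For the finitely many $n<n_0$ I need positivity at each $n$ separately. Here I would use that $\mu^{\a,\c}_{\mathrm{HL}}\neq\mu^{0,0}_{\mathrm{HL}}$ on $\llbracket n,\ell\rrbracket$ for some large $\ell$: the density difference at site $i$ is nonzero for all large $i$ by Lemma~\ref{lem:asym of dn}. The minimum over these finitely many $n$ then gives the left endpoint of the closed interval $I_{\a,\c}$.

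Second claim. By Lemma~\ref{lem:limit to half space}, for fixed $n\le\ell$ the marginal $\mu_N^{(0,\a,\b,\c,\d)}$ on $\llbracket 1,\ell\rrbracket$ converges in TV to $\mu_{\mathrm{HL}}^{\a,\c}$. Applying the same lemma at $\a=\b=\c=\d=0$, together with Proposition~\ref{prop:SpecialExcursion}, shows that $\mathbb P_N^{\exc}$ converges to $\mu_{\mathrm{HL}}^{0,0}$. Hence the $\liminf$ equals the half-line TV distance, and the first part gives the claim.

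The main obstacle is the lower bound: keeping the truncation argument uniform in $n$, and handling small $n$.
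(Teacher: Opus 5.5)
Your proposal is correct and follows the paper's proof essentially step by step. The upper bound comes from the mixture \eqref{eq:decomp mu HL} together with Lemma~\ref{lem:diff of nu}. The lower bound comes from the block sum $d_n+\dots+d_{2n-1}\asymp n^{-1/2}$ given by Lemma~\ref{lem:asym of dn}, converted into a total variation bound through a bounded test function whose error is controlled by the second moment bound \eqref{eq:second momemt bound}. Small $n$ are handled by a single site with nonzero density difference, and the second claim follows from Lemma~\ref{lem:limit to half space} with Proposition~\ref{prop:SpecialExcursion}.

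The only difference is cosmetic: you truncate the linear statistic at level $K\sqrt{L}$, whereas the paper uses $\sin(\lambda X_n/\sqrt{n})$ with the Taylor bound $|\sin u-u|\le u^2/2$. Both give constants uniform in $n$.
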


\begin{proof} We first prove the upper bound in \eqref{eq:statement prop TV}. Using Lemma \ref{lem:mathsf P},
\[
\begin{aligned}
\TV{
\mu_{\mathrm{HL}}^{\a,\c}
 \left((\tau_i)_{i\in\llbracket n,n+L-1\rrbracket}\in\,\cdot\,\right)
-
\mu_{\mathrm{HL}}^{0,0}
\left((\tau_i)_{i\in\llbracket n,n+L-1\rrbracket}\in\,\cdot\,\right)}
\leq
\frac12\sum_{x\geq0}\left|\upnu_{n-1}^{\a,\c}(x)-\upnu_{n-1}^{0,0}(x)\right| 
\leq\frac{C_{\upnu}}{2n},
\end{aligned}
\]
where
 $C_{\upnu} =
\sup_{r\in\mathbb Z_{\geq0}}(r+1)
\sum_{x\geq0} |\upnu_{r}^{\a,\c}(x)-\upnu_{r}^{0,0}(x) |<\infty$ by Lemma~\ref{lem:diff of nu}. 
Taking the supremum over \(L\in\mathbb N\),  
\begin{equation}\label{eq:comparison-HL-upper}
 \sup_{\ell\in\mathbb N:\,\ell\geq n}n\,
\TV{
\mu_{\mathrm{HL}}^{\a,\c}
 \left((\tau_i)_{i\in\llbracket n,\ell\rrbracket}\in\,\cdot\,\right)
-
\mu_{\mathrm{HL}}^{0,0}
 \left((\tau_i)_{i\in\llbracket n,\ell\rrbracket}\in\,\cdot\,\right)}
\leq C_{\upnu}/2,
\qquad n\in\mathbb N.
\end{equation} 

We next prove the lower bound in \eqref{eq:statement prop TV}. Denote $d_j=\mathbb E_{\mathrm{HL}}^{\a,\c}[\tau_j]-\mathbb E_{\mathrm{HL}}^{0,0}[\tau_j]$ for $j\in\mathbb N$.
By Lemma~\ref{lem:asym of dn}, we have $\lim_{j\to\infty}j^{3/2}d_j=- \eta(\a,\c)/\sqrt\pi$, and hence
\begin{equation*} 
\sqrt n\sum_{j=n}^{2n-1}d_j
 =
\frac1n\sum_{j=n}^{2n-1}
\left(\frac jn\right)^{-3/2}j^{3/2}d_j 
 \longrightarrow
-\frac{\eta(\a,\c)}{\sqrt\pi}
\int_1^2u^{-3/2}\,du 
 =
-\frac{2\eta(\a,\c)}{\sqrt\pi}
\left(1-\frac1{\sqrt2}\right)\qquad\mbox{ as }n\to\infty.
\end{equation*}
 Since \(\eta(\a,\c)\neq0\), we know that there exist $C_1\in(0,\infty)$ and $N_1\in\mathbb N$, depending only on \(\a,\c\), such that
 \begin{equation}\label{eq:comparison-HL-block-mean-lower}
\left|
d_n+\dots+d_{2n-1}
\right|
\geq C_1/\sqrt n 
\qquad \mbox{ for each } n\geq N_1.
\end{equation}

Denote $\Delta_{n,L}= \mu_{\mathrm{HL}}^{\a,\c}
 \left((\tau_i)_{i\in\llbracket n,n+L-1\rrbracket}
 \in\,\cdot\,\right)
-
\mu_{\mathrm{HL}}^{0,0}
 \left((\tau_i)_{i\in\llbracket n,n+L-1\rrbracket}
 \in\,\cdot\,\right) $, which is a signed measure on \(\{0,1\}^L\). Denote by \(|\Delta_{n,L}|\)  its variation measure.
By Lemma \ref{lem:mathsf P} twice, we see that
\[
\left|\Delta_{n,L}(\boldsymbol\upepsilon)\right|
\leq
\sum_{x\geq0}
\left|\upnu_{n-1}^{\a,\c}(x)-\upnu_{n-1}^{0,0}(x)\right|
\mathsf P_{L,x}(\boldsymbol\upepsilon),\qquad\boldsymbol\upepsilon\in\{0,1\}^L.
\]
as well as 
\begin{equation}\label{eq:comparison-HL-signed-moment} 
\begin{split}
\int \left(\tau_n+\dots+\tau_{n+L-1}-L/2\right)^2\,d|\Delta_{n,L}| &\leq
\sum_{x\geq0}
\left|\upnu_{n-1}^{\a,\c}(x)-\upnu_{n-1}^{0,0}(x)\right|
\int\left(-\frac{L}{2}+\sum_{i=0}^{L-1}\tau_{n+i}\right)^2\,d\mathsf P_{L,x}  \\
& \leq
L\sum_{x\geq0}\left|\upnu_{n-1}^{\a,\c}(x)-\upnu_{n-1}^{0,0}(x)\right| 
 \leq
C_{\upnu} L/n. 
\end{split}
\end{equation} 
Let  
$\lambda
=
\min (1,C_1/(C_{\upnu}+1) )\in(0,1]$. 
Then \(C_{\upnu}\lambda\leq C_1\). Denote $X_n=\tau_n+\dots+\tau_{2n-1}-n/2$. 
Note that 
\[
\int X_{n}\,d\Delta_{n,n}
=d_n+\dots+d_{2n-1}.
\] 
Using  
\eqref{eq:comparison-HL-block-mean-lower}  and
\eqref{eq:comparison-HL-signed-moment} with \(L=n\), in view of $|\sin u-u|\leq u^2/2$ for $u\in\mathbb R$, we obtain
\[
\begin{aligned}
\left|
\int
\sin\left(\frac{\lambda X_{n }}{\sqrt n}\right)
\,d\Delta_{n,n}
\right|
&\geq
\frac{\lambda}{\sqrt n}
\left|
\int X_{n }\,d\Delta_{n,n}
\right|
-
\frac{\lambda^2}{2n}
\int X_{n }^2\,d|\Delta_{n,n}| \geq
\frac{\lambda C_1}{n}
-
\frac{C_{\upnu}\lambda^2}{2n} \geq
\frac{\lambda C_1}{2n},
\qquad n\geq N_1.
\end{aligned}
\]
It is a standard fact that for every function \(|f|\leq1\) and probability measures \(P,Q\) on a finite state space, we have
$\left|\int f\,d(P-Q)\right|
\leq 
2\TV{P-Q}$. Taking $f$ to be the sine function, in view of the above display, for $n\geq N_1$,
\[
\sup_{\ell\in\mathbb N:\,\ell\geq n} 
\TV{
\mu_{\mathrm{HL}}^{\a,\c}
 \left((\tau_i)_{i\in\llbracket n,\ell\rrbracket}\in\,\cdot\,\right)
-
\mu_{\mathrm{HL}}^{0,0}
 \left((\tau_i)_{i\in\llbracket n,\ell\rrbracket}\in\,\cdot\,\right)}=\sup_{L\in\mathbb N}\TV{\Delta_{n,L}}
\geq
\TV{\Delta_{n,n}}
\geq \lambda C_1/(4n).
\]
By Lemma~\ref{lem:asym of dn} and \(\eta(\a,\c)\neq0\), there exists $N_2\geq N_1$ such that \(d_{N_2}\neq0\). Hence, for $n < N_1$,
 \[
\begin{aligned}
\sup_{\ell\in\mathbb N:\,\ell\geq n} 
\TV{
\mu_{\mathrm{HL}}^{\a,\c}
 \left((\tau_i)_{i\in\llbracket n,\ell\rrbracket}\in\,\cdot\,\right)
-
\mu_{\mathrm{HL}}^{0,0}
 \left((\tau_i)_{i\in\llbracket n,\ell\rrbracket}\in\,\cdot\,\right)}
 \geq
\TV{
\mu_{\mathrm{HL}}^{\a,\c}(\tau_{N_2}\in\,\cdot\,)
-
\mu_{\mathrm{HL}}^{0,0}(\tau_{N_2}\in\,\cdot\,)} =
|d_{N_2}|.
\end{aligned}
\]
Combining the last two displayed equations and \eqref{eq:comparison-HL-upper}, we conclude the first assertion in the proposition.

It remains to prove the last assertion. By Lemma~\ref{lem:limit to half space}, for each \(\b\in[0,1)\), \(\d\in(-1,0]\), and  \(1\leq n\leq\ell\),
\[
\lim_{N\to\infty}\TV{\mu_N^{(0,\a,\b,\c,\d)}
 \left((\tau_i)_{i\in\llbracket n,\ell\rrbracket}\in\,\cdot\,\right)
-
\mu_{\mathrm{HL}}^{\a,\c}
 \left((\tau_i)_{i\in\llbracket n,\ell\rrbracket}\in\,\cdot\,\right)}=0.
\]
Recall from Proposition~\ref{prop:SpecialExcursion} that $\mathbb P_N^{\mathsf{exc}}=\mu_N^{(0,0,0,0,0)}$. Therefore,
\[
\lim_{N\to\infty}\TV{\mathbb P_N^{\mathsf{exc}}
 \left((\tau_i)_{i\in\llbracket n,\ell\rrbracket}\in\,\cdot\,\right)
-
\mu_{\mathrm{HL}}^{0,0}
 \left((\tau_i)_{i\in\llbracket n,\ell\rrbracket}\in\,\cdot\,\right)}=0.
\]
Combining the last two displayed equations, in view of the triangle inequality,
\begin{equation*}
    \begin{split}
&\lim_{N\to\infty}\TV{
\mu_N^{(0,\a,\b,\c,\d)}
 \left((\tau_i)_{i\in\llbracket n,\ell\rrbracket}\in\,\cdot\,\right)
-
\mathbb P_N^{\mathsf{exc}}
 \left((\tau_i)_{i\in\llbracket n,\ell\rrbracket}\in\,\cdot\,\right)}\\
 &=
 \TV{
\mu_{\mathrm{HL}}^{\a,\c}
\left((\tau_i)_{i\in\llbracket n,\ell\rrbracket}\in\,\cdot\,\right)
-
\mu_{\mathrm{HL}}^{0,0}
\left((\tau_i)_{i\in\llbracket n,\ell\rrbracket}\in\,\cdot\,\right)}.
    \end{split}
\end{equation*} 
Therefore,
\begin{equation*}
    \begin{split}
        &\inf_{n\in\mathbb N}
\sup_{\ell\in\mathbb N:\,\ell\geq n}
\liminf_{N\to\infty}
n\,
\TV{
\mu_N^{(0,\a,\b,\c,\d)}
 \left((\tau_i)_{i\in\llbracket n,\ell\rrbracket}\in\,\cdot\,\right)
-
\mathbb P_N^{\mathsf{exc}}
 \left((\tau_i)_{i\in\llbracket n,\ell\rrbracket}\in\,\cdot\,\right)}\\
 &=\inf_{n\in\mathbb N}\sup_{\ell\in\mathbb N:\,\ell\geq n}n\,
\TV{
\mu_{\mathrm{HL}}^{\a,\c}
 \left((\tau_i)_{i\in\llbracket n,\ell\rrbracket}\in\,\cdot\,\right)
-
\mu_{\mathrm{HL}}^{0,0}
 \left((\tau_i)_{i\in\llbracket n,\ell\rrbracket}\in\,\cdot\,\right)}>0,
    \end{split}
\end{equation*} 
in view of the first assersion. We conclude the proof.
\end{proof}

\begin{proof}[Proof of Theorem \ref{thm:converse TASEP again}]
For each $\a,\b\in[0,1)$, $\c,\d\in(-1,0]$,  and \(N\geq \ell\geq n\), let
\[
D_N^{\a,\b,\c,\d}(n,\ell) =\TV{
\mu_N^{(0,\a,\b,\c,\d)}
\left((\tau_i)_{i\in\llbracket n,\ell\rrbracket}\in\cdot\right)
-
\mathbb P_N^{\mathsf{exc}}
\left((\tau_i)_{i\in\llbracket n,\ell\rrbracket}\in\cdot\right)}.
\]
In view of the last assertion of Proposition~\ref{prop:comparison HL measures}, if $\eta(\a,\c)\neq0$, we have
\[
C_{\a,\b,\c,\d}=
\inf_{n\in\mathbb N}
\sup_{\ell\in\mathbb N:\,\ell\geq n}
\liminf_{N\to\infty} nD_N^{\a,\b,\c,\d}(n,\ell) >0.
\]
Since $\eta(\a,\c)\eta(\b,\d)\neq0$, using the above for both $(\a,\b,\c,\d)$ and $(\b,\a,\d,\c)$, we know that for each fixed $m,k\in\mathbb N$,  there exist integers
\(M_{\mathrm L}\geq\ell_{\mathrm L}\geq m\) and
\(M_{\mathrm R}\geq\ell_{\mathrm R}\geq k\) such that
\[
D_N^{\a,\b,\c,\d}(m,\ell_{\mathrm L})>\frac{C_{\a,\b,\c,\d}}{2m}
\quad\text{for every }N\geq M_{\mathrm L},
\qquad
D_N^{\b,\a,\d,\c}(k,\ell_{\mathrm R})>\frac{C_{\b,\a,\d,\c}}{2k}
\quad\text{for every }N\geq M_{\mathrm R}.
\]
Using the particle-hole duality as in Lemma \ref{lem:particle hole}, when \(N\geq m+k-1\), we have 
\[
D_N^{\a,\b,\c,\d}(m,N-k+1)=D_N^{\b,\a,\d,\c}(k,N-m+1).
\]
Let $N_0=
\max (
m+k-1,\,
M_{\mathrm L},\,
M_{\mathrm R},\,
\ell_{\mathrm L}+k-1,\,
m+\ell_{\mathrm R}-1 )$. Then for \(N\geq N_0\), in view of  the last two displayed equations, $\llbracket m,\ell_{\mathrm L}\rrbracket
\subseteq
\llbracket m,N-k+1\rrbracket$, and $\llbracket k,\ell_{\mathrm R}\rrbracket
\subseteq
\llbracket k,N-m+1\rrbracket$,  
\[
\begin{aligned}
D_N^{\a,\b,\c,\d}(m,N-k+1)
&\geq
\max\left(
D_N^{\a,\b,\c,\d}(m,\ell_{\mathrm L}),
 D_N^{\b,\a,\d,\c}(k,\ell_{\mathrm R})
\right) >  
\frac{\min\left(C_{\a,\b,\c,\d},C_{\b,\a,\d,\c}\right)}
{2\min(m,k)}.
\end{aligned}
\]
We conclude the proof.
\end{proof}

\section{Moderate deviation bounds for the open ASEP in the maximal current phase: Proof of Theorem \ref{thm:moderate deviation MC}}\label{sec:Moderate}

In the following, our goal is to show a moderate deviation bound on the number of particles under the stationary measure of the open ASEP in the maximal current phase. This is the content of Theorem \ref{thm:moderate deviation MC}, which we state again below for clarity.

\begin{theorem}\label{thm:moderate deviation MC again}
Assume $q\in[0,1)$, $\a,\b\in[0,1]$, $\a\b<1$, and $\c,\d\in(-1,0]$. 
For \(N\in\mathbb N\), let \(\mu_N^{(q,\a,\b,\c,\d)}\) be the stationary measure of the open ASEP on the lattice $\llbracket1,N\rrbracket$, and let \(\tau_i\in\{0,1\}\) denote the occupation variable at
site \(i\). There exists a constant \(C\in(0,\infty)\), depending only on \(q,\a,\b,\c,\d\), such that, for every \(N\in\mathbb N\), every nonempty set
\(I\subseteq\llbracket1,N\rrbracket\), and every \(x\geq0\),
\[ 
 \mu_N^{(q,\a,\b,\c,\d)}
 \left(
 \left|\sum_{i\in I}\tau_i-\frac{|I|}{2}\right|
 \geq x\sqrt{|I|}
 \right)
 \leq C e^{-x^2/2}.
\]
\end{theorem}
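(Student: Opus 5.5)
Via Theorem~\ref{thm:Motzkin representation}, write $\mu_N^{(q,\a,\b,\c,\d)}$ as the pushforward under $\Phi$ of the path measure $\mathbb P_N^{(q,\a,\b,\c,\d)}$. By \eqref{eq:full-density-ASEP-main}, this measure has density $W_{1,N}/\mathrm E_N^{\a,\b;\uptheta}[W_{1,N}]$ with respect to the auxiliary measure $\mathrm P_N^{\a,\b;\uptheta}$, which carries no bulk weights. Since $0\le W_{1,N}\le 1$, any event $A$ satisfies
\[
\mu_N(A)\le \mathrm P_N^{\a,\b;\uptheta}(\Phi^{-1}A)\big/\mathrm E_N^{\a,\b;\uptheta}[W_{1,N}].
\]
The proof then needs two ingredients.

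\textbf{(i)} A lower bound $\inf_N \mathrm E_N^{\a,\b;\uptheta}[W_{1,N}]>0$. By the proof of Theorem~\ref{thm:main open ASEP again}, this expectation is comparable to $\Upgamma_{\varsigma_L}\Upupsilon_{\varsigma_R}>0$ up to errors $O(N^{-1/2})$; the first estimates there give this for $m=k\approx N/2$. For small $N$, positivity is immediate from the all-flat path. The cases $\a=1$ or $\b=1$ reduce to these by particle–hole duality (Lemma~\ref{lem:particle hole}) and Step 2 of that proof. This leaves a Chernoff bound under $\mathrm P:=\mathrm P_N^{\a,\b;\uptheta}$ with constant $1$ in the exponent. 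To get exactly $e^{-x^2/2}$, set $X_i=\tau_i-1/2$ and show the Hoeffding-type bound
\[
\mathrm E\bigl[e^{t\sum_{i\in I}X_i}\bigr]\le C e^{t^2|I|/8}.
\]
Optimizing with $t=4x/\sqrt{|I|}$ then gives $2Ce^{-2x^2}\le 2Ce^{-x^2/2}$, which leaves some slack.

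\textbf{(ii)} The exponential moment bound, which follows from negative association. For $t\in\mathbb R$, consider the tilted measures $\mathrm P^{(t)}$ with density proportional to $e^{t\sum_{i\in J}\tau_i}$ for $J\subseteq I$. The aim is to show that under every such tilt (including $t=0$),
\[
\mathrm{Cov}^{(t)}(\tau_i,\tau_j)\le 0\quad (i\neq j).
\]
Then $\frac{d^2}{dt^2}\log \mathrm E[e^{t\sum_{I}\tau_i}]=\mathrm{Var}^{(t)}(\sum_I\tau_i)\le\sum_I \mathrm{Var}^{(t)}(\tau_i)\le |I|/4$. Integrating twice, and using a first-moment bound $|\mathrm E\sum_I X_i|\le C'$ (from the meander/excursion structure, or absorbed by the slack), yields $\log\mathrm E e^{t\sum X_i}\le C''+ t^2|I|/8$.

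To prove the covariance inequality, use the matrix product structure. The tilted measure is again an MPA-type measure, with $\mathbf D$ replaced by $e^{t}\mathbf D$ at sites of $J$, matrices $\mathbf D,\mathbf E$ from Theorem~\ref{thm:TASEP rep} (the TASEP matrices realize $Z_N$, and the $\uptheta$, $\a$, $\b$ weights sit in the boundary vectors), and boundary vectors with entries $\a^x$ and $\b^y\uptheta(y)$. For $i<j$, write the products as $\langle W|\mathbf A\,\mathbf M_i\,\mathbf B\,\mathbf M_j\,\mathbf C|V\rangle$. Then $\mathrm{Cov}(\tau_i,\tau_j)$ is proportional to the $2\times2$ determinant
\[
\det\begin{pmatrix} \langle W|\mathbf A\mathbf D\mathbf B\mathbf D\mathbf C|V\rangle & \langle W|\mathbf A\mathbf D\mathbf B\mathbf E\mathbf C|V\rangle\\ \langle W|\mathbf A\mathbf E\mathbf B\mathbf D\mathbf C|V\rangle & \langle W|\mathbf A\mathbf E\mathbf B\mathbf E\mathbf C|V\rangle\end{pmatrix}.
\]
Write the row vector $u^{\mathbf D}=\langle W|\mathbf A\mathbf D$, $u^{\mathbf E}=\langle W|\mathbf A\mathbf E$ and the column vectors $v^{\mathbf D}=\mathbf D\mathbf C|V\rangle$, $v^{\mathbf E}=\mathbf E\mathbf C|V\rangle$, indexed by height. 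By Cauchy–Binet the determinant expands as
\[
\sum_{x<x',\,y<y'}\det(u)_{x,x'}\,\det(\mathbf B)_{\{x,x'\},\{y,y'\}}\,\det(v)_{y,y'}.
\]
Since $\mathbf C$, $\mathbf B$ are tridiagonal with nonnegative entries and nonnegative $2\times2$ minors, they are TN$_2$.

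The main obstacle is the sign of the boundary minors. The expected mechanism is that $\mathbf D$ shifts heights up, so $u^{\mathbf D}$ is stochastically larger than $u^{\mathbf E}$ (a likelihood-ratio ordering, i.e.\ a $2\times2$ minor of one sign), while $v^{\mathbf D}$ versus $v^{\mathbf E}$ has the opposite ordering. Combined with TN$_2$ of $\mathbf B$, this should give a determinant $\le0$. The ordering should follow from log-concavity in height of the vectors $\langle W|\mathbf A$ and $\mathbf C|V\rangle$, which in turn follows because $x\mapsto \a^x$ and $y\mapsto\b^y\uptheta(y)$ are log-concave ($\uptheta(y+1)/\uptheta(y)=(1-\c\d q^y)/(1-q^{y+1})$ is nonincreasing) and log-concavity is preserved by the lazy-walk kernels $\mathbf C$ and by the tilted $\mathbf D,\mathbf E$. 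I would first verify this preservation lemma carefully, including at the boundary at height $0$, since that is where the argument could fail.
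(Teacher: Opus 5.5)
Your architecture matches the paper's. You dominate $\mu_N$ by a path measure without bulk weights, using $W_{1,N}\le 1$ and a uniform lower bound on $\mathrm E_N^{\a,\b;\uptheta}[W_{1,N}]$. Your route to that lower bound, through the proof of Theorem~\ref{thm:main open ASEP again}, is a valid alternative to the paper's $W_{1,N}\ge e^{-C\sum_i q^{\gamma_i}}$ plus Markov. You then prove negative correlation under exponential tilts via a $2\times2$ determinant, Cauchy--Binet and $\mathrm{TN}_2$.

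\textbf{The log-concavity input is false.} With $s=q^y$, the ratio $\uptheta(y+1)/\uptheta(y)=(1-\c\d s)/(1-qs)$ has $s$-derivative $(q-\c\d)/(1-qs)^2$. So it is nondecreasing in $y$ whenever $\c\d>q$, and this regime is allowed by the hypotheses. Concretely, for $q=0$ one has $\uptheta=(1,1-\c\d,1-\c\d,\dots)$. Take $j=N$, $v_y=\b^y\uptheta(y)$ and $q_h=v_h+v_{h+1}$. Then the minor of $[\mathbf D v\ \ \mathbf E v]$ on rows $\{1,2\}$ is $q_1^2-q_0q_2=-\b^2(1+\b)(1-\c\d)\,\c\d<0$ whenever $\b>0$ and $\c\d>0$. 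So your $\mathrm{TN}_2$ proof of the covariance inequality breaks exactly at the right boundary vector. The height-$0$ issue you flag is harmless: zero extension and the factorization $\mathbf E+t\mathbf D=(\mathbf{Id}+t\mathbf d)(\mathbf{Id}+\mathbf e)$, with $\mathbf d,\mathbf e$ the shift parts, handle it. The fix is to discard $\uptheta$ before tilting. Since $(\c\d;q)_\infty\le\uptheta\le(q;q)_\infty^{-1}$, one has $\mathbb P_N^{(q,\a,\b,\c,\d)}(E)\le C\,\mathsf Q_N^{\a,\b}(E)$ with $\mathsf Q_N^{\a,\b}(\bg)\propto\a^{\gamma_0}\b^{\gamma_N}$. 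Both boundary vectors of $\mathsf Q_N^{\a,\b}$ are geometric, hence log-concave. This is how the paper proceeds.

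\textbf{The centering is a genuine gap.} Your bound $|\mathrm E\sum_{i\in I}X_i|\le C'$ is false. Colors are fair coins given the heights, so $\mathrm E\sum_{i=1}^{\ell}X_i=\tfrac12\mathrm E[\gamma_\ell-\gamma_0]$, which is of order $\sqrt N$ for $\ell=\lfloor N/2\rfloor$ in the maximal current phase. Also, integrating $\psi''\le|I|/4$ leaves the term $t\,\mathrm E[\sum_I X_i]$, which is not bounded uniformly in $t$. Your quadratic slack can absorb a bias of order $\sqrt{|I|}$. But that bias bound must be proved for arbitrary non-interval $I$, and it cannot be bypassed: negative correlation only gives concentration around $\mathrm E[Y_I]$, while the theorem itself forces $|\mathrm E[Y_I]-|I|/2|=O(\sqrt{|I|})$. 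The paper gets this from two facts.
\begin{enumerate}
\item The densities are monotone in the site. With $\mathbf C=\mathbf D+\mathbf E$, $\mathsf E[\tau_k]-\mathsf E[\tau_{k+1}]=\langle\a|\mathbf C^{k-1}P_0\mathbf C^{N-k-1}|\b\rangle/\langle\a|\mathbf C^N|\b\rangle\ge0$, because $\mathbf D\mathbf C-\mathbf C\mathbf D=P_0$ is the projection onto height $0$. Hence $\mathrm E[Y_I]-|I|/2$ lies between the biases of the last and the first $|I|$ sites.
\item These interval biases are $O(\sqrt{|I|})$ by the height-increment tail bounds of Lemma~\ref{lem:endpoint-moderate-deviation}.
\end{enumerate}
Then, for $x\ge 2C_1$, the event lies inside $\{|Y_I-\mathrm E Y_I|\ge x\sqrt{|I|}/2\}$, which gives $2e^{-x^2/2}$.
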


Let us remark that for $1 \ll x \ll \log(|I|)$, the above result  follows directly from Theorem~\ref{thm:main open ASEP}. However, in order to show the above moderate deviation result for general $x$ and general subsets $I$, we require some new insights. In particular, we will invoke a novel correlation result for a tilted path measure, which is of independent interest.

\subsection{A moderate deviation bound for lattice intervals}

As a first step towards Theorem~\ref{thm:moderate deviation MC again}, we provide a moderate deviation bound for a particle measure closely related to the signed measure for bicolored Motzkin paths from Theorem~\ref{thm:TASEP rep} for the open TASEP, and when the set $I$ is a lattice interval; see also \cite{BrycWang2024MotzkinII} for related work on fluctuations of Motzkin paths with varying endpoints. 

\begin{lemma}\label{lem:endpoint-moderate-deviation}
Let $\a\in[0,1)$, $\b\in[0,1]$.  For $N\in\mathbb N$, let $\mathsf Z_N^{\a,\b}
 =\sum_{\bg\in\mathrm{BCMot}_N}\a^{\gamma_0}\b^{\gamma_N}$. Then $\mathsf Z_N^{\a,\b}\in(0,\infty)$, and 
 \[
  \mathsf Q_N^{\a,\b}(\bg)
 = \a^{\gamma_0}\b^{\gamma_N}/\mathsf Z_N^{\a,\b},
 \qquad
 \bg=(\gamma_0,\ldots,\gamma_N)\in\mathrm{BCMot}_N,
 \]
 defines a probability measure.  If $\bg\sim\mathsf Q_N^{\a,\b}$ and $(\tau_1,\ldots,\tau_N)=\Phi(\bg)$, then there exist constants $C,c\in(0,\infty)$, depending only on $\a$ and $\b$, such that, for every nonempty lattice interval $J\subseteq\llbracket1,N\rrbracket$ and every $x\geq0$,
\begin{equation}\label{eq:endpoint-Motzkin-moderate-deviation}
  \mathsf Q_N^{\a,\b}
 \left(
 \left|\sum_{i\in J}\tau_i-\frac{|J|}{2}\right|
 \geq x\sqrt{|J|}
 \right)
 \leq Ce^{-cx^2}.
\end{equation}
\end{lemma}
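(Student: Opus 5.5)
The plan is to decompose the particle count along the path. For $J=\llbracket s,t\rrbracket$ we have $\sum_{i\in J}(\tau_i-\tfrac12)=\tfrac12(\gamma_t-\gamma_{s-1})+\sum_{i\in J}\upvartheta_i$, where $\upvartheta_i=\tau_i-\tfrac12-\tfrac12(\gamma_i-\gamma_{i-1})$. Given the path $\bg$ without colors, the $\upvartheta_i$ vanish on up/down steps and are independent fair $\pm\tfrac12$ signs on horizontal steps. Under $\mathsf Q_N^{\a,\b}$, conditionally on the heights, the colors are i.i.d. fair, since the weight $\a^{\gamma_0}\b^{\gamma_N}$ does not see them. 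Hoeffding's inequality then gives $\mathsf Q_N^{\a,\b}(|\sum_{J}\upvartheta_i|\geq x\sqrt{|J|}/2)\leq 2e^{-x^2/2}$. It therefore suffices to prove the Gaussian tail
\[
\mathsf Q_N^{\a,\b}\bigl(|\gamma_t-\gamma_{s-1}|\geq x\sqrt{|J|}\bigr)\leq Ce^{-cx^2}.
\]

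\textbf{Step 1: existence and normalization.} Finiteness and positivity of $\mathsf Z_N^{\a,\b}$ follow exactly as in Proposition~\ref{prop:endpt wt compare} with $\uptheta\equiv1$, using $Z_N(x,y)\leq 4^N$ and $\a<1$. The lower bounds $\mathsf Z_N^{\a,\b}\geq Z_N(0,0)$ (if $\b<1$) and $\mathsf Z_N^{\a,\b}\geq Z_N(0,\mathrm{free})$ (if $\b=1$) come from Lemmas~\ref{lem:bounds ZNxy} and~\ref{lem:bounds ZNxfree}.

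\textbf{Step 2: reduce the increment to a bridge.} Write $\ell=t-s+1$ and condition on $(\gamma_{s-1},\gamma_t)=(u,v)$. The joint weight of this pair is $\mathrm A(u)Z_\ell(u,v)\mathrm B(v)$, with $\mathrm A(u)=\sum_x\a^xZ_{s-1}(x,u)$ and $\mathrm B(v)=\sum_y\b^yZ_{N-t}(v,y)$. Lemma~\ref{lem:bounds ZNxy} gives
\[
Z_\ell(u,v)\leq 4^{\ell+1}\frac{(u+1)(v+1)}{(\ell+1)^{3/2}}\,e^{-(u-v)^2/(\ell+1)},
\]
a Gaussian factor in $u-v$ at scale $\sqrt\ell$. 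We bound the probability of $|u-v|\geq x\sqrt\ell$ by the ratio
\[
\frac{\sum_{|u-v|\geq x\sqrt\ell}\mathrm A(u)Z_\ell(u,v)\mathrm B(v)}{\mathsf Z_N^{\a,\b}},
\]
inserting the upper bounds \eqref{eq:endpoint-prefix-bound} for $\mathrm A$ and for $\mathrm B$ (or, when $\b=1$, \eqref{eq:endpoint-suffix-free-bound}). The resulting sums are then estimated via the discrete Gaussian sums $\mathfrak X$ of Lemma~\ref{lem:discrete Gaussian}.

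\textbf{Step 3: the polynomial prefactors (main obstacle).} The difficulty is that the factors $(u+1)(v+1)$ and the normalization give polynomial prefactors such as $(N+1)^{3/2}/((s)(\ell)(N-t))^{3/2}$, which must not blow up. First split $e^{-(u-v)^2/(\ell+1)}\leq e^{-x^2\ell/(2(\ell+1))}e^{-(u-v)^2/(2(\ell+1))}$. This extracts the factor $e^{-cx^2}$, and the remaining sum is exactly the total-mass computation with Gaussian width $2(\ell+1)$ instead of $\ell+1$. That modified total mass should be comparable, up to constants depending only on $\a,\b$, to $\mathsf Z_N^{\a,\b}$ itself. The cleanest way to verify this is to compare it with Chapman--Kolmogorov for the unconstrained walk, via the free Gaussian (local CLT) bounds of Lemma~\ref{lem:bounds on combinatorial numbers}. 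Alternatively, one can directly check the three-block convolution estimate using \eqref{eq:endpoint-two-gaussian-moment} and \eqref{eq:endpoint-free-first-moment}, much as in the proof of Proposition~\ref{prop:endpt wt compare}. If the constant-factor comparison fails in an edge regime (for instance very small $\ell$), the bound $|\gamma_t-\gamma_{s-1}|\leq\ell$ makes the statement trivial for $x>\sqrt\ell$. For $x\leq\sqrt\ell$ with $\ell$ bounded, the claim holds by choosing $C$ large.

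Finally, combine the increment bound with the Hoeffding bound through a union bound, splitting the threshold $x\sqrt{|J|}$ into two halves. This yields \eqref{eq:endpoint-Motzkin-moderate-deviation} with some $c>0$.
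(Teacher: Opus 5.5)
Your reduction is correct: writing $\sum_{i\in J}(\tau_i-\tfrac12)$ as $\tfrac12(\gamma_t-\gamma_{s-1})$ plus a color sum, and applying conditional Hoeffding to the colors, is exactly how the paper concludes. Everything therefore rests on a Gaussian tail for the height increment that is uniform in the position and length of $J$. \textbf{Step 3 does not deliver this.}

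The pointwise bound $Z_\ell(u,v)\leq 4^{\ell+1}(u+1)(v+1)(\ell+1)^{-3/2}e^{-(u-v)^2/(\ell+1)}$ from Lemma~\ref{lem:bounds ZNxy} overestimates $Z_\ell(u,v)$ by a factor of order $(u+1)(v+1)/\ell$ once $u,v\gg\sqrt\ell$. That is the typical situation for a short interval in the bulk. Take $\b<1$, $s\approx N-t\approx N/2$ and $\ell=\sqrt N$. Typical heights are then of order $\sqrt N$, and inserting this bound together with \eqref{eq:endpoint-prefix-bound} gives a ``modified total mass'' of order $4^N N^{-1/2}\ell^{-1}$. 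On the other hand, $\mathsf Z_N^{\a,\b}\asymp 4^N N^{-3/2}$. After extracting $e^{-cx^2}$ you are left with $(N/\ell)\,e^{-cx^2}$, which is not uniformly bounded.

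So the claimed comparability with $\mathsf Z_N^{\a,\b}$ is false. The loss occurs for every $1\ll\ell\ll\min(s,N-t)$, so your fallback for bounded $\ell$ does not rescue it. Evaluating the three-block sum with Lemma~\ref{lem:discrete Gaussian} would only compute this too-large quantity. Switching to the unconstrained bound $Z_\ell(u,v)\leq\binom{2\ell}{\ell+v-u}$ with Chapman--Kolmogorov fails in the opposite regime. For $J=\llbracket 1,N\rrbracket$ and $\b<1$ it loses a factor of order $N$, because it ignores positivity exactly where the heights are $O(1)$. No single pointwise kernel bound works in all regimes.

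The missing idea is to avoid any pointwise bound on $Z_\ell$ and instead use the harmonic function $v\mapsto v+1$ of the killed walk. When the suffix after $J$ has length at least a constant fraction of $N$, one has $4^{t}\mathrm B(v)/\mathsf Z_N^{\a,\b}\leq C(v+1)$. Combine this with two further inputs:
\begin{itemize}
\item the weighted tail bound \eqref{eq:endpoint-K-tail-1-short}, $\sum_{|v-u|\geq x\sqrt\ell}(v+1)Z_\ell(u,v)\leq C4^\ell(u+1)e^{-cx^2}$. It is proved by using the binomial bound when $u+1\geq\sqrt{\ell+1}$ and the Lemma~\ref{lem:bounds ZNxy} bound otherwise, so it picks the sharp estimate in each regime;
\item the exact identity $\sum_u(u+1)\mathrm A(u)=4^{s-1}(1-\a)^{-2}$.
\end{itemize}
Together these give $Ce^{-cx^2}$ with no polynomial loss.

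The remaining configurations need separate arguments:
\begin{itemize}
\item for $\b<1$, path reversal, which maps $\mathsf Q_N^{\a,\b}$ to $\mathsf Q_N^{\b,\a}$, puts the longer side after $J$;
\item when $J$ starts within distance $\ell$ of the left end, a union bound through $\gamma_0$, supplemented (for $\b<1$) by an exponential tail for $|\gamma_N-\gamma_0|$;
\item for $\b=1$ with a short suffix, the bounds $\mathrm B\leq 4^{N-t}$, $\sum_u\mathrm A(u)\lesssim 4^{s}s^{-1/2}$ and $\mathsf Z_N^{\a,1}\gtrsim 4^N N^{-1/2}$.
\end{itemize}
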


\begin{proof}
    For clarity, we split the proof into three steps. In the first step, we recall basic combinatorial identities from Section~\ref{sec:Combinatorial} and apply them to bound $\mathsf Z_N^{\a,\b}$. In the second step, obtain moderate deviations bounds on paths under the measure $\mathsf Q_N^{\a,\b}$. In the third step, we convert these estimates to moderate deviation bounds for the respective particle configuration.\\

\noindent\textbf{Step 1. } In this step, we develop some estimates for the involved partition functions, which in particular imply the first assertion that $\mathsf Z_N^{\a,\b}\in(0,\infty)$. Throughout this step, we write $C,c\in(0,\infty)$ to denote absolute constants which may change from line to line.
We start with the following combinatorial estimates which are immediate from Section~\ref{sec:Combinatorial}. More precisely, for each \(m,n,u,v\in\mathbb Z_{\geq0}\), we have
\begin{align}
 Z_{m+n}(u,v)
 &=\sum_{z\geq0}Z_m(u,z)Z_n(z,v),
 &Z_n(u,v)&=Z_n(v,u),                                      \label{eq:endpoint-K-semigroup-short}\\
  4^n(u+1)&=\sum_{v\geq0}  Z_n(u,v)(v+1),                      \label{eq:endpoint-harmonic-short}\\
 0\leq Z_n(u,v)
 &\leq \binom{2n}{n+v-u} ,                                           \label{eq:endpoint-K-free-short}\\
 Z_n(u,v)
 &\leq
 C 4^n\frac{(u+1)(v+1)}{(n+1)^{3/2}}
 \exp\left(-\frac{(u-v)^2}{n+1}\right),                    \label{eq:endpoint-K-Dirichlet-short}\\
 Z_n(u;\mathrm{free})
 &\leq
 4^n\min\left(1, 2(u+1)(n+1)^{-1/2}\right),            \label{eq:endpoint-H-short}\\
 Z_n(0,0)
 &\geq c 4^n(n+1)^{-3/2},
 &Z_n(0;\mathrm{free})&\geq c 4^n(n+1)^{-1/2}.                              \label{eq:endpoint-lower-short}
\end{align}
Indeed, the path concatenation and reversal give \eqref{eq:endpoint-K-semigroup-short}. One can check that $\sum_{v\geq0}Z_1(u,v)(v+1)
 =4(u+1)$, and then \eqref{eq:endpoint-harmonic-short} follows from \eqref{eq:endpoint-K-semigroup-short} by induction. 
The remaining estimates follow from Lemmas~\ref{lem:count biMot}, \ref{lem:bounds ZNxy}, and
\ref{lem:bounds ZNxfree}, together with  \(Z_n(u;\mathrm{free})\leq4^n\). In addition, we require that for every \(n\in\mathbb N\),
\(u\in\mathbb Z_{\geq0}\), and \(x\geq0\),
\begin{align}
 \sum_{ v\geq0,\,|v-u|\geq x\sqrt n}Z_n(u,v)
 &\leq
 C4^n (n+1)^{-1/2}(u+1 ) e^{-cx^2},                         \label{eq:endpoint-K-tail-0-short}\\
 \sum_{ v\geq0,\, |v-u|\geq x\sqrt n}
 (v+1)Z_n(u,v)
 &\leq C 4^n(u+1)e^{-cx^2}.                                   \label{eq:endpoint-K-tail-1-short}
\end{align}
We  give now a short probabilistic justification of \eqref{eq:endpoint-K-tail-0-short} and \eqref{eq:endpoint-K-tail-1-short}.
Define $Y_n=\frac12\sum_{i=1}^{2n}\upxi_i$, where $\upxi_i$ are i.i.d. with $\mathbb P(\upxi_i=1)=\mathbb P(\upxi_i=-1)=1/2$. Then 
\[
 \mathbb P(Y_n=d)=4^{-n}\binom{2n}{n+d},\qquad
 \mathbb E\left[Y_n^2\right]=\frac n2,\qquad
 \mathbb E\left[e^{\lambda Y_n}\right]
 =\cosh(\lambda/2)^{2n}\leq e^{n\lambda^2/4}.
\]
By Chernoff's inequality and the Cauchy--Schwarz inequality, respectively, 
\begin{equation}\label{eq:endpoint-p-tail-short}
 \sum_{|d|\geq x\sqrt n}4^{-n}\binom{2n}{n+d}\leq2e^{-x^2},
 \qquad
 \sum_{|d|\geq x\sqrt n} 4^{-n}|d|\binom{2n}{n+d}
 \leq\sqrt n\,e^{-x^2/2}.
\end{equation}
If \(u+1\geq\sqrt{n+1}\), then \eqref{eq:endpoint-K-tail-0-short} and \eqref{eq:endpoint-K-tail-1-short}
follow from combining \eqref{eq:endpoint-K-free-short} and \eqref{eq:endpoint-p-tail-short}. 

We next assume \(u+1<\sqrt{n+1}\) and set $w=\sqrt{n+1}$. We claim that for each \(k\in\{1,2\}\), \(a\geq0\), and \(w\geq1\),
\begin{equation}\label{eq:endpoint-discrete-Gaussian-short}
 \sum_{d\in\mathbb Z,\,|d|\geq aw}
 (w+|d|)^k e^{-d^2/w^2}
 \leq
 Cw^{k+1}e^{-a^2/2}.
\end{equation}
Indeed, on the summation range, we have $e^{-d^2/w^2}
 \leq e^{-a^2/2}e^{-d^2/(2w^2)}$.
For each \(j\in\mathbb Z_{\geq0}\), note that the set
\(\{d\in\mathbb Z:jw\leq|d|<(j+1)w\}\) contains at most \(4w\)
integers. Hence
\[
 \sum_{d\in\mathbb Z}(w+|d|)^k e^{-d^2/(2w^2)}
 \leq
 4w^{k+1}\sum_{j=0}^{\infty}(j+2)^k e^{-j^2/2}
 \leq Cw^{k+1},
\]
 and multiplying both sides by \(e^{-a^2/2}\) proves \eqref{eq:endpoint-discrete-Gaussian-short}.
 Using \eqref{eq:endpoint-discrete-Gaussian-short} with  \(a=x\sqrt n/w\)  and  \(k\in\{1,2\}\), in view of \(a^2\geq x^2/2\), and combining with \eqref{eq:endpoint-K-Dirichlet-short}, we conclude the proof of \eqref{eq:endpoint-K-tail-0-short} and \eqref{eq:endpoint-K-tail-1-short}. 

We next estimate 
\[
\mathsf{Z}_N^{\a,\b}
 =\sum_{s,y\geq0}\a^s\b^yZ_N(s,y),\qquad
 A_j(u)=   \sum_{s\geq0}\a^sZ_j(s,u),
 \qquad
 B_j(v)=\sum_{y\geq0}\b^yZ_j(v,y).
\]
When $\b<1$, using \(Z_N(s,y)\leq 4^N\), we see that \(\mathsf{Z}_N^{\a,\b}<\infty\); when $\b=1$, we get that 
 $\mathsf{Z}_N^{\a,1}=\sum_{s\geq0}\a^sZ_N(s;\mathrm{free})
 \leq4^N\sum_{s\geq0}\a^s<\infty$.
The fact that $\mathsf{Z}_N^{\a,\b}>0$ is contained in our next claim, which provides an effective lower bound on $\mathsf{Z}_N^{\a,\b}$:
\begin{align}
 \mbox{when }\b<1:\quad&
  \mathsf{Z}_N^{\a,\b}\geq c4^{N}(N+1)^{-3/2},
 &B_j(v)&\leq
 C4^j(1-\b)^{-2}   (v+1)(j+1)^{-3/2},                            \label{eq:endpoint-pinned-short}\\
  \mbox{when }\b=1:\quad&
  \mathsf{Z}_N^{\a,1}\geq c4^{N}(N+1)^{-1/2},
 &B_j(v)&=Z_j(v;\mathrm{free}).                                          \label{eq:endpoint-free-short}
\end{align}
Indeed, the lower bounds follow by retaining respectively the \((s,y)=(0,0)\) term and the \(s=0\) terms, while the upper bound for \(B_j\) follows from \eqref{eq:endpoint-K-Dirichlet-short}.   Finally, using \eqref{eq:endpoint-harmonic-short} and \eqref{eq:endpoint-H-short}, we have
\begin{align}
 \sum_{u\geq0}(u+1)A_j(u)
 &=4^j\sum_{s\geq0}(s+1)\a^s= 4^j(1-\a)^{-2} ,                  \label{eq:endpoint-A1-short}\\
 \sum_{u\geq0}A_j(u)
 &\leq 2\cdot 4^j(1-\a)^{-2} (j+1)^{-1/2}.                          \label{eq:endpoint-A0-short}
\end{align}

\noindent\textbf{Step 2. } 
Throughout this step, we write $C,c\in(0,\infty)$ to denote constants depending only on $\a$ and $\b$, which may change from line to line.
In this step, we show that  
\begin{equation}\label{eq:endpoint-height-increment}
 \mathsf Q_N^{\a,\b}
 \left(
 |\gamma_{r+\ell}-\gamma_r|\geq x\sqrt\ell
 \right)
 \leq Ce^{-cx^2}
 \qquad\mbox{ for each }0\leq r<r+\ell\leq N\mbox{ and } x\geq0.
\end{equation}
To prove this, we first prove the special case when $r=0$:   
\begin{equation}\label{eq:endpoint-one-time-displacement}
 \mathsf Q_N^{\a,\b}
 \left(|\gamma_j-\gamma_0|\geq x\sqrt j\right)
 \leq  Ce^{-cx^2}
 \qquad\mbox{ for each }j\in\llbracket1,N\rrbracket\mbox{ and } x\geq0.
\end{equation} 
If \(N\geq 2j\), then 
\eqref{eq:endpoint-pinned-short}, or
\eqref{eq:endpoint-H-short} and \eqref{eq:endpoint-free-short} when
\(\b=1\), gives
 \(4^{j}B_{N-j}(z)/\mathsf{Z}_N^{\a,\b}\leq C(z+1)\)
 for every \(z\geq0\).
Therefore, by \eqref{eq:endpoint-K-tail-1-short} and \eqref{eq:endpoint-A1-short},
\begin{equation}\label{eq:ertyby}
    \mathsf Q_N^{\a,\b}
 \left(|\gamma_j-\gamma_0|\geq x\sqrt j\right) =
 \frac1{ \mathsf{Z}_N^{\a,\b}}
 \sum_{s,z\geq0}
 \a^sZ_j(s,z)B_{N-j}(z)
 {\bf1}_{\{|z-s|\geq x\sqrt j\}}
 \leq Ce^{-cx^2}.
\end{equation}
 Suppose now that \(N<2j\).  If \(\b=1\), then we have
\(B_{N-j}(z)=Z_{N-j}(z;\mathrm{free})\leq4^{N-j}\)
for every \(z\geq0\), and
\(4^{N}/\mathsf{Z}_N^{\a,1} \leq C\sqrt{j+1}\).  Using
\eqref{eq:endpoint-K-tail-0-short},
\[
 \mathsf Q_N^{\a,1}
 \left(|\gamma_j-\gamma_0|\geq x\sqrt j\right)
 \leq
 C\sqrt{j+1} \cdot 4^{-j}
 \sum_{s\geq0}\a^s
 \sum_{ z\geq0,\,|z-s|\geq x\sqrt j}Z_j(s,z)
 \leq Ce^{-cx^2}.
\]
It remains to consider the case when \(N<2j\) and \(\b<1\).
By \eqref{eq:endpoint-K-Dirichlet-short} and the lower bound in
\eqref{eq:endpoint-pinned-short}, we have
 $\mathsf Q_N^{\a,\b}(\gamma_0=s,\gamma_N=y)
 \leq C(s+1)(y+1)\a^s\b^y $. Therefore, for sufficiently small $c_0>0$,  
\[
 \mathbb E_{\mathsf Q_N^{\a,\b}}
 \left[e^{c_0|\gamma_N-\gamma_0|}\right]
 \leq
 C\sum_{s,y\geq0}(s+1)(y+1)
 (\a e^{c_0})^s(\b e^{c_0})^y
 \leq C\qquad\mbox{ uniformly in }N\in\mathbb N.
\]
By Markov's inequality, 
 \begin{equation}\label{eq:endpoint-endpoint-tail-short}
 \mathsf Q_N^{\a,\b}(|\gamma_N-\gamma_0|\geq x)
 \leq Ce^{-c x} 
 \qquad \mbox{ for each }x\geq0.
\end{equation}
Note that, under the reversal bijection \((\mathrm{inv}\bg)_i=\gamma_{N-i}\), with horizontal steps retaining their colors, the law becomes \(\mathsf Q_N^{\b,\a}\). 
If $N-j=0$, then $\gamma_j=\gamma_N$.   
If \(N-j\geq1\),  using \eqref{eq:ertyby} at time \(N-j\) to the reversed path, 
\[
 \mathsf Q_N^{\a,\b}
 \left(
 |\gamma_j-\gamma_N|\geq x\sqrt j/2
 \right)
 \leq C\exp\left(-c/4\cdot\, x^2j/(N-j)\right)
 \leq Ce^{-cx^2}.
\]  
If \(x>\sqrt j\), the event in \eqref{eq:endpoint-one-time-displacement} is empty. 
When \(0\leq x\leq\sqrt j\), using the triangle inequality and \eqref{eq:endpoint-endpoint-tail-short}, we complete the proof of \eqref{eq:endpoint-one-time-displacement}.

We return to the proof of the claim \eqref{eq:endpoint-height-increment}. 
We first assume \(r<\ell\). In view of $ |\gamma_{r+\ell}-\gamma_r|
 \leq |\gamma_{r+\ell}-\gamma_0|+|\gamma_r-\gamma_0|$, a union bound using \eqref{eq:endpoint-one-time-displacement} proves \eqref{eq:endpoint-height-increment}.
We next assume $r\geq\ell$. If $\b<1$ and $N-r-\ell<\ell$, reverse the path and apply the preceding case to \(\mathsf Q_N^{\b,\a}\) concludes the proof. When $\b<1$ and $N-r-\ell\geq\ell$, we can also assume $N-r-\ell\geq r$ without loss of generality, since one can reverse the path and swap $\a$ and $\b$.
Then \(N\leq3(N-r-\ell)\), and
\eqref{eq:endpoint-pinned-short} gives
 \(4^{r+\ell}B_{N-r-\ell}(v)/\mathsf{Z}_N^{\a,\b}
 \leq C(v+1)\) for every \(v\geq0\).
Using \eqref{eq:endpoint-K-tail-1-short}  and \eqref{eq:endpoint-A1-short},
\begin{align*}
  \mathsf Q_N^{\a,\b}
 \left(
 |\gamma_{r+\ell}-\gamma_r|\geq x\sqrt\ell
 \right) =
 \frac1{ \mathsf{Z}_N^{\a,\b}}
 \sum_{u,v\geq0}
 A_r(u)Z_\ell(u,v)B_{N-r-\ell}(v)
 {\bf1}_{\{|v-u|\geq x\sqrt\ell\}}
 \leq Ce^{-cx^2}.
\end{align*}
We finally consider the case \(\b=1\) and \(r\geq\ell\).  If \(N-r-\ell>r\), then
\(N<3(N-r-\ell)\), and
\eqref{eq:endpoint-H-short} and \eqref{eq:endpoint-free-short} give
  $4^{r+\ell}  Z_{N-r-\ell}(v;\mathrm{free}) /\mathsf{Z}_N^{\a,\b}\leq C(v+1)$.
The preceding argument then applies verbatim.  If
\(N-r-\ell\leq r\), then \(N\leq3r\), \(Z_{N-r-\ell}(v;\mathrm{free})\leq4^{N-r-\ell}\)
for every \(v\geq0\), and
\(4^{N}/\mathsf{Z}_N^{\a,\b} \leq C\sqrt{r+1}\).  By
\eqref{eq:endpoint-K-free-short}, \eqref{eq:endpoint-p-tail-short},
 \eqref{eq:endpoint-A0-short},
\[
 \mathsf Q_N^{\a,1}
 \left(
 |\gamma_{r+\ell}-\gamma_r|\geq x\sqrt\ell
 \right)
 \leq
 C4^{-\ell-r}\sqrt{r+1}
 \left(\sum_{u\geq0}A_r(u)\right)
 \sum_{|d|\geq x\sqrt\ell} \binom{2\ell}{\ell+d}
 \leq Ce^{-cx^2}.
\]
We conclude the proof of the claim \eqref{eq:endpoint-height-increment}.\\

\noindent\textbf{Step 3. } In this step, we conclude the proof of the lemma.  Write $J=\llbracket r+1,r+\ell\rrbracket$ and set 
\[
 \upvarepsilon_i=
 \begin{cases}
  1,&\text{if step \(i\) is horizontal and black},\\
 -1,&\text{if step \(i\) is horizontal and white},\\
  0,&\text{otherwise}.
 \end{cases}
\]
By the definition of \(\Phi\),
\begin{equation}\label{eq:endpoint-occupation-decomposition}
 \sum_{i\in J}\left(\tau_i-\frac12\right)
 =
 \frac12(\gamma_{r+\ell}-\gamma_r)+
 \frac12\sum_{i=r+1}^{r+\ell}\upvarepsilon_i.
\end{equation} We observe that, conditionally on every height sequence
\((\gamma_i)_{i=0}^N\) with positive
\(\mathsf Q_N^{\a,\b}\)-probability, the colors of the horizontal steps are
independent, with each color being black or white with probability
\(1/2\). This is because \(\mathsf Q_N^{\a,\b}\) depends only on
\(\gamma_0\) and \(\gamma_N\).
If the path has \(h\leq\ell\) horizontal steps in \(J\), then
\[
 \mathbb E\left[
 \left.
 \exp\left(\lambda \left(\upvarepsilon_{r+1}+\dots+\upvarepsilon_{r+\ell}\right)   \right)
 \right|(\gamma_i)_{i=0}^N
 \right]
 =\cosh(\lambda)^h\leq e^{h\lambda^2/2}.
\]
Using Chernoff's inequality with \(\lambda=x/\sqrt\ell\) for both tails,
\begin{equation}\label{eq:endpoint-color-tail}
 \mathsf Q_N^{\a,\b}\left(
 \left.
 \left|\upvarepsilon_{r+1}+\dots+\upvarepsilon_{r+\ell}\right|
 \geq x\sqrt\ell
 \right|(\gamma_i)_{i=0}^N
 \right)
 \leq2e^{-x^2/2}.
\end{equation}
Averaging over the uncolored path $(\gamma_i)_{i=0}^N$ gives the same unconditional bound.
In view of \eqref{eq:endpoint-occupation-decomposition}, the event in \eqref{eq:endpoint-Motzkin-moderate-deviation} is contained in the
union of the event in \eqref{eq:endpoint-height-increment} and the color event in \eqref{eq:endpoint-color-tail}.  By a union bound, we conclude the proof of
\eqref{eq:endpoint-Motzkin-moderate-deviation}.
\end{proof}

\subsection{Extension from intervals to general integer sets via negative correlations}

In the following, we extend the moderate deviation results from Lemma~\ref{lem:endpoint-moderate-deviation} to general subsets $I\subseteq\llbracket1,N\rrbracket$.

\begin{lemma}\label{lem: general I}
Assume the same notation as in Lemma \ref{lem:endpoint-moderate-deviation}. Then there exists a constant \(C\in(0,\infty)\), depending only on \(\a\) and \(\b\), such that for every nonempty set  \(I\subseteq\llbracket1,N\rrbracket\) and every \(x\geq0\),
\begin{equation}\label{eq:endpoint-Motzkin-moderate-deviation general I}
 \mathsf Q_N^{\a,\b}
 \left(
 \left|\sum_{i\in I}\tau_i-\frac{|I|}{2}\right|
 \geq x\sqrt{|I|}
 \right)
 \leq C e^{-x^2/2}.
\end{equation}
\end{lemma}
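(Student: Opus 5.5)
The plan is to prove a negative-dependence statement for the occupation variables under $\mathsf Q_N^{\a,\b}$. From it we obtain the sub-Gaussian Chernoff bound with constant exactly $1/2$ in the exponent for arbitrary $I$. The target is the exponential moment inequality
\[
\mathbb E_{\mathsf Q_N^{\a,\b}}\Big[\exp\Big(\lambda\sum_{i\in I}(\tau_i-\tfrac12)\Big)\Big]\le C\,\cosh(\lambda/2)^{|I|}\,\le C e^{\lambda^2|I|/8}\qquad\text{for all }\lambda\in\mathbb R .
\]
Taking $\lambda=\pm 4x/\sqrt{|I|}$ gives $Ce^{-2x^2}\le Ce^{-x^2/2}$ per tail; the slack may absorb constants. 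The route to it is to show that the family $(\tau_i)_{i\in I}$, or rather $(\tau_i)$ under suitably tilted versions of $\mathsf Q_N^{\a,\b}$, is negatively associated in the weak sense that suffices for Chernoff: $\mathbb E[\prod_{i\in I} e^{\lambda\tau_i}]\le\prod_{i\in I}\mathbb E[e^{\lambda\tau_i}]$. Separately, each marginal $\mathsf Q_N^{\a,\b}(\tau_i=1)$ must be $\tfrac12+O(\text{small})$ in a way compatible with an $O(1)$ multiplicative loss.

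The first step is a determinantal covariance identity. By Theorem~\ref{thm:TASEP rep}-type reasoning, $\Phi_\#\mathsf Q_N^{\a,\b}$ is given by a matrix product with $\mathbf D,\mathbf E$ from \eqref{eq:DandE}. The boundary vectors are $\langle W|=[\a^x]_x$ and $|V\rangle=[\b^y]_y$, and $\mathbf C=\mathbf D+\mathbf E$. I would introduce the tilt $\mathbf C_\lambda=e^{\lambda}\mathbf D+\mathbf E$ at sites of $I$ and $\mathbf C$ elsewhere. Then I would prove by induction on $|I|$ that the tilted measures have $\operatorname{Cov}(\tau_i,\tau_j)\le0$ for $i<j$. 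Writing $\langle W|\mathbf P$ for the left product up to $i-1$ and $\mathbf Q|V\rangle$ for the right product after $j$, the covariance numerator is
\[
\langle W\mathbf P|\mathbf D\,\mathbf M\,\mathbf D|\mathbf Q V\rangle\langle W\mathbf P|\mathbf E\,\mathbf M\,\mathbf E|\mathbf Q V\rangle-\langle W\mathbf P|\mathbf D\,\mathbf M\,\mathbf E|\mathbf Q V\rangle\langle W\mathbf P|\mathbf E\,\mathbf M\,\mathbf D|\mathbf Q V\rangle .
\]
This is the negative of the determinant of the $2\times2$ matrix with rows $\langle W\mathbf P|\{\mathbf D,\mathbf E\}$, then $\mathbf M$, then columns $\{\mathbf D,\mathbf E\}|\mathbf QV\rangle$. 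Here $\mathbf D$ is the shift-up and $\mathbf E$ the shift-down, up to identity. So, up to the common shift, the covariance reduces to a determinant of entries of the form $u_{x\pm1}$ against $\mathbf M$ against $v_{y\pm1}$. Nonpositivity would follow if $\mathbf M$ is totally nonnegative of order two (TN$_2$) and the vectors $u=\langle W\mathbf P|$ and $v=\mathbf Q|V\rangle$ are log-concave. For the Cauchy–Binet step one needs the row/column pairs $(\mathbf D,\mathbf E)$ correctly ordered so that the $2\times2$ minors have a definite sign.

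The second step is to verify the inputs. $\mathbf M$ is a product of tridiagonal matrices $\mathbf C$ or $\mathbf C_\lambda$ with nonnegative entries. A nonnegative tridiagonal matrix whose $2\times 2$ minors are nonnegative is TN$_2$: for $\mathbf C_\lambda$ the diagonal is $1+e^{\lambda}$ and the off-diagonals are $1$ and $e^\lambda$, giving minor $(1+e^\lambda)^2-e^\lambda>0$. TN$_2$ is preserved under products by Cauchy–Binet. The geometric vectors $\a^x$ and $\b^y$ are log-concave, with the truncation at $0$ handled since they are supported on $\mathbb Z_{\ge0}$. Log-concavity is preserved under multiplication by TN$_2$ matrices, as these are TP$_2$ kernels acting on PF$_2$ sequences. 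Combining these gives negative pairwise covariance under every tilt. A standard argument then yields the product bound: the derivative in $\lambda_j$ of $\log\mathbb E[e^{\sum\lambda_i\tau_i}]$ is the tilted mean of $\tau_j$, which is monotone nonincreasing in the other tilts.

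The last step is to control the marginals. Negative dependence gives $\mathbb E e^{\lambda\sum_I(\tau_i-1/2)}\le\prod_{i\in I}\mathbb E e^{\lambda(\tau_i-1/2)}$, and we need this to be at most $C e^{\lambda^2|I|/8}$. Under $\mathsf Q_N^{\a,\b}$ the mean $\mathbb E\tau_i-\tfrac12$ equals $\tfrac12\mathbb E[\gamma_i-\gamma_{i-1}]$, which is not exactly zero. I expect this to be the main obstacle: the biases could accumulate over $|I|$ sites and ruin the $O(1)$ prefactor. The plan is to use $\sum_{i\in I}|\mathbb E\tau_i-\tfrac12|\lesssim\sum_i(\text{height drift})$, which is bounded by $O(\sqrt{|I|})$ via Lemma~\ref{lem:endpoint-moderate-deviation}-type height bounds. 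This gives $\log\prod\le\lambda\cdot O(\sqrt{|I|})+\lambda^2|I|/8$. Optimizing at $\lambda\asymp x/\sqrt{|I|}$ leaves $e^{O(x)-2x^2}\le Ce^{-x^2/2}$. For small $x$ the bound is trivial once $C\ge e^{1/2}$.
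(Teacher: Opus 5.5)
Your negative-correlation step is essentially the paper's argument: tilt by $\mathbf C_t=\mathbf E+t\mathbf D$, write the covariance as a $2\times2$ determinant, and use TN$_2$ plus log-concavity with Cauchy--Binet. Passing from pairwise negative covariance under all tilts to a Chernoff bound is also fine. You do it via the product of marginal MGFs; the paper uses $\psi''(s)=\operatorname{Var}_{\mathsf Q^{\a,\b}_{N,\boldsymbol t}}(Y_I)\le|I|/4$.

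\textbf{The gap is the last step, controlling the mean for a general set $I$.} You need $\bigl|\sum_{i\in I}(\mathsf E_N^{\a,\b}[\tau_i]-\tfrac12)\bigr|\le C\sqrt{|I|}$. The height bounds of Lemma~\ref{lem:endpoint-moderate-deviation} cannot give this. They control $\sum_{i\in J}(\mathsf E\tau_i-\tfrac12)=\tfrac12\mathsf E[\gamma_{\max J}-\gamma_{\min J-1}]$ only for lattice intervals $J$. A scattered $I$, say $|I|$ isolated sites spread over $\llbracket1,N\rrbracket$, is a union of up to $|I|$ intervals, so summing interval bounds only gives $O(|I|)$. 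Nothing in your argument rules out single-site drifts $\tfrac12\mathsf E[\gamma_i-\gamma_{i-1}]$ whose interval sums are small but which add up over a well-chosen $I$.

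\textbf{The missing ingredient is monotonicity of the density profile}, $\mathsf E_N^{\a,\b}[\tau_1]\ge\mathsf E_N^{\a,\b}[\tau_2]\ge\cdots\ge\mathsf E_N^{\a,\b}[\tau_N]$. It follows from the matrix product formula: $\mathbf D\mathbf C-\mathbf C\mathbf D=\mathbf D\mathbf E-\mathbf E\mathbf D=P_0$ is the nonnegative projection onto the $0$th coordinate. Hence $\mathsf E\tau_k-\mathsf E\tau_{k+1}=\langle\a|\mathbf C^{k-1}P_0\mathbf C^{N-k-1}|\b\rangle/\langle\a|\mathbf C^N|\b\rangle\ge0$. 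Write $I=\{i_1<\dots<i_n\}$; since $j\le i_j\le N-n+j$, the sum over $I$ is sandwiched between the sums over $\llbracket N-n+1,N\rrbracket$ and $\llbracket1,n\rrbracket$. These are intervals, and integrating the tail bound of Lemma~\ref{lem:endpoint-moderate-deviation} bounds each sum by $C\sqrt n$.

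You only need the signed sum here, since Hoeffding's lemma gives $\log\mathbb E e^{\lambda(\tau_i-1/2)}\le\lambda(p_i-\tfrac12)+\lambda^2/8$; your absolute-value version is stronger than necessary. Your first displayed target, with a prefactor $C$ uniform in $\lambda$, is false in general because the means are not $\tfrac12$. Only the revised form with an extra factor $e^{|\lambda|\,O(\sqrt{|I|})}$ is attainable, and it suffices.

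A secondary issue concerns your justification of log-concavity. TN$_2$ matrices do not preserve log-concavity in general: a nonnegative diagonal matrix is TN$_2$, yet $\mathrm{diag}(1,\epsilon,1)$ maps $(1,1,1)$ to a non-log-concave vector. What holds for your specific matrices is that $(\langle u|\mathbf C_\lambda)_k=e^\lambda u_{k-1}+(1+e^\lambda)u_k+u_{k+1}$, with $u_{-1}=0$. This is a convolution with the log-concave kernel $(e^\lambda,1+e^\lambda,1)$ followed by restriction to $\mathbb Z_{\ge0}$, and that does preserve log-concavity. The paper reaches the same conclusion by factoring $\mathbf C_t=(\mathbf{Id}+t\mathbf d)(\mathbf{Id}+\mathbf e)$ and applying Lemma~\ref{lem:basic properties log concave}. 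You also need log-concavity of $w_h+w_{h+1}$ and $v_h+v_{h+1}$ to see that the two $2\times2$ boundary factors are TN$_2$; your sketch leaves this implicit.
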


As a key input, we consider a suitable tilting $\mathsf Q_{N,\boldsymbol t}^{\a,\b}$ of the measure $\mathsf Q_N^{\a,\b}$. We show that under this tilted measure $Q_{N,\boldsymbol t}^{\a,\b}$, the occupation variables of the particle configuration associated to a path chosen according to $\mathsf Q_{N,\boldsymbol t}^{\a,\b}$ will be negatively correlated. 

\begin{lemma}\label{lem:negatively correlated}
     Assume $\a\in[0,1)$ and $\b\in[0,1]$. For each $N\in\mathbb N$ and \(\boldsymbol t=(t_1,\ldots,t_N)\in(0,\infty)^N\), define
    \[
    \mathsf{Z}^{\a,\b}(\boldsymbol t)=\sum_{\bg\in\mathrm{BCMot}_N}
 \a^{\gamma_0}\b^{\gamma_N}
 \prod_{i=1}^N t_i^{\tau_i},
    \]
    where we write $(\tau_1,\dots,\tau_N)=\Phi(\bg)$.
    Then $\mathsf{Z}^{\a,\b}(\boldsymbol t)\in(0,\infty)$. Define the probability measure
    \[
 \mathsf Q_{N,\boldsymbol t}^{\a,\b}(\bg)
 :=
 \frac{
 \a^{\gamma_0}\b^{\gamma_N}
 \prod_{i=1}^N t_i^{\tau_i}
 }{\mathsf{Z}^{\a,\b}(\boldsymbol t)},\qquad\bg=(\gamma_0,\dots,\gamma_N)\in\mathrm{BCMot}_N.
\]
Then, under this probability measure, we have $\operatorname{Cov}_{Q_{N,\boldsymbol t}^{\a,\b}}(\tau_i,\tau_j)\leq0$ for each $1\leq i< j\leq N$. 
\end{lemma}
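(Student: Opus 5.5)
We compute the covariance through the matrix product structure underlying Theorem~\ref{thm:TASEP rep}, and reduce its sign to a $2\times 2$ determinant that we control by total nonnegativity and log-concavity. With $\mathbf D,\mathbf E$ as in \eqref{eq:DandE}, let $\langle W_\a|=[1,\a,\a^2,\dots]$ and $|V_\b\rangle=[1,\b,\b^2,\dots]^T$, and set $\mathbf M_i=\mathbf E+t_i\mathbf D$. The same bijection between paths and matrix entries used in the proof of Theorem~\ref{thm:TASEP rep} ($\mathbf D$ for up steps and black horizontal steps, $\mathbf E$ for down steps and white horizontal steps) gives
\[
\mathsf Z^{\a,\b}(\boldsymbol t)=\langle W_\a|\mathbf M_1\cdots\mathbf M_N|V_\b\rangle .
\]
Finiteness follows from $Z_N(x,y)\le 4^N$ and $Z_N(x,y)=0$ for $|x-y|>N$, together with $\a<1$ (for $\b=1$ use $Z_N(x,\mathrm{free})\le 4^N$). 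Positivity follows from the path staying at height $0$.

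Fix $i<j$ and write $\langle u|=\langle W_\a|\mathbf M_1\cdots\mathbf M_{i-1}$, $\mathbf X=\mathbf M_{i+1}\cdots\mathbf M_{j-1}$ and $|v\rangle=\mathbf M_{j+1}\cdots\mathbf M_N|V_\b\rangle$. All entries of $u$, $v$ and $\mathbf X$ are nonnegative and finite. Write $[\mathbf A\mathbf B]$ for $\langle u|\mathbf A\mathbf X\mathbf B|v\rangle$ with $\mathbf A,\mathbf B\in\{\mathbf D,\mathbf E\}$. Then
\[
\mathsf Z^2\,\mathrm{Cov}(\tau_i,\tau_j)=\mathsf Z\,t_it_j[\mathbf D\mathbf D]-t_i[\mathbf D\mathbf M_j]\,t_j[\mathbf M_i\mathbf D].
\]
Expanding $\mathbf M=\mathbf E+t\mathbf D$ and cancelling, this equals
\[
t_it_j\bigl([\mathbf D\mathbf D][\mathbf E\mathbf E]-[\mathbf D\mathbf E][\mathbf E\mathbf D]\bigr)=t_it_j\det(P\mathbf XQ).
\]
Here $P$ is the $2\times\infty$ matrix with rows $\langle u|\mathbf D$ and $\langle u|\mathbf E$, and $Q$ is the $\infty\times 2$ matrix with columns $\mathbf D|v\rangle$ and $\mathbf E|v\rangle$. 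By Cauchy--Binet (justified by absolute convergence),
\[
\det(P\mathbf XQ)=\sum_{x<y,\ x'<y'}P_{[x,y]}\,\mathbf X_{[x,y],[x',y']}\,Q_{[x',y']},
\]
with the corresponding $2\times2$ minors. It therefore suffices to show three things: every $2\times2$ minor of $\mathbf X$ is nonnegative; $P_{[x,y]}\ge0$ for all $x<y$; and $Q_{[x',y']}\le0$ for all $x'<y'$ (or both inequalities reversed).

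For $\mathbf X$, the key observation is the factorization $\mathbf E+t\mathbf D=(\mathbf I+t\mathbf U)(\mathbf I+\mathbf L)$, where $\mathbf U$ and $\mathbf L$ are the up and down shifts; this holds because $\mathbf U\mathbf L=\mathbf I$. Bidiagonal matrices with nonnegative entries are totally nonnegative, and total nonnegativity of order two is preserved under products by Cauchy--Binet. Hence $\mathbf X$ is TN$_2$.

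For $P$, note $(u\mathbf D)_x=u_x+u_{x-1}$ and $(u\mathbf E)_x=u_x+u_{x+1}$, with $u_{-1}=0$. We first show that $u$ is log-concave on $\mathbb Z_{\ge0}$ with no internal zeros. The sequence $W_\a$ is geometric, and each multiplication by $\mathbf M_k$ acts as $u'_x=t_ku_{x-1}+(1+t_k)u_x+u_{x+1}$. This is the restriction to $\mathbb Z_{\ge0}$ of the convolution of $u$ (extended by zero) with the log-concave kernel $(t_k,1+t_k,1)$, and convolution preserves log-concavity. The minor for $x<y$ is
\[
(u_x+u_{x-1})(u_y+u_{y+1})-(u_y+u_{y-1})(u_x+u_{x+1}).
\]
Expanding it into differences of the form $u_au_b-u_{a'}u_{b'}$ with $a+b=a'+b'$ and $\{a,b\}$ more spread out than $\{a',b'\}$, log-concavity makes each difference of one sign, so the minor has a fixed sign. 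The same argument for $v$, which is obtained from $V_\b$ by column operations, gives the opposite sign for $Q$ because the roles of $\mathbf D$ and $\mathbf E$ are transposed. For $\b=1$ the vector $V_\b$ is constant, which is still log-concave.

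The main obstacle is this sign bookkeeping: checking carefully that the $P$-minors and $Q$-minors have opposite signs, including the boundary cases $x=0$ and $y=x+1$. If the expansion does not close directly, the fallback is to write $P=\binom{u\,\mathbf D}{u\,\mathbf E}$ as a fixed $2\times\infty$ structure applied to the TN$_2$ kernel $(u_{x-y})$ of a log-concave sequence, that is, a Pólya frequency sequence of order two, and read off the signs of its minors via Cauchy--Binet once more.
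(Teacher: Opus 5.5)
Your proposal is correct and follows essentially the same route as the paper. The shared ingredients are the factorization $\mathbf E+t\mathbf D=(\mathbf I+t\mathbf U)(\mathbf I+\mathbf L)$, which makes the middle product TN$_2$; Cauchy--Binet applied to the $2\times 2$ covariance determinant; and log-concavity of the propagated boundary vectors, where the paper proves the two-point convolution step directly in Lemma~\ref{lem:basic properties log concave}. The sign bookkeeping you were worried about closes directly. Set $p_h=u_h+u_{h+1}$ and $q_h=v_h+v_{h+1}$. Then $P_{[x,y]}=p_{x-1}p_y-p_{y-1}p_x\le 0$ and $Q_{[x',y']}=q_{x'}q_{y'-1}-q_{x'-1}q_{y'}\ge 0$. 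This is exactly the paper's statement that $\boldsymbol{\mathsf X}$ (your $P$ with its rows swapped) and $\boldsymbol{\mathsf Y}$ are TN$_2$, so the fallback is not needed.
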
 

Let us now conclude the proof of the moderate deviation result in Lemma \ref{lem: general I} assuming the negative correlations from Lemma \ref{lem:negatively correlated}.

\begin{proof}[Proof of Lemma \ref{lem: general I} assuming Lemma \ref{lem:negatively correlated}]
Denote the expectation of $\mathsf Q_N^{\a,\b}$ by $\mathsf E_N^{\a,\b}$. We first show that
\begin{equation}\label{eq:general-I-density-monotonicity}
 \mathsf E_N^{\a,\b}[\tau_1]\geq\mathsf E_N^{\a,\b}[\tau_2]\geq\cdots\geq \mathsf E_N^{\a,\b}[\tau_N].
\end{equation}
Indeed, let $\mathbf D$ and $\mathbf E$ be as in \eqref{eq:DandE}, \(\mathbf C=\mathbf D+\mathbf E\),  $\langle\a|=[1,\a,\a^2,\dots]$, and $|\b\rangle=[1,\b, \b^2,\dots]^T$.
For 
\(k\in\llbracket1,N-1\rrbracket\),
Theorem~\ref{thm:TASEP rep} ensures that
\[
\begin{aligned}
 \mathsf E_N^{\a,\b}[\tau_k]-\mathsf E_N^{\a,\b}[\tau_{k+1}]
 &=
 \frac{
 \langle\a|\mathbf  C^{k-1}
 (\mathbf D\mathbf  C-\mathbf  C \mathbf D)
 \mathbf  C^{N-k-1}|\b\rangle
 }{
 \langle\a|\mathbf  C^N|\b\rangle
 } =
 \frac{
 \langle\a|\mathbf  C^{k-1}P_0
 \mathbf  C^{N-k-1}|\b\rangle
 }{
 \langle\a|\mathbf  C^N|\b\rangle
 }
 \geq0,
\end{aligned}
\]
where  $(P_0)_i^{j}={\bf1}_{\{i=j=0\}}$ for each $i,j\in\mathbb Z_{\geq0}$.

Fix a nonempty \(I\subseteq\llbracket1,N\rrbracket\), and set $Y_I=\sum_{i\in I}\tau_i$. We next prove the claim that
\begin{equation}\label{eq:general-I-centered-concentration}
 \mathsf Q_N^{\a,\b}
 \left(|Y_I-\mathbb E[Y_I]|\geq u\right)
 \leq
 2e^{-2u^2/|I|}\qquad\mbox{ for every }u\geq0.
\end{equation}
Indeed, it suffices to show that 
\begin{equation}\label{eq:hyrmtn}
 \log\mathsf E_N^{\a,\b}
 \left[e^{s(Y_I-\mathbb E[Y_I])}\right]
 \leq \frac{1}{8}|I|s^2\qquad\mbox{ for }s\in\mathbb R,
\end{equation}
and then \eqref{eq:general-I-centered-concentration} follows from Chernoff's inequality with \(s=4u/|I|\) to the
upper tail and with \(s=-4u/|I|\) to the lower tail. Define
\[ 
 \psi(s)=
 \log\mathsf E_N^{\a,\b}\left[e^{sY_I}\right],
 \qquad s\in\mathbb R.
\]
Choose $\boldsymbol{t}\in(0,\infty)^N$ by $t_i=e^s$ for $i\in I$ and $t_i=1$ for $i\notin I$.
Using Lemma \ref{lem:negatively correlated}, and the standard fact that the second derivative of the log-moment generating function yields the variance,
\[ 
 \psi''(s) =
 \operatorname{Var}_{\mathsf Q_{N,\boldsymbol t}^{\a,\b}}(Y_I) =
 \sum_{i\in I}\operatorname{Var}_{\mathsf Q_{N,\boldsymbol t}^{\a,\b}}(\tau_i)
 +
 2\sum_{ i,j\in I,\,i<j} 
 \operatorname{Cov}_{\mathsf Q_{N,\boldsymbol t}^{\a,\b}}(\tau_i,\tau_j)
 \leq \frac{1}{4}|I|. 
\]
Since \(\psi(0)=0\) and \(\psi'(0)=\mathsf E_N^{\a,\b}[Y_I]\)  by Taylor expansion,  we conclude \eqref{eq:hyrmtn} and hence the claim \eqref{eq:general-I-centered-concentration}.

Using Lemma~\ref{lem:endpoint-moderate-deviation}, there exist constants \(C_0,c_0\in(0,\infty)\), depending only on \(\a\) and \(\b\),
such that, for every \(N\in\mathbb N\), every nonempty lattice interval \(J\subseteq\llbracket1,N\rrbracket\), and every \(s\geq0\),
\begin{equation}\label{eq:general-I-interval-input}
 \mathsf Q_N^{\a,\b}
 \left(
 \left|
 \sum_{k\in J}\left(\tau_k- 1/2\right)
 \right|
 \geq s\sqrt{|J|}
 \right)
 \leq C_0e^{-c_0s^2}.
\end{equation}
Let $C_1=C_0\int_0^\infty e^{-c_0s^2}\,ds<\infty$. We next show that 
\begin{equation}\label{eq:general-I-mean-location}
 \left|\mathbb E[Y_I]- |I|/2\right|
 \leq C_1\sqrt{|I|}.
\end{equation}
Indeed, by \eqref{eq:general-I-density-monotonicity},
\[
 \sum_{k=N-|I|+1}^{N}\left( \mathsf E_N^{\a,\b}[\tau_k]- \frac{1}{2}\right)
 \leq
 \mathbb E[Y_I]- \frac{1}{2}|I|
 \leq
 \sum_{k=1}^{|I|}\left(\mathsf E_N^{\a,\b}[\tau_k]- \frac{1}{2}\right).
\]
For each nonempty lattice interval \(J\subseteq\llbracket1,N\rrbracket\),
denote $X_J=
 \left|
 \sum_{k\in J}\left(\tau_k- 1/2\right)
 \right|$.
Using \eqref{eq:general-I-interval-input}, we have
\[
\begin{aligned}
 \left|
 \sum_{k\in J}\left(\mathsf E_N^{\a,\b}[\tau_k]- \frac{1}{2}\right)
 \right| \leq
 \mathsf E_N^{\a,\b}[X_J]
 =
 \int_0^\infty
 \mathsf Q_N^{\a,\b}(X_J>t)\,dt
 \leq
 C_0\sqrt{|J|}
 \int_0^\infty e^{-c_0s^2}\,ds
 =
 C_1\sqrt{|J|}.
\end{aligned}
\]
Combining the last two displayed equations, we conclude the proof of the claim \eqref{eq:general-I-mean-location}.

We return to the proof of the lemma. We will prove \eqref{eq:endpoint-Motzkin-moderate-deviation general I} for $C=\max(2,e^{2C_1^2})$. 
 If \(0\leq x<2C_1\), then $1\leq e^{2C_1^2}e^{-x^2/2}$, and the bound of probability by $1$ suffices.
We next assume \(x\geq2C_1\). By \eqref{eq:general-I-mean-location}, 
\[
 \left\{\left|Y_I- |I|/2\right|\geq x\sqrt{|I|}\right\}
 \subseteq
 \left\{
 |Y_I-\mathbb E[Y_I]|
 \geq x\sqrt{|I|}/2
 \right\}.
\]
Using \eqref{eq:general-I-centered-concentration} with
\(u=x\sqrt{|I|}/2\), we have
$ \mathsf Q_N^{\a,\b} ( |Y_I- |I|/2 |\geq x\sqrt{|I|} )
 \leq2e^{-x^2/2}$,
 which concludes the proof. 
\end{proof}

\subsection{Negative correlations under the tilted measure $\mathsf Q_{N,\boldsymbol t}^{\a,\b}$: proof of Lemma \ref{lem:negatively correlated}}\label{sec:LogConcave}

In order to show the negative correlation for $\mathsf Q_{N,\boldsymbol t}^{\a,\b}$, we will use the matrices $\mathbf{D}$ and $\mathbf{E}$ as well as the vectors $\langle\a|$ and $|\b\rangle$ from the matrix product ansatz from Theorem \ref{thm:TASEP rep} in order to express the covariance between different sites as a determinant of an explicit $2\times2$ matrix; see \eqref{eq:TargetCov}. In order to determine the sign of this determinant, we recall the notation of log-concave sequences, and total non-negativity of matrices.

\begin{definition}\label{def:log concave}
We call a non-negative sequence \(f=(f_h)_{h\in\mathbb Z}\) log-concave if its positive support is an integer interval and $f_h^2\geq f_{h-1}f_{h+1}$ for each $h\in\mathbb Z$. 
\end{definition}

Note that for all $\a,\b >0$, the vectors $\langle\a|=[1,\a,\a^2,\dots]$, $|\b\rangle=[1,\b, \b^2,\dots]^T$ are log-concave. We will use the following basics facts on log-concave sequences.

\begin{lemma}\label{lem:basic properties log concave}
   Suppose \(f=(f_h)_{h\in\mathbb Z}\) is a log-concave sequence, then $f_r f_s\geq f_{r-1}f_{s+1}$ for each $r\leq s$. 
   Moreover, for each $s\geq0$, the sequences $x_h=f_h+s f_{h-1}$ and $y_h=f_h+s f_{h+1}$ are log-concave.
\end{lemma}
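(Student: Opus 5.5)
The plan is to prove the first assertion by a ratio-monotonicity argument, and then obtain the second by expanding the relevant $2\times2$ expression and applying the first assertion to the cross term.

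\textbf{First assertion.} Fix $r\leq s$. If $f_{r-1}f_{s+1}=0$, the inequality holds trivially because $f$ is non-negative. Otherwise $f_{r-1}>0$ and $f_{s+1}>0$. Since the positive support of $f$ is an integer interval, $f_h>0$ for every $h\in\llbracket r-1,s+1\rrbracket$. On this range the condition $f_h^2\geq f_{h-1}f_{h+1}$ says that the ratios $f_h/f_{h-1}$ are nonincreasing in $h$. Hence
\[
\frac{f_r}{f_{r-1}}\geq\frac{f_{s+1}}{f_s},
\]
and multiplying by $f_{r-1}f_s>0$ gives the claim.

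\textbf{Second assertion: support.} If $s=0$ there is nothing to prove. If $s>0$, the positive support of $x$ is the union of the support of $f$ and its shift by $+1$. The union of an integer interval with its unit shift is again an integer interval (or empty, if $f\equiv0$).

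\textbf{Second assertion: the inequality.} For each $h$, expand
\[
x_h^2-x_{h-1}x_{h+1}
=\bigl(f_h^2-f_{h-1}f_{h+1}\bigr)
+s\bigl(f_hf_{h-1}-f_{h-2}f_{h+1}\bigr)
+s^2\bigl(f_{h-1}^2-f_{h-2}f_h\bigr).
\]
The first and last brackets are non-negative by log-concavity of $f$. The middle bracket is the only non-obvious term, and I expect it to be the one point needing care. It is non-negative by the first assertion applied with $(r,s)=(h-1,h)$, which gives $f_{h-1}f_h\geq f_{h-2}f_{h+1}$. Hence $x_h^2\geq x_{h-1}x_{h+1}$ for every $h$.

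\textbf{The sequence $y$.} Consider the reversed sequence $\tilde f_h=f_{-h}$. Reversal clearly preserves log-concavity, including the interval-support condition. Moreover $y_h=\tilde f_{-h}+s\tilde f_{-h-1}=\tilde x_{-h}$, where $\tilde x_h=\tilde f_h+s\tilde f_{h-1}$. By the previous step $\tilde x$ is log-concave, so $y$, being its reversal, is log-concave as well.
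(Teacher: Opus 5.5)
Your proof is correct and follows essentially the same route as the paper: ratio monotonicity on the positive stretch $\llbracket r-1,s+1\rrbracket$ for the first assertion, then the three-term expansion of $x_h^2-x_{h-1}x_{h+1}$ with the cross term handled by the first assertion at $(h-1,h)$. The only cosmetic difference is that you obtain the statement for $y$ by reflecting $h\mapsto -h$, whereas the paper writes out the analogous expansion for $y$ directly; both are equally valid.
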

\begin{proof}
    To prove the first assertion, if $f_{r-1}f_{s+1}=0$, this follows from non-negativity. Otherwise, by the interval support property, \(f_{r-1},f_r,\ldots,f_s,f_{s+1}\) are positive. Log-concavity then implies that the ratios \(f_h/f_{h-1}\) are non-increasing, and hence $f_r/f_{r-1}\geq f_{s+1}/f_s$. For the second assertion, let \(I=\{h\in\mathbb Z:f_h>0\}\). The sequences \(x\) and \(y\) are
non-negative. If \(s=0\), both have positive support \(I\), whereas if
\(s>0\), their positive supports are \(I\cup(I+1)\) and
\(I\cup(I-1)\), respectively.   Denote
\(\Delta f_h=f_h^2-f_{h-1}f_{h+1}\); then
\[
\begin{aligned}
 \Delta x_h
 &=
 \Delta f_h
 +s\bigl(f_hf_{h-1}-f_{h-2}f_{h+1}\bigr)
 +s^2\Delta f_{h-1}\geq0,\\
 \Delta y_h
 &=
 \Delta f_h
 +s\bigl(f_hf_{h+1}-f_{h-1}f_{h+2}\bigr)
 +s^2\Delta f_{h+1}\geq0,
\end{aligned}
\]
where we use the first assertion.
\end{proof}

\begin{definition}\label{def:TN}
Suppose $\mathbf A$ is a (possibly infinite and not necessarily square) matrix. Then for each $k\in\mathbb N$, we call $\mathbf A$  totally non-negative of order \(k\), abbreviated simply \(\mathrm{TN}_k\), if every minor of the matrix $\mathbf A$ of
order at most \(k\) with increasing row and column indices is non-negative. 
\end{definition}

\begin{proof}[Proof of Lemma \ref{lem:negatively correlated}]
We will use the matrices  
  \begin{equation}\label{eq:teyadvf}
              \mathbf{D}= \begin{bmatrix}
  1 & 1 & 0 & 0 &\dots\\
  0 & 1 & 1 &0 &\dots\\
  0& 0 & 1 &  1 \\
   \vdots & & \ddots&\ddots \\
\end{bmatrix},\qquad
   \mathbf{E}= \begin{bmatrix}
  1  & 0 & 0 & \dots& \\
  1  & 1  & 0&\dots \\
  0& 1  & 1  &  &\\
   \vdots & & \ddots & \ddots\\
\end{bmatrix},  \end{equation}    
and boundary vectors $\langle\a|=[1,\a,\a^2,\dots]$, $|\b\rangle=[1,\b, \b^2,\dots]^T$. 
Denote $\mathbf{C}_t=\mathbf{E}+t\mathbf{D}$. We have
\[
 \mathsf Z^{\a,\b}(\boldsymbol t)
 =
 \langle\a|\mathbf C_{t_1}\cdots\mathbf C_{t_N}|\b\rangle \qquad\mbox{ for }\boldsymbol t=(t_1,\ldots,t_N)\in(0,\infty)^N,
\]
where all summands are non-negative. We claim that 
\[
0<\mathsf{Z}^{\a,\b}(\boldsymbol t)\leq 2^N(1-\a)^{-1} (1+t_1)\dots(1+t_N)<\infty.
\]
Indeed, the first inequality follows from retaining only the constant zero path colored black at all steps, and the second  follows from 
$\Vert \langle\a|\Vert_{\ell^1}=(1-\a)^{-1}$, $\Vert|\b\rangle\Vert_{\ell^\infty}=1$, and $\sup_{h\geq0}\sum_{k\geq0}(\mathbf C_t)_h^k\leq 2(1+t)$ for $t\in(0,\infty)$.

Fix $1\leq i< j\leq N$. For each $\boldsymbol{t}\in(0,\infty)^N$, define 
\begin{align*}
     \boldsymbol w
 &=
 \langle\a|\mathbf{C}_{t_1}\cdots \mathbf{C}_{t_{i-1}},
 \qquad
 \mathbf B
 =
 \mathbf{C}_{t_{i+1}}\cdots \mathbf{C}_{t_{j-1}},
 \qquad
 \boldsymbol v 
 =
 \mathbf{C}_{t_{j+1}}\cdots \mathbf{C}_{t_N}|\b\rangle,\\
 \boldsymbol{\mathsf X}&=
 \begin{bmatrix}
  \boldsymbol w \mathbf E\\
  \boldsymbol w \mathbf D
 \end{bmatrix},
 \qquad\qquad
 \boldsymbol{\mathsf Y}=
 \begin{bmatrix}
  \mathbf D\boldsymbol v \quad \mathbf E\boldsymbol v 
 \end{bmatrix}.
\end{align*}  These matrices and vectors all have finite non-negative entries. We split the rest of the proof into two steps.\\

\noindent\textbf{Step 1. } In this step, we conclude the proof of the lemma, assuming that   
\begin{equation}\label{eq:claim det}
 \det(\boldsymbol{\mathsf X}\mathbf B\boldsymbol{\mathsf Y})\geq0\qquad\mbox{ for each }\boldsymbol{t}\in(0,\infty)^N.
\end{equation}
Here, we note that $\boldsymbol{\mathsf X}\mathbf B\boldsymbol{\mathsf Y}$ is a finite $2\times 2$ matrix, given in \eqref{def:XBY}, since the rows of \(\boldsymbol{\mathsf X}\) are \(\ell^1\), the columns of \(\boldsymbol{\mathsf Y}\) are \(\ell^\infty\), and $\sup_{r\geq0}\sum_{s\geq0}(\mathbf B)_r^s
 \leq
 \prod_{m=i+1}^{j-1}2(1+t_m)<\infty$.

For each \(\kappa,\tilde{\kappa}\in\{0,1\}\), let \(\mathsf Z_{\kappa\tilde{\kappa}}\) be the contribution to
\(\mathsf Z^{\a,\b}(\boldsymbol t)\) from paths satisfying $\Phi(\bg)_i=\kappa$ and $\Phi(\bg)_j=\tilde{\kappa}$. The rows of \(\boldsymbol{\mathsf X}\) are ordered by
\(\kappa\in\{0,1\}\), whereas the columns of
\(\boldsymbol{\mathsf Y}\) are ordered by \(\tilde{\kappa}\in\{1,0\}\). Hence
\begin{equation}
    \label{def:XBY}
 \boldsymbol{\mathsf X}\mathbf B\boldsymbol{\mathsf Y}
 =
 \begin{bmatrix}
  \mathsf Z_{01}t_j^{-1} & \mathsf Z_{00}\\
  \mathsf Z_{11}(t_it_j)^{-1} & \mathsf Z_{10}t_i^{-1}
 \end{bmatrix},
\end{equation}
and we have
 $\det(\boldsymbol{\mathsf X}\mathbf B\boldsymbol{\mathsf Y})
 = (\mathsf Z_{01}\mathsf Z_{10}-\mathsf Z_{00}\mathsf Z_{11})/(t_i t_j)$. Therefore \eqref{eq:claim det} implies 
\begin{equation}
\begin{aligned}\label{eq:TargetCov}
 \operatorname{Cov}_{\mathsf Q_{N,\boldsymbol t}^{\a,\b}}
 \left(\Phi(\bg)_i,\Phi(\bg)_j\right)
 =
 \frac{\mathsf Z_{00}\mathsf Z_{11}
       -\mathsf Z_{01}\mathsf Z_{10}}
      {\mathsf Z^{\a,\b}(\boldsymbol t)^2} =
 -\frac{t_it_j}
 {\mathsf Z^{\a,\b}(\boldsymbol t)^2}
 \det(\boldsymbol{\mathsf X}\mathbf B\boldsymbol{\mathsf Y})
 \leq0, 
\end{aligned}
\end{equation}
and we conclude the proof of the lemma.\\

\noindent\textbf{Step 2. } In this step, we prove the claim \eqref{eq:claim det}. We first note that it suffices to show that the three matrices $\boldsymbol{\mathsf X}$, $\mathbf B$, and $\boldsymbol{\mathsf Y}$ are $\mathrm{TN}_2$ as in Definition \ref{def:TN}. Indeed, let \(P_K\) be the coordinate projection onto
\(\llbracket0,K\rrbracket\). By the preceding finiteness and
non-negativity, the monotone convergence theorem gives
 $\lim_{K\to\infty}\boldsymbol{\mathsf X}P_K\mathbf BP_K\boldsymbol{\mathsf Y}=
 \boldsymbol{\mathsf X}\mathbf B\boldsymbol{\mathsf Y}$
entrywise.  
By the Cauchy--Binet formula, if
\(\boldsymbol{\mathsf X}\), \(\mathbf B\), and
\(\boldsymbol{\mathsf Y}\) are \(\mathrm{TN}_2\), then
\[
\begin{aligned}
 \det(\boldsymbol{\mathsf X}P_K\mathbf BP_K\boldsymbol{\mathsf Y})
 &=
 \sum_{0\leq r<s\leq K}
 \sum_{0\leq k<\ell\leq K}
 \det \boldsymbol{\mathsf X}_{\cdot,\{r,s\}}\,
 \det\mathbf B_{\{r,s\},\{k,\ell\}}\,
 \det \boldsymbol{\mathsf Y}_{\{k,\ell\},\cdot} \geq0,
\end{aligned}
\] 
which concludes the proof of \eqref{eq:claim det}. In the rest of the proof, we show that $\boldsymbol{\mathsf X}$, $\mathbf B$,  and $\boldsymbol{\mathsf Y}$  are $\mathrm{TN}_2$.

Set $\mathbf D={\bf Id}+\mathbf d$ and $\mathbf E={\bf Id}+\mathbf e$, where ${\bf Id}$ is the identity matrix, 
$\mathbf d_h^k={\bf1}_{\{k=h+1\}}$ and $\mathbf e_h^k={\bf1}_{\{h=k+1\}}$ for each $h,k\in\mathbb Z_{\geq0}$. 
Then $\mathbf C_t=\mathbf E+t\mathbf D=({\bf Id}+t\mathbf d)({\bf Id}+\mathbf e)$. 
We claim that ${\bf Id}+s\mathbf d$ and ${\bf Id}+s\mathbf e$ are  \(\mathrm{TN}_2\) for each $s\geq0$. Indeed, consider \(r_1<r_2\) and \(k_1<k_2\). Then for ${\bf Id}+s\mathbf d$, if two antidiagonal entries would simultaneous vanish, this yields $r_2\leq k_1<k_2\leq r_1+1$, giving a contradiction. Similarly, for ${\bf Id}+s\mathbf e$, if two antidiagonal entries  simultaneous vanish, this would imply $ r_2-1\leq k_1<k_2\leq r_1$,  giving a contradiction. This yields the claim for  $\mathbf C_t$.
Iterating the Cauchy--Binet formula, we also know that  \(\mathbf B\) is
\(\mathrm{TN}_2\). 

Note that the Cauchy--Binet sums contain only finitely many nonzero terms, and that the geometric vectors $\langle\a|$ and $|\b\rangle$ are log-concave. For each row vector $f$ and column vector $g$ which are log-concave, and each $s\geq0$, we have that 
\[
\begin{array}{ll}
 (f({\bf Id}+s\mathbf d))_h=f_h+s f_{h-1},
 &
 (f({\bf Id}+s\mathbf e))_h=f_h+s f_{h+1},\\[1mm]
 (({\bf Id}+s\mathbf e)g)_h=g_h+s g_{h-1},
 &
 (({\bf Id}+s\mathbf d)g)_h=g_h+s g_{h+1}.
\end{array}
\]  Hence all four row and column vectors are
log-concave.
Therefore, by a finite product, the row vector $\boldsymbol w$ and the column vector $\boldsymbol v$ are both log-concave. Using the zero extensions of \(w_h\) and \(v_h\), set
\[
 p_h=w_h+w_{h+1},
 \qquad
 q_h=v_h+v_{h+1},
 \qquad h\in\mathbb Z.
\]
Lemma \ref{lem:basic properties log concave} shows that \(p\) and
\(q\) are log-concave. For each \(h\geq0\), we have
\[
 (\boldsymbol w\mathbf E)_h=p_h,
 \qquad
 (\boldsymbol w\mathbf D)_h=p_{h-1},
 \qquad
 (\mathbf D\boldsymbol v)_h=q_h,
 \qquad
 (\mathbf E\boldsymbol v)_h=q_{h-1}.
\]
Therefore, for \(0\leq r<s\), the first assertion of Lemma
\ref{lem:basic properties log concave} gives
\[
 \det\boldsymbol{\mathsf X}_{\cdot,\{r,s\}}
 =p_rp_{s-1}-p_{r-1}p_s\geq0,
 \qquad
 \det\boldsymbol{\mathsf Y}_{\{r,s\},\cdot}
 =q_rq_{s-1}-q_{r-1}q_s\geq0.
\]
We conclude the proof that \(\boldsymbol{\mathsf X}\) and \(\boldsymbol{\mathsf Y}\) are \(\mathrm{TN}_2\).
\end{proof}

\subsection{Moderate deviations for the open ASEP}

We have now all tools in order to conclude the proof of Theorem \ref{thm:moderate deviation MC again}. 

\begin{proof}[Proof of Theorem \ref{thm:moderate deviation MC again}]
We can assume that \(\a\in[0,1)\) and \(\b\in[0,1]\) as the case $\a=1$ then follows from the particle-hole duality in Lemma \ref{lem:particle hole}. 
Throughout the proof, we write $C_i\in(0,\infty)$, $i\in\mathbb N$ to denote constants depending only on \(q,\a,\b,\c,\d\).
Recall the probability measure $\mathbb P_N^{(q,\a,\b,\c,\d)}$ from Definition \ref{def:measure Motzkin} and $ \mathsf Q_N^{\a,\b}$ from Lemma \ref{lem:endpoint-moderate-deviation}. 
Denote their expectations by $\mathbb E_N^{(q,\a,\b,\c,\d)}$  and $\mathsf E_N^{\a,\b}$.
It suffices to show that  
\begin{equation}\label{eq:suffices moderate d}
    \mathbb P_N^{(q,\a,\b,\c,\d)}(E)
 \leq
 C_1 
 \mathsf Q_N^{\a,\b}(E)\qquad\mbox{ for each } E\subseteq\mathrm{BCMot}_N.
\end{equation}
Indeed, by Theorem \ref{thm:Motzkin representation}, for each nonempty set \(I\subseteq\llbracket1,N\rrbracket\) and $x\geq0$,
\begin{align*}
    & \mu_N^{(q,\a,\b,\c,\d)}
 \left(
 \left|\sum_{i\in I}\tau_i- |I|/2\right|
 \geq x\sqrt{|I|}
 \right) =
 \mathbb P_N^{(q,\a,\b,\c,\d)}(E_{I,x}),\\
 &\mbox{ where }\qquad
 E_{I,x}
 =
 \left\{
 \bg\in\mathrm{BCMot}_N:
 \left|
 \sum_{i\in I}\Phi(\bg)_i-|I|/2
 \right|
 \geq x\sqrt{|I|}
 \right\}.
\end{align*} 
Using Lemma \ref{lem: general I} with a constant $C_2$, combining with \eqref{eq:suffices moderate d}, 
we have $\mathbb P_N^{(q,\a,\b,\c,\d)}(E_{I,x})\leq C_1C_2e^{-x^2/2}$, and we conclude the proof of the theorem.

Recall the endpoint weight $ \uptheta(h)= (\c\d;q)_h/(q;q)_h$ for $h\in\mathbb Z_{\geq0}$, and the bulk weights, for $\bg\in\mathrm{BCMot}_N$, 
\begin{equation}\label{eq:npmom}
\upomega_i(\bg)=
\begin{cases}
1-q^{\gamma_i},
&\gamma_i-\gamma_{i-1}=1,\\
1-\c\d q^{\gamma_i},
&\gamma_i-\gamma_{i-1}=-1,\\
1+\d q^{\gamma_i},
&\gamma_i-\gamma_{i-1}=0
  \text{ and the \(i\)-th step is black},\\
1+\c q^{\gamma_i},
&\gamma_i-\gamma_{i-1}=0
  \text{ and the \(i\)-th step is white}.
\end{cases}
\end{equation}
We write $W_{1,N}(\bg)=\prod_{i=1}^N\upomega_i(\bg)$.
By definition, we have
\begin{equation}\label{eq:MC-MD-path-RN}
 \mathbb P_N^{(q,\a,\b,\c,\d)}( E)
 =
 \frac{
 \mathsf E_N^{\a,\b}
 \left[
 \uptheta(\gamma_N)W_{ 1,N }\mathbf1_{E}
 \right]
 }{
 \mathsf E_N^{\a,\b}
 \left[
 \uptheta(\gamma_N)W_{1,N}
 \right]
 }\qquad\mbox{ for each } E\subseteq\mathrm{BCMot}_N.
\end{equation}  
Recall the probability measure $\mathrm P_N^{\a,\b;\uptheta}$ from Proposition \(\ref{prop:endpt wt compare}\). Denote its expectation by \(\mathrm E_N^{\a,\b;\uptheta}\).
We have 
\begin{equation}\label{eq:RND theta}
 \frac{d\mathrm P_N^{\a,\b;\uptheta}}
 {d\mathsf Q_N^{\a,\b}}(\bg)
 =
 \frac{\uptheta(\gamma_N)}
 {\mathsf E_N^{\a,\b}[\uptheta(\gamma_N)]}, \qquad\bg\in\mathrm{BCMot}_N.
\end{equation} 
We claim that, to prove \eqref{eq:suffices moderate d}, it suffices to show that
\begin{equation}\label{eq:W1Nbig}
    \mathrm E_N^{\a,\b;\uptheta}[W_{1,N}]\geq C_3>0.
\end{equation} 
Indeed, as in \eqref{eq:boundsthetanmi}, we have
\begin{equation}\label{eq:boundsthetanmi final}
0< (\c\d;q)_\infty
\leq \uptheta(h)
\leq   (q;q)_\infty^{-1}<\infty 
\qquad\mbox{ for } h\in\mathbb Z_{\geq0}. 
\end{equation} 
Combining \eqref{eq:RND theta}, \eqref{eq:W1Nbig}, and \eqref{eq:boundsthetanmi final}, 
\begin{align*}
    \mathsf E_N^{\a,\b}
 [\uptheta(\gamma_N)W_{1,N}]
 &=
 \mathsf E_N^{\a,\b}
 [\uptheta(\gamma_N)]
 \,
 \mathrm E_N^{\a,\b;\uptheta}[W_{1,N}] \geq (\c\d;q)_\infty C_3,\\
  \mathsf E_N^{\a,\b}
 \left[
 \uptheta(\gamma_N)W_{1,N}\mathbf1_{E}
 \right]
 &\leq(q;q)_\infty^{-1}\mathsf Q_N^{\a,\b}(E),
\end{align*}
and dividing them, in view of \eqref{eq:MC-MD-path-RN}, we conclude the proof of \eqref{eq:suffices moderate d}.

In the remainder of the proof, we prove the claim \eqref{eq:W1Nbig}. Let 
\[
 C_4
 =
 \max\left(
 \frac{1}{1-q},
 \frac{\c\d}{1-\c\d},
 \frac{-\c}{1+\c},
 \frac{-\d}{1+\d}
 \right)<\infty.
\]
Then, using the definition of \eqref{eq:npmom} of the bulk local weights, and the fact that $ -\log(1-u)\leq u/(1-u)$ for $u\in[0,1)$, one can check that each local weight satisfies
$ -\log\upomega_i(\bg)
 \leq C_4 q^{\gamma_i}$. Summing over \(i\) gives
\begin{equation}\label{eq:MC-MD-global-log-bound}
 W_{1,N}(\bg)
 \geq
 \exp\left(
 -C_4   G(\bg) 
 \right),
\end{equation}
where we denote $G(\bg)=q^{\gamma_1}+\dots+q^{\gamma_N}$.
It then suffices to show that 
\begin{equation}\label{eq:MC-MD-uniform-q-height}
C_5= \sup_{N\in\mathbb N}
 \mathrm E_N^{\a,\b;\uptheta}
 \left[G(\bg)\right]<\infty.
\end{equation}
Indeed, by
Markov's inequality,
$ \mathrm P_N^{\a,\b;\uptheta}
 \left(
G(\bg)>2\max(1,C_5)
 \right)
 \leq \frac{1}{2}$.
Hence, using \eqref{eq:MC-MD-global-log-bound},
\[ 
 \mathrm E_N^{\a,\b;\uptheta}[W_{1,N}]
 \geq
 \frac12e^{-2C_4 \max(1,C_5)} , 
\]
and we conclude the proof of the claim \eqref{eq:W1Nbig}. It remains to establish \eqref{eq:MC-MD-uniform-q-height}. 
Assume that $N\geq2$. When \(\b<1\), by Proposition \(\ref{prop:endpt wt compare}\), for all $1\leq i<N$
\[
 \mathrm E_N^{\a,\b;\uptheta}[q^{\gamma_i}]
 \leq
 C_6
 \frac{(N+1)^{3/2}}
 {(i+1)^{3/2}(N-i+1)^{3/2}}
\]
 Therefore, we see that
 \[
\begin{aligned}
 \sum_{i=1}^{N-1}
 \mathrm E_N^{\a,\b;\uptheta}[q^{\gamma_i}]
 &\leq
 C_7\sum_{i=1}^{\lfloor N/2\rfloor}(i+1)^{-3/2} +
 C_7\sum_{i=\lfloor N/2\rfloor+1}^{N-1}
 (N-i+1)^{-3/2}
 \leq C_8.
\end{aligned}
\]
Since \(q^{\gamma_N}\leq1\), this proves \eqref{eq:MC-MD-uniform-q-height} when \(\b<1\).
When \(\b=1\), Proposition \(\ref{prop:endpt wt compare}\) gives for all $1\leq i<N$
\[
 \mathrm E_N^{\a,1;\uptheta}[q^{\gamma_i}]
 \leq
 \frac{C_9}{(i+1)^{3/2}}
 \left(\frac{N+1}{N-i+1}\right)^{1/2}
 \quad \mbox{ and }  \quad\mathrm E_N^{\a,1;\uptheta}[q^{\gamma_N}]
 \leq\frac{C_9}{N+1}.
\]
By a similar argument as in the previous case, 
\[
\begin{aligned}
 \sum_{i=1}^{N-1}
 \mathrm E_N^{\a,1;\uptheta}[q^{\gamma_i}]
 &\leq
 C_{10}\sum_{i=1}^{\infty}(i+1)^{-3/2}
 + C_{10} (N+1)^{-1}\sum_{r=1}^{N+1}r^{-1/2}
 \leq C_{11}.
\end{aligned}
\]
Adding the bound involving $\gamma_N$ proves
\eqref{eq:MC-MD-uniform-q-height} when \(\b=1\). We conclude \eqref{eq:MC-MD-uniform-q-height} and hence the proof.
\end{proof}

\appendix

\section{Proof of combinatorial identities}\label{app:Combinatorics}
In the following, we give the deferred proofs of the combinatorial identities from Section~\ref{sec:Combinatorial}. We start with the proof of Lemma~\ref{lem:bounds on combinatorial numbers}, which we restate as Lemma~\ref{lem:bounds on combinatorial numbers AGAIN} for clarity.

\begin{lemma}\label{lem:bounds on combinatorial numbers AGAIN}
    For every $n\in\mathbb Z_{\geq0}$ and $m\in\llbracket-n,n\rrbracket$,
    \[
 \frac{4^{n-1}}{e^2\sqrt{n+1}}
\exp\!\left(-(\log4)\frac{m^2}{n+1}\right) \leq \binom{2n}{n+m}\leq  \frac{4^n}{\sqrt{n+1}}
\exp\!\left(- \frac{m^2}{n+1}\right).
    \]
\end{lemma}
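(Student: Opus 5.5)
The plan is to bound the ratio $\binom{2n}{n+m}/\binom{2n}{n}$ from both sides, and then combine this with elementary bounds on the central binomial coefficient $\binom{2n}{n}$. By the symmetry $m\mapsto -m$ we may assume $0\le m\le n$. The case $n=0$ is trivial, since both bounds reduce to checking $e^{-2}/4\le 1\le 1$.

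\textbf{Step 1 (central term).} Wallis-type product bounds give
\[
\frac{4^n}{\sqrt{\pi(n+1/2)}}\le\binom{2n}{n}\le\frac{4^n}{\sqrt{\pi n}}.
\]
This follows, for instance, by induction using $\binom{2n+2}{n+1}=\binom{2n}{n}\cdot\frac{4(n+1/2)}{n+1}$ together with monotonicity of the corrected sequences. Since $\sqrt{\pi n}\ge\sqrt{n+1}$ for $n\ge1$, the upper bound becomes $\binom{2n}{n}\le 4^n/\sqrt{n+1}$. Similarly, $\sqrt{\pi(n+1/2)}\le 2\sqrt{n+1}$, which gives the lower bound $\binom{2n}{n}\ge 4^n/(2\sqrt{n+1})$.

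\textbf{Step 2 (ratio).} Write
\[
\frac{\binom{2n}{n+m}}{\binom{2n}{n}}=\prod_{j=1}^m\frac{n-j+1}{n+j}.
\]
For the upper bound, use $\log\frac{n+1-j}{n+j}=\log(1-u_j)-\log(1+u_j)$ with $u_j=\frac{j-1/2}{n+1/2}\in[0,1)$. Since $\log\frac{1-u}{1+u}\le -2u$, the logarithm of the product is at most
\[
-\sum_{j=1}^m\frac{2j-1}{n+1/2}=-\frac{m^2}{n+1/2}\le-\frac{m^2}{n+1}.
\]
Together with Step 1, this gives the upper bound.

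For the lower bound, first treat $m\le n/2$ (this threshold may be adjusted). Each factor satisfies $\frac{n+1-j}{n+j}=1-\frac{2j-1}{n+j}$, and $\log(1-v)\ge -v-v^2$ for $v\le 1/2$. Applying this with $v=\frac{2j-1}{n+j}$ and summing, we get the bound $-\frac{m^2}{n+1}\cdot(1+\text{small})$ minus an $O(1)$ error. The factor $\log4>1$ in the target is the slack that absorbs the relative error, and the constant $e^{-2}/4$ absorbs the additive error. More cleanly, one can use $\log\frac{1-u}{1+u}\ge -2u-\frac{u^3}{1-u^2}\cdot C$ and check the resulting sum.

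\textbf{Step 3 (the tail $m>n/2$: the main obstacle).} Here the quadratic approximation deteriorates and constants must be tracked carefully. The plan is to use the crude bound $\binom{2n}{n+m}\ge 1$ together with $\frac{m^2}{n+1}$ close to $n$. Near $m=n$ the target lower bound is about $4^{n-1}e^{-2}4^{-n+1}/\sqrt{n+1}\approx 4^{-1}\cdot\frac{4}{e^2\sqrt{n+1}}<1$, so the bound holds there. For intermediate $m$, use the log-concavity of $m\mapsto\binom{2n}{n+m}$ against the log-convexity-free quadratic $-(\log 4)\,m^2/(n+1)$. More precisely, the function $f(m)=\log\binom{2n}{n+m}+(\log4)\frac{m^2}{n+1}$ has increments $\log\frac{n-m}{n+m+1}+(\log4)\frac{2m+1}{n+1}$. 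I expect these increments to change sign at most once, from negative to positive, so that $f$ is minimized in the interior. The minimum would then be controlled by combining the Step 2 bound at $m\approx n/2$ with the endpoint check at $m=n$. This sign analysis is the delicate part. If it fails, the fallback is to use Stirling's bounds with explicit remainders, $\sqrt{2\pi k}(k/e)^k\le k!\le e\sqrt k(k/e)^k$, which give the entropy form $\binom{2n}{n+m}\ge\frac{c}{\sqrt{n}}\exp(2nH(\tfrac{n+m}{2n}))$, and then to compare with $2n\log2-(\log4)m^2/(n+1)$ via $H(\tfrac12+x)\ge\log2-(2\log 2)x^2$. The last inequality holds exactly on $[0,\tfrac12]$ by concavity of $H(\tfrac12+x)$ in $x^2$, and it is this inequality that explains the constant $\log4$.
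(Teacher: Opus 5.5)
Your upper bound is correct and is essentially the paper's argument: the same product estimate with $\log\frac{1-x}{1+x}\le -2x$ at $x=\frac{2j-1}{2n+1}$, plus $\binom{2n}{n}\le 4^n/\sqrt{n+1}$. Your small-$m$ lower bound also goes through once the threshold is lowered to about $m\le n/4$. Then $v_j\le\tfrac12$, and the quadratic correction only raises the coefficient to $\tfrac43<\log 4$.

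The gap is the lower bound for large $|m|$. There the decisive sign analysis is only conjectured, and it is stated backwards. The increments $g(m)=\log\frac{n-m}{n+m+1}+(\log 4)\frac{2m+1}{n+1}$ form a concave function of $m\in[0,n)$, since $\frac{d^2}{dm^2}\log\frac{n-m}{n+m+1}=\frac{1}{(n+m+1)^2}-\frac{1}{(n-m)^2}<0$. Moreover $g(0)=\frac{\log 4}{n+1}-\log\bigl(1+\frac1n\bigr)\ge 0$, because $(n+1)\log\bigl(1+\frac1n\bigr)$ decreases from the value $\log 4$ at $n=1$. So $\{g\ge 0\}$ is an initial segment, i.e.\ $f$ is nondecreasing and then decreasing. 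Under the pattern you predict (negative, then positive), $f$ would have an interior minimum, and the values at $m\approx n/2$ and $m=n$ would give no information about it. So your conclusion does not follow even from your own claim. With the correct pattern, $\min_{0\le m\le n}f(m)=\min\bigl(f(0),f(n)\bigr)$, and both endpoints are immediate: use $\binom{2n}{n}\ge 4^n/(2\sqrt{n+1})$ at $m=0$, and $\frac{n^2}{n+1}\ge n-1$ at $m=n$. This repair gives a complete, Stirling-free proof of the lower bound that differs from the paper's route, and it makes your Step~2 lower bound unnecessary.

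The fallback does not close the gap as written either. The inequality $H(\tfrac12+x)\ge\log 2-(2\log 2)x^2$ is false: at $x=\tfrac12$ the left side is $0$ and the right side is $\tfrac12\log 2$. It would give the exponent $(\log 2)m^2/n$, which is incompatible with $\binom{2n}{2n}=1$. The chord bound from convexity in $x^2$ is $H(\tfrac12+x)\ge(\log 2)(1-4x^2)$, equivalently $I(t)\le(\log 4)t^2$ for $I(t)=(1+t)\log(1+t)+(1-t)\log(1-t)$, which is exactly what the paper uses. It yields $4^n\exp\bigl(-(\log 4)m^2/n\bigr)$. Passing from $m^2/n$ to $m^2/(n+1)$ then costs the factor $4^{-m^2/(n(n+1))}\ge\tfrac14$; this is where the $4^{n-1}$ comes from, and your sketch does not account for it. Finally, $k!\le e\sqrt{k}\,(k/e)^k$ fails at $k=0$, which is needed when $m=n$. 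The paper instead uses $e\sqrt{k+1}\,(k/e)^k$ together with $\sqrt{2n/((n+m+1)(n-m+1))}\ge 1/\sqrt{n+1}$. With these corrections your fallback becomes the paper's proof.
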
 
\begin{proof}
When \(n=0\), the inequalities hold trivially. We next assume $n\in\mathbb N$. Stirling's inequality implies that 
    \begin{equation}\label{eq:from Stirling}
    \sqrt r\left(\frac re\right)^r\leq r!\leq e\sqrt{r+1}\left(\frac re\right)^r
    \qquad\mbox{for every }r\in\mathbb Z_{\geq0}.
    \end{equation} Therefore,
    \begin{equation*} 
\begin{aligned}
 \binom{2n}{n+m}
 &\geq \frac1{e^2}
 \sqrt{\frac{2n}{(n+m+1)(n-m+1)}}\,
 \frac{(2n)^{2n}}
 {(n+m)^{n+m}(n-m)^{n-m}}\\
 &=\frac1{e^2}
 \sqrt{\frac{2n}{(n+m+1)(n-m+1)}}\,
 4^n\exp\!\left(-nI\!\left(\frac mn\right)\right)
 \geq \frac{4^{n-1}}{e^2\sqrt{n+1}}
\exp\!\left(-(\log4)\frac{m^2}{n+1}\right) ,
\end{aligned}
\end{equation*}
where in the first step we use \eqref{eq:from Stirling}; in the
second step we set
\[
I(t)=(1+t)\log(1+t)+(1-t)\log(1-t),\qquad t\in[-1,1],
\]
where \(0\log0:=0\);
and in the third step we use 
$\sqrt{\frac{2n}{(n+m+1)(n-m+1)}}
\geq\frac{\sqrt{2n}}{n+1}
\geq\frac1{\sqrt{n+1}}$ together with $I(t)\leq(\log4)t^2$ for $t\in[-1,1]$. The latter follows from $I(t)=I(-t)$, \(I(1)=\log4\), and 
$\frac{d}{dt}(I(t)/t^2)\geq0$ for \(t\in(0,1)\).

Note that
\begin{equation}\label{eq:qwer1}
\binom{2n}{n}\leq\frac{4^n}{\sqrt{n+1}} \qquad\mbox{ for }n\in\mathbb Z_{\geq0}.
\end{equation}
Indeed, for
\(k\in\mathbb Z_{\geq0}\), let
$a_k=\frac{\sqrt{k+1}}{4^k}\binom{2k}{k}$. Then \(a_0=1\) and
$\left(\frac{a_{k+1}}{a_k}\right)^2
=\frac{(2k+1)^2(k+2)}{4(k+1)^3}
=1-\frac{3k+2}{4(k+1)^3}\leq1$.
Hence \(a_k\leq1\) for every \(k\geq0\), proving \eqref{eq:qwer1}. The upper bound follows from the symmetry \(\binom{2n}{n+m}=\binom{2n}{n-m}\) and
\[
 \log\frac{\binom{2n}{n+m}}{\binom{2n}{n}}=\sum_{j=1}^m\log\left(\frac{n-j+1}{n+j}\right)
 \leq-\frac{2}{2n+1}\sum_{j=1}^m(2j-1)=-\frac{2m^2}{2n+1} 
 \leq-\frac{m^2}{n+1}\qquad\mbox{for }m\in\llbracket0,n\rrbracket,
\]
where we have used $ \log\!\left(\frac{1-x}{1+x}\right)\leq-2x$ for $x\in[0,1)$.
\end{proof}

Next, we show effective upper and lower bounds on the partition function $Z_N(x,y)$ from Lemma~\ref{lem:bounds ZNxy}.

\begin{lemma}\label{lem:bounds ZNxy AGAIN}
For $N,x,y\in\mathbb Z_{\geq0}$, recall $Z_N(x,y)$ from Definition~\ref{def:bicolored Motzkin path}. Then for every $N\in\mathbb Z_{\geq0}$,
\begin{align*}
     Z_N(x,y)&\le
  4^{N+1}\frac{(x+1)(y+1)}{(N+1)^{3/2}}
 \exp\!\left(- \frac{(x-y)^2}{N+1}\right)\qquad\mbox{ for every }x,y\in\mathbb Z_{\geq0},\\
 Z_N(0,y)
 &\ge
 e^{-2}4^{N-1}  \, \frac{y+1}{(N+1)^{3/2}}
 \exp\!\left(-(\log 4)\frac{y^2}{N+1}\right)\qquad\mbox{ for every }y\in\llbracket0,N\rrbracket.
\end{align*}
\end{lemma}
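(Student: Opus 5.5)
The plan is to derive both bounds from the reflection formula \eqref{eq:ZNxy} of Lemma~\ref{lem:count biMot},
\[
Z_N(x,y)=\binom{2N}{N+x-y}-\binom{2N}{N+x+y+2},
\]
combined with the binomial bounds of Lemma~\ref{lem:bounds on combinatorial numbers AGAIN}. The difference of two binomial coefficients must be controlled by the product $(x+1)(y+1)$ times a local-limit factor. Throughout, set $d=x-y$ and $s=x+y+2$, so that $s-|d|=2\min(x,y)+2$ and $s+|d|=2\max(x,y)+2$.

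For the upper bound, I will write the difference as a telescoping sum. Taking $|d|$ by symmetry,
\[
\binom{2N}{N+|d|}-\binom{2N}{N+s}=\sum_{k=|d|}^{s-1}\left(\binom{2N}{N+k}-\binom{2N}{N+k+1}\right),
\]
and each summand equals $\binom{2N}{N+k}\frac{2k+1}{N+k+1}$.

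1. Using the upper bound of Lemma~\ref{lem:bounds on combinatorial numbers AGAIN} and $k\ge |d|$, each term is at most $\frac{4^N}{\sqrt{N+1}}e^{-d^2/(N+1)}\frac{2k+1}{N+1}$.
2. There are $s-|d|=2\min(x,y)+2$ terms, and each has $2k+1\le 2s$. This gives $Z_N\le 4^{N}\cdot 4(\min+1)(\max+1)\cdot(N+1)^{-3/2}e^{-d^2/(N+1)}$, since $s\le 2(\max+1)$. That is exactly the claimed bound with constant $4$.
3. If $N+k+1>2N$, the binomial coefficients vanish, so out-of-range terms are harmless.

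For the lower bound with $x=0$, I have $Z_N(0,y)=\binom{2N}{N-y}-\binom{2N}{N+y+2}=\binom{2N}{N+y}\bigl(1-\frac{(N-y)(N-y-1)}{(N+y+1)(N+y+2)}\bigr)$, using the ratio of consecutive binomial coefficients twice.

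1. The bracket equals $\frac{(4y+6)N+\dots}{(N+y+1)(N+y+2)}$. For $y\le N$, the numerator is at least $(2y+3)(2N+\dots)$ and the denominator is at most $(2N+2)^2$. This yields a factor $\ge \frac{y+1}{N+1}\cdot c$ with $c$ of order one; I expect $c\ge 1$ after the precise algebra, i.e.\ $1-\frac{(N-y)(N-y-1)}{(N+y+1)(N+y+2)}\ge\frac{y+1}{N+1}$.
2. Multiplying by the lower bound $\binom{2N}{N+y}\ge \frac{4^{N-1}}{e^2\sqrt{N+1}}e^{-(\log4)y^2/(N+1)}$ gives the claim.
3. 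Edge cases $y\in\{N-1,N\}$, where the second binomial vanishes, are checked directly.

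The main obstacle is the elementary inequality in step 1 of the lower bound, where constants must come out exactly. I would verify it by clearing denominators: one needs $(N+1)\bigl[(N+y+1)(N+y+2)-(N-y)(N-y-1)\bigr]\ge (y+1)(N+y+1)(N+y+2)$. The left side is $(N+1)(2y+1)(2N+2)$ plus lower-order terms, i.e.\ $(N+1)\cdot 2(2y+1)(N+1)$ after simplification. The right side is at most $(y+1)(2N+2)(N+y+2)$, and comparison works for $y\le N$. If this is not tight enough, I will absorb the loss into the slack between $4^{N-1}$ and $4^N$.
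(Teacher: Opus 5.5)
Your plan is the paper's proof: the reflection formula \eqref{eq:ZNxy} plus Lemma~\ref{lem:bounds on combinatorial numbers AGAIN}, with an extra factor $4(x+1)(y+1)/(N+1)$ in the upper bound and $(y+1)/(N+1)$ in the lower bound. Your telescoping is equivalent to the paper's estimate $1-\prod_j u_j\le\sum_j(1-u_j)$ on the ratio of the two binomials. However, the statement has explicit constants with no room to spare, and as written both halves miss them.

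In the upper bound, you count $2\min(x,y)+2$ terms and bound each $2k+1\le 2s\le 4(\max(x,y)+1)$. That gives $8(x+1)(y+1)$, i.e.\ a prefactor $2\cdot 4^{N+1}$ rather than $4^{N+1}$. Sum exactly instead: $\sum_{k=|d|}^{s-1}(2k+1)=s^2-d^2=(x+y+2)^2-(x-y)^2=4(x+1)(y+1)$. Combine this with $\frac{2k+1}{N+k+1}\le\frac{2k+1}{N+1}$ and with $\binom{2N}{N+k}\le\frac{4^N}{\sqrt{N+1}}e^{-d^2/(N+1)}$ for $k\ge|d|$ (trivially true if $k>N$). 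This gives the claim with the stated constant.

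In the lower bound, the inequality you expect, $1-\frac{(N-y)(N-y-1)}{(N+y+1)(N+y+2)}\ge\frac{y+1}{N+1}$, is true, but your verification is wrong. The numerator is not $(4y+6)N+\dots$. Your sketched comparison drops the ``lower-order'' terms and bounds the right side by $(y+1)(2N+2)(N+y+2)$; at $y=0$ it reduces to $N+1\ge N+2$, which is false. The exact computation is short. With $A=N+y+1$ and $B=N-y$,
\[
A(A+1)-B(B-1)=(A+B)(A-B+1)=2(2N+1)(y+1),
\]
so the bracket equals $\frac{2(2N+1)(y+1)}{(N+y+1)(N+y+2)}$. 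This is at least $\frac{y+1}{N+1}$ because $(N+y+1)(N+y+2)\le(2N+1)(2N+2)$ for $y\le N$. The inequality is an equality at $y=N$, so no lossy estimate of the bracket can work. Your fallback is also unavailable: the lower bound in Lemma~\ref{lem:bounds on combinatorial numbers AGAIN} already carries $4^{N-1}/(e^2\sqrt{N+1})$, exactly the constant in the target. You therefore need the factor $\frac{y+1}{N+1}$ with constant $1$. No separate edge-case check for $y\in\{N-1,N\}$ is needed, since the ratio formula already makes the second binomial vanish there.
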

\begin{proof}
    Recall from Lemma~\ref{lem:count biMot} that
\begin{equation}\label{eq:formula ZNxy again}
 Z_N(x,y)=
 \binom{2N}{N+x-y}-\binom{2N}{N+x+y+2}.
\end{equation}
By Lemma \ref{lem:bounds on combinatorial numbers}, when $|x-y|\leq N$,  
\begin{equation}\label{eq:binomial-bounds-for-ZNxy}
 \frac{1}{4e^2}\frac{4^N}{\sqrt{N+1}}
 \exp\!\left(-(\log 4)\frac{(x-y)^2}{N+1}\right)
 \le  \binom{2N}{N+x-y}\le
 \frac{4^N}{\sqrt{N+1}}
 \exp\!\left(-\frac{(x-y)^2}{N+1}\right).
\end{equation} 

To prove the first assertion of the lemma, in view of \eqref{eq:binomial-bounds-for-ZNxy}, we only need to show that
\begin{equation}\label{eq:qwer2}
     Z_N(x,y)\le
 \frac{4(x+1)(y+1)}{N+1}\binom{2N}{N+x-y}.
\end{equation}
Indeed, if $x+y+2> N$, we have $\binom{2N}{N+x+y+2}=0$ and
$\frac{4(x+1)(y+1)}{N+1}\ge2$, and we conclude \eqref{eq:qwer2}. If $x+y+2\leq N$,  
\begin{align*}
 1-\frac{\binom{2N}{N+x+y+2}}{ \binom{2N}{N+x-y}}&=1-\prod_{j=|x-y|+1}^{x+y+2}\frac{N-j+1}{N+j}
  \le \sum_{j=|x-y|+1}^{x+y+2}
 \left(1-\frac{N-j+1}{N+j}\right) =\sum_{j=|x-y|+1}^{x+y+2}\frac{2j-1}{N+j}\\
 &\le\frac{1}{N+1}\sum_{j=|x-y|+1}^{x+y+2}(2j-1)
 =\frac{(x+y+2)^2-(x-y)^2}{N+1} =\frac{4(x+1)(y+1)}{N+1},
\end{align*}
where we used the fact that $1-\prod_j u_j\le\sum_j(1-u_j)$ for $u_j\in[0,1]$, and we conclude \eqref{eq:qwer2}.

We next prove the second assertion of the lemma. When $y\in\llbracket0,N\rrbracket$,
\begin{equation} 
 Z_N(0,y)=
 \frac{2(2N+1)(y+1)}{(N+y+1)(N+y+2)}
 \binom{2N}{N-y}\geq\frac{y+1}{N+1}\binom{2N}{N-y},
\end{equation}
where the first step follows from \eqref{eq:formula ZNxy again} with $x=0$, and the second step follows from  $(N+y+1)(N+y+2)\le(2N+1)(2N+2)$.
Combining with \eqref{eq:binomial-bounds-for-ZNxy} with $x=0$ we conclude the proof. 
\end{proof}

Next, we establish Lemma~\ref{lem:bounds ZNxfree} on the function $Z_N(x,\mathrm{free})$. 

\begin{lemma}\label{lem:bounds ZNxfree AGAIN}
For $N,x\in\mathbb Z_{\geq0}$, recall $Z_N(x,\mathrm{free})$ from Definition~\ref{def:bicolored Motzkin path}. Then for every $N,x\in\mathbb Z_{\geq0}$,
\begin{align*}
    & Z_N(x,\mathrm{free})
 \leq
 2\,4^N
 \min\left\{1,\frac{x+1}{\sqrt{N+1}}\right\}
 \qquad\mbox{ and }\qquad  Z_N(0,\mathrm{free}) 
 \geq
 \frac{e^{-2}4^{N-1}}{\sqrt{N+1}}.
\end{align*} 
Moreover, we have that for each fixed $L\in(0,\infty)$, uniformly for $n\geq3$ and $x\leq L\log n$,
\begin{equation}\label{eq:converse-short-free-expansion AGAIN}
\frac{Z_n(x,\mathrm{free})}{4^n}
=\frac1{\sqrt{\pi n}}
\left(2(x+1)-\frac1n
\left(\frac23(x+1)^3+\frac7{12}(x+1)\right)
+O_L\!\left(\frac{(x+1)^5}{n^2}\right)\right).
\end{equation}
\end{lemma}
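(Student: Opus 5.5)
The plan is to work from the exact formula of Lemma~\ref{lem:count biMot}. Summing it over $y\geq0$ telescopes:
\[
Z_N(x,\mathrm{free})=\sum_{y\ge0}\left[\binom{2N}{N+x-y}-\binom{2N}{N+x+y+2}\right]=\sum_{k=-x-1}^{x+1}\binom{2N}{N+k}-\binom{2N}{N+x+1},
\]
since the second family of binomials cancels all terms of the first with index $N+k$, $k\le -x-2$, up to a boundary term. Equivalently, $Z_N(x,\mathrm{free})=\sum_{|k|\le x}\binom{2N}{N+k}+\binom{2N}{N+x+1}$; in the random walk language of Step~1 of Lemma~\ref{lem:endpoint-moderate-deviation}, this is $4^N\,\mathbb P(-x-1\le Y_N\le x)$. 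I would first pin down the exact form of this reflection identity, and then derive all three assertions from it.

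For the upper bound, I would use $\binom{2N}{N+k}\le\binom{2N}{N}\le 4^N/\sqrt{N+1}$, which is \eqref{eq:qwer1}. There are $2x+2$ terms, giving $Z_N(x,\mathrm{free})\le 2(x+1)4^N/\sqrt{N+1}$. The bound $Z_N(x,\mathrm{free})\le 4^N$ is trivial, so the minimum bound with constant $2$ follows. For the lower bound at $x=0$, the identity gives $Z_N(0,\mathrm{free})=\binom{2N}{N}+\binom{2N}{N+1}\ge\binom{2N}{N}$. By Lemma~\ref{lem:bounds on combinatorial numbers} with $m=0$, this is at least $e^{-2}4^{N-1}/\sqrt{N+1}$.

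For the expansion \eqref{eq:converse-short-free-expansion AGAIN}, I would write the sum as $\sum_{k=-x-1}^{x}\binom{2n}{n+k}$, a symmetric window of $2(x+1)$ consecutive terms centred at $-1/2$. Then I would use the local expansion
\[
4^{-n}\binom{2n}{n+k}=\frac{1}{\sqrt{\pi n}}\left(1-\frac{1}{8n}-\frac{k^2}{n}+O\!\left(\frac{(1+k)^4}{n^2}\right)\right),
\]
valid uniformly for $|k|\le L\log n+1$. It comes from Stirling's formula with the first correction for $\binom{2n}{n}$ and from $\log\prod_{j=1}^k\frac{n-j+1}{n+j}=-k^2/n+O(k^4/n^3+k^2/n^2)$; the $k/n^2$-type terms cancel after symmetrization. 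Summing gives
\[
\sum_{k=-x-1}^{x}k^2=2\sum_{j=1}^{x}j^2+(x+1)^2=\frac{(x+1)\bigl(2(x+1)^2+1\bigr)}{3}.
\]
The coefficient of $-1/n$ is then $\frac{2(x+1)}{8}+\frac{2(x+1)^3}{3}+\frac{x+1}{3}=\frac23(x+1)^3+\frac{7}{12}(x+1)$, which matches the claim. The error summed over the $2(x+1)$ terms is $O((x+1)^5/n^2)$.

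The main obstacle is bookkeeping in this local expansion: the error must be uniform for $x\le L\log n$, and no $O(k/n)$ term may survive. I would handle it by pairing $k$ with $-1-k$. With the identity $\binom{2n}{n+k}=\binom{2n}{n-k}$, this turns the window into $2\sum_{j=0}^{x}\binom{2n}{n+j}$ minus nothing extra, since the window $\{-x-1,\dots,x\}$ maps to $\{0,\dots,x\}$ twice. The expansion is then needed only for $k\ge0$, where the Taylor remainders are controlled directly.
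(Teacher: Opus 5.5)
Your main line is exactly the paper's proof, and it is correct. You telescope to $Z_N(x,\mathrm{free})=\sum_{k=-x-1}^{x}\binom{2N}{N+k}$. You bound this by $2x+2$ copies of $\binom{2N}{N}\le 4^N/\sqrt{N+1}$, together with the trivial bound $4^N$. You get the lower bound from $\binom{2N}{N}$ and Lemma~\ref{lem:bounds on combinatorial numbers} with $m=0$. Finally you sum the local expansion $4^{-n}\binom{2n}{n+k}=(\pi n)^{-1/2}\bigl(1-(k^2+1/8)/n+O((1+|k|)^4/n^2)\bigr)$ directly over the window, using $\sum_{k=-x-1}^{x}k^2=(x+1)(2(x+1)^2+1)/3$. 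Your third paragraph carries this out and produces the stated coefficient.

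The last paragraph, however, asserts a false identity and should be deleted. Pairing $k$ with $-1-k$ does not give equal terms, because $\binom{2n}{n-1-k}=\binom{2n}{n+1+k}$. The window is symmetric about $-1/2$, while the binomial coefficients are symmetric about $0$.

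The correct folding is
\[
\sum_{k=-x-1}^{x}\binom{2n}{n+k}=\binom{2n}{n}+2\sum_{j=1}^{x}\binom{2n}{n+j}+\binom{2n}{n+x+1}.
\]
Your version $2\sum_{j=0}^{x}\binom{2n}{n+j}$ exceeds this by $\binom{2n}{n}-\binom{2n}{n+x+1}$, which is of order $4^n(x+1)^2n^{-3/2}$. It is already wrong at $x=0$, where $Z_n(0,\mathrm{free})=\binom{2n}{n}+\binom{2n}{n+1}\neq 2\binom{2n}{n}$. Carried through, it would change the coefficient of $-1/n$ to $\frac23(x+1)^3-(x+1)^2+\frac7{12}(x+1)$, contradicting your own correct third paragraph.

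No reduction to $k\ge 0$ is needed anyway. The expansion is even in $k$; write its error as $(1+|k|)^4$ rather than $(1+k)^4$, which vanishes at $k=-1$. Uniformity only requires $|k|\le x+1\le (L+1)\log n$ for $n\ge 3$, with the finitely many small $n$ absorbed into the constant, as the paper does.
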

\begin{proof}
Using Lemma~\ref{lem:count biMot} and binomial symmetry,  
\begin{align}
 Z_N(x,\mathrm{free})
 &=
 \sum_{y\geq0}
 \left[
 \binom{2N}{N+x-y}
 -
 \binom{2N}{N+x+y+2}
 \right] =  
 \sum_{j=N-x-1}^{N+x}\binom{2N}{j}.
 \label{eq:exact-count-ZNxfree}
\end{align}
Note that $Z_N(x,\mathrm{free})\leq4^N$ by the binomial theorem. On the other hand, the sum in \eqref{eq:exact-count-ZNxfree} has $2x+2$ terms. 
We claim that $\binom{2N}{j}\leq\frac{4^N}{\sqrt{N+1}}$ for every $j\in\mathbb Z$. Indeed, if $j\in\llbracket0,2N\rrbracket$ then this follows from Lemma~\ref{lem:bounds on combinatorial numbers} with $n=N$ and $m=j-N$, and if $j\notin\llbracket0,2N\rrbracket$ then $\binom{2N}{j}=0$. Therefore,
\[
 Z_N(x,\mathrm{free})
 \leq
 \min\left\{4^N,\frac{2(x+1)4^N}{\sqrt{N+1}}\right\}
 \leq
 2\,4^N\min\left\{1,\frac{x+1}{\sqrt{N+1}}\right\}.
\]
Finally, setting $x=0$ in \eqref{eq:exact-count-ZNxfree} and using the lower bound in Lemma~\ref{lem:bounds on combinatorial numbers},
\[
 Z_N(0,\mathrm{free})
 =
 \binom{2N}{N-1}+\binom{2N}{N}
 \geq
 \binom{2N}{N}
 \geq
 \frac{e^{-2}4^{N-1}}{\sqrt{N+1}}.
\] 

We now show the expansion \eqref{eq:converse-short-free-expansion AGAIN}. 
As a first step, we claim that for each fixed $L\in(0,\infty)$, 
\begin{equation}\label{eq:asymp of combinatoric}
4^{-n}\binom{2n}{n+j}
=\frac1{\sqrt{\pi n}}
\left(1-\frac{j^2+1/8}{n}
+O_L\!\left(\frac{(1+|j|)^4}{n^2}\right)\right)
\qquad\mbox{uniformly for }|j|\leq L\log n.
\end{equation}
Indeed, Stirling's formula 
implies
\begin{equation}\label{eq:central comb}
4^{-n}\binom{2n}{n}
=\frac1{\sqrt{\pi n}}
\left(1-\frac1{8n}+O(n^{-2})\right).
\end{equation}
When $n$ is sufficiently large depending only on $L$,  Taylor's theorem gives, uniformly in $0\leq j\leq L\log n$,
\[
\begin{aligned}
&\log\frac{\binom{2n}{n+j}}{\binom{2n}{n}}
=\sum_{s=1}^j\left[
\log\left(1-\frac{s-1}{n}\right)
-\log\left(1+\frac{s}{n}\right)
\right]\\
&=-\frac1n\sum_{s=1}^j(2s-1)
+\frac1{2n^2}\sum_{s=1}^j\left(s^2-(s-1)^2\right)
+O_L\!\left(\frac{(1+j)^4}{n^3}\right) =-\frac{j^2}{n}+\frac{j^2}{2n^2}
+O_L\!\left(\frac{(1+j)^4}{n^3}\right).
\end{aligned}
\]
Using binomial symmetry and combining with \eqref{eq:central comb}, we conclude the proof of \eqref{eq:asymp of combinatoric}. 

Now by \eqref{eq:exact-count-ZNxfree},
\[
\frac{Z_n(x,\mathrm{free})}{4^n}
=\sum_{j=-x-1}^{x}4^{-n}\binom{2n}{n+j}.
\]
If $n\geq3$ and $x\leq L\log n$, then $x+1\leq(L+1)\log n$. Hence \eqref{eq:asymp of combinatoric} with $L+1$ instead of $L$, together with 
 $\sum_{j=-h}^{h-1}j^2= (2h^3+h)/3$ and $\sum_{j=-h}^{h-1}(1+|j|)^4=O(h^5)$, gives \eqref{eq:converse-short-free-expansion} for all sufficiently
large $n$. Increasing the constant in the $O$ notation to cover the finitely many remaining pairs $(n,x)$ proves \eqref{eq:converse-short-free-expansion AGAIN}.
\end{proof}

In order to show Lemma \ref{lem:comparison kernels}, we require the following combinatorial estimate.

\begin{lemma}\label{lem:smoothing}
For \(n\in\mathbb Z_{\geq0}\) and \(r\in\mathbb Z\), let
\[
    f_n(r)=\binom{2n}{n+r}-\binom{2n}{n+r+2}
    \qquad\mbox{and}\qquad
    \Delta^2 f_n(r)=f_n(r+2)-2f_n(r+1)+f_n(r).
\]
Then 
\[
    \left|\Delta^2 f_n(r)\right|
    \leq
     4^{n+3}\frac{|r+2|}{(n+1)^{5/2}}
    \exp\!\left(-\frac{(r+2)^2}{6(n+1)}\right).
\]
\end{lemma}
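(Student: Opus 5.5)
The plan is to avoid Fourier integrals entirely. The reason is that the integral representation from Lemma~\ref{lem:count biMot} gives $\Delta^2 f_n(r)=-\frac{8}{\pi}\int_0^\pi(2+2\cos\theta)^n\sin\theta\sin^2(\theta/2)\sin((r+2)\theta)\,d\theta$. This only yields $4^n n^{-2}$, without the factor $|r+2|/\sqrt n$ and without the Gaussian factor. Instead I will exploit the closed form already implicit in Lemma~\ref{lem:TechnicalCombiInput}. Writing $M=n+1$ and $b(k)=\binom{2M}{M+k}$, a direct check (as for $y_N$) gives
\[
f_n(r)=\frac{r+1}{n+1}\binom{2n+2}{n+r+2}=\frac{g(r+1)}{M},\qquad g(k)=k\,b(k),
\]
valid for all $r\in\mathbb Z$ with the usual convention. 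Thus $\Delta^2 f_n(r)=M^{-1}\bigl(g(c+1)-2g(c)+g(c-1)\bigr)$ with $c=r+2$. The discrete Leibniz rule gives
\[
g(c+1)-2g(c)+g(c-1)=c\,\bigl(b(c+1)-2b(c)+b(c-1)\bigr)+\bigl(b(c+1)-b(c-1)\bigr).
\]
By the symmetry $f_n(r)=-f_n(-r-2)$ (equivalently $g$ odd), it suffices to treat $c\geq0$. For $c=0$ the left side vanishes, consistent with the factor $|r+2|$.

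In the main range $0\leq c\leq M/2$, I will use the exact ratios $b(c+1)/b(c)=(M-c)/(M+c+1)$ and $b(c-1)/b(c)=(M+c)/(M-c+1)$. With these, $b(c+1)-b(c-1)=-b(c)\,\frac{2c(2M+1)}{(M+c+1)(M-c+1)}$. Similarly, $\Delta^2 b(c)=b(c)\,R(c,M)$, where $R$ is an explicit rational function with $|R|\leq C(1+c^2)/M^2$ when $c\leq M/2$; this uses that the denominators are then $\geq M^2/4$. Together these give
\[
|\Delta^2 f_n(r)|\leq C\,\frac{c\,(1+c^2/M)}{M^2}\,b(c).
\]
Then Lemma~\ref{lem:bounds on combinatorial numbers} gives $b(c)\leq 4^M M^{-1/2}e^{-c^2/(M+1)}$. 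Finally, $(1+c^2/M)e^{-c^2/(M+1)}\leq C' e^{-c^2/(6M)}$ produces the claimed shape $4^{n+1}C''\,c\,M^{-5/2}e^{-c^2/(6M)}$, and the explicit constants should fit within $4^{n+3}$.

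In the tail range $c>M/2$ the rational expansions degenerate, because $M-c+1$ can be small. There I will instead bound the three terms of $g$ separately: $|g(k)|\leq |k|\,4^M M^{-1/2}e^{-k^2/(M+1)}$ with $k\in\{c-1,c,c+1\}$. This gives $|\Delta^2 f_n(r)|\leq C\,4^M c\,M^{-3/2}e^{-(c-1)^2/(M+1)}$. Comparing with the target, the surplus factor is $e^{-(c-1)^2/(M+1)+c^2/(6M)}\leq C e^{-c^2/(3M)}\leq Ce^{-M/12}\leq C/M$. This recovers the missing power of $M$. Terms with $|k|>M$ vanish, and small $n$ is handled by the trivial bound.

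I expect the main obstacle to be purely bookkeeping. One part is verifying that $R(c,M)$ indeed satisfies $|R|\leq C(1+c^2)/M^2$ uniformly for $c\leq M/2$, by computing its numerator exactly. The other is tracking the numerical constants carefully enough to land at the stated prefactor $4^{n+3}$ and exponent $1/6$, rather than at some unspecified $C$.
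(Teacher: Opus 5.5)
Your plan is correct and is essentially the paper's argument: an exact closed form for $\Delta^2 f_n$, followed by the Gaussian bound of Lemma~\ref{lem:bounds on combinatorial numbers}.

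One intermediate claim is false. Your ratios give $\Delta^2 b(c)=b(c)R(c,M)$ with
\[
R(c,M)=\frac{4c^2-2M-2}{(M+c+1)(M-c+1)}.
\]
So $R(0,M)=-2/(M+1)$, and the bound $|R|\le C(1+c^2)/M^2$ fails. The true bound for $c\le M/2$ is $|R|\le C(M+c^2)/M^2$. This is harmless: $b(c+1)-b(c-1)$ is already of order $c\,b(c)/M$, so your displayed estimate $|\Delta^2 f_n(r)|\le C\,c(1+c^2/M)\,b(c)/M^2$ still holds.

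If you carry the computation to the end, you get
\[
g(c+1)-2g(c)+g(c-1)=c\,b(c)\,\frac{4c^2-6M-4}{(M+c+1)(M-c+1)}.
\]
Since $b(c)=\frac{(M+c+1)(M-c+1)}{(2M+2)(2M+1)}\binom{2M+2}{M+c+1}$, this is exactly the paper's identity
\[
\Delta^2 f_n(r)=\frac{(r+2)(2(r+2)^2-3n-5)}{(n+1)(n+2)(2n+3)}\binom{2n+4}{n+r+4},\qquad r\in\mathbb Z .
\]
The paper absorbs the degenerate factor $(M+c+1)(M-c+1)$ into the larger binomial, and this removes your tail range $c>M/2$ altogether. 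It then finishes with:
\begin{itemize}
\item $(n+1)(n+2)(2n+3)\ge 2(n+1)^3$;
\item $|2(r+2)^2-3n-5|\le 2(r+2)^2+5(n+1)$;
\item Lemma~\ref{lem:bounds on combinatorial numbers} at level $n+2$, together with $n+3\le 3(n+1)$;
\item $(a+5/2)e^{-a/3}\le 4e^{-a/6}$.
\end{itemize}
This lands directly on the prefactor $4^{n+3}$ and the exponent $1/6$.

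Your two-range version also closes with these exact constants. In the main range, with $a=c^2/M$, the ratio to the target is at most $(4a+10)e^{-a/3}/12<1$. In the tail range, the three-term bound requires
\[
M^{3/2}(M+1)^{-1/2}e^{-(c-1)^2/(M+1)+c^2/(6M)}\le 4 .
\]
This holds, but only after a finite check at small $M$. So your route is correct, just more bookkeeping than the paper's.
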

\begin{proof}
    By a computation,
    \[
        \Delta^2 f_n(r)
    =
    \frac{(r+2)\left(2(r+2)^2-3n-5\right)}
    {(n+1)(n+2)(2n+3)}
    \binom{2n+4}{n+r+4}.
    \]
If $|r+2|>n+2$, then 
$\binom{2n+4}{n+r+4}=0$.
and the result holds. We next assume $|r+2|\leq n+2$.
By the upper bound in Lemma~\ref{lem:bounds on combinatorial numbers}
with $n$ there replaced by $n+2$ and $m=r+2$,
\[
   \binom{2n+4}{n+r+4}
    \leq
    \frac{4^{n+2}}{\sqrt{n+3}}
    \exp\!\left(-\frac{(r+2)^2}{n+3}\right).
\]
Note that 
$(n+1)(n+2)(2n+3)\geq2(n+1)^3$,
$ |2(r+2)^2-3(n+1)-2 | \leq2(r+2)^2+5(n+1)$, and
$ n+3\leq3(n+1)$.
Combining with the fact that $ \left(a+ 5/2\right)e^{-a/3}
    \leq4e^{-a/6}$ for every $a\geq0$,
we conclude the proof.
\end{proof}

We can now show Lemma~\ref{lem:comparison kernels}, which we restate  for clarity.

\begin{lemma}\label{lem:comparison kernels AGAIN}
       For every $N,x,y\in\mathbb Z_{\geq0}$ satisfying
  $(x+2)^2\leq  N+1 $, 
    \[
     \left|
 \frac{Z_N(x,y)}{x+1}-Z_N(0,y)
 \right|
 \leq 4^{N+3}\frac{(x+1)^2(x+y+2)}{(N+1)^{5/2}}
 \exp\left(-\frac{y^2}{12(N+1)}\right).
    \]
\end{lemma}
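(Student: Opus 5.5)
We write $f_N(r)=\binom{2N}{N+r}-\binom{2N}{N+r+2}$ as in Lemma~\ref{lem:smoothing}. By Lemma~\ref{lem:count biMot}, $Z_N(x,y)=\binom{2N}{N+x-y}-\binom{2N}{N+x+y+2}$. Using binomial symmetry, this becomes
\[
Z_N(x,y)=\binom{2N}{N+y-x}-\binom{2N}{N+y+x+2}=\sum_{\ell=0}^{x} f_N(y-x+2\ell).
\]
The sum telescopes in steps of $2$: the consecutive terms $\binom{2N}{N+y-x+2\ell+2}$ cancel, leaving exactly the two outer binomials. In particular $Z_N(0,y)=f_N(y)$. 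The quantity to bound is therefore an averaging error:
\[
\frac{Z_N(x,y)}{x+1}-Z_N(0,y)=\frac{1}{x+1}\sum_{\ell=0}^{x}\bigl(f_N(y-x+2\ell)-f_N(y)\bigr).
\]
The points $y-x+2\ell$, $\ell\in\llbracket0,x\rrbracket$, are symmetric about $y$ with spacing $2$. A first-order Taylor term therefore cancels, and only second differences of $f_N$ remain.

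To make this exact, I would write each difference $f_N(y+k)-f_N(y)$, with $k=2\ell-x\in\llbracket -x,x\rrbracket$, via discrete Taylor with the first difference. Pairing $k$ with $-k$, the symmetric combination $f_N(y+k)+f_N(y-k)-2f_N(y)$ is a sum of at most $O(k^2)$ second differences $\Delta^2f_N(r)$, with $r$ ranging over $\llbracket y-k-2,\,y+k\rrbracket$. Concretely, each second difference at unit step is counted with weight at most $|k|$. Summing over $\ell$ and dividing by $x+1$, the error is at most $C(x+1)^2\max_{|r-y|\le x+2}|\Delta^2 f_N(r)|$.

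Lemma~\ref{lem:smoothing} then bounds each term by
\[
4^{N+3}\frac{|r+2|}{(N+1)^{5/2}}e^{-(r+2)^2/(6(N+1))}.
\]
We have $|r+2|\le y+x+4\lesssim x+y+2$. For the Gaussian factor, $|r+2-y|\le x+4$ and $(x+2)^2\le N+1$ give $(r+2)^2\ge y^2/2-(x+4)^2\ge y^2/2-C(N+1)$. Hence $e^{-(r+2)^2/(6(N+1))}\le C e^{-y^2/(12(N+1))}$, which is the source of the factor $12$ in the statement.

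Two points remain: negative $r$, and the constant. Negative indices cause no trouble, since $f_N$ is defined for all $r\in\mathbb Z$ and Lemma~\ref{lem:smoothing} holds for all $r$. The main obstacle is the bookkeeping needed to get the stated constant $4^{N+3}$, rather than some $C\,4^{N+3}$. To achieve it, I would avoid crude maxima and count the second-difference multiplicities exactly: the total weight is $\frac{1}{x+1}\sum_\ell \binom{|k|}{\cdot}\sim x(x+1)/6\le (x+1)^2/4$. I would also use the slack in Lemma~\ref{lem:smoothing}'s exponent, $6$ versus $12$, to absorb the shift constants $e^{(x+4)^2/(6(N+1))}$ and $(x+y+4)/(x+y+2)\le 2$. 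If this fails, a slightly looser constant would suffice for all later applications.
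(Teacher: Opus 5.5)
Your proposal is correct and is essentially the paper's proof. Both use the telescoping identity $Z_N(x,y)=\sum_{j=0}^{x}f_N(y-x+2j)$, cancel the first-order term over the symmetric points, apply Lemma~\ref{lem:smoothing} with $(r+2)^2\ge y^2/2-(r+2-y)^2$, and absorb the shift via $(x+2)^2\le N+1$. The only difference is that the paper cancels the first-order term with a one-sided discrete Taylor expansion and $\sum_j(2j-x)=0$, where you pair $\pm k$.

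Your exact bookkeeping closes with the stated constant, so no looser constant is needed. The pairing gives total weight $x(x+2)/6$. The extra factors are $e^{(x+4)^2/(6(N+1))}\le e^{2/3}$ and $(x+y+4)/(x+y+2)\le 2$, and their product with this weight is still below $(x+1)^2$.
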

\begin{proof}
    We use the notation from Lemma \ref{lem:smoothing}. By Lemma~\ref{lem:count biMot}, binomial symmetry, and telescoping, 
\begin{equation}\label{eq:one-time-endpoint-telescoping}
    Z_N(0,y)=f_N(y),
    \qquad
    Z_N(x,y)=\sum_{j=0}^{x}f_N(y-x+2j)\qquad\mbox{ for every }y\in\mathbb Z_{\geq0}.
\end{equation}
Note that for every function \(g:\mathbb Z\to\mathbb R\) and every $s\in\llbracket-x,x\rrbracket$,  
\begin{equation}\label{eq:one-time-endpoint-discrete-Taylor}
\begin{split}
& \left|g(y+s)-g(y)-s\left(g(y+1)-g(y)\right)\right|\\
&={\bf1}_{\{s\geq0\}}\left| \sum_{i=0}^{s-1}\sum_{k=0}^{i-1}\Delta^2g(y+k)\right|+
{\bf1}_{\{s<0\}}\left| \sum_{i=1}^{-s}\sum_{k=y-i}^{y-1}\Delta^2g(k)\right|
\leq
 \frac{|s|(|s|+1)}2
 \max_{ r\in\mathbb Z,\,|r-y|\leq x} 
 \left|\Delta^2g(r)\right|.
 \end{split}
\end{equation} 
Since \(\sum_{j=0}^{x}(-x+2j)=0\), averaging \eqref{eq:one-time-endpoint-discrete-Taylor} over \(s\in\{-x,-x+2,\ldots,x\}\) and using
\eqref{eq:one-time-endpoint-telescoping} gives 
\begin{equation}\label{eq:one-time-endpoint-average-error}
 \left|
 \frac{Z_N(x,y)}{x+1}-Z_N(0,y)
 \right|
 \leq
 \frac{x(x+1)}2
 \max_{ r\in\mathbb Z,\,|r-y|\leq x} 
 \left|\Delta^2f_N(r)\right|.
\end{equation}
For \(r\) in the maximum above, let \(d=r+2-y\), then \(|d|\leq x+2\).
By Lemma~\ref{lem:smoothing}, in view of
 $|r+2|\leq y+x+2$, 
 $(r+2)^2=(y+d)^2\geq\frac{y^2}{2}-d^2$,
and, by the hypothesis,
 $\frac{d^2}{N+1}
 \leq
 \frac{(x+2)^2}{N+1}
 \leq 1$ and 
 $\exp\!\left(\frac{d^2}{6(N+1)}\right)\leq2$, we get
\[
 \left|\Delta^2f_N(r)\right|
 \leq
 2\,4^{N+3}\frac{y+x+2}{(N+1)^{5/2}}
 \exp\left(-\frac{y^2}{12(N+1)}\right).
\]
Substituting into \eqref{eq:one-time-endpoint-average-error} concludes the proof of the lemma.
\end{proof}

We have the following proof of Lemma~\ref{lem:TechnicalCombiInput}, which we restate below for clarity. 

\begin{lemma}\label{lem:TechnicalCombiInput AGAIN}
   Set for $N\in\mathbb{Z}_{\geq0}$ and $k\in\mathbb Z$,
\[
y_N(k)=\frac{k}{N+1}\binom{2N+2}{N+k+1} . 
\] Then for each $N\in\mathbb{Z}_{\geq0}$, $j\in\mathbb Z$, and $n\in\llbracket1,\lfloor\varepsilon(N+1)\rfloor\rrbracket$,
\begin{equation}\label{eq:compaAGAIN}
      \frac{|y_N(n+j)-y_N(n)|}{y_N(n)}
\leq
8e^4\,|j| e^{8\varepsilon |j|}\left(\frac1n+\frac{n}{N+1}\right) .
\end{equation}

\end{lemma}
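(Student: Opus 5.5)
We will work directly with the explicit formula rather than with the path interpretation. Set $M=N+1$ and $a_k=\binom{2M}{M+k}$, so that $y_N(k)=k\,a_k/M$ and $y_N(n)>0$ for $1\le n\le \varepsilon M\le M/4$. The proof then rests on a ratio bound for the binomial coefficients: for $|k|\le M$ and $|i|$ not too large,
\[
\log\frac{a_{n+i}}{a_n}=\sum_{s=1}^{i}\log\frac{M-n-s+1}{M+n+s}\qquad(i\ge0),
\]
with the analogous product for $i<0$. Each factor is of the form $1-O((n+s)/M)$. Hence $|a_{n+i}/a_n-1|\le C|i|(n+|i|)/M\cdot e^{C|i|(n+|i|)/M}$ as long as $n+|i|\le M/2$, say.

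We split into two regimes in $j$.

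\textbf{Small $j$, with $|j|\le n$.} Write
\[
\frac{y_N(n+j)}{y_N(n)}-1=\frac{n+j}{n}\Bigl(\frac{a_{n+j}}{a_n}-1\Bigr)+\frac jn .
\]
The second term contributes $|j|/n$. For the first, $(n+j)/n\le 2$ and $n+|j|\le 2n\le M/2$, so the ratio bound gives $|a_{n+j}/a_n-1|\le C|j|\,n/M\cdot e^{C|j|\varepsilon}$, using $n/M\le\varepsilon$. Together these give $C|j|e^{C\varepsilon|j|}(1/n+n/M)$, which is of the claimed form. The constants $8e^4$ and $8\varepsilon$ should follow after tracking: the elementary inequality $\log\frac{1-u}{1+v}\ge -2(u+v)$ for $u,v\le 1/2$ yields an exponent of $4|j|(n+|j|)/M\le 8\varepsilon|j|$.

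\textbf{Large $j$, with $|j|>n$.} Here we simply bound $|y_N(n+j)-y_N(n)|\le |y_N(n+j)|+y_N(n)$ and show $|y_N(n+j)|\le C|j|e^{8\varepsilon|j|}y_N(n)/n$. The term $y_N(n)$ is trivially at most $(|j|/n)\,y_N(n)$.

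\begin{itemize}
\item For $j<-2n$, we use $|y_N(k)|=y_N(|k|)$ (by binomial symmetry $a_{-k}=a_k$), which reduces to positive arguments $|n+j|\le|j|$.
\item For positive arguments $m=|n+j|$, we must show $m\,a_m\le C|j|e^{8\varepsilon|j|}a_n$. If $m\le n$, this holds because $a_m\ge a_n$ fails in the wrong direction only mildly: we still have $a_m/a_n\le\prod(1+v)/(1-u)\le e^{4|j|(n+|j|)/M}$. If $m>n$, it holds since $a_m\le a_n$ by unimodality, and $m\le n+|j|\le 2|j|$.
\end{itemize}

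The main obstacle is the large-$j$ case with $m<n$. There $a_m/a_n$ can grow like $e^{(n^2-m^2)/M}$. This is bounded by $e^{2n|j|/M}\le e^{2\varepsilon|j|}$ because $|j|\ge n-m$, but the crude estimate must be arranged so that the exponent is uniform, i.e.\ linear in $|j|$ with coefficient proportional to $\varepsilon$. A second delicate point is $|n+j|>M$, where $a$ vanishes and the bound is trivial. Finally, all factor estimates require $n+|j|\le M/2$; when this fails we only use $a_m\le a_0$ and $a_0/a_n\le e^{4n^2/M}\le e^{4\varepsilon n}\le e^{4\varepsilon|j|}$, since $|j|\ge n$ in this case.
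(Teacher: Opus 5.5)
Your proof is correct and uses the same ingredients as the paper: explicit products for ratios of consecutive binomial coefficients, unimodality and symmetry of $k\mapsto\binom{2M}{M+k}$ (equivalently, oddness of $y_N$), and a crude triangle-inequality bound once $|j|$ is comparable to $n$. The difference is in how you organize the small-$|j|$ regime.

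\begin{itemize}
\item The paper computes the one-step ratio of $y_N$ itself, $|y_N(r+1)-y_N(r)|/y_N(r)=|N+1-2r(r+1)|/(r(N+r+2))\le 1/r+4r/(N+1)$. It then telescopes with weights $y_N(r)/y_N(n)$, bounded by $r/n$ for $r\ge n$ and by $e^{4\varepsilon(n-r)}$ for $r\le n$, and splits at $|j|<n/2$.
\item You split off the linear prefactor exactly, via $\frac{y_N(n+j)}{y_N(n)}-1=\frac{n+j}{n}\bigl(\frac{a_{n+j}}{a_n}-1\bigr)+\frac jn$. You control $a_{n+j}/a_n$ by a log-sum and split at $|j|\le n$. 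This avoids telescoping and reaches the same bound with comparable constants.
\end{itemize}

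One step needs repair. In the subcase $m=|n+j|\le n$, i.e.\ $-2n\le j<-n$, your bullet bound $a_m/a_n\le e^{4|j|(n+|j|)/M}$ only gives an exponent up to about $12\varepsilon|j|$, since $|j|\le 2n$ there. The surplus factor $e^{4\varepsilon|j|}$ is unbounded, so it cannot be absorbed into $8e^4 e^{8\varepsilon|j|}$.

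The fix is to use the argument from your last paragraph throughout this subcase, not only when $n+|j|>M/2$. The bound on $a_0/a_n$ involves only $n$ factors rather than $|j|$. Specifically, $a_m\le a_0$ and
\[
\log\frac{a_0}{a_n}=\sum_{s=0}^{n-1}\log\frac{M+s+1}{M-s}\le\frac{n^2}{M-n}\le\frac{4\varepsilon n}{3}<\frac{4\varepsilon|j|}{3}.
\]
This is exactly what the paper's bound $y_N(r)/y_N(n)\le e^{4\varepsilon(n-r)}$ accomplishes in the same range.
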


 \begin{proof}
  Assume $N\in\mathbb{Z}_{\geq0}$, $j\in\mathbb Z$, and $n\in\llbracket1,\lfloor\varepsilon(N+1)\rfloor\rrbracket$ throughout this step.
Observe that 
\begin{equation}\label{eq:wer1}
    \frac{|y_N(r+1)-y_N(r)|}{y_N(r)}
=
\frac{|N+1-2r(r+1)|}{r(N+r+2)}
\leq
\frac1r+\frac{4r}{N+1}\qquad\mbox{ for }r\in\llbracket1,N+1\rrbracket.
\end{equation}
Furthermore,
\begin{equation}\label{eq:wer2}
\frac{y_N(r)}{y_N(n)}
=
\frac rn
\prod_{\ell=r}^{n-1}
\frac{N+\ell+2}{N-\ell+1}
\leq
e^{4\varepsilon(n-r)}\qquad\mbox{ for }r\in\llbracket1,n\rrbracket,
\end{equation}
where we use 
\[
\log\frac{N+\ell+2}{N-\ell+1}
=
\log\left(1+\frac{2\ell+1}{N-\ell+1}\right)
\leq
\frac{2\ell+1}{N-\ell+1}
\leq
\frac{4n}{N+1}
\leq
4\varepsilon\qquad\mbox{ for }\ell\leq n-1.
\]
Moreover,
\begin{equation}\label{eq:wer3}
    \frac{y_N(r)}{y_N(n)}
=
\frac rn
\prod_{\ell=n}^{r-1}
\frac{N-\ell+1}{N+\ell+2}
\leq
\frac rn\qquad\mbox{ for }r\geq n. 
\end{equation}
We begin to prove \eqref{eq:compaAGAIN}. When $j=0$ the left side is $0$. We first assume $j>0$. 
When $j<n/2$, by \eqref{eq:wer1} and \eqref{eq:wer3},
\[
\begin{aligned}
\frac{|y_N(n+j)-y_N(n)|}{y_N(n)}
 \leq
\sum_{r=n}^{n+j-1}
\frac{y_N(r)}{y_N(n)}
\frac{|y_N(r+1)-y_N(r)|}{y_N(r)} \leq
\sum_{r=n}^{n+j-1}
\frac rn
\left(\frac1r+\frac{4r}{N+1}\right) \leq
9j\left(\frac1n+\frac{n}{N+1}\right).
\end{aligned}
\]
When $j\geq n/2$,   by  \eqref{eq:wer3}, 
\[
\frac{|y_N(n+j)-y_N(n)|}{y_N(n)}
\leq
2+\frac jn
\leq
5\frac jn.
\]
The above two displayed bounds together imply \eqref{eq:compaAGAIN} when \(j>0\).

We next assume $j<0$. 
When $j>-n/2$, by \eqref{eq:wer1} and \eqref{eq:wer2},
\[
\begin{aligned}
\frac{|y_N(n+j)-y_N(n)|}{y_N(n)} \leq
\sum_{r=n+j}^{n-1}
\frac{y_N(r)}{y_N(n)}
\frac{|y_N(r+1)-y_N(r)|}{y_N(r)} \leq
4|j| e^{4\varepsilon |j|}
\left(\frac1n+\frac{n}{N+1}\right),
\end{aligned}
\]
which implies \eqref{eq:compaAGAIN}. We next assume $j\leq -n/2$. If \(n+j\geq1\), use \eqref{eq:wer2}; if \(n+j=0\), use \(y_N(0)=0\); and if \(n+j<0\),
use the oddness of \(y_N\), together with \eqref{eq:wer2} when \(j\geq -2n\) and \eqref{eq:wer3} when \(j<-2n\). In every case,  
\[
\frac{|y_N(n+j)-y_N(n)|}{y_N(n)}
\leq
1+e^{-4\varepsilon j}-\frac jn
\leq
5\frac{|j|}{n} e^{8\varepsilon |j|}.
\]
The above two displayed bounds together imply \eqref{eq:compaAGAIN} for \(j<0\). We conclude the proof of \eqref{eq:compaAGAIN}.
 \end{proof}

\section{Proof of discrete Gaussian estimates}\label{app:Gauss}

We will now show the deferred discrete Gaussian estimates from Lemma~\ref{lem:discrete Gaussian}, which we restate below for clarity.

\begin{lemma}\label{lem:discrete Gaussian AGAIN}
For each \(s\geq1\), \(u\in[0,1)\), and \(z\in\mathbb Z_{\geq0}\), define
\begin{equation}\label{def:Gaussian AGAIN}
    \mathfrak{X}_{s,u}(z)
=
\sum_{x\geq0}(x+1)u^xe^{-(x-z)^2/s}.
\end{equation}
Then there exist finite constants $C_u$ depending on $u\in[0,1)$; $C_{u,k}$ depending on $u\in[0,1)$, $k\in\mathbb Z_{\geq0}$; and $C_{u,v,k}$ depending on $u,v\in[0,1)$, $k\in\mathbb Z_{\geq0}$, such that, for each fixed $k\in\mathbb Z_{\geq0}$, \(s\geq1\), $x,y\geq2$, \(u,v\in[0,1)\),
\begin{align}
 \label{eq:endpoint-one-gaussian-moment AGAIN}  \sum_{z\geq0}(z+1)^k\mathfrak{X}_{s,u}(z)
&\leq
C_{u,k} s^{(k+1)/2},\\
\label{eq:endpoint-two-gaussian-moment AGAIN} \sum_{z\geq0}(z+1)^k
\mathfrak{X}_{x,u}(z)\mathfrak{X}_{y,v}(z)
&\leq
C_{u,v,k}\left(\frac{xy}{x+y}\right)^{(k+1)/2},\\
\label{eq:endpoint-free-inverse-moment AGAIN} \sum_{z\geq0}\mathfrak X_{x,u}(z)\min\left(1,\frac{z+1}{\sqrt y}\right)
&\leq
C_u\frac{x}{\sqrt{x+y}},\\
\label{eq:endpoint-free-first-moment AGAIN}\sum_{z\geq0}z(z+1)\mathfrak X_{x,u}(z)\min\left(1,\frac{z+1}{\sqrt y}\right)
&\leq
C_u\frac{x^2}{\sqrt{x+y}}.
\end{align} 
\end{lemma}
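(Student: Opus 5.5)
The key observation is that $\mathfrak X_{s,u}(z)$ is essentially a Gaussian bump of width $\sqrt s$ centred near the origin. I will use two elementary bounds. First, $(x-z)^2\geq z^2/2-x^2$ gives
\[
\mathfrak X_{s,u}(z)\leq e^{-z^2/(2s)}\sum_{x\geq0}(x+1)u^xe^{x^2/s}.
\]
This sum diverges for large $x$, so I will instead split at $x\leq z/2$ versus $x>z/2$. For $x\leq z/2$ we have $(x-z)^2\geq z^2/4$. For $x>z/2$ the factor $u^x\leq u^{z/2}$ is exponentially small in $z$. This yields
\[
\mathfrak X_{s,u}(z)\leq C_u\bigl(e^{-z^2/(4s)}+(z+1)u^{z/2}\bigr),
\]
using $\sum_x(x+1)u^x\leq(1-u)^{-2}$. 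Second, trivially $\mathfrak X_{s,u}(z)\leq(1-u)^{-2}$.

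For \eqref{eq:endpoint-one-gaussian-moment AGAIN}, I will sum $(z+1)^k$ against this bound. The Gaussian part gives $\sum_z(z+1)^ke^{-z^2/(4s)}\leq C_ks^{(k+1)/2}$ by comparison with an integral, using $s\geq1$. The geometric part is bounded by a constant $C_{u,k}\leq C_{u,k}s^{(k+1)/2}$.

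For \eqref{eq:endpoint-two-gaussian-moment AGAIN}, the product of the two pointwise bounds is at most
\[
C\bigl(e^{-z^2/(4x)-z^2/(4y)}+\text{geometric terms}\bigr).
\]
Since $1/x+1/y=(x+y)/(xy)$, the Gaussian part has effective variance $xy/(x+y)\geq1$, because $x,y\geq2$. Summing against $(z+1)^k$ then gives $C(xy/(x+y))^{(k+1)/2}$. In each cross term, at least one factor is geometric, and bounding the other factor by $(1-u)^{-2}$ makes that term a constant. A constant is dominated by the target, since $xy/(x+y)\geq1$.

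For \eqref{eq:endpoint-free-inverse-moment AGAIN} and \eqref{eq:endpoint-free-first-moment AGAIN}, I will split into cases. If $y\leq x$, I use $\min\leq1$ and \eqref{eq:endpoint-one-gaussian-moment AGAIN} with $k=0$ and $k=2$. This gives $C\sqrt x\leq C'x/\sqrt{x+y}$, because $x+y\leq2x$, and similarly $Cx^{3/2}\leq C'x^2/\sqrt{x+y}$. If $y>x$, I use $\min\leq(z+1)/\sqrt y$ together with $k=1$ and $k=3$. This gives $Cx/\sqrt y\leq C'x/\sqrt{x+y}$ and $Cx^2/\sqrt y$, which is fine since $x+y\leq2y$.

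The only mildly delicate step is the pointwise Gaussian bound on $\mathfrak X$, specifically the split at $x\approx z/2$ that controls the tail contribution from large $x$. Everything else is routine comparison of sums with integrals.
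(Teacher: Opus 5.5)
Your proposal is correct. For the last two estimates (the ones with $\min(1,(z+1)/\sqrt y)$) it coincides with the paper's argument: the same case split $y\le x$ versus $y\ge x$, and the same use of the first estimate with $k\in\{0,2\}$ and $k\in\{1,3\}$, respectively.

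For the first two estimates you take a different route.

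The paper never bounds $\mathfrak X_{s,u}(z)$ pointwise. It interchanges the $x$- and $z$-sums and proves a uniform shifted moment bound,
\[
\sum_{\ell\ge0}(\ell+1)^j e^{-(\ell-\sigma)^2/t}\le A_j(\sigma+1)^j t^{(j+1)/2},\qquad t\ge1,\ \sigma\ge0.
\]
The polynomial loss $(\sigma+1)^j$ in the centre is then absorbed by the geometric weight $u^\ell$. For the product estimate it completes the square exactly,
\[
\frac{(z-\ell)^2}{x}+\frac{(z-m)^2}{y}=\frac{(z-\xi_{\ell,m})^2}{h}+\frac{(\ell-m)^2}{x+y},\qquad h=\frac{xy}{x+y},
\]
and reuses the same shifted bound around the centre $\xi_{\ell,m}$.

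You instead decouple the geometric weight from the Gaussian once and for all, via the pointwise domination
\[
\mathfrak X_{s,u}(z)\le C_u\bigl(e^{-z^2/(4s)}+(z+1)u^{z/2}\bigr).
\]
Its tail part rests on $\sum_{x>z/2}(x+1)u^x\le C_u(z+1)u^{z/2}$. This is easy (write $x=z/2+w$ and use $x+1\le(z/2+1)(w+1)$), but you should write it out, including the degenerate case $u=0$.

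After that, everything reduces to centred Gaussian moments. The product estimate becomes a matter of adding exponents, $\frac1{4x}+\frac1{4y}=\frac1{4h}$. The price is a factor $4$ in the variance and cross terms of order $O(1)$. Both are harmless because $h\ge1$ when $x,y\ge2$.

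The paper's route keeps the exact Gaussian structure and produces explicit constants, $A_kM_{k+1}(u)$ and $A_kM_{k+1}(u)M_{k+1}(v)$. Yours is somewhat shorter and more transparent, with less explicit constants.
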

\begin{proof}
In this proof, for \(j\in\mathbb Z_{\geq0}\) and \(w\in[0,1)\), we denote
\[
A_j
=
4\sum_{\ell=0}^{\infty}(\ell+2)^j e^{-\ell^2} \qquad\mbox{ and }\qquad M_j(w)
=
\sum_{\ell=0}^{\infty}(\ell+1)^j w^\ell,
\]
which are finite real numbers.
For clarity, we split the rest of the proof into four steps. \\

\noindent\textbf{Step 1. } In this step, we prove that 
\begin{equation}\label{eq:shifted-discrete-Gaussian-moment}
\sum_{\ell=0}^{\infty}
(\ell+1)^j e^{-(\ell-\upsigma)^2/t}
\leq
A_j(\upsigma+1)^j t^{(j+1)/2},
\qquad\mbox{ for each }
t\geq1,\,\upsigma\geq0,\,j\in\mathbb Z_{\geq0}.
\end{equation}
Indeed, since $\ell+1 
\leq
(\upsigma+1) (1+|\ell-\upsigma| )$ for each $\ell\geq0$, it suffices to show 
\begin{equation}\label{eq:shifted-discrete-Gaussian}
\sum_{\ell=0}^{\infty}
\left(1+|\ell-\upsigma|\right)^j e^{-(\ell-\upsigma)^2/t}
\leq
A_jt^{(j+1)/2},
\qquad\mbox{ for each }
t\geq1,\,\upsigma\geq0,\,j\in\mathbb Z_{\geq0}.
\end{equation}
Note that, for each $r\in\mathbb Z_{\geq0}$,
 $\#\{
n\in\mathbb Z:
r\sqrt t\leq |n-\upsigma|<(r+1)\sqrt t \}\leq 4\sqrt t$.
For each \(n\) in this set, we have 
 $1+|n-\upsigma|
\leq
1+(r+1)\sqrt t
\leq
(r+2)\sqrt t$
and
$e^{-(n-\upsigma)^2/t}
\leq
e^{-r^2}$.
Therefore
\begin{align*}
\sum_{n\in\mathbb Z}
\left(1+|n-\upsigma|\right)^j e^{-(n-\upsigma)^2/t}
&=
\sum_{r=0}^{\infty}
\sum_{ r\sqrt t\leq |n-\upsigma|<(r+1)\sqrt t}
\left(1+|n-\upsigma|\right)^j e^{-(n-\upsigma)^2/t} \leq
4t^{(j+1)/2}
\sum_{r=0}^{\infty}(r+2)^j e^{-r^2},
\end{align*}
which proves \eqref{eq:shifted-discrete-Gaussian}. We therefore conclude the proof of \eqref{eq:shifted-discrete-Gaussian-moment}.\\

\noindent\textbf{Step 2. } In this step, we prove \eqref{eq:endpoint-one-gaussian-moment AGAIN}.   Using
\eqref{eq:shifted-discrete-Gaussian-moment} with \(t=s\) and \(\upsigma=\ell\),  
\begin{align*}
\sum_{z=0}^{\infty}
(z+1)^k\mathfrak X_{s,u}(z) =
\sum_{\ell=0}^{\infty}(\ell+1)u^\ell
\sum_{z=0}^{\infty}
(z+1)^k e^{-(z-\ell)^2/s} \leq
A_ks^{(k+1)/2}
\sum_{\ell=0}^{\infty}
(\ell+1)^{k+1}u^\ell =
A_kM_{k+1}(u)s^{(k+1)/2},
\end{align*}
hence \eqref{eq:endpoint-one-gaussian-moment AGAIN} holds with
$C_{u,k}
=
A_kM_{k+1}(u)$. \\

\noindent\textbf{Step 3. } In this step, we prove \eqref{eq:endpoint-two-gaussian-moment AGAIN}. Set 
 \[
h
=
\frac{xy}{x+y}\geq1,
\qquad
\xi_{\ell,m}
=
\frac{y\ell+xm}{x+y}\geq0,
\qquad
\ell,m\in\mathbb Z_{\geq0}.
\]
Note that 
\begin{equation}\label{eq:Gaussian-completion-two-endpoints}
\frac{(z-\ell)^2}{x}
+
\frac{(z-m)^2}{y}
=
\frac{(z-\xi_{\ell,m})^2}{h}
+
\frac{(\ell-m)^2}{x+y}.
\end{equation}
Note that 
 $\xi_{\ell,m}+1
\leq
\ell+m+1
\leq
(\ell+1)(m+1)$. Using \eqref{eq:Gaussian-completion-two-endpoints},  \eqref{eq:shifted-discrete-Gaussian-moment}, and
\(e^{-(\ell-m)^2/(x+y)}\leq1\),  
\begin{align*}
&\sum_{z=0}^{\infty}
(z+1)^k
\mathfrak X_{x,u}(z)\mathfrak X_{y,v}(z) =
\sum_{\ell,m=0}^{\infty}
(\ell+1)(m+1)u^\ell v^m
e^{-(\ell-m)^2/(x+y)}
\sum_{z=0}^{\infty}
(z+1)^k e^{-(z-\xi_{\ell,m})^2/h}\\
&\leq
A_kh^{(k+1)/2}
\sum_{\ell,m=0}^{\infty}
(\ell+1)(m+1)u^\ell v^m
(\xi_{\ell,m}+1)^k \leq
A_kh^{(k+1)/2}
\sum_{\ell,m=0}^{\infty}
(\ell+1)^{k+1}(m+1)^{k+1}u^\ell v^m\\
&=
A_kM_{k+1}(u)M_{k+1}(v)
\left(\frac{xy}{x+y}\right)^{(k+1)/2}.
\end{align*}
We conclude the proof of \eqref{eq:endpoint-two-gaussian-moment AGAIN} with
$C_{u,v,k}
=
A_kM_{k+1}(u)M_{k+1}(v)$.\\

\noindent\textbf{Step 4. } In this step, we prove \eqref{eq:endpoint-free-inverse-moment AGAIN} and \eqref{eq:endpoint-free-first-moment AGAIN}.
When \(y\leq x\), using \eqref{eq:endpoint-one-gaussian-moment AGAIN} with \(s=x\) and \(k=0,2\),
\begin{align*}
     \sum_{z=0}^{\infty}
\mathfrak X_{x,u}(z)
\min\left(1,\frac{z+1}{\sqrt y}\right)
 &\leq
C_{u,0}\sqrt x \leq
\sqrt2\,C_{u,0}\frac{x}{\sqrt{x+y}},\\
\sum_{z=0}^{\infty}
z(z+1)\mathfrak X_{x,u}(z)
\min\left(1,\frac{z+1}{\sqrt y}\right) &\leq
\sum_{z=0}^{\infty}
(z+1)^2\mathfrak X_{x,u}(z) \leq
C_{u,2}x^{3/2} \leq
\sqrt2\,C_{u,2}\frac{x^2}{\sqrt{x+y}}.
\end{align*}
When \(y\geq x\), using \eqref{eq:endpoint-one-gaussian-moment AGAIN} with \(s=x\) and \(k=1,3\), 
 \begin{align*}
\sum_{z=0}^{\infty}
\mathfrak X_{x,u}(z)
\min\left(1,\frac{z+1}{\sqrt y}\right)
 &\leq
\frac1{\sqrt y}
\sum_{z=0}^{\infty}
(z+1)\mathfrak X_{x,u}(z) \leq
C_{u,1}\frac{x}{\sqrt y} \leq
\sqrt2\,C_{u,1}\frac{x}{\sqrt{x+y}},\\
\sum_{z=0}^{\infty}
z(z+1)\mathfrak X_{x,u}(z)
\min\left(1,\frac{z+1}{\sqrt y}\right) &\leq
\frac1{\sqrt y}
\sum_{z=0}^{\infty}
(z+1)^3\mathfrak X_{x,u}(z) \leq
C_{u,3}\frac{x^2}{\sqrt y} \leq
\sqrt2\,C_{u,3}\frac{x^2}{\sqrt{x+y}}.
\end{align*} 
We conclude the proof of \eqref{eq:endpoint-free-inverse-moment AGAIN} and \eqref{eq:endpoint-free-first-moment AGAIN} with 
$C_u
=
\sqrt2\max_{0\leq j\leq3}C_{u,j}$.
\end{proof}

\bibliographystyle{goodbibtexstyle.bst}
\bibliography{RWApproximation}

\end{document}